\numberwithin{equation}{section}
\numberwithin{equation}{section}
\newtheorem{theorem}{Theorem}[section]
\newtheorem{lemma}[theorem]{Lemma}
\newtheorem{proposition}[theorem]{Proposition}
\newtheorem{corollary}[theorem]{Corollary}
\theoremstyle{definition}
\newtheorem{definition}[theorem]{Definition}
\theoremstyle{remark}
\newtheorem{example}[theorem]{Example}
\theoremstyle{remark}
\newtheorem{remark}[theorem]{Remark}
\newcommand\bp{\begin{proof}}
\newcommand\ep{\end{proof}}
\newcommand{\boldw}{{\bf w}}
\begin{document}

 \title[Topological boundaries of connected graphs and Coxeter groups]{Topological boundaries of connected graphs and Coxeter groups}

\author{Mario Klisse}

\address{TU Delft, EWI/DIAM,
	P.O.Box 5031,
	2600 GA Delft,
	The Netherlands}

\email{m.klisse@tudelft.nl}

\begin{abstract}

We introduce and study certain topological spaces associated with connected rooted graphs. These spaces reflect combinatorial and order theoretic properties of the underlying graph and relate in the case of hyperbolic graphs to Gromov's hyperbolic compactification \cite{Gromov}. They are particularly tractable in the case of Cayley graphs of finite rank Coxeter groups. In that context we speak of the compactification and the boundary of the Coxeter group. As it turns out, the canonical action of the Coxeter group on its Cayley graph induces a natural action on the compactification and the boundary. From this we deduce that in this case our construction coincides with spaces defined in \cite{Caprace} (see also \cite{Lecureux} and \cite{Lam}). We further prove the amenability of the action, we characterize when the compactification is small at infinity and we study classes of Coxeter groups for which the action is a topological boundary action in the sense of Furstenberg.

The second part of the paper deals with the applications of our results to the study of (Iwahori) Hecke algebras. These are certain deformations of group algebras of Coxeter groups. We first study embeddings of Hecke C$^\ast$-algebras and prove property Akemann-Ostrand for a certain class of Hecke-von Neumann algebras. Lastly, we make use of results that are widely related to Kalantar-Kennedy their approach to the C$^\ast$-simplicity problem \cite{KalantarKennedy} to study the simplicity and injective envelopes of operator algebras associated with Hecke algebras.
\end{abstract}

\date{\today. \emph{MSC2010:} 20F55, 20F65, 46L05, 46L10. The author is supported by the NWO project ``The structure of Hecke-von Neumann algebras'', 613.009.125.}

\maketitle


\section*{Introduction}

Hyperbolic graphs are graphs which satisfy a certain negative curvature condition. Intuitively, a hyperbolic graph is a graph whose large-scale geometry looks similar to that of a tree. The concept goes back to Gromov, see \cite{Gromov}. Every hyperbolic graph $K$ admits a topological ``space at infinity'' $\partial_{h}K$, consisting of equivalence classes of certain sequences, and a corresponding ``compactification'' $K\cup\partial_{h}K$. These spaces turn out to have a rich structure which provides an excellent tool to study the underlying graph. Especially in the context of Cayley graphs of groups, the notion of hyperbolicity allows to explore connections between algebraic properties of the group and geometric properties of certain topological spaces (see \cite{Kapovich} for a survey). This led to a number of breakthroughs in the fields of geometric and combinatoric group theory.

Following Gromov's ideas, many similar constructions assigning topological spaces to graphs and groups have been presented. In this paper, our objective is to follow an analogous path by defining certain topological spaces associated with (countable, undirected and simplicial) connected rooted graphs which reflect combinatorial and order theoretic properties. Given a connected rooted graph $(K,o)$ we denote these spaces by $\overline{(K,o)}$ and $\partial(K,o)$. As it turns out, the spaces very naturally identify as spectra of certain unital abelian C$^\ast$-algebras. In particular, $\overline{(K,o)}$ and $\partial(K,o)$ are compact and Hausdorff where $K$ embeds into $\overline{(K,o)}$ as a dense subset. Our construction covers a number of interesting examples. Further, if the graph is hyperbolic, both spaces very nicely relate to Gromov's hyperbolic compactification and boundary.

\begin{theorem} \label{1}
Let $(K,o)$ be a hyperbolic connected rooted graph. Then there exists a continuous surjection $\phi:\partial(K,o)\rightarrow\partial_{h}K$. If the graph is locally finite, then $\phi$ extends to a continuous surjection $\tilde{\phi}:\overline{(K,o)}\rightarrow K\cup\partial_{h}K$ with $\tilde{\phi}|_{K}=\text{id}_{K}$.
\end{theorem}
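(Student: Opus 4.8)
The plan is to build the map at the level of points and then verify continuity through the Gelfand picture. Recall that $\overline{(K,o)}$ is realised as the Gelfand spectrum of a unital abelian C$^\ast$-subalgebra $\cA\subseteq\ell^{\infty}(K)$ which contains $c_{0}(K)$ and separates the vertices, so that the point evaluations embed $K$ as an open dense subset and $\partial(K,o)=\overline{(K,o)}\setminus K$. On the Gromov side I write $(x\mid y)_{o}=\tfrac12\bigl(d(o,x)+d(o,y)-d(x,y)\bigr)$ for the Gromov product, recall that a net $(x_{i})$ \emph{converges to infinity} when $(x_{i}\mid x_{j})_{o}\to\infty$, and that $\partial_{h}K$ is the set of equivalence classes of such nets, two being equivalent when $(x_{i}\mid y_{j})_{o}\to\infty$, with the usual shadow topology on $K\cup\partial_{h}K$.

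First I would define $\phi$ on boundary points. A point $\omega\in\partial(K,o)$ is, by density, the limit in $\overline{(K,o)}$ of a net $(x_{i})$ of vertices leaving every finite subset of $K$. Concretely, convergence of $(x_{i})$ to $\omega$ records, for each vertex $v$, whether the net eventually lies \emph{beyond} $v$, i.e. eventually satisfies $d(o,v)+d(v,x_{i})=d(o,x_{i})$; this is the order-theoretic content built into $\cA$. I would then show, using $\delta$-hyperbolicity, that such a net automatically converges to infinity: the vertices $v$ that the net passes beyond lie along geodesics issuing from $o$, and the thin-triangle property forces these initial segments to fellow-travel, so that $(x_{i}\mid x_{j})_{o}\to\infty$. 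Setting $\phi(\omega)=[\,(x_{i})\,]\in\partial_{h}K$ then makes sense once I check independence of the chosen net: if $(x_{i})$ and $(y_{j})$ both converge to $\omega$ they pass beyond exactly the same vertices, hence share arbitrarily long geodesic prefixes from $o$, whence $(x_{i}\mid y_{j})_{o}\to\infty$ and the two Gromov classes coincide.

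Next I would establish surjectivity and continuity of $\phi$. For surjectivity, every $\xi\in\partial_{h}K$ is represented by a geodesic ray $o=v_{0},v_{1},v_{2},\dots$; by compactness of $\overline{(K,o)}$ the sequence $(v_{n})$ has a subnet converging to some $\omega\in\partial(K,o)$, and by construction $\phi(\omega)=\xi$. For continuity it is cleanest to argue dually: I would show that each shadow function on $K$ (a suitably regularised indicator of a metric sector around a geodesic ray) extends continuously to the Gromov compactification and, being expressible through the ``beyond $v$'' data, already lies in $\cA$. This exhibits the algebra $\cA_{h}$ of functions extending continuously to $K\cup\partial_{h}K$ as a C$^\ast$-subalgebra of $\cA$, and the dual of the inclusion $\cA_{h}\hookrightarrow\cA$ is precisely a continuous surjection of spectra restricting to $\phi$ on the boundary.

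Finally, for the extension $\tilde{\phi}$ I would invoke local finiteness: a locally finite graph is proper, so $K\cup\partial_{h}K$ is compact Hausdorff and $\cA_{h}$ is a genuine unital algebra with $c_{0}(K)\subseteq\cA_{h}\subseteq\cA$. The same inclusion dualises to a continuous surjection $\tilde{\phi}:\overline{(K,o)}\to K\cup\partial_{h}K$, which restricts to $\id_{K}$ because both algebras contain $c_{0}(K)$ with compatible point evaluations, and restricts to $\phi$ on $\partial(K,o)$. \textbf{The main obstacle} is the step in the second paragraph: converting the purely combinatorial, order-theoretic notion of convergence carried by $\cA$ into honest metric (Gromov) convergence, and conversely controlling which vertices a convergent net passes beyond. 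This is exactly where $\delta$-hyperbolicity, through the fellow-traveller property of geodesics from $o$, is indispensable; without it the ``beyond $v$'' data need not pin down a single point at infinity.
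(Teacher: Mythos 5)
Your proposal, as written, only addresses the locally finite half of the theorem: every structural fact it relies on fails for general (non-locally-finite) hyperbolic graphs, whereas the first statement is asserted for all of them. The claims that $c_{0}(K)\subseteq\cA$, that $K$ sits inside $\overline{(K,o)}$ as an \emph{open} dense subset, and that $\partial(K,o)=\overline{(K,o)}\setminus K$ are exactly the content of Lemma \ref{discreteness}, which needs local finiteness: in the first graph of Figure \ref{Figure} (a tree, hence hyperbolic) the boundary points $z_{i}$ converge to the \emph{vertex} $z$, so $K$ is not open and $\chi_{\{z\}}\notin\mathcal{D}(K,o)$. Your surjectivity step inherits the same defect twice over: the statement that every $\xi\in\partial_{h}K$ is represented by a geodesic ray from $o$ is an Arzel\`a--Ascoli/properness fact, true for locally finite graphs and false in general, and identifying $\mathrm{Spec}(\cA_{h})$ with $K\cup\partial_{h}K$ requires that space to be compact --- local finiteness yet again. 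Moreover, this gap cannot be argued away. Consider the graph with vertices $o$ and $(n,k)$ ($n\geq1$, $k\geq0$) and edges $o\sim(1,k)$, $(n,k+1)\sim(n+1,k)$, and $(n,k)\sim(n,k')$ for $k\neq k'$: geodesics ascend one level per step up to at most one horizontal jump, and same-level vertices are at distance $\leq1$, so all triangles are $1$-thin and the graph is hyperbolic; the unique geodesic from $o$ to $(n,k)$ is the ascending path through the vertices $(j,k+n-j)$, so every set $\{w\mid v\leq w\}$ with $v\neq o$ is finite, hence no sequence $o$-converges to infinity and $\partial(K,o)=\emptyset$, while the sequence $((n,0))_{n}$ has pairwise Gromov products $\geq\min(n,m)-\tfrac12$, so $\partial_{h}K\neq\emptyset$. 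No surjection $\partial(K,o)\rightarrow\partial_{h}K$ can exist there, so ray-based (or any) arguments for the general first statement are doomed; this also shows the step is more delicate than the paper's own ``surjectivity: that is clear'' suggests, and that local finiteness (or a substitute) is genuinely needed for it.

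Second, your diagnosis of where the difficulty lies is inverted, and this affects the one step you leave unproven. The paper's well-definedness and continuity arguments rest on a single estimate that uses no hyperbolicity whatsoever: if $v\leq x$ and $v\leq y$ in the graph order, then $d_{K}(v,x)=d_{K}(o,x)-d_{K}(o,v)$ and $d_{K}(v,y)=d_{K}(o,y)-d_{K}(o,v)$, so the triangle inequality gives $\langle x,y\rangle_{o}\geq d_{K}(o,v)$; since a point of $\partial(K,o)$ admits ``beyond'' vertices $v$ with $d_{K}(o,v)$ arbitrarily large, order convergence yields divergent Gromov products for free. Hyperbolicity enters only so that $\partial_{h}K$ and its topology are defined at all --- it is not, as your closing paragraph claims, what converts order data into metric data, and fellow-travelling is never needed. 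This matters because the crucial unproven claim in your continuity argument --- that the regularised shadow functions lie in $\cA$, i.e. extend continuously to $\overline{(K,o)}$ --- is \emph{equivalent} to the continuity you are trying to establish, and the only way to verify it is to prove the estimate above; once you have that estimate, the paper's direct net argument is both shorter and valid without local finiteness. Two smaller points: nets converging to the same $\omega$ pass beyond the same vertices, but they need not ``share arbitrarily long geodesic prefixes'' (distinct geodesics from $o$ through a common vertex need not agree; the estimate uses only the common vertex), and in the locally finite case your dualisation of the inclusion $\cA_{h}\hookrightarrow\cA$ is a perfectly reasonable packaging of $\tilde{\phi}$, so the proposal is salvageable in that setting.
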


In general, the structure of the spaces $\overline{(K,o)}$ and $\partial(K,o)$ is much less tractable than that of hyperbolic compactifications and boundaries. However, for certain classes of graphs this can be different. This is the case for Cayley graphs of finite rank Coxeter groups. These are groups freely generated by a finite set $S$ subject to relations of the form $(st)^{m_{st}}=e$ where $m_{ss}=1$, $m_{st}=m_{ts}$ and $m_{st}\geq2$ for $s\neq t$. They were first introduced in \cite{Coxeter} and satisfy a number of strong properties. Today, Coxeter groups find application in many branches of mathematics (for more information see \cite{Bourbaki}, \cite{Davis}, \cite{combinatorics}). For a given Coxeter system $(W,S)$ we denote the corresponding compactification (resp. boundary) associated with its Cayley graph by $\overline{(W,S)}$ (resp. $\partial(W,S)$). The canonical action of $W$ on itself via left multiplication induces an action of $W$ on $\overline{(W,S)}$ and $\partial(W,S)$. These actions turn out to have some desirable properties.

The author is grateful to Sven Raum and Adam Skalski, who pointed out that the space $\overline{(W,S)}$ coincides with the minimal combinatorial compactification associated with the Coxeter complex of $(W,S)$ appearing in \cite{Caprace} (see also \cite{Lecureux}, \cite{Lam}). It has been shown in \cite{Lecureux} that for every building $X$ suitable subgroups of the automorphism group $\text{Aut}(X)$ act amenably on its maximal combinatorial compactification (as defined in \cite{Caprace}) in the sense that there exists a net of continuous, almost equivariant maps from the combinatorial compactification to the space of all probability measures on the group. By identifying $\overline{(W,S)}$ in a suitable way we deduce the amenability of the actions $W\curvearrowright\overline{(W,S)}$ and $W\curvearrowright\partial(W,S)$.

\begin{theorem} \label{2}
Let $(W,S)$ be a finite rank Coxeter system. Then the canonical actions $W\curvearrowright\overline{(W,S)}$ and $W\curvearrowright\partial(W,S)$ are amenable.
\end{theorem}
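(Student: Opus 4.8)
The plan is to reduce the statement to the amenability theorem of \cite{Lecureux} for combinatorial compactifications of buildings, via the identification of $\overline{(W,S)}$ with the (minimal) combinatorial compactification of the Coxeter complex recorded just before the statement. Throughout, amenability means topological amenability: the action $W\curvearrowright Z$ on a compact Hausdorff space $Z$ is amenable if there is a net of weak-$*$ continuous maps $\mu_i\colon Z\to\mathrm{Prob}(W)$, $z\mapsto\mu_i^z$, with $\sup_{z\in Z}\|g\cdot\mu_i^z-\mu_i^{g\cdot z}\|_1\to 0$ for every $g\in W$; here $\mathrm{Prob}(W)$ is unproblematic since a finite rank Coxeter group is countable. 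Two functorial properties of this notion are what make the reduction work: amenability is inherited by closed invariant subspaces, and more generally it pulls back along $W$-equivariant continuous maps, in the sense that if $W\curvearrowright Z$ is amenable and $f\colon Z'\to Z$ is continuous and $W$-equivariant, then $W\curvearrowright Z'$ is amenable (one composes the net with $f$).

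First I would fix the geometric picture. The Coxeter complex $\Sigma(W,S)$ is a (thin) building, and left multiplication realises $W$ as a group of type-preserving automorphisms of $\Sigma(W,S)$ acting simply transitively on chambers; in particular all chamber stabilisers are trivial, which places us in the most favourable case of the hypotheses of \cite{Lecureux}. Thus the combinatorial compactifications of $\Sigma(W,S)$ in the sense of \cite{Caprace} are defined and carry continuous $W$-actions, and by the cited identification $\overline{(W,S)}$ is $W$-equivariantly homeomorphic to the minimal one among them.

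The decisive step is to match $\overline{(W,S)}$ with a compactification for which \cite{Lecureux} directly yields amenability. Concretely, I would invoke \cite{Lecureux} to obtain a net of almost-equivariant measure-valued maps witnessing amenability of the $W$-action on the combinatorial compactification appearing there, and then argue that, under the dictionary between the author's order-theoretic description of $\overline{(W,S)}$ and the filter/residue description of \cite{Caprace}, this is (up to $W$-equivariant homeomorphism) the same $W$-space. Amenability of $W\curvearrowright\overline{(W,S)}$ is then immediate. Since $\partial(W,S)$ is by construction a closed $W$-invariant subspace of $\overline{(W,S)}$, restricting the net of measures gives amenability of $W\curvearrowright\partial(W,S)$ as well, completing the proof.

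The hard part will be precisely this identification, and in particular getting the comparison arrow to point the right way. The various combinatorial compactifications are linked by canonical $W$-equivariant surjections from the maximal one onto the coarser ones, but amenability does \emph{not} descend along an arbitrary equivariant continuous surjection — collapsing a boundary action to a point already shows that a quotient of an amenable action need not be amenable — so one cannot simply push Lécureux's net forward along the maximal-to-minimal map. The work therefore lies in reconciling the minimal/maximal conventions across \cite{Caprace} and \cite{Lecureux} and exhibiting $\overline{(W,S)}$ as the \emph{domain} of an equivariant continuous map to an amenable compactification (or as a closed invariant subspace of one), so that amenability transfers by pullback rather than by an illegitimate quotient; once this is set up correctly, the functional-analytic transfer of the net of measures is routine.
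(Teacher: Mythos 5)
Your proposal is correct and follows essentially the same route as the paper: identify $\overline{(W,S)}$ with Caprace--L\'ecureux's minimal combinatorial compactification $\mathcal{C}_{1}(W,S)$ of the Coxeter complex (the paper does this via the horofunction compactification of $\text{Cay}(W,S)$ and \cite[Theorem 3.1]{Caprace}), invoke L\'ecureux's amenability theorem for the maximal combinatorial compactification $\mathcal{C}_{\text{sph}}(\Sigma)$, and transfer amenability by restriction to a closed invariant subspace, then restrict further to $\partial(W,S)$. The ``hard part'' you flag --- getting the comparison arrow to point the right way --- is resolved exactly as you anticipate and is in fact immediate from the construction: both $\mathcal{C}_{1}(X)$ and $\mathcal{C}_{\text{sph}}(X)$ are defined as closures inside the same product $\prod_{\sigma}\text{St}(\sigma)$ (of the images of the chambers, respectively of all spherical residues), so $\mathcal{C}_{1}(X)$ is a closed $W$-invariant \emph{subset} of $\mathcal{C}_{\text{sph}}(X)$, not a quotient, and no illegitimate push-forward is needed.
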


We further completely characterize when the compactification of a Coxeter system $(W,S)$ is small at infinity in the sense that for every sequence $\left(x_{i}\right)_{i\in\mathbb{N}}\subseteq W$ converging to a boundary point $z\in\partial(W,S)$, $x_{i}\mathbf{w}\rightarrow z$ for every $\mathbf{w}\in W$. In that case, we call the system $(W,S)$ small at infinity. The notion of smallness at infinity has a number of interesting operator algebraic implications that we pick up in the second part of the paper. It implicitly appears in \cite{Kaimanovich}, \cite{Higson} and \cite{HiGu}. The proof of the following theorem crucially depends on Moussong's characterization of word hyperbolic Coxeter groups \cite[Theorem 17.1]{Moussong}.

\begin{theorem} \label{3}
Let $(W,S)$ be a finite rank Coxeter system. For $s\in S$ denote the centralizer of $s$ in $W$ by $C_{W}(s)$. Then the following statements are equivalent:

\begin{enumerate}
\item $(W,S)$ is small at infinity;
\item $\#C_{W}(s)<\infty$ for every $s\in S$;
\item $W$ is word hyperbolic and the map $\tilde{\phi}$ from Theorem \ref{1} is a homeomorphism.
\end{enumerate}
\end{theorem}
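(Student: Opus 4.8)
The plan is to prove the cycle of implications $(3)\Rightarrow(1)\Rightarrow(2)\Rightarrow(3)$. Throughout I use that, by the construction of $\overline{(W,S)}$ as the spectrum of an abelian C$^{\ast}$-algebra, the indicator function $\mathbf{1}_{H^{+}}$ of each combinatorial half-space $H^{+}$ determined by a wall $H$ extends to a continuous function on $\overline{(W,S)}$; in particular, two points of $\partial(W,S)$ are distinct as soon as they lie on opposite sides of a single wall. I also use that a finite rank system has finite $S$, so the Cayley graph is locally finite and proper and Theorem \ref{1} applies.

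\emph{$(3)\Rightarrow(1)$.} Assume $W$ is word hyperbolic and $\tilde{\phi}$ is a homeomorphism. The Gromov compactification $W\cup\partial_{h}W$ is small at infinity: if $x_{i}\rightarrow\xi\in\partial_{h}W$ and $\mathbf{w}\in W$, then $d(x_{i},x_{i}\mathbf{w})=|\mathbf{w}|$ is bounded, so the Gromov products $(x_{i}\mathbf{w}\mid\xi)$ stay within $|\mathbf{w}|$ of $(x_{i}\mid\xi)\rightarrow\infty$ and hence $x_{i}\mathbf{w}\rightarrow\xi$. Since $\tilde{\phi}$ is a homeomorphism restricting to the identity on $W$, convergence $x_{i}\rightarrow z$ in $\overline{(W,S)}$ is equivalent to $x_{i}\rightarrow\tilde{\phi}(z)$ in $W\cup\partial_{h}W$, and transporting the previous statement back through $\tilde{\phi}^{-1}$ gives $x_{i}\mathbf{w}\rightarrow z$. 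Thus $(W,S)$ is small at infinity.

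\emph{$(1)\Rightarrow(2)$.} I argue by contraposition: suppose $C_{W}(s)$ is infinite for some $s\in S$, and let $H_{s}$ be the wall fixed by $s$. The key observation is that for every $x\in C_{W}(s)$ the reflection across the wall separating $x$ from $xs$ is $xsx^{-1}=s$, so $x$ and $xs$ lie on opposite sides of the single wall $H_{s}$. Since $C_{W}(s)$ is infinite and the Cayley graph is proper, I can choose $x_{i}\in C_{W}(s)$ leaving every finite set; passing to a subsequence, all $x_{i}$ lie in one half-space, say $H_{s}^{+}$, and $x_{i}\rightarrow z\in\partial(W,S)$ by compactness. Then $x_{i}s\in H_{s}^{-}$ for all $i$, so any limit $z'$ of $(x_{i}s)$ satisfies $\mathbf{1}_{H_{s}^{+}}(z')=0\neq1=\mathbf{1}_{H_{s}^{+}}(z)$ by continuity of the half-space indicator; hence $z'\neq z$ and smallness at infinity fails for $\mathbf{w}=s$.

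\emph{$(2)\Rightarrow(3)$.} First, finiteness of all centralizers forces $W$ to be word hyperbolic. By Moussong's characterization \cite{Moussong}, if $W$ is not hyperbolic then $S$ contains a subset $T$ with $W_{T}$ either an irreducible affine subsystem of rank $\geq3$ or a direct product $W_{T_{1}}\times W_{T_{2}}$ of two infinite special subgroups. In the product case any $s\in T_{1}$ has $W_{T_{2}}\subseteq C_{W}(s)$ infinite; in the affine case of rank $d+1\geq3$ the lattice of translations parallel to the wall of a generator $s\in T$ is isomorphic to $\Z^{d-1}$ with $d-1\geq1$ and centralizes $s$, so again $C_{W}(s)$ is infinite. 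Either way $(2)$ fails, proving the contrapositive. It remains to show that for such $W$ the map $\tilde{\phi}$ is a homeomorphism; as $\overline{(W,S)}$ and $W\cup\partial_{h}W$ are compact Hausdorff and $\tilde{\phi}$ is a continuous surjection, it suffices to prove that $\tilde{\phi}$ is injective, and since $\tilde{\phi}|_{W}=\mathrm{id}$ this reduces to injectivity on $\partial(W,S)$.

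\textbf{Main obstacle.} The injectivity of $\tilde{\phi}$ on the boundary is the heart of the matter. Suppose $z_{1}\neq z_{2}$ in $\partial(W,S)$ with $\tilde{\phi}(z_{1})=\tilde{\phi}(z_{2})=\xi$, and fix a wall $H$, with associated reflection $r$, such that $z_{1}\in H^{+}$ and $z_{2}\in H^{-}$. Choosing $x_{i}\rightarrow z_{1}$ in $H^{+}$ and $y_{i}\rightarrow z_{2}$ in $H^{-}$ yields two sequences converging to the same Gromov point $\xi$ yet permanently separated by $H$. By hyperbolicity the two approaches to $\xi$ remain within bounded Hausdorff distance of one another, so $H$ must come within a bounded distance of the geodesic ray $[o,\xi)$ at points escaping to $\xi$; since $r$ interchanges the two sides while fixing $H$ and both sides limit to $\xi$, the reflection $r$ fixes $\xi$. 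The task is then to deduce that $r$, hence some generator conjugate to it, has infinite centralizer, contradicting $(2)$. I expect the decisive point to be the analysis of the stabilizer of $\xi$ for an order-two element: a reflection of a hyperbolic Coxeter group fixing a boundary point approached from both sides of its own wall should force an infinite pencil of pairwise disjoint parallel walls accumulating along $[o,\xi)$, all preserved by a common infinite subgroup, and hence an infinite centralizer. Making this accumulation precise — controlling the geometry of walls near $[o,\xi)$ via local finiteness and re-invoking Moussong's classification to identify the resulting infinite commuting or dihedral configuration — is the step I anticipate to be the most delicate.
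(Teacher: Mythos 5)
Your implications $(3)\Rightarrow(1)$ and $(1)\Rightarrow(2)$ are correct, and they are essentially the paper's own arguments (the ``$\Leftarrow$'' direction of Theorem \ref{hyperbolic 3}, and the ``$(1)\Rightarrow(2)$'' direction of Theorem \ref{infinity}). Note that for $(1)\Rightarrow(2)$ you only ever need the wall $H_{s}$ of a \emph{generator} $s$, and there your half-space claim is immediate: the half-space of $H_{s}$ not containing $e$ is exactly $\left\{ \mathbf{w}\in W\mid\left|s\mathbf{w}\right|<\left|\mathbf{w}\right|\right\} =\mathcal{U}_{s}\cap W$, whose indicator $\chi_{\mathcal{U}_{s}}$ is clopen by construction; your wall argument is then literally the paper's (``choose $\mathbf{w}_{i}\in C_{W}(s)$ with $s\nleq\mathbf{w}_{i}$; then $s\leq s\mathbf{w}_{i}=\mathbf{w}_{i}s$, while $s\nleq z$''). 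For a \emph{general} wall, however, your blanket assertion that the half-space indicator is continuous ``by the construction of $\overline{(W,S)}$ as a spectrum'' is not justified: $\mathcal{D}(W,S)$ is generated by the projections $P_{\mathbf{x}}$, and the far half-space of the wall crossed by the edge $\{\mathbf{x},\mathbf{x}s\}$ is in general strictly larger than $\mathcal{U}_{\mathbf{x}s}\cap W$ (it is an infinite union of such sets). The statement is true, but it requires equivariance: $H_{wsw^{-1}}^{\pm}=w.H_{s}^{\pm}$ together with the continuity of the $W$-action (Theorem \ref{coxeteraction}). You invoke the general statement both in your preamble and in $(2)\Rightarrow(3)$, where you also use, without proof, the converse fact that distinct boundary points are separated by some wall.

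The genuine gap is in $(2)\Rightarrow(3)$: the injectivity of $\tilde{\phi}$ is exactly the step you leave as a heuristic, and it is the heart of the theorem. Your sketch --- a reflection $r$ whose wall separates $z_{1}$ from $z_{2}$ must fix $\xi$, and this ``should'' force an infinite pencil of parallel walls and hence an infinite centralizer --- is not carried out, and it is unclear how it would produce what condition $(2)$ actually forbids, namely infinitely many elements commuting with a single \emph{generator} $s\in S$ (an infinite stabilizer of $\xi$, or of a wall configuration, is not enough). The paper avoids this geometric difficulty entirely by first proving $(2)\Rightarrow(1)$ (the ``$(2)\Rightarrow(1)$'' direction of Theorem \ref{infinity}): if smallness at infinity fails, a purely combinatorial argument using the exchange condition reduces to a sequence $\mathbf{w}_{i}=t\mathbf{v}_{i}$ with $\left|t\mathbf{v}_{i}\right|=\left|\mathbf{v}_{i}\right|+1$ and $\mathbf{w}_{i}s=\mathbf{v}_{i}$, whence $s=\mathbf{w}_{i}^{-1}t\mathbf{w}_{i}$ and $\mathbf{w}_{i_{0}}^{-1}\mathbf{w}_{i}\in C_{W}(s)$ for all large $i$, so $\#C_{W}(s)=\infty$. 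Once $(1)$ is available, injectivity of $\tilde{\phi}$ is a three-line argument (this is how the paper proves ``$\Rightarrow$'' in Theorem \ref{hyperbolic 3}): if $\left[\alpha\right]_{h}=\left[\beta\right]_{h}$ then $\sup_{i}\left|\alpha_{i}^{-1}\beta_{i}\right|<\infty$, so along a subsequence $\alpha_{i_{k}}^{-1}\beta_{i_{k}}=\mathbf{w}$ is constant, and smallness at infinity gives $\left[\beta\right]=\lim_{k}\alpha_{i_{k}}\mathbf{w}=\left[\alpha\right]$; a continuous bijection of compact Hausdorff spaces is then a homeomorphism. To close your cycle you should replace the direct attack on $(2)\Rightarrow(3)$ by this route: prove $(2)\Rightarrow(1)$ combinatorially, then deduce $(3)$ from $(1)$.
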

 Reflection centralizers of Coxeter groups have been studied in \cite{Allcock} and \cite{Brink}. The main theorem in \cite{Brink} gives the description of the centralizer $C_{W}(s)$ of a generator $s$ in a Coxeter group $W$ as a semidirect product of its reflection subgroup by the fundamental group of the connected component of the odd Coxeter diagram of $W$ containing $s$. From the equivalence of the first two statements in Theorem \ref{3} we deduce that the only irreducible affine type Coxeter system that is small at infinity is the infinite dihedral group. We further prove that an irreducible right-angled Coxeter system is small at infinity if and only if the group decomposes as a free product of finite Coxeter groups.

Another notion that was famously used in \cite{KalantarKennedy} is that of topological boundary actions. Topological and measurable boundary actions have been introduced by Furstenberg in \cite{Fu1}, \cite{Fu2} in the context of rigidity of Lie groups. Compared to its measurable counterpart, the notion of topological boundary actions initially received much less attention. That changed when its relevance for rigidity and simplicity problems of reduced group C$^\ast$-algebras was discovered by Kalantar and Kennedy in \cite{KalantarKennedy}. We will study under which circumstances Coxeter systems give rise to a boundary action on their boundary. For Coxeter systems $(W,S)$ that are right-angled or small at infinity we can completely characterize when the canonical action $W\curvearrowright\partial(W,S)$ admits a (topologically free) boundary action.

\begin{theorem} \label{4}
Let $\left(W,S\right)$ be a finite rank Coxeter system such that $W$ is infinite. Then the following statements hold:
\begin{itemize}
\item Assume that the system is right-angled and irreducible. Then the action of $W$ on $\partial(W,S)$ is a (topologically free) boundary action if and only if $\left|S\right|\geq3$.
\item Assume that the system is small at infinity. Then the action of $W$ on $\partial(W,S)$ is a (topologically free) boundary action if and only if $W$ is non-amenable.
\end{itemize}
\end{theorem}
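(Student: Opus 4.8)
The plan is to prove the two bullet points of Theorem \ref{4} separately, using the general structure theory of topological boundary actions together with the amenability result (Theorem \ref{2}) and the characterizations of smallness at infinity (Theorem \ref{3}). Recall that a \emph{(topological) boundary action} in the sense of Furstenberg means that the action $W \curvearrowright \partial(W,S)$ is both \emph{minimal} (every orbit closure is everything) and \emph{strongly proximal} (the weak-$\ast$ closure of every orbit in the space of probability measures contains a point mass). A convenient sufficient tool I would invoke is that an amenable action on a compact space is a boundary action precisely when it is minimal and strongly proximal, and that for amenable actions topological freeness is closely tied to the action being an injective envelope / $W$-boundary; since Theorem \ref{2} already gives amenability, many of the heavier hypotheses needed for the Kalantar--Kennedy machinery are automatic, and I expect to be able to reduce both statements to verifying minimality, strong proximality, and topological freeness directly from the combinatorial description of $\partial(W,S)$.

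For the \textbf{right-angled irreducible} case, I would first dispose of the small cases. If $|S| = 1$ then $W$ is finite, contradicting the hypothesis; if $|S| = 2$ and the system is right-angled and irreducible, then $W$ is the infinite dihedral group $\Z/2 \ast \Z/2$, which is (virtually) abelian and amenable, so its action on any compact space cannot be strongly proximal unless the space is a point. Since $\partial(W,S)$ for the infinite dihedral group consists of exactly two points which are swapped by the generators, the action is not minimal-plus-strongly-proximal in the boundary sense (it is not topologically free either, as the barycentre is fixed), establishing the ``only if'' direction for $|S| = 2$. For $|S| \geq 3$ irreducible right-angled, I would exploit the concrete model of $\partial(W,S)$ as a boundary of the Cayley graph: using the combinatorial/order-theoretic description of boundary points and the free-product-like structure of right-angled Coxeter groups, I would exhibit, for any two boundary points and any open set, group elements (long alternating words in at least three generators) that push arbitrary starting configurations into the target neighbourhood, giving minimality, and I would push probability measures towards a Dirac mass by iterating such words, giving strong proximality. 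Topological freeness should follow because the irreducibility and $|S| \geq 3$ prevent any nontrivial element from fixing an open set of boundary points — any fixed-point set of $\mathbf{w} \neq e$ is nowhere dense.

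For the \textbf{small at infinity} case, I would use Theorem \ref{3}: smallness at infinity is equivalent to $W$ being word hyperbolic with $\tilde\phi$ a homeomorphism, so $\partial(W,S)$ is identified $W$-equivariantly with the Gromov boundary $\partial_h K$ of the Cayley graph. For word hyperbolic groups the action on the Gromov boundary is very well understood: a non-elementary word hyperbolic group acts on its Gromov boundary as a minimal, strongly proximal, topologically free boundary action, while an elementary (virtually cyclic, hence amenable) hyperbolic group has a boundary of at most two points on which the action fails to be a boundary action. Thus the condition ``$W$ non-amenable'' exactly separates the non-elementary case (genuine boundary action) from the elementary/finite cases (no boundary action), and I would cite the standard Gromov-boundary dynamics together with the identification from Theorem \ref{3} to conclude, checking topological freeness from the fact that in a non-elementary hyperbolic group every infinite-order element has exactly two fixed points on the boundary, so fixed-point sets are again nowhere dense.

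The main obstacle I anticipate is the right-angled case with $|S| \geq 3$: unlike the small-at-infinity case, there is no ready-made Gromov-boundary black box, so strong proximality and minimality must be extracted directly from the combinatorial model of $\partial(W,S)$ and the specific dynamics of alternating words. The delicate point will be to show strong proximality uniformly—producing a single net of group elements that simultaneously collapses an arbitrary measure toward a point mass—rather than merely proximality on pairs, and to verify that the irreducibility hypothesis (as opposed to mere infiniteness) is genuinely what rules out the reducible decomposable cases where the action splits as a product and strong proximality fails.
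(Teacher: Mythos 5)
Your overall route is the same as the paper's: the $|S|=2$ and amenable cases are dispatched via the two-point boundary and an invariant average measure, minimality and strong proximality for irreducible right-angled systems with $|S|\geq 3$ are extracted from the dynamics of alternating words along paths in the Coxeter diagram (this is exactly Theorem \ref{r.a.}), and the small-at-infinity case is reduced via Theorem \ref{hyperbolic 3} to standard facts about Gromov boundaries of non-elementary hyperbolic groups (Theorem \ref{s.a.i.}). The genuine gap is topological freeness in the right-angled case: your sentence that irreducibility and $|S|\geq3$ ``prevent any nontrivial element from fixing an open set of boundary points'' is a restatement of the claim, not an argument, and it is in fact the hardest part of the whole theorem --- harder than strong proximality, which you single out as the delicate step but which the paper settles quickly with two contracting elements $\mathbf{g},\mathbf{h}$. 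Minimality plus strong proximality do not imply topological freeness, and nothing soft does. The paper needs a structural lemma (Lemma \ref{technical}: if $\mathbf{w}.z=z$ then $\mathbf{w}=\mathbf{u}\mathbf{v}^{-1}$ with $|\mathbf{w}|=|\mathbf{u}|+|\mathbf{v}|$ and $\mathbf{u},\mathbf{v}\leq z$), then a minimal-length analysis in Proposition \ref{r.a.2} showing that a hypothetical $\mathbf{w}$ fixing an interior point must reduce to the form $(st)^{l}$ with $m_{st}=\infty$, which is finally excluded by producing, using a third generator $r$ (this is where $|S|\geq 3$ enters), sequences of non-fixed boundary points converging to $z$. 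An alternative is operator-algebraic (C$^\ast$-simplicity of $W$ plus \cite[Theorem 7.1]{BKKO} and \cite[Theorem 2]{Archbold}), but that too is a substantive input, not a formality.

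There is a second, smaller flaw in the small-at-infinity case: you justify topological freeness by ``every infinite-order element has exactly two fixed points on the boundary,'' but this says nothing about torsion elements, which in a Coxeter group are unavoidable --- every generator is an involution, and these are precisely the elements one must worry about. A finite-order element of a hyperbolic group can act trivially on the entire Gromov boundary (exactly when it lies in the finite radical), so you must either invoke a statement covering all nontrivial elements (the paper cites \cite[Corollaire 20]{Harpe}) or additionally argue that smallness at infinity excludes a nontrivial finite normal subgroup (it does: a nontrivial direct factor or finite normal subgroup forces some reflection centralizer to be infinite, contradicting Theorem \ref{infinity}, but this needs to be said). Finally, your preliminary claim that amenability of the action feeds into the definition of a boundary action is a harmless confusion --- minimality and strong proximality are the definition, and Theorem \ref{2} plays no role in verifying them --- but it should not be presented as a ``tool.''
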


The second half of the paper gives applications of our earlier studies in the case of operator algebras associated with (Iwahori) Hecke algebras.

(Iwahori) Hecke algebras, first studied in the 1950s, are deformations of group algebras of a Coxeter group $W$ depending on a specific deformation parameter $q\in\mathbb{R}_{>0}^{S}$. They are intimately related to the representation theory of algebraic groups (see e.g. \cite{IwahoriMatsumoto}, \cite{Bernstein}, \cite{KL2}) and received attention in the context of buildings and Kac-Moody groups acting on them \cite{Remy}. Hecke algebras can be naturally represented on the Hilbert space $\ell^{2}(W)$ and thus complete to C$^\ast$-algebras (resp. von Neumann algebras) that we denote by $C_{r,q}^{\ast}(W)$ (resp. $\mathcal{N}_{q}(W)$). For spherical or affine type Coxeter systems these operator algebras have been studied in \cite{Matsumoto}, \cite{KL}, \cite{Lusztig}. Much later, the study of weighted $L^{2}$-cohomology of Coxeter groups led to the exploration of general Hecke von Neumann algebras \cite{Dymara}. Recently, the investigation of the ideal structure of both Hecke C$^\ast$-algebras and Hecke-von Neumann algebras made some progress. Motivated by Garncarek's characterization of factoriality of single-parameter right-angled Hecke-von Neumann algebras \cite{Gar}, Caspers, Larsen and the author obtained results on the (non-)simplicity and trace-uniqueness of (right-angled) Hecke C$^\ast$-algebras \cite{Mario}. Later, Garncarek's characterization was extended by Raum and Skalski in \cite{Raum} to multi-parameters which completely settles the question for factoriality in the right-angled case. In this context, it should also be mentioned that the reduced group C$^\ast$-algebra of an irreducible Coxeter group has been proven to be simple if and only if the corresponding Coxeter system is of non-affine type (see \cite{Fe}, \cite{DeLaHarpe}, \cite{Cornulier}). Other relevant references about Hecke operator algebras are \cite{Dymara2}, \cite{Caspers} and \cite{CSW}.

Our present work is motivated by the approach in \cite[Section 5]{Caspers}. As it turns out, the C$^\ast$-subalgebra of $\mathcal{B}(\ell^{2}(W))$ generated by all Hecke C$^\ast$-algebras of a given Coxeter system $(W,S)$ naturally identifies with the reduced crossed product $C(\overline{(W,S)})\rtimes_{r}W$ of the group $W$ by the continuous functions on $\overline{(W,S)}$. We prove that the Hecke-von Neumann algebras $\mathcal{N}_{q}(W)$ trivially intersect with the compact operators on $\ell^{2}(W)$ if and only if the parameter $q$ is inside a certain region of $\mathbb{R}^{S}$. This immediately implies that for those $q$ the C$^\ast$-algebras $C_{r,q}^{\ast}(W)$ naturally embed into $C(\partial(W,S))\rtimes_{r}W$.

\begin{theorem} \label{5}
Let $(W,S)$ be a finite rank Coxeter system with $W$ being infinite. Then, $\mathcal{N}_{q}(W)\cap\mathcal{K}\neq0$ if and only if $q\in\mathcal{R}^{\prime}$. Here $\mathcal{R}^{\prime}$ is a certain set of parameters associated with the region of convergence of the multi-variate growth series $\sum_{\mathbf{w}\in W}z_{\mathbf{w}}$ of $W$ (see Section \ref{applications} for details).
\end{theorem}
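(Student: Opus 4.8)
The plan is to reduce the statement to a computation with the right regular representation of the Hecke $*$-algebra and the convergence of the growth series. First I would fix the self-adjoint normalization in which the generators $T_{s}\in\mathcal{N}_{q}(W)$ satisfy $(T_{s}-q_{s}^{1/2})(T_{s}+q_{s}^{-1/2})=0$ and act on $\ell^{2}(W)$ by $T_{s}\delta_{\mathbf{w}}=\delta_{s\mathbf{w}}$ when $\ell(s\mathbf{w})>\ell(\mathbf{w})$ and $T_{s}\delta_{\mathbf{w}}=\delta_{s\mathbf{w}}+(q_{s}^{1/2}-q_{s}^{-1/2})\delta_{\mathbf{w}}$ otherwise, with analogous right operators $T_{s}^{r}$. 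Here $\delta_{e}$ is cyclic and separating and implements the canonical trace, so $\ell^{2}(W)$ is the standard form of $\mathcal{N}_{q}(W)$ and the commutant $\mathcal{N}_{q}(W)^{\prime}$ is the right Hecke--von Neumann algebra generated by the $T_{s}^{r}$. The first reduction is standard: since $\mathcal{K}$ is an ideal in $\mathcal{B}(\ell^{2}(W))$, the intersection $\mathcal{N}_{q}(W)\cap\mathcal{K}$ is an ideal in $\mathcal{N}_{q}(W)$, and passing to a spectral projection of $T^{*}T$ for $0\neq T\in\mathcal{N}_{q}(W)\cap\mathcal{K}$ shows that $\mathcal{N}_{q}(W)\cap\mathcal{K}\neq0$ if and only if $\mathcal{N}_{q}(W)$ contains a nonzero finite-rank projection, equivalently if and only if $\mathcal{N}_{q}(W)^{\prime}$ admits a nonzero finite-dimensional invariant subspace $V\subseteq\ell^{2}(W)$.

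For the direction $q\in\mathcal{R}^{\prime}\Rightarrow\mathcal{N}_{q}(W)\cap\mathcal{K}\neq0$ I would construct such a $V$ explicitly, in fact one-dimensional. Solving the eigenvalue equation $T_{s}^{r}\Omega=\lambda_{s}\Omega$ edge by edge on the Cayley graph forces, for each descent pair $\{\mathbf{v},\mathbf{v}s\}$, the quadratic $\lambda_{s}^{2}-(q_{s}^{1/2}-q_{s}^{-1/2})\lambda_{s}-1=0$, hence $\lambda_{s}\in\{q_{s}^{1/2},-q_{s}^{-1/2}\}$ and $\Omega=\sum_{\mathbf{w}}\big(\prod_{i}\lambda_{s_{i}}\big)\delta_{\mathbf{w}}$ for a reduced word $\mathbf{w}=s_{1}\cdots s_{n}$. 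Well-definedness of the coefficient along braid moves forces $\lambda_{s}=\lambda_{t}$ whenever $m_{st}$ is odd; these choices are precisely the one-dimensional representations (characters) of the Hecke algebra. For such a choice $\|\Omega\|^{2}=\sum_{\mathbf{w}}\prod_{i}|\lambda_{s_{i}}|^{2}=\sum_{\mathbf{w}}z_{\mathbf{w}}$ evaluated at $z_{s}=|\lambda_{s}|^{2}\in\{q_{s},q_{s}^{-1}\}$, i.e.\ a value of the multivariate growth series. Thus whenever $q$ lies in the region $\mathcal{R}^{\prime}$, for which some admissible evaluation $z_{s}\in\{q_{s},q_{s}^{-1}\}$ lies inside the region of convergence of $\sum_{\mathbf{w}}z_{\mathbf{w}}$, the vector $\Omega$ is a nonzero element of $\ell^{2}(W)$ spanning a one-dimensional $\mathcal{N}_{q}(W)^{\prime}$-invariant subspace, and the rank-one projection onto $\Omega$ is a nonzero compact operator in $\mathcal{N}_{q}(W)$.

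For the converse I would begin from a nonzero finite-dimensional $\mathcal{N}_{q}(W)^{\prime}$-invariant subspace $V$ and deduce convergence of the growth series at an admissible point. Each $\xi=\sum_{\mathbf{w}}a_{\mathbf{w}}\delta_{\mathbf{w}}\in V$ satisfies the descent recursion $(T_{s}^{r}\xi)_{\mathbf{v}}=a_{\mathbf{v}s}$ when $\ell(\mathbf{v}s)>\ell(\mathbf{v})$ and $(T_{s}^{r}\xi)_{\mathbf{v}}=a_{\mathbf{v}s}+(q_{s}^{1/2}-q_{s}^{-1/2})a_{\mathbf{v}}$ otherwise; since all vectors $T_{\mathbf{w}}^{r}\xi$ lie in the finite-dimensional space $V$, the coefficients $a_{\mathbf{w}}$ obey a finite-dimensional linear recursion along geodesics of the Cayley graph whose characteristic rates are the eigenvalues of the operators $T_{s}^{r}|_{V}$, lying in $\{q_{s}^{1/2},-q_{s}^{-1/2}\}$. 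Square-summability of $\xi$ then forces the dominant rates to have modulus $<1$ and the geometric sum $\sum_{\mathbf{w}}\prod_{i}|\lambda_{s_{i}}|^{2}$ to converge, which is exactly convergence of the growth series at an admissible evaluation, i.e.\ $q\in\mathcal{R}^{\prime}$; passing to an irreducible subrepresentation of $V$ and tracking its leading coefficient makes this quantitative.

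The main obstacle is precisely this only-if direction in full generality: a finite-dimensional invariant subspace need not be spanned by a character, so one must control the asymptotics of the coefficient functions $a_{\mathbf{w}}$ along all geodesic rays, rule out cancellations, and then match the resulting convergence condition with the exact combinatorial description of $\mathcal{R}^{\prime}$ --- in particular the coupling $\lambda_{s}=\lambda_{t}$ across odd-labelled edges, which links the admissible evaluations of the growth series across the irreducible components of the Coxeter diagram. I expect the cleanest route is to replace the ad hoc recursion estimate by the Plancherel theory of the regular representation of the Hecke algebra, under which nonzero finite-rank projections correspond to atoms carried by finite-dimensional (hence character) representations whose formal degree is finite exactly when the growth series converges; identifying $\mathcal{R}^{\prime}$ with this locus, together with the normalization and boundedness checks for the $T_{s}$, are then the remaining routine points.
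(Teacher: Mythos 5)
Your forward implication ($q\in\mathcal{R}^{\prime}\Rightarrow\mathcal{N}_{q}(W)\cap\mathcal{K}\neq0$) is correct and is in substance the paper's own argument: the explicit $\ell^{2}$-character vector $\Omega=\sum_{\mathbf{w}}\lambda_{\mathbf{w}}\delta_{\mathbf{w}}$ with $\lambda_{s}\in\{q_{s}^{1/2},-q_{s}^{-1/2}\}$, coupled along odd-labelled edges, is exactly the multi-parameter version of Garncarek's one-dimensional projection \cite[Theorem 5.3]{Gar} that the paper invokes, and your direct sign bookkeeping even replaces the unitarily implemented isomorphisms \cite[Proposition 4.7]{Mario} that the paper uses to pass from $\mathcal{R}\cap\mathbb{R}_{>0}^{(W,S)}$ to all of $\mathcal{R}^{\prime}$. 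Your opening reduction (nontrivial intersection with $\mathcal{K}$ $\Leftrightarrow$ finite-rank projection $\Leftrightarrow$ finite-dimensional invariant subspace for the commutant $\mathcal{N}_{q}^{r}(W)$) also matches the paper.

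The converse, however, is a genuine gap, and your own text concedes it. Neither of your proposed routes closes it. The recursion route fails exactly where you say it does: for $\dim V>1$ the transfer operators along a geodesic are the non-commuting restrictions $T_{s}^{r}|_{V}$, there is no simultaneous eigenbasis, and you give no argument excluding cancellation or producing a \emph{single} admissible evaluation $z_{s}\in\{q_{s},q_{s}^{-1}\}$, consistent across odd edges, at which the full series $\sum_{\mathbf{w}}z_{\mathbf{w}}$ converges. The Plancherel route is not ``routine'': no Plancherel theory exists for Hecke algebras of general finite rank Coxeter systems (only the spherical and affine cases are classical), and the parenthetical ``finite-dimensional (hence character)'' is false --- Hecke algebras of infinite Coxeter groups admit finite-dimensional irreducible representations of dimension $>1$ already for the infinite dihedral group --- so even granted a decomposition you would still have to connect an arbitrary atom to convergence of the growth series at an admissible point. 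The paper's converse avoids all of this structure theory. After reducing to $0<q_{s}\leq1$ by the unitarily implemented isomorphisms (both hypothesis and conclusion are invariant under admissible inversions $q_{s}\mapsto q_{s}^{-1}$), it takes a finite-rank projection $P\in\mathcal{N}_{q}^{r}(W)$ with orthonormal range basis $\xi_{1},\dots,\xi_{n}$ and combines the identity $T_{\mathbf{w}}^{(q)}P\delta_{e}=P\delta_{\mathbf{w}}$ with the norm bound of Lemma~\ref{inequality}, namely $\Vert T_{\mathbf{w}}^{(q)}\eta\Vert_{2}\geq q_{\mathbf{w}}^{1/2}\Vert\eta\Vert_{2}$ for $q\leq1$, to obtain
\[
q_{\mathbf{w}}\Vert P\delta_{e}\Vert_{2}^{2}\;\leq\;\Vert P\delta_{\mathbf{w}}\Vert_{2}^{2}\;=\;\sum_{i=1}^{n}\left|\left\langle \xi_{i},\delta_{\mathbf{w}}\right\rangle \right|^{2}\qquad\text{for every }\mathbf{w}\in W\text{.}
\]
Summing over $\mathbf{w}\in W$ gives $\Vert P\delta_{e}\Vert_{2}^{2}\sum_{\mathbf{w}}q_{\mathbf{w}}\leq n<\infty$, while $P\delta_{e}\neq0$ (otherwise $P\delta_{\mathbf{w}}=T_{\mathbf{w}}^{(q)}P\delta_{e}=0$ for all $\mathbf{w}$, i.e. $P=0$); hence $\sum_{\mathbf{w}}q_{\mathbf{w}}<\infty$, i.e. $q\in\mathcal{R}$. (The paper organizes this as a two-case dichotomy, but this summation is its content.) This elementary estimate, requiring no control of the representation of the Hecke algebra on $V$, is precisely the step missing from your proposal.
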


\begin{corollary} \label{6}
Let $(W,S)$ be a finite rank Coxeter system. For $q\in\mathbb{R}_{>0}^{(W,S)}\setminus\mathcal{R}^{\prime}$ the Hecke C$^\ast$-algebra $C_{r,q}^{\ast}(W)$ naturally embeds into the reduced crossed product $C(\partial (W,S))\rtimes_{r}W$.
\end{corollary}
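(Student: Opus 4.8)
The plan is to derive Corollary~\ref{6} directly from Theorem~\ref{5} together with the identification of the C$^\ast$-algebra generated by all Hecke C$^\ast$-algebras as the reduced crossed product $C(\overline{(W,S)})\rtimes_{r}W$. The starting observation is that for $q\notin\mathcal{R}'$, Theorem~\ref{5} gives $\mathcal{N}_{q}(W)\cap\mathcal{K}=0$, so the Hecke-von Neumann algebra intersects the compacts trivially; consequently the individual generators of $C_{r,q}^{\ast}(W)$, when viewed inside $\mathcal{B}(\ell^{2}(W))$, map injectively into the quotient by the ideal of compacts. The goal is to promote this into a genuine embedding of $C_{r,q}^{\ast}(W)$ into $C(\partial(W,S))\rtimes_{r}W$.

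First I would recall the ambient picture. Since $C_{r,q}^{\ast}(W)\subseteq C(\overline{(W,S)})\rtimes_{r}W$ for every admissible $q$ (this is the identification mentioned before Theorem~\ref{5}), and since $K=W$ embeds as a dense open subset of $\overline{(W,S)}$ with complement $\partial(W,S)$, there is a short exact sequence at the level of the abelian coefficient algebra,
\[
0\longrightarrow C_{0}(W)\longrightarrow C\bigl(\overline{(W,S)}\bigr)\longrightarrow C\bigl(\partial(W,S)\bigr)\longrightarrow 0,
\]
where $C_{0}(W)$ is the ideal of functions vanishing at infinity, which is $W$-invariant. Taking reduced crossed products of this $W$-equivariant sequence and using that $C_{0}(W)\rtimes_{r}W\cong\mathcal{K}(\ell^{2}(W))$ (the reduced crossed product of the compactly supported functions on the group by the translation action is exactly the compacts), one obtains an exact sequence
\[
0\longrightarrow\mathcal{K}\longrightarrow C\bigl(\overline{(W,S)}\bigr)\rtimes_{r}W\longrightarrow C\bigl(\partial(W,S)\bigr)\rtimes_{r}W\longrightarrow 0.
\]
Exactness of the reduced crossed product functor here is not automatic, so I would either invoke the amenability of the action $W\curvearrowright\overline{(W,S)}$ from Theorem~\ref{2} (which makes reduced and full crossed products coincide and guarantees the desired exactness) or appeal to the exactness of $W$ as a discrete group, which follows from the same amenability of a boundary-type action.

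The embedding of $C_{r,q}^{\ast}(W)$ into $C(\partial(W,S))\rtimes_{r}W$ is then the composition of the inclusion $C_{r,q}^{\ast}(W)\hookrightarrow C(\overline{(W,S)})\rtimes_{r}W$ with the quotient map onto $C(\partial(W,S))\rtimes_{r}W$. To see that this composition is injective I would argue that its kernel is $C_{r,q}^{\ast}(W)\cap\mathcal{K}$, the intersection of the Hecke C$^\ast$-algebra with the ideal of compacts inside $\mathcal{B}(\ell^{2}(W))$. Since $C_{r,q}^{\ast}(W)\subseteq\mathcal{N}_{q}(W)$ and Theorem~\ref{5} gives $\mathcal{N}_{q}(W)\cap\mathcal{K}=0$ precisely when $q\notin\mathcal{R}'$, this kernel vanishes, so the map is an injective $\ast$-homomorphism and hence isometric. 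I expect the main obstacle to be the bookkeeping around exactness of the reduced crossed product sequence and the clean identification of the kernel of the quotient restricted to $C_{r,q}^{\ast}(W)$ with $C_{r,q}^{\ast}(W)\cap\mathcal{K}$; once amenability of the action is invoked to handle exactness, the rest reduces to Theorem~\ref{5} and the structural identifications already established.
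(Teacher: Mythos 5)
Your proposal is correct and takes essentially the same route as the paper: the paper realizes $C_{r,q}^{\ast}(W)$ inside $\mathfrak{A}(W)\cong C(\overline{(W,S)})\rtimes_{r}W$, passes to the quotient by the compacts via $\kappa\circ\pi$ with $\pi(\mathfrak{A}(W))\cong C(\partial(W,S))\rtimes_{r}W$, and notes that the kernel of the restriction to $C_{r,q}^{\ast}(W)$ is $C_{r,q}^{\ast}(W)\cap\mathcal{K}\subseteq\mathcal{N}_{q}(W)\cap\mathcal{K}=0$ by Theorem \ref{5}. Your explicit exact sequence $0\rightarrow\mathcal{K}\rightarrow C(\overline{(W,S)})\rtimes_{r}W\rightarrow C(\partial(W,S))\rtimes_{r}W\rightarrow 0$, with exactness justified by the amenability of the action (Theorem \ref{2}), is precisely the content the paper packages into the construction of the isomorphism $\kappa$.
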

These observations provide a direct link between the topological spaces $\overline{(W,S)}$ and $\partial(W,S)$ (and the action of $W$ on them) and the Hecke operator algebras of the system which allows us to apply our earlier results.

As mentioned before, the notion of smallness at infinity has a number of interesting operator algebraic implications. Higson and Guentner used it in \cite{HiGu} to prove that for a word hyperbolic group $G$ the map $C_{r}^{\ast}(G)\odot JC_{r}^{\ast}(G)J\rightarrow\mathcal{B}(\ell^{2}(G))/\mathcal{K}(\ell^{2}(G))$, $x\otimes y\mapsto xy+\mathcal{K}(\ell^{2}(G))$ is continuous in the minimal tensor norm where $J$ denotes the modular conjugation operator. The same statement has earlier been shown by Akemann and Ostrand \cite{AO} for free groups by a different method. The notion of the property Akemann-Ostrand (property $(\mathcal{AO}))$ was introduced in \cite{Ozawa} and was famously applied by Ozawa to rigidity questions of von Neumann algebras. Variations of property $(\mathcal{AO})$ have later been introduced in \cite{Isono2} and \cite{Isono}. Ozawa proved in \cite{Ozawa} that finite von Neumann algebras that satisfy property $(\mathcal{AO})$ are solid in the sense that the relative commutant of any diffuse von Neumann subalgebra is injective. Using the similar notion of strong solidity, Ozawa and Popa \cite{OzawaPopa} were able to find classes of von Neumann algebras that have no (von Neumann algebraic) Cartan subalgebras. Their approach has been advanced by Popa and Vaes in \cite{PopaVaes} (see also Chifan-Sinclair \cite{ChifanSinclair}). Isono \cite{Isono2} later proved that factors with the weak-$\ast$ completely bounded approximation property that satisfy condition $(\mathcal{AO})^{+}$ are strongly solid. Using a method similar to that of Higson and Guentner (see also \cite[Section 5]{Caspers}), we prove that Hecke-von Neumann algebras of Coxeter systems that are small at infinity satisfy Isono's strong condition $(\mathcal{AO})$ (see \cite{Isono}). The same statement was claimed in \cite{Caspers} in the case of right-angled hyperbolic Coxeter groups, but unfortunately the argument presented there contains a gap.

\begin{theorem} \label{7}
Let $(W,S)$ be a Coxeter system that is small at infinity. Then for every $q\in\mathbb{R}_{>0}^{(W,S)}$ the Hecke-von Neumann algebra $\mathcal{N}_{q}(W)\subseteq\mathcal{B}(\ell^{2}(W))$ satisfies the strong condition $\left(\mathcal{AO}\right)$.
\end{theorem}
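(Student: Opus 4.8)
The plan is to take as distinguished weakly dense C$^\ast$-subalgebra of $M:=\mathcal{N}_q(W)$ the reduced Hecke C$^\ast$-algebra $A:=C_{r,q}^\ast(W)$ and to verify directly the two requirements of Isono's strong condition $(\mathcal{AO})$ (see \cite{Isono}): namely that $A$ is exact, and that there is a unital completely positive map $\theta\colon A\otimes_{\min}JAJ\to\mathcal{B}(\ell^2(W))$ with $\theta(a\otimes JbJ)-aJbJ\in\mathcal{K}$ for all $a,b\in A$, where $J$ is the modular conjugation of the canonical (tracial) Hecke state, so that $\ell^2(W)=L^2(M)$ in standard form and $JMJ=M'$. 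The entire argument runs parallel to the Higson--Guentner treatment of word hyperbolic groups \cite{HiGu} (compare \cite[Section 5]{Caspers}); the new geometric input is that smallness at infinity plays the role formerly played by Gromov hyperbolicity. I emphasize that every ingredient below is independent of the parameter, so the argument is uniform over all $q\in\mathbb{R}_{>0}^{(W,S)}$.

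The backbone is the identification, established at the start of Section \ref{applications}, of the C$^\ast$-algebra generated by all Hecke C$^\ast$-algebras of $(W,S)$ with the reduced crossed product $N:=C(\overline{(W,S)})\rtimes_{r}W$; in particular $A\subseteq N$. Because $c_0(W)\subseteq C(\overline{(W,S)})$ embeds as $c_0(W)\rtimes_r W\cong\mathcal{K}$, the algebra $N$ contains $\mathcal{K}$ as an ideal with quotient $C(\partial(W,S))\rtimes_{r}W$. By Theorem \ref{2} the action $W\curvearrowright\overline{(W,S)}$ is amenable, so $N$ is nuclear; since exactness passes to C$^\ast$-subalgebras, $A$ is exact, which settles the first requirement.

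For the second requirement I would first produce the $\ast$-homomorphism $A\otimes_{\min}JAJ\to\mathcal{B}(\ell^2(W))/\mathcal{K}$, $a\otimes JbJ\mapsto aJbJ+\mathcal{K}$. The crucial point is that $JAJ$ commutes with $N$ modulo $\mathcal{K}$. On the one hand $JAJ\subseteq M'$ commutes with $A$ exactly. On the other hand, writing $M_f$ for multiplication by $f\in C(\overline{(W,S)})$ and $\rho_s^{(q)}:=JT_sJ$ for the right Hecke generator, the Hecke corrections making up $\rho_s^{(q)}$ are themselves multiplication operators, so they drop out of the commutator and one computes $[\rho_s^{(q)},M_f]=[\rho_s,M_f]=M_{f(\,\cdot\,s)-f}\,\rho_s$, where $\rho_s$ is ordinary right translation. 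Smallness at infinity says precisely that $f(\,\cdot\,s)-f\in c_0(W)$ for $f\in C(\overline{(W,S)})$, whence $[\rho_s^{(q)},M_f]\in\mathcal{K}$. As the Hecke generators together with $C(\overline{(W,S)})$ recover the canonical unitaries of the crossed product (they differ only by a multiplication operator), $A$ and $C(\overline{(W,S)})$ generate $N$, and the two commutation facts combine to $[JAJ,N]\subseteq\mathcal{K}$. Passing to the Calkin algebra, the images of $N$ and of $JAJ$ commute; since $N$ is nuclear the product map $N\otimes_{\min}JAJ\to\mathcal{B}(\ell^2(W))/\mathcal{K}$ is well defined, and restricting along the (min-isometric) inclusion $A\otimes_{\min}JAJ\hookrightarrow N\otimes_{\min}JAJ$ yields the desired $\ast$-homomorphism.

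It remains to lift this to a unital completely positive map into $\mathcal{B}(\ell^2(W))$. Here I would invoke the standard lifting argument of \cite{HiGu,Isono}: nuclearity of $N$ supplies a net of completely positive factorizations $N\to M_{k_i}(\mathbb{C})\to N$ of the identity, and combining these with the mod-$\mathcal{K}$ commutation of $JAJ$ with $N$ produces unital completely positive maps $\Theta_i\colon N\otimes_{\min}JAJ\to\mathcal{B}(\ell^2(W))$ whose defect $\Theta_i(n\otimes JbJ)-nJbJ$ tends to $0$ modulo $\mathcal{K}$; a point-ultraweak cluster point, restricted to $A\otimes_{\min}JAJ$, is the required $\theta$. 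I expect the main obstacle to be the computation in the previous paragraph: one must check carefully that the modular conjugation $J$ of the tracial Hecke state turns $A$ into a genuine right Hecke algebra whose generators $JT_sJ$ differ from the right translations $\rho_s$ only by diagonal multiplication operators, so that smallness at infinity applies verbatim. Once this combinatorial bookkeeping is in place, the nuclearity coming from Theorem \ref{2} and the lifting machinery are routine.
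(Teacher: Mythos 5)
Your argument contains the paper's proof, but you have misstated the target: what you announce as ``the two requirements of Isono's strong condition $(\mathcal{AO})$'' --- exactness of $A$ together with a u.c.p.\ map $\theta\colon A\otimes_{\min}JAJ\to\mathcal{B}(\ell^{2}(W))$ with $\theta(a\otimes JbJ)-aJbJ\in\mathcal{K}$ --- is not the strong condition $(\mathcal{AO})$ but Isono's condition $(\mathcal{AO})^{+}$ from \cite{Isono2}. The strong condition $(\mathcal{AO})$ of Definition \ref{Definition} (i.e.\ \cite[Definition 2.6]{Isono}) asks for unital C$^{\ast}$-subalgebras $A\subseteq\mathcal{N}_{q}(W)$ and $\mathcal{C}\subseteq\mathcal{B}(\ell^{2}(W))$ such that (1) $A$ is exact and $\sigma$-weakly dense, (2) $\mathcal{C}$ is nuclear and contains $A$, and (3) $[\mathcal{C},JAJ]\subseteq\mathcal{K}$. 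Strong $(\mathcal{AO})$ implies $(\mathcal{AO})^{+}$ (cf.\ \cite[Remark 2.7]{Isono}) but not conversely, so the statement you ultimately certify is weaker than the theorem as stated.

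Fortunately the body of your argument establishes (1)--(3) verbatim, and with the same witnesses as the paper: take $A=C_{r,q}^{\ast}(W)$ and $\mathcal{C}=N\cong\mathfrak{A}(W)\cong C(\overline{(W,S)})\rtimes_{r}W$. Your exactness argument ($A\subseteq N$ with $N$ nuclear by amenability of the action, Corollary \ref{Total amenability} and Corollary \ref{nuclear}) is fine, $\sigma$-weak density being automatic since $\mathcal{N}_{q}(W)=A''$; and your commutator computation is exactly the paper's: $JT_{s}^{(q)}J=T_{s}^{(q),r}=T_{s}^{(1),r}+p_{s}(q)P_{s}^{r}$ with $P_{s}^{r}$ diagonal, so $[T_{s}^{(q),r},M_{f}]=[T_{s}^{(1),r},M_{f}]\in\mathcal{K}$ for $f\in C(\overline{(W,S)})$ by smallness at infinity (\cite[Lemma 5.3.17]{BrownOzawa}), while $[JAJ,A]=0$ exactly; since $N=C^{\ast}(A\cup C(\overline{(W,S)}))$ and commutation modulo $\mathcal{K}$ passes to the generated C$^{\ast}$-algebra, this gives $[N,JAJ]\subseteq\mathcal{K}$. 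So the correct fix is simply to stop there and observe that $(A,N)$ witnesses Definition \ref{Definition}; the final lifting paragraph should be deleted. I also note that this lifting paragraph, which you would still need if your goal really were $(\mathcal{AO})^{+}$, is the one loosely argued step: a point-ultraweak cluster point of maps $\Theta_{i}$ whose defects vanish only asymptotically modulo $\mathcal{K}$ need not itself have defect in $\mathcal{K}$. The clean route is the Choi--Effros lifting theorem applied to the u.c.p.\ map $A\otimes_{\min}JAJ\hookrightarrow N\otimes_{\min}JAJ\to\mathcal{B}(\ell^{2}(W))/\mathcal{K}$, which is nuclear because the inclusion $A\hookrightarrow N$ is a nuclear map ($N$ being nuclear), and hence liftable on the separable domain.
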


Using Garncarek's observation \cite[Section 6]{Gar} that Dykema's interpolated free group factors \cite{Dykema} can be seen as Hecke-von Neumann algebras of free products of finite right-angled Coxeter groups, Theorem \ref{7} in particular implies that interpolated free group factors satisfy the strong condition $(\mathcal{AO})$. We can further use Theorem \ref{7} to show that for Coxeter systems which are small at infinity the intersection $C_{r,q}^{\ast}(G)\cap JC_{r,q}^{\ast}(G)J$ is trivial for all $q\in\mathbb{R}_{>0}^{(W,S)}\setminus\mathcal{R}^{\prime}$.

In the final part of the paper we pick up implications related to the work of Kalantar and Kennedy. In \cite{KalantarKennedy} the authors built a connection between Hamana's theory of (C$^\ast$-dynamical) injective envelopes (see \cite{Hamana1}, \cite{Hamana2}, \cite{Hamana3}, \cite{Hamana4}) and Furstenberg's notion of topological boundary actions. They used this connection to reformulate the longstanding open problem to determine which discrete groups are C$^\ast$-simple (meaning that the corresponding reduced group C$^\ast$-algebra is simple) in terms of the structure of the action of the group on its Furstenberg boundary. They further partially answered a conjecture by Ozawa \cite{Ozawa2} about tight embeddings of exact C$^\ast$-algebras affirmatively. A series of breakthrough works followed (see e.g. \cite{BKKO}, \cite{Haagerup}, \cite{Adam} and also \cite{BK}, \cite{HK}, ...). Using Theorem \ref{4} we will apply some of the results in the context of operator algebras associated with Hecke algebras.\\

\noindent \emph{Structure.} The paper is organized as follows. In Section \ref{section 1} we recall basic notions about partially ordered sets, graphs and trees and their hyperbolic compactifications. Further, we will introduce Coxeter groups and multi-parameter Hecke algebras (resp. their operator algebras). In Section \ref{section 2} we define the topological spaces $\overline{(K,o)}$ and $\partial(K,o)$ associated with (countable, undirected and simplicial) rooted graphs $(K,o)$ and study their basic properties. We will call these spaces the compactification and the boundary of the graph and prove that in the case of hyperbolic graphs Gromov's hyperbolic boundary arises as a quotient of our boundary. Section \ref{section 3} discusses our construction in the context of Cayley graphs of finite rank Coxeter groups. We will show that the canonical action of a Coxeter group on itself by left multiplication induces a natural action on its boundary (resp. its compactification). It is well-behaved in the sense that the action is amenable. We will further fully characterize when the boundary is small at infinity and we prove for certain classes of Coxeter groups that the action is a boundary action in the sense of Furstenberg. Finally, in Section \ref{section 4} we pick up the results of the earlier sections and apply them to (operator algebras associated with) Hecke algebras by studying embedding questions, the Akemann-Ostrand property for certain Hecke-von Neumann algebras and results which are widely related to the theory of injective envelopes of Hecke C$^\ast$-algebras.\\

\noindent \emph{Recent development}: After completion of the first draft of this paper
in October 2020, in \cite{Mario3} the author has found applications of
his construction to the simplicity question of right-angled Hecke
C$^{\ast}$-algebras. Based on the results of Section \ref{section 3} and Section \ref{section 4}
he completely characterized the simplicity of this class of C$^{\ast}$-algebras.
This extends previous results by Caspers, Larsen and the author (see
\cite{Mario}).


\section{Preliminaries and notation} \label{section 1}

\subsection{General notation}

The scalar product of Hilbert spaces is linear in the first variable. The symbol $\odot$ denotes the algebraic tensor product of C$^\ast$-algebras, $\otimes$ denotes the minimal tensor product and the maximal tensor product is denoted by $\otimes_{max}$. We write $\rtimes_{r}$ for reduced crossed products. Further, the neutral element of a group is always denoted by $e$.


\subsection{Partially ordered sets}

A \emph{partial order} on a set $\mathcal{S}$ is a binary relation $\leq$ which is \emph{reflexive}, \emph{antisymmetric} and \emph{transitive}. A set with a partial order is called a \emph{partially ordered set} (\emph{poset}). If existent, the \emph{join} of a subset $T\subseteq\mathcal{S}$, denoted by $\vee T$, is the least upper bound of $T$, meaning that $\vee T\geq y$ for every $y\in T$ and $\vee T\leq x$ for every $x\in\mathcal{S}$ with $x\geq y$ for every $y\in T$. In the same way, the \emph{meet} of $T$, denoted by $\wedge T$, is the greatest lower bound of $T$, meaning that $\wedge T\leq y$ for every $y\in T$ and $\wedge T\geq x$ for every $x\in\mathcal{S}$ with $x\leq y$ for every $y\in T$.

In general, the join and the meet of a set do not necessarily exist. A poset in which all pairs of elements have a join is called a \emph{join-semilattice}. If every non-empty subset has a join, it is called a \emph{complete join-semilattice}. Dually, a poset in which all pairs of elements have a meet is called a \emph{meet-semilattice}. If every non-empty subset has a meet, it is called a \emph{complete meet-semilattice}.

\begin{lemma}
Let $\mathcal{S}$ be a complete meet-semilattice and let $T\subseteq\mathcal{S}$ be a set. If $T$ has an upper bound (i.e. an element $x\in\mathcal{S}$ with $x\geq y$ for all $y\in T$), then the join $\vee T$ exists.

Dually, if $\mathcal{S}$ is a complete join-semilattice and $T\subseteq\mathcal{S}$ is a set with a lower bound (i.e. an element $x\in\mathcal{S}$ with $x\leq y$ for all $y\in T$), then the meet $\wedge T$ exists.
\end{lemma}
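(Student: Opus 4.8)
The plan is to prove the first assertion; the second follows by the order-reversing duality (replacing $\leq$ by $\geq$ turns a complete join-semilattice into a complete meet-semilattice and swaps the roles of join and meet). So assume $\mathcal{S}$ is a complete meet-semilattice and $T\subseteq\mathcal{S}$ has an upper bound. The standard trick is to construct the join of $T$ as the meet of the set of all its upper bounds. Concretely, I would set
\[
U=\{x\in\mathcal{S}: x\geq y \text{ for all } y\in T\},
\]
which is non-empty by hypothesis. Since $\mathcal{S}$ is a complete meet-semilattice and $U$ is a non-empty subset, the meet $\wedge U$ exists. The claim is that $\vee T=\wedge U$.

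The first step is to check that $\wedge U$ is an upper bound for $T$, i.e. that $\wedge U\geq y$ for every $y\in T$. This is where the definition of $U$ does the work: fix $y\in T$; then $y$ is a lower bound for $U$ (every $x\in U$ satisfies $x\geq y$ by definition of $U$), so by the defining property of the meet as the greatest lower bound we get $\wedge U\geq y$. Since $y\in T$ was arbitrary, $\wedge U$ is an upper bound of $T$, hence $\wedge U\in U$.

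The second step is to verify minimality among upper bounds, which is exactly the remaining condition in the definition of the join. Let $x\in\mathcal{S}$ be any element with $x\geq y$ for all $y\in T$; then $x\in U$ by definition, and since $\wedge U$ is a lower bound for $U$ we have $\wedge U\leq x$. Combining the two steps, $\wedge U$ is an upper bound of $T$ that lies below every upper bound of $T$, which is precisely the statement that $\vee T$ exists and equals $\wedge U$.

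I do not expect any genuine obstacle here; the argument is the classical fact that a complete meet-semilattice with a top element (or, as here, with meets of non-empty bounded-below families plus the bounded-above hypothesis supplying $U\neq\emptyset$) is automatically a complete lattice on bounded sets. The only point that needs care is ensuring $U\neq\emptyset$ so that $\wedge U$ is legitimately defined—this is guaranteed precisely by the hypothesis that $T$ has an upper bound—and keeping the two inequalities $\wedge U\geq y$ (for $y\in T$) and $\wedge U\leq x$ (for upper bounds $x$) straight, since they use the meet's defining property in its two opposite roles. The dual statement is then obtained verbatim by reversing all inequalities.
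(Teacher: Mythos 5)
Your proof is correct and follows exactly the paper's argument: define the (non-empty) set of upper bounds of $T$, take its meet using the complete meet-semilattice property, and verify that this meet is both an upper bound of $T$ and below every upper bound, hence equals $\vee T$. The paper's version is merely terser, asserting the two inequalities that you spell out in detail.
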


\begin{proof}
Let $\mathcal{S}$ be a complete meet-semilattice and $T\subseteq\mathcal{S}$ a set having an upper bound. Then, the set $T^{\prime}:=\left\{ x\in\mathcal{S}\mid y\leq x\text{ for all }y\in T\right\}$  is non-empty. Since $\mathcal{S}$ is a complete meet-semilattice, the meet $x:=\wedge T^{\prime}$ exists. It satisfies $y\leq x$ for all $y\in T$ and $x^{\prime}\geq x$ for all $x^{\prime}\in T^{\prime}$, i.e. $x$ is the join of the set $S$. The second statement follows analogously.
\end{proof}


\subsection{Graphs and trees}

A \emph{graph} $K$ is a pair $K=(V,E)$ consisting of a \emph{vertex set} $V$ and an \emph{edge set} $E\subseteq V\times V$. In the case where the vertex and edge set of the graph $K$ are not designated, we will often write $x\in K$, meaning that $x$ is a vertex of the graph $K$. A graph is called \emph{countable} if $V$ is countable, it is called \emph{undirected} if for every element $\left(x,y\right)\in E$ also $\left(y,x\right)\in E$ and it is called \emph{simplicial} if $\left(x,x\right)\notin E$ for every $x\in V$ and if the graph contains no double edges. We will always assume that the graphs appearing in this paper are countable, undirected and simplicial.

Two vertices $x,y\in V$ are called \emph{adjacent} if $(x,y)\in E$. A path $\alpha=(\alpha_{i})_{i}$ of length $n\in\mathbb{N}\cup\left\{ \infty\right\}$  is a sequence $\alpha_{0}...\alpha_{n}$ of $n$ vertices for which $(\alpha_{i},\alpha_{i+1})\in E$ for every $0\leq i<n$. We call $K$ \emph{connected} if every two vertices of $K$ can be connected by a path. This induces a natural metric $d_{K}$ on $K$ via \begin{eqnarray} \nonumber d_{K}(x,y):=\min\left\{ n\mid\text{there is a path of length }n\text{ that connects }x\text{ and }y\right\} \text{.} \end{eqnarray} We call this the \emph{graph metric} on $K$. A path $\alpha$ is called \emph{geodesic} if $d_{K}(\alpha_{i},\alpha_{j})=\left|i-j\right|$ for all $i,j$. Without further comments we will often extend a finite geodesic path $\alpha_0...\alpha_n$ to an infinite path via $\alpha_0...\alpha_n \alpha_n \alpha_n ...$ and still call it (finite) geodesic. Further, we say that $K$ is a \emph{tree} if there is no finite path $\alpha=(\alpha_{i})_{i=0,...,n}$ with $\alpha_{0}=\alpha_{n}$ for which the vertices $\alpha_{0},...,\alpha_{n}$ are pairwise distinct. Trees have the useful property that every two vertices $x,y$ are connected by a unique geodesic path that we will often denote by $\left[x,y\right]$.

A vertex of a graph is said to have \emph{finite degree} if the number of vertices that are adjacent to it is finite. A graph whose vertices all have finite degree is called \emph{locally finite}. If there is a uniform bound on the degree of vertices, we say that the graph is \emph{uniformly locally finite}.

\begin{example}
Let $G$ be a group generated by a set $S$ where $e\notin S$. Set $S^{-1}:=\left\{ g^{-1}\mid g\in S\right\}$. The \emph{Cayley graph} $\text{Cay}(G,S)$ of $G$ with respect to $S$ is the graph defined by the vertex set $G$ and the edge set $\left\{ \left(g,h\right)\in G\times G\mid g^{-1}h\in S\cup S^{-1}\right\}$. If $S$ is finite, the corresponding Cayley graph is  countable, undirected, simplicial and uniformly locally finite.
\end{example}


\subsection{Hyperbolic compactifications}

In this subsection, we review the notion of Gromov's hyperbolic boundary of hyperbolic graphs. The results and definitions are as they appear in \cite[Chapter 5.3]{BrownOzawa} and \cite{Kapovich}. They go back to Gromov's original work \cite{Gromov}.

Let $K$ be a connected graph. A \emph{geodesic triangle} $\Delta$ consists of three points $x,y,z\in K$ and three geodesic paths connecting them. If there exists a number $\delta>0$ for which each of the paths is contained in the open $\delta$-tubular neighborhood of the union of the other two paths, such a triangle is called \emph{$\delta$-slim}. We say that the graph $K$ is \emph{hyperbolic} if there exists $\delta>0$ such that every geodesic triangle is $\delta$-slim. Note that trees are always hyperbolic.

The \emph{Gromov product} of a graph $K$ with base point $o\in K$ is defined by \begin{eqnarray} \nonumber \left\langle x,y\right\rangle _{o}:=\frac{1}{2}(d_{K}(o,x)+d_{K}(o,y)-d_{K}(x,y))\text{.} \end{eqnarray} If $K$ is hyperbolic, one can define an \emph{equivalence relation} $\sim_{h}$ on the set of all sequences $\mathbf{x}:=(x_i)_{i \in \mathbb{N}} \subseteq K$ which \emph{converge to infinity} in the sense that $\liminf_{i,j} \langle x_i, x_j\rangle_o =\infty$ by declaring two such sequences $\mathbf{x}$ and $\mathbf{y}$ to be equivalent if and only if $\liminf_{i,j} \langle x_i, y_j \rangle_o=\infty$. This definition does not depend on choice of the base point $o$. We write $\left[\mathbf{x}\right]_{h}$ for the equivalence class of $\mathbf{x}$. The \emph{hyperbolic boundary} (or \emph{Gromov boundary}) $\partial_{h}K$ of $K$ is the set of all equivalence classes of sequences in $K$ which converge to infinity. The union $K\cup\partial_{h}K$ is called the \emph{hyperbolic bordification} (or \emph{Gromov  bordification}). For $z\in\partial_{h}K$, $R>0$ define the sets
\begin{eqnarray}
\nonumber
U\left(z,R\right):=\{z^{\prime}\in \partial_{h}K & \mid & \text{there are sequences } \mathbf{x},\mathbf{y} \text{ converging to infinity}\\
\nonumber
& & \text{ with } z=[\mathbf{x}]_{h}, z^{\prime}=[\mathbf{y}]_{h} \text{ and }  \liminf_{i,j\rightarrow \infty} \left\langle x_{i}\text{, }y_{j}\right\rangle _{o}>R\}
\end{eqnarray}
and
\begin{eqnarray}
\nonumber
U^\prime \left(z,R\right):=U(z,R) \cup \{z^{\prime}\in K & \mid & \text{there is a sequence } \mathbf{x} \text{ converging to infinity}\\
\nonumber
& & \text{ with } z=[\mathbf{x}]_{h} \text{ and }  \liminf_{i \rightarrow \infty} \left\langle x_{i}\text{, }y \right\rangle _{o}>R\}\text{.}
\end{eqnarray}

One can topologize $\partial_h K$ by declaring the basis of neighborhoods for a point $z \in \partial_h K$ to be $\{U(z,R) \mid R>0\}$. One can further topologize $K\cup\partial_{h}K$ by endowing $K$ with the metric (i.e. discrete) topology and by declaring the basis of neighborhoods for a point $z \in \partial_h K$ to be $\{U^\prime (z,R) \mid R>0\}$. Again, these topologies are independent of the choice of the base point $o$. If we assume $K$ to be locally finite, then $K\cup\partial_{h}K$ is a compact space that contains $K$ as a dense open subset. In that context we also speak of $K\cup \partial_h K$ as the \emph{hyperbolic compactification} of $K$. Further, every automorphism (i.e. isometric bijection) of $K$ uniquely extends to a homeomorphism of $K\cup\partial_{h}K$.

A group $G$ generated by a finite set $S$ whose Cayley graph $\text{Cay}(G,S)$ is hyperbolic, is called \emph{word hyperbolic}. As it turns out, both the hyperbolicity of $\text{Cay}(G,S)$ and the hyperbolic boundary $\partial_{h}G:=\partial_{h}\text{Cay}(G,S)$, do not depend on the choice of the finite generating set $S$. The left action of $G$ on its Cayley graph induced by left multiplication extends to an amenable action on the compactification $G\cup\partial_{h}G$ and the boundary $\partial_{h}G$ (see e.g. \cite[Theorem 5.3.15]{BrownOzawa}).


\subsection{Coxeter groups}

A \emph{Coxeter group} $W$ is a group freely generated by a set $S$ subject to relations of the form $(st)^{m_{st}}=e$ where $m_{st}\in\left\{ 1,2,...,\infty\right\}$  with $m_{ss}=1$, $m_{st}\geq2$ for all $s\neq t$ and $m_{st}=m_{ts}$. The condition $m_{st}=\infty$ means that $s$ and $t$ are free with respect to each other, i.e. no relation of the form $(st)^{m}=e$ with $m\in\mathbb{N}$ is imposed. The pair $(W,S)$ is then called a \emph{Coxeter system}. It is \emph{right-angled} if $m_{st}\in\left\{ 2,\infty\right\}$  for all $s \neq t$. We will usually assume that the set $S$ is finite. In that case we say that the system $(W,S)$ has \emph{finite rank}. The \emph{Coxeter diagram} of $(W,S)$ is given by the vertex set $S$ and the edge set $\left\{ (s,t)\mid m_{st}\geq3\right\}$  where every edge connecting vertices $s,t\in S$ is labeled by the corresponding exponent $m_{st}$. It encodes the data of the system $(W,S)$. The \emph{odd Coxeter diagram} is obtained from the Coxeter diagram by removing all edges whose labels are even or infinite.

If $T\subseteq S$ is a subset, the subgroup $W_{T}$ of $W$ generated by $T$ is called a \emph{special subgroup}. It is also a Coxeter group with the same exponents as $W$, see \cite[Theorem 4.1.6]{Davis}. The system $(W,S)$ is called \emph{irreducible} if its Coxeter diagram is connected. This is equivalent to $W$ not having a non-trivial decomposition into a direct product of special subgroups. One usually distinguishes three classes of irreducible Coxeter systems.

\begin{definition}
Let $\left(W,S\right)$ be an irreducible Coxeter system.
\begin{itemize}
\item It is of \emph{spherical type} if it is locally finite, i.e. every finitely generated subgroup of $W$ is finite.
\item It is of \emph{affine type} if it is infinite, virtually abelian and has finite rank.
\item It is of \emph{non-affine type} if it is neither spherical nor affine.
\end{itemize}
\end{definition}

Both, spherical type and affine type Coxeter systems are entirely classified by their Coxeter diagrams. The corresponding diagrams are called $(A_{n})_{n\geq1}$, $(B_{n})_{n\geq2}$, $(D_{n})_{n\geq3}$, $(E_{n})_{6\leq n\leq8}$, $F_{4}$, $G_{2}$, $(H_{n})_{2\leq n\leq4}$, $(I_{n})_{n\geq3}$, $A_{\infty}$, $A_{\infty}^{\prime}$, $B_{\infty}$, $D_{\infty}$ and $(\tilde{A}_{n})_{n\geq1}$, $(\tilde{B}_{n})_{n\geq3}$, $(\tilde{C}_{n})_{n\geq2}$, $(\tilde{D}_{n})_{n\geq4}$, $(\tilde{E}_{n})_{6\leq n\leq8}$, $\tilde{F}_{4}$, $\tilde{G}_{2}$, $\tilde{I}_1$.

By \cite[Theorem 14.1.2 and Proposition 17.2.1]{Davis} a Coxeter group is amenable if and only if it decomposes as a direct product of spherical type and affine type Coxeter systems.

Every element $\mathbf{w}\in W$ can be written as a product $\mathbf{w}=s_{1}...s_{n}$ with generators $s_{1},...,s_{n}\in S$. Such an expression is called \emph{reduced} if it has minimal length, meaning that $n\leq m$ for every other expression $\mathbf{w}=t_{1}...t_{m}$ with $t_{1},...,t_{m}\in S$. The set of letters appearing in a reduced expression is independent of the choice of the reduced expression (see \cite[Proposition 4.1.1]{Davis}). With $\left|\mathbf{w}\right|:=n$ we define a \emph{word length} on $W$. For $\mathbf{v},\mathbf{w}\in W$ with $\left|\mathbf{v}^{-1}\mathbf{w}\right|=\left|\mathbf{w}\right|-\left|\mathbf{v}\right|$ (resp. $\left|\mathbf{w}\mathbf{v}^{-1}\right|=\left|\mathbf{w}\right|-\left|\mathbf{v}\right|$) we say that $\mathbf{w}$ \emph{starts} (resp. \emph{ends}) \emph{with $\mathbf{w}$} and write $\mathbf{v}\leq_{R}\mathbf{w}$ (resp. $\mathbf{v}\leq_{L}\mathbf{w}$). This defines a partial order that is called the \emph{weak right} (resp. \emph{weak left}) \emph{Bruhat order}. For convenience, we will usually write $\leq$ instead of $\leq_{R}$. The weak Bruhat orders have the important property that they define complete meet-semilattices on $W$ (see \cite[Theorem 3.2.1]{combinatorics}).

All Coxeter groups share three important (equivalent) conditions. We use the convention that $\widehat{s}$ means that s is removed from an expression.

\begin{theorem}[{\cite[Theorem 3.2.16 and Theorem 3.3.4]{Davis}}] \label{cancellation}
Let $(W,S)$ be a Coxeter system, $\mathbf{w}=s_{1}...s_{n}$ an expression for an element $\mathbf{w}\in W$ and $s,t\in S$. Then, the following conditions hold:

\begin{itemize}
\item \emph{Deletion condition}: If $s_{1} \ldots s_{n}$ is not a reduced expression for $\boldw$, then there exist $i<j$ such that $s_{1} \ldots \widehat{s_{i}} \ldots \widehat{s_{j}} \ldots s_{n}$ is also an expression for $\boldw$.
\item  \emph{Exchange condition}: If $\boldw=s_{1} \ldots s_{n}$ is reduced, then either $\left|s\boldw\right|=n+1$ or there exists $1\leq i\leq n$ with $s\boldw=s_{1} \ldots \widehat{s_{i}} \ldots s_{n}$.
\item \emph{Folding condition}: If $\left|s\boldw\right|=n+1$ and $\left|\boldw t\right|=n+1$, then either $\left|s \boldw t\right|=n+2$ or $\left|s \boldw t\right|=n$.
\end{itemize}
\end{theorem}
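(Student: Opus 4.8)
The three assertions are classical, and the plan is to derive all of them uniformly from the Tits geometric representation, supplemented by one elementary parity observation. The parity observation is that the assignment $s \mapsto -1$ extends to a homomorphism $\varepsilon : W \to \{\pm 1\}$: the defining relations are respected since $\varepsilon\big((st)^{m_{st}}\big) = \big((-1)(-1)\big)^{m_{st}} = 1$. Hence $\varepsilon(\mathbf{w}) = (-1)^{|\mathbf{w}|}$, so $|\cdot|$ changes parity under one-sided multiplication by a generator; combined with the trivial bound $\big| |s\mathbf{w}| - |\mathbf{w}| \big| \le 1$ this shows that such a multiplication changes the length by exactly $\pm 1$. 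This already delivers the length dichotomy asserted in the Folding condition, since $|s\mathbf{w}| = n+1$ forces $|s\mathbf{w}t| = |s\mathbf{w}| \pm 1 \in \{n, n+2\}$; the sharper content in the case $|s\mathbf{w}t| = n$, namely that then $s\mathbf{w}t = \mathbf{w}$, is where the hypothesis $|\mathbf{w}t| = n+1$ genuinely enters and will be read off from the Exchange analysis below.

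Next I would construct the representation. Let $V = \bigoplus_{s \in S} \mathbb{R}\, e_s$ with the symmetric bilinear form $B$ determined by $B(e_s, e_t) = -\cos(\pi/m_{st})$ (so $B(e_s,e_t) = -1$ when $m_{st} = \infty$), and for each $s$ define $\sigma_s \in GL(V)$ by $\sigma_s(v) = v - 2B(e_s,v)\,e_s$, the $B$-reflection negating $e_s$. The essential verification is that $s \mapsto \sigma_s$ extends to a homomorphism $W \to GL(V)$, i.e.\ that $(\sigma_s\sigma_t)^{m_{st}} = \mathrm{id}$. I would check this by restricting to the plane $\mathrm{span}\{e_s,e_t\}$ and its $B$-orthogonal complement: when $m_{st} < \infty$ the form is positive definite on the plane, where $\sigma_s\sigma_t$ acts as a rotation of order $m_{st}$, while on the complement both reflections act trivially.

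The heart of the argument, and the step I expect to be the main obstacle, is the root-system machinery. I would set $\Phi := \{\sigma(\mathbf{w})e_s : \mathbf{w}\in W,\ s\in S\}$ and call a root positive (negative) if its coordinates in $\{e_s\}$ are all $\ge 0$ (all $\le 0$). The two facts to establish are the dichotomy that \emph{every} root is positive or negative, and the length formula $|N(\mathbf{w})| = |\mathbf{w}|$ for the inversion set $N(\mathbf{w}) := \{\alpha \in \Phi^{+} : \sigma(\mathbf{w}^{-1})\alpha \in \Phi^{-}\}$; in particular $|s\mathbf{w}| = |\mathbf{w}| + 1$ if and only if $\sigma(\mathbf{w}^{-1})e_s \in \Phi^{+}$. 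I would prove these together by induction on $|\mathbf{w}|$, the key sub-lemma being that $\sigma_s$ permutes $\Phi^{+}\setminus\{e_s\}$ and sends $e_s \mapsto -e_s$, so that prepending a generator alters $N(\cdot)$ by exactly one root. Controlling how the sign of a root behaves away from the single reflecting hyperplane is the delicate point.

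With this in hand the conditions follow. For the \emph{Exchange condition}, assume $\mathbf{w} = s_1\ldots s_n$ is reduced with $|s\mathbf{w}| \ne n+1$; by the $\pm 1$ fact $|s\mathbf{w}| = n-1$, so the length formula gives $\sigma(\mathbf{w}^{-1})e_s \in \Phi^{-}$, i.e.\ $e_s \in N(\mathbf{w}^{-1})$. Since $N(\mathbf{w}^{-1}) = \{\,s_1\cdots s_{i-1}(e_{s_i}) : 1\le i \le n\,\}$, we get $e_s = s_1\cdots s_{i-1}(e_{s_i})$ for some $i$; comparing the associated reflections yields $s\,(s_1\cdots s_{i-1}) = s_1\cdots s_{i-1}s_i$, whence $s\mathbf{w} = s_1\ldots\widehat{s_i}\ldots s_n$. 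The \emph{Deletion condition} then follows by choosing the shortest non-reduced prefix $s_1\cdots s_j$ and applying the right-handed form of Exchange to delete one earlier letter along with $s_j$, producing the required $i<j$. Finally I would note that Deletion conversely implies Exchange, and that the case $|s\mathbf{w}t| = n$ of Folding combined with $|\mathbf{w}t| = n+1$ forces $s\mathbf{w} = \mathbf{w}t$ via the same root computation, so that all three conditions are equivalent.
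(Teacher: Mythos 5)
Your proposal is correct in outline, but note that the paper itself contains no proof of this statement: it is quoted directly from Davis's book (Theorems 3.2.16 and 3.3.4 there), so the citation is the paper's entire ``proof''. What you have reconstructed is the classical root-system argument in the style of Humphreys and Bj\"{o}rner--Brenti: the parity homomorphism gives $|s\mathbf{w}|=|\mathbf{w}|\pm 1$; the Tits form $B(e_s,e_t)=-\cos(\pi/m_{st})$ yields a representation by the rank-two computation; the root dichotomy and the criterion $|s\mathbf{w}|=|\mathbf{w}|+1\Leftrightarrow\sigma(\mathbf{w}^{-1})e_s\in\Phi^{+}$ (proved by induction via the sub-lemma that $\sigma_s$ permutes $\Phi^{+}\setminus\{e_s\}$) give Exchange through the inversion set; Deletion follows from Exchange applied at the shortest non-reduced prefix; and Folding, \emph{as stated in the paper} (a claim about lengths only), indeed already follows from parity, with the sharper conclusion $s\mathbf{w}t=\mathbf{w}$ coming from the Exchange analysis --- you are right to flag that distinction. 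Davis himself proves the Exchange Condition by a different, purely combinatorial route (walls and half-spaces in the Cayley graph with a parity count of wall crossings), with Theorem 3.3.4 being the abstract equivalence of the three conditions; your route is no less standard and buys, as a by-product, the faithfulness of the geometric representation. Two small blemishes, neither fatal: with your stated definition $N(\mathbf{w})=\{\alpha\in\Phi^{+}:\sigma(\mathbf{w}^{-1})\alpha\in\Phi^{-}\}$, the set that contains $e_s$ and equals $\{s_1\cdots s_{i-1}(e_{s_i}):1\le i\le n\}$ is $N(\mathbf{w})$, not $N(\mathbf{w}^{-1})$ (the slip is consistent across both occurrences, so the argument survives); and the step ``comparing the associated reflections'' silently uses that the reflection attached to a root is well defined, which requires either injectivity of $\sigma$ (itself a consequence of the length formula) or a direct argument --- worth a sentence in a complete write-up, as is the remark that the right-handed Exchange used for Deletion follows from the left-handed one applied to $\mathbf{w}^{-1}$.
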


In the right-angled case, if we have cancellation of the form $s_{1}...s_{n}=s_{1}...\widehat{s_{i}}...\widehat{s_{j}}...s_{n}$ for $s_{1},...,s_{n}\in S$, then $s_{i}=s_{j}$ and $s_{i}$ commutes with every letter in the reduced expression for $s_{i+1} ...s_{j-1}$.

In \cite{Moussong} Moussong characterized those Coxeter groups that are word hyperbolic.

\begin{theorem}[{\cite[Theorem 17.1]{Moussong}}]
For every Coxeter system $(W,S)$ the following statements are equivalent:

\begin{itemize}
\item $W$ is word hyperbolic;
\item $W$ has no subgroups isomorphic to $\mathbb{Z}\times\mathbb{Z}$;
\item There is no subset $T\subseteq S$ such that $(W_{T},T)$ is an affine Coxeter system of rank $\geq3$ or that there exists a pair of disjoint subsets $T_{1},T_{2}\subseteq T$ with $(W_{T},T)=(W_{T_{1}}\times W_{T_{2}},T_{1}\cup T_{2})$ where $W_{T_1}$, $W_{T_2}$ are infinite.
\end{itemize}
\end{theorem}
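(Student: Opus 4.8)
The statement is Moussong's theorem, and the natural route is through the CAT$(0)$ geometry of the Davis complex together with the Flat Plane Theorem. The plan is to pass from the group to a geometric model on which hyperbolicity can be read off metrically. First I would introduce the Davis--Moussong complex $\Sigma$ associated with $(W,S)$, equipped with Moussong's piecewise Euclidean metric in which each Coxeter cell (the convex hull of a $W_T$-orbit for a spherical $T\subseteq S$) carries its natural Euclidean metric. Since $W$ acts properly, cocompactly and by isometries on $\Sigma$, the \v{S}varc--Milnor lemma gives that $W$ is quasi-isometric to $\Sigma$; as Gromov hyperbolicity is a quasi-isometry invariant of geodesic spaces, the problem reduces to deciding when $\Sigma$ is hyperbolic.

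The two structural inputs are as follows. The first is Moussong's theorem that $\Sigma$ is CAT$(0)$. I would prove this by Gromov's link condition: a piecewise Euclidean complex with finitely many isometry types of cells is locally CAT$(0)$ precisely when the link of every vertex is CAT$(1)$, and a simply connected locally CAT$(0)$ space is CAT$(0)$ by Cartan--Hadamard. The vertex links here are the geometric realizations of the poset of spherical subsets of $S$, metrized as piecewise spherical complexes; verifying that their systole is at least $2\pi$ is the combinatorial heart of this step and relies on the realization of finite Coxeter groups as reflection groups on spheres. The second input is the Flat Plane Theorem (Gromov; see Bridson--Haefliger): a cocompact CAT$(0)$ space is Gromov hyperbolic if and only if it contains no isometrically embedded flat plane $\mathbb{E}^{2}$. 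Combining the two, $W$ is word hyperbolic if and only if $\Sigma$ admits no flat.

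With this dictionary in place, the equivalence of (1) and (3) becomes a classification of flats in $\Sigma$. For the easy direction ($\neg(3)\Rightarrow\neg(1)$) I would exhibit flats directly: if some $W_{T}$ is affine of rank $r\geq 3$, then $W_{T}$ acts as an affine reflection group on $\mathbb{E}^{r-1}$ with $r-1\geq 2$ and the corresponding convex subcomplex of $\Sigma$ contains a flat; if $W_{T}=W_{T_{1}}\times W_{T_{2}}$ with both factors infinite, then $\Sigma_{T}$ splits metrically as $\Sigma_{T_{1}}\times\Sigma_{T_{2}}$, each infinite factor contains a bi-infinite geodesic, and their product is a flat. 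The hard direction ($(3)\Rightarrow(1)$) is the converse: assuming no such $T$ exists, one must show $\Sigma$ has no flat. Here I would study how the walls (fixed sets of reflections) of $\Sigma$ cut an alleged flat $F\cong\mathbb{E}^{2}$, and argue that the reflections whose walls meet $F$ generate, up to conjugacy, a special subgroup that is forced to be either affine of rank $\geq 3$ or a product of two infinite special subgroups, contradicting (3). This local-to-global analysis, relating the Euclidean structure on $F$ to the Coxeter combinatorics, is the main obstacle and is the technically deepest part of Moussong's argument.

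Finally I would fold in condition (2) to close the cycle of equivalences. The implication $(1)\Rightarrow(2)$ is the standard fact that a word hyperbolic group contains no copy of $\mathbb{Z}\times\mathbb{Z}$, since every polycyclic subgroup of a hyperbolic group is virtually cyclic. For $(2)\Rightarrow(3)$ I would argue by contraposition exactly as in the easy direction above: an affine $W_{T}$ of rank $r\geq 3$ is virtually $\mathbb{Z}^{r-1}$ with $r-1\geq 2$ and hence contains $\mathbb{Z}\times\mathbb{Z}$, while a decomposition $W_{T}=W_{T_{1}}\times W_{T_{2}}$ with both factors infinite yields $\mathbb{Z}\times\mathbb{Z}$ because every infinite Coxeter group contains an element of infinite order. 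Together with $(3)\Rightarrow(1)$ this gives the full chain $(1)\Rightarrow(2)\Rightarrow(3)\Rightarrow(1)$.
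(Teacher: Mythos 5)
First, a remark on the comparison itself: the paper does not prove this statement at all --- it is quoted verbatim from Moussong with the citation to \cite[Theorem 17.1]{Moussong}, so the only meaningful benchmark is the cited literature. Your outline reproduces the standard architecture correctly: the Davis--Moussong complex $\Sigma$ with its piecewise Euclidean metric, the \v{S}varc--Milnor reduction of hyperbolicity of $W$ to hyperbolicity of $\Sigma$, the exhibition of flats when (3) fails, and the elementary implications $(1)\Rightarrow(2)$ (hyperbolic groups contain no $\mathbb{Z}\times\mathbb{Z}$) and $(2)\Rightarrow(3)$ (an affine system of rank $r\geq 3$ is virtually $\mathbb{Z}^{r-1}$ with $r-1\geq 2$, and a product of two infinite special subgroups contains $\mathbb{Z}\times\mathbb{Z}$ since every infinite Coxeter group has an element of infinite order). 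Those parts are sound.

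The genuine gap is that the two steps carrying all the difficulty are named rather than proved. (i) The CAT$(0)$ property of $\Sigma$ reduces to showing that every vertex link --- the piecewise spherical realization of the poset of spherical subsets of $S$ --- is CAT$(1)$; this is Moussong's Lemma, itself a substantial theorem, and your proposal defers it to ``the combinatorial heart of this step'' without argument. (ii) More seriously, the direction $(3)\Rightarrow(1)$, i.e.\ that the absence of affine subsystems of rank $\geq 3$ and of products of infinite special subgroups forces $\Sigma$ to contain no flat plane, is exactly the content that makes the theorem a theorem. Your plan --- ``study how the walls cut an alleged flat $F$ and argue that the reflections whose walls meet $F$ generate, up to conjugacy, a special subgroup that is affine of rank $\geq 3$ or a product of two infinite ones'' --- asserts the conclusion of Moussong's analysis rather than supplying it: a flat need not be a subcomplex, is not invariant under any obvious collection of reflections, and the passage from ``walls crossing $F$'' to a conjugate of a standard special subgroup of the required type is precisely the hard local-to-global argument. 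It is also worth noting that Moussong's own proof does not classify flats at all: under condition (3) he verifies a strict strengthening of the link condition (links are ``extra large'', with no closed geodesics of length exactly $2\pi$), which permits replacing the Euclidean cells by slightly negatively curved hyperbolic cells and endows $\Sigma$ with a piecewise hyperbolic CAT$(-1)$ metric, from which hyperbolicity of $W$ is immediate. Either route can be completed, but as written your proposal omits the step on which everything depends.
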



\subsection{Multi-parameter Hecke algebras}

The operator algebras associated with (Iwahori) Hecke algbras of spherical and affine Coxeter groups have been studied early in \cite{Matsumoto}, \cite{KL} and \cite{Lusztig} whereas non-affine Hecke-von Neumann algebras first appear in \cite{Dymara} and \cite{Dymara2} (see also \cite[Chapter 19]{Davis}). Note that we use a different normalization of the generators than in the named references. Our notation is mainly taken from \cite{Gar} and \cite{Mario}. It coincides with the ones in \cite{Caspers}, \cite{CSW} and \cite{Raum}.

For a Coxeter system $(W,S)$ let $\mathbb{R}_{>0}^{(W,S)}$ (resp. $\mathbb{C}^{(W,S)}$ and $\left\{ -1,1\right\} ^{S}$) be the set of tuples $q:=(q_{s})_{s\in S}$ in $\mathbb{R}_{>0}^{S}$ (resp. in $\mathbb{C}^{S}$ and $\left\{ -1,1\right\} ^{S}$) of positive real numbers (resp. complex numbers and numbers $-1$ or $1$) with the property that $q_{s}=q_{t}$ whenever $s$ and $t$ are conjugate in $W$. For every such $q$ and every reduced expression $\mathbf{w}=s_{1}...s_{n}$ of $\mathbf{w}\in W$ we define \begin{equation} \nonumber q_{\mathbf{w}}:=q_{s_{1}}...q_{s_{n}}\text{, }\qquad p_{s}(q):=q_{s}^{-\frac{1}{2}}(q_{s}-1)\text{.} \end{equation} Note that $q_{\mathbf{w}}$ does not depend on the choice of the reduced expression for $\mathbf{w}$. The \emph{(Iwahori) Hecke algebra} $\mathbb{C}_{q}\left[W\right]$ associated with the Coxeter system $(W,S)$ and the multi-parameter $q\in\mathbb{R}_{>0}^{(W,S)}$ is the (unique) $\ast$-algebra spanned by a linear basis $\{ T_{\mathbf{w}}^{(q)}\mid\mathbf{w}\in W\}$  with
\begin{eqnarray} \label{multiplication}
T_{s}^{\left(q\right)}T_{\mathbf{w}}^{\left(q\right)}=\begin{cases}
T_{s\mathbf{w}}^{\left(q\right)} & \text{, if }\left|s\mathbf{w}\right|>\left|\mathbf{w}\right|\\
T_{s\mathbf{w}}^{\left(q\right)}+p_{s}(q)T_{\mathbf{w}}^{\left(q\right)} & \text{, if }\left|s\mathbf{w}\right|<\left|\mathbf{w}\right|
\end{cases}\text{,}
\end{eqnarray}
and \begin{eqnarray} \nonumber (T_{\mathbf{w}}^{(q)})^{\ast}=T_{\mathbf{w}^{-1}}^{(q)} \end{eqnarray} where $s\in S$, $\mathbf{w}\in W$ (see \cite[Proposition 19.1.1]{Davis}). Similarly, there exists a \emph{right-handed (Iwahori) Hecke algebra} $\mathbb{C}_{q}^{r}\left[W\right]$ spanned by a linear basis $\{ T_{\mathbf{w}}^{(q),r}\mid\mathbf{w}\in W\}$  satisfying an analogue to \eqref{multiplication} with the order of $s$ and $\mathbf{w}$ reversed. For every $s\in S$ the operators $T_{s}^{(q)}$ and $T_{s}^{(q),r}$ naturally act on the Hilbert space $\ell^{2}(W)$ of square-summable functions on $W$ with the canonical orthonormal basis $(\delta_{\mathbf{w}})_{\mathbf{w}\in W}$ via
\begin{eqnarray} \nonumber
T_{s}^{\left(q\right)}\delta_{\mathbf{w}}=\begin{cases}
\delta_{s\mathbf{w}} & \text{, if }\left|s\mathbf{w}\right|>\left|\mathbf{w}\right|\\
\delta_{s\mathbf{w}}+p_{s}(q)\delta_{\mathbf{w}} & \text{, if }\left|s\mathbf{w}\right|<\left|\mathbf{w}\right|\text{,}
\end{cases}
\end{eqnarray}
and
\begin{eqnarray}
\nonumber
T_{s}^{\left(q\right),r}\delta_{\mathbf{w}}=\begin{cases}
\delta_{\mathbf{w}s} & \text{, if }\left|\mathbf{w}s\right|>\left|\mathbf{w}\right|\\
\delta_{\mathbf{w}s}+p_{s}(q)\delta_{\mathbf{w}} & \text{, if }\left|\mathbf{w}s\right|<\left|\mathbf{w}\right|
\end{cases}\text{.}
\end{eqnarray}
This defines faithful $\ast$-representations $\mathbb{C}_{q}\left[W\right]\rightarrow\mathcal{B}(\ell^{2}(W))$ and $\mathbb{C}_{q}^{r}\left[W\right]\rightarrow\mathcal{B}(\ell^{2}(W))$. We will usually identify both $\ast$-algebras with their images under this representation. The corresponding \emph{(reduced) Hecke C$^\ast$-algebra} $C_{r,q}^{\ast}(W)$ is defined to be the norm closure of $\mathbb{C}_{q}\left[W\right]$ in $\mathcal{B}(\ell^{2}(W))$ and the corresponding \emph{Hecke-von Neumann algebra} is $\mathcal{N}_{q}(W):=(C_{r,q}^{\ast}(W))^{\prime\prime}$. Similarly, we define the \emph{right-handed (reduced) Hecke C$^\ast$-algebra} $C_{r,q}^{\ast,r}(W)$ and the \emph{right-handed (reduced) Hecke-von Neumann algebra} $\mathcal{N}_{q}^{r}(W)$. It follows from \cite [Proposition 19.2.1]{Davis} that the commutant of $\mathcal{N}_{q}(W)$ is $\mathcal{N}_{q}^{r}(W)$ and vice versa. Note that for $q_{s}=1$, $s\in S$ we get that $\mathbb{C}_{q}\left[W\right]=\mathbb{C}\left[W\right]$, $C_{r,q}^{\ast}(W)=C_{r}^{\ast}(W)$ and $\mathcal{N}_{q}(W)=\mathcal{L}(W)$ are respectively the group algebra, reduced group C$^\ast$-algebra and group von Neumann algebra of $W$. 

For every $q\in\mathbb{R}_{>0}^{(W,S)}$ the vector state $\tau$ defined by $\tau(x):=\left\langle x\delta_{e},\delta_{e}\right\rangle$  for $x\in\mathcal{B}(\ell^{2}(W))$ restricts to a faithful \emph{tracial state} $\tau_{q}$ (resp. $\tau_{q}^{r}$) on $C_{r,q}^{\ast}(W)$ and $\mathcal{N}_{q}(W)$ (resp. $C_{r,q}^{\ast,r}(W)$ and $\mathcal{N}_{q}^{r}(W)$) with $\tau_{q}(T_{\mathbf{w}}^{(q)})=0$ (resp. $\tau_{q}^{r}(T_{\mathbf{w}}^{(q)})=0$) for all $\mathbf{w}\in W\setminus\left\{ e\right\}$. Finally, for $\mathbf{w}\in W$ define $P_{\mathbf{w}}$ to be the orthogonal projection of $\ell^{2}(W)$ onto the subspace $\overline{\text{Span}\left\{ \delta_{\mathbf{v}}\mid\mathbf{v}\in W\text{ with }\mathbf{w}\leq_{R}\mathbf{v}\right\} }$ and $P_{\mathbf{w}}^{r}$ to be the orthogonal projection of $\ell^{2}(W)$ onto the subspace $\overline{\text{Span}\left\{ \delta_{\mathbf{v}}\mid\mathbf{v}\in W\text{ with }\mathbf{w}\leq_{L}\mathbf{v}\right\} }$.


\section{Boundaries of connected graphs} \label{section 2}

In the following section we will define certain topological spaces associated with connected graphs and study their basic properties as well as the connection of our construction with Gromov's hyperbolic boundary. As mentioned before, we will always assume that the graphs appearing in this paper are countable, undirected and simplicial. The construction presented here works in greater generality which might be the content of the author's future research.


\subsection{Construction and basic properties}

\begin{definition}
A \emph{rooted graph} $\left(K,o\right)$ is a graph $K$ equipped with a root $o\in K$. If $K$ is connected, we impose a partial order $\leq_o$ on $K$ by declaring $x\leq_o y$ if and only if there exists a geodesic path starting in $o$ and ending in $y$ which passes $x$. If the root $o$ is clear, we often just write $\leq$ instead of $\leq_o$. We call this the \emph{graph order} on $\left(K,o\right)$. Further, define relations $\geq_o$, $<_o$ and $>_o$ (resp. $\geq$, $<$ and $>$) in the natural way. If the \emph{join} or \emph{meet} (with respect to the partial order) of two elements $x,y\in K$ exists, we denote it by $x\vee_o y$ (resp. $x \vee y$) or $x\wedge_o y$ (resp. $x \wedge y$).
\end{definition}

One easily checks that the graph order indeed defines a partial order. Based on it, we define a topological space associated with the connected rooted graph $\left(K,o\right)$ into which $K$ naturally embeds as a dense subset. Let $\mathbf{x}=(x_i)_{i \in \mathbb{N}}$ be a sequence in $K$. We say that $\mathbf{x}$ \emph{$o$-converges} if for every $x\in K$ one either has $x \leq x_i$ for all large enough $i$ or $x \nleq x_i$ for all large enough $i$. Write $x\leq_o \mathbf{x}$ (resp. $x\leq \mathbf{x}$) in the first case and $x\nleq_o \mathbf{x}$ (resp. $x\nleq \mathbf{x}$) in the second one. Note that constant sequences in $K$ necessarily $o$-converge. We say that $\mathbf{x}$ \emph{$o$-converges to infinity} if $\sup_{x \leq \mathbf{x}} d_K(x,o)=\infty$. One easily checks that infinite geodesic paths always $o$-converge to infinity. On the set of all sequences in $K$ which $o$-converge we define an equivalence relation $\sim_o$ (resp. $\sim$) by declaring $\mathbf{x}\sim_o\mathbf{y}$ if and only if for every $x\in K$ the implications $x \leq \mathbf{x} \Leftrightarrow x\leq \mathbf{y}$ hold. Denote by $[\mathbf{x}]_{o}$ (resp. $[ \mathbf{x} ]$) the corresponding equivalence class of a sequence $\mathbf{x}$ and write $\partial{(K,o)}$ for the set of all equivalence classes of sequences which $o$-converge to infinity. We call this set the \emph{boundary of $(K,o)$}. Similarly, the \emph{bordification $\overline{(K,o)}$} is the set of all equivalence classes of sequences in $K$ which $o$-converge. We will view $K$ as a subset of its bordification by identifying the elements in $K$ with equivalence classes of constant sequences.

The following lemma is easy to check.

\begin{lemma} \label{poset}
Let $(K,o)$ be a connected rooted graph. Then the graph order on $(K,o)$ extends to a partial order on $\overline{(K,o)}$ via
\[
[\mathbf{x}]\leq_o [ \mathbf{y}]:\Leftrightarrow x\leq_o \mathbf{y}\text{ for every } x\in K \text{ with }x\leq_o \mathbf{x}
\]
for $ [\mathbf{x}], [\mathbf{y} ]\in \overline{(K,o)}$.
\end{lemma}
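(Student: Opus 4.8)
The plan is to reformulate the defining relation as an inclusion of down-sets, after which every required property becomes essentially automatic. For an $o$-converging sequence $\mathbf{x}$ in $K$, set $L(\mathbf{x}):=\{x\in K\mid x\leq_o\mathbf{x}\}$. This set is well defined precisely because $o$-convergence forces, for each $x\in K$, exactly one of $x\leq_o\mathbf{x}$ or $x\nleq_o\mathbf{x}$ to hold. By the very definition of $\sim_o$ we have $\mathbf{x}\sim_o\mathbf{y}$ if and only if $L(\mathbf{x})=L(\mathbf{y})$, and the proposed relation reads $[\mathbf{x}]\leq_o[\mathbf{y}]\Leftrightarrow L(\mathbf{x})\subseteq L(\mathbf{y})$. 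Recording this equivalence at the outset is the one conceptual step; everything afterwards is a statement about containment of the canonically associated sets $L(\mathbf{x})$.

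Granting this, I would first check that the relation is well defined on equivalence classes. Since $[\mathbf{x}]\leq_o[\mathbf{y}]$ refers only to the sets $L(\mathbf{x})$ and $L(\mathbf{y})$, and $\sim_o$ is exactly equality of these sets, replacing $\mathbf{x}$ or $\mathbf{y}$ by an equivalent sequence cannot change the truth value; hence the relation descends to $\overline{(K,o)}$. Reflexivity and transitivity are then inherited verbatim from reflexivity and transitivity of set inclusion. For antisymmetry, $L(\mathbf{x})\subseteq L(\mathbf{y})$ together with $L(\mathbf{y})\subseteq L(\mathbf{x})$ gives $L(\mathbf{x})=L(\mathbf{y})$, that is $\mathbf{x}\sim_o\mathbf{y}$, so $[\mathbf{x}]=[\mathbf{y}]$. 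Thus $\leq_o$ is a partial order on $\overline{(K,o)}$.

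It remains to verify that this order restricts to the graph order under the identification of $K$ with the constant sequences. For $x\in K$ the constant sequence $\overline{x}$ satisfies $L(\overline{x})=\{z\in K\mid z\leq_o x\}$, the principal down-set of $x$, so $[\overline{x}]\leq_o[\overline{y}]$ holds if and only if the down-set of $x$ is contained in that of $y$. If $x\leq_o y$, then transitivity of the graph order yields this inclusion; conversely, reflexivity of the graph order places $x$ in its own down-set, so the inclusion forces $x\leq_o y$. Hence $[\overline{x}]\leq_o[\overline{y}]\Leftrightarrow x\leq_o y$, which is exactly the assertion that the new order extends the graph order. I do not expect a genuine obstacle here: the whole argument is driven by the opening reformulation, and the only point needing even slight care is the independence of representatives, which is dispatched the moment one observes that $\sim_o$ coincides with equality of the sets $L(\mathbf{x})$.
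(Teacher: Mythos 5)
Your proposal is correct. The paper offers no proof of this lemma (it is dismissed with ``the following lemma is easy to check''), and your reformulation via the down-sets $L(\mathbf{x})$ — under which $\sim_o$ becomes equality and $\leq_o$ becomes inclusion of these sets, with the extension property following from reflexivity and transitivity of the graph order on $K$ — is precisely the routine verification the author had in mind.
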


As before, we often just write $\leq$ instead of $\leq_o$ for the extended graph order. We equip $\overline{(K,o)}$ with the topology generated by the subbase of sets of the form
\begin{eqnarray}
\nonumber
\mathcal{U}_{x}:=\left\{ z\in\overline{(K,o)}\mid x\leq z\right\} \: \text{ and  } \: \mathcal{U}_{x}^{c}:=\left\{ z\in\overline{(K,o)}\mid x\nleq z\right\}
\end{eqnarray}
where $x\in K$. In particular, $\mathcal{U}_{x}$ is \emph{clopen} (closed and open) in $\overline{(K,o)}$. Further, we impose the subspace topology on $\partial(K,o)$.

\begin{lemma} \label{discreteness}
Let $(K,o)$ be a connected rooted graph. Then the following statements hold:
\begin{itemize}
\item $\overline{(K,o)}$ contains $K$ as a dense subset;
\item For $x\in K$ the one point set $\left\{ x\right\}$  is clopen if $x$ has finite degree;
\item If the graph is locally finite, then $K$ is a discrete subset of $\overline{(K,o)}$ and $\partial(K,o)=\overline{(K,o)}\setminus K$.
\end{itemize}
\end{lemma}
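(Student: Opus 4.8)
The plan is to prove the three assertions in turn, using the second to derive the third. Two standing facts will do most of the work: first, that a sequence $\mathbf{x}=(x_{i})$ which $o$-converges has, for each fixed $v\in K$, an \emph{eventually constant} truth value for the relation ``$v\le x_{i}$'', so that being true infinitely often upgrades to being true for all large $i$; and second, that in a locally finite graph every ball $\{x\in K\mid d_{K}(o,x)\le M\}$ is finite. I will also use throughout that $[x]\le_{o}[\mathbf{x}]$ if and only if $x\le_{o}\mathbf{x}$ (immediate from Lemma \ref{poset} and transitivity of the graph order), so that membership of $z=[\mathbf{x}]$ in $\mathcal{U}_{x}$ just means $x\le\mathbf{x}$. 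With this reformulation, density is short: take a nonempty basic open set $\mathcal{U}_{x_{1}}\cap\dots\cap\mathcal{U}_{x_{m}}\cap\mathcal{U}_{y_{1}}^{c}\cap\dots\cap\mathcal{U}_{y_{n}}^{c}$ containing $z=[\mathbf{x}]$, so that $x_{j}\le\mathbf{x}$ for all $j$ and $y_{k}\nleq\mathbf{x}$ for all $k$. By $o$-convergence a single large index $i$ satisfies $x_{j}\le_{o}x_{i}$ for all $j$ and $y_{k}\nleq_{o}x_{i}$ for all $k$, and then the constant sequence at $x_{i}\in K$ lies in the chosen open set. Hence $K$ meets every nonempty basic open set.

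The heart of the matter is the second bullet, for which I would isolate the following. \emph{Claim:} if $x$ has finite degree and $z\in\mathcal{U}_{x}$ with $z\neq[x]$, then $y\le z$ for some neighbour $y$ of $x$ with $x<_{o}y$ (a ``child'' of $x$). To see this, write $z=[\mathbf{x}]$ with $x\le\mathbf{x}$. The inequality $z\neq[x]$ forces $x_{i}\neq x$ for all large $i$: otherwise the eventually constant truth values of ``$v\le x_{i}$'' would coincide with those of the constant sequence at $x$, giving $z=[x]$. Thus for large $i$ we have $x<_{o}x_{i}$, and the vertex following $x$ on a geodesic from $o$ to $x_{i}$ passing through $x$ is a child $y_{i}$ of $x$ with $y_{i}\le_{o}x_{i}$. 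Since $x$ has only finitely many children, some child $y$ equals $y_{i}$ for infinitely many $i$, so $y\le x_{i}$ infinitely often, whence $y\le\mathbf{x}$ by $o$-convergence. Granting the claim, and noting that no child $y$ satisfies $y\le[x]$ (because $d_{K}(o,y)>d_{K}(o,x)$), I obtain the clopen description
\[
\{x\}=\mathcal{U}_{x}\cap\bigcap_{y\text{ child of }x}\mathcal{U}_{y}^{c},
\]
which is clopen as a finite intersection of subbasic clopen sets.

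For the third bullet, discreteness of $K$ is immediate: local finiteness gives every vertex finite degree, so by the previous part each $\{x\}$ is open. For the identity $\partial(K,o)=\overline{(K,o)}\setminus K$, the inclusion $\partial(K,o)\cap K=\emptyset$ holds because a vertex $v$ has down-set $\{x\mid x\le_{o}v\}$ contained in the ball of radius $d_{K}(o,v)$, so $\sup_{x\le[v]}d_{K}(x,o)<\infty$. Conversely, given $z=[\mathbf{x}]\notin K$ I would show $\sup_{x\le\mathbf{x}}d_{K}(x,o)=\infty$ by contradiction: if this supremum were $M<\infty$, then whenever $d_{K}(o,x_{i})>M$ I take the vertex at distance $M+1$ on a geodesic from $o$ to $x_{i}$; finiteness of the sphere of radius $M+1$ and the pigeonhole principle yield a single such vertex below $\mathbf{x}$ (via $o$-convergence), contradicting the bound $M$. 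Hence $d_{K}(o,x_{i})\le M$ eventually, so $(x_{i})$ takes finitely many values; since distinct vertices have distinct down-sets (by antisymmetry of $\le_{o}$), $o$-convergence forces $(x_{i})$ to be eventually constant, giving $z\in K$, a contradiction.

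I expect the main obstacle to be the two pigeonhole passages, in the Claim and in the last paragraph. In each, the relevant witness vertex (the child $y_{i}$, respectively the distance-$(M+1)$ vertex) a priori depends on $i$, and the argument only closes because local finiteness bounds the number of candidates while $o$-convergence converts ``infinitely often'' into ``eventually always''. Making these two ingredients interact cleanly is the crux; by comparison the density statement and the discreteness half of the third bullet are routine consequences of the first two parts.
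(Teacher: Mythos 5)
Your proof is correct and follows essentially the same route as the paper: the same clopen identity $\{x\}=\mathcal{U}_{x}\cap\bigcap_{y\text{ child of }x}\mathcal{U}_{y}^{c}$ for the second bullet, and the same sphere-plus-pigeonhole argument (bounding $d_{K}(o,x_{i})$ by excluding the finite sphere at radius $M+1$, then extracting a constant value) for the third. The only difference is that you spell out the pigeonhole steps and the density argument that the paper treats as clear or asserts without proof.
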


\begin{proof}
The density of $K\subseteq\overline{(K,o)}$ is clear. If $x\in K$ has finite degree, then $\left\{ x\right\}$ is open since either $\left\{ x\right\} =\bigcap_{y\in K:x<y,d_{K}(x,y)=1}(\mathcal{U}_{x}\cap\mathcal{U}_{y}^{c})$ or $\left\{ x\right\} =\mathcal{U}_{x}$. In particular, if the graph is locally finite, $K$ is a discrete subset of $\overline{(K,o)}$. It remains to show that $\partial(K,o)=\overline{(K,o)}\setminus K$.
For this, let $z\in\overline{(K,o)}$ be a point represented by a
sequence $\mathbf{x}=(x_{i})_{i\in\mathbb{N}}\subseteq K$ which $o$-converges
but which does not $o$-converge to infinity. Then $l:=\sup\{d_{K}(y,o)\mid y\in K\text{ with }y\leq z\}$
is finite. Since $K$ is locally finite there exists $i_{0}\in\mathbb{N}$
such that $x_{i}\notin\bigcup_{y\in K:d_{K}(y,o)=l+1}\mathcal{U}_{y}$
for all $i\geq i_{0}$. But then $d_{K}(x_{i},o)\leq l$ for all $i\geq i_{0}$.
In particular, again by the local finiteness of $K$, there exists
a subsequence of $\mathbf{x}$ which is constant. But $\mathbf{x}$
$o$-converges, so $z=y$ for some $y\in K$. This implies the claim. 
\end{proof}

\begin{remark} \label{no-geodesic}
\emph{(a)} It is in general not true that for a connected rooted graph $\left(K,o\right)$ the set $K\subseteq\overline{(K,o)}$ is open. Indeed, if we consider the first graph in Figure \ref{Figure} with the indicated sequence $(z_{i})_{i\in\mathbb{N}}$ of boundary points represented by infinite geodesic paths, then $z_{i}\rightarrow z$. The example in particular demonstrates that the boundary $\partial(K,o)$ is not necessarily compact.

\noindent \emph{(b)} Other than in the context of trees, it is in general not true that for a connected rooted graph $(K,o)$ and an element $x\in K$ the openness of the one point set $\left\{ x\right\}$  implies that $x$ has finite degree. Indeed, consider the second graph in Figure \ref{Figure}. Its vertex $z$ does not have finite degree but the one point set $\left\{ z\right\}$ is open since $\left\{ z\right\} =\mathcal{U}_{z}\cap\mathcal{U}_{z^{\prime}}^{c}$.

\noindent \emph{(c)} Other than for the Gromov compactification of a hyperbolic graph, in general not every element of the bordification $\overline{(K,o)}$ is represented by a (possibly finite) geodesic path; not even in the locally finite case. Consider for instance the sequence $(x_i)_{i \in \mathbb{N}}$ indicated in the third graph of Figure \ref{Figure}. It $o$-converges but the corresponding equivalence class can not be represented by a geodesic path.
\vspace{3mm}

\begin{figure}[h]
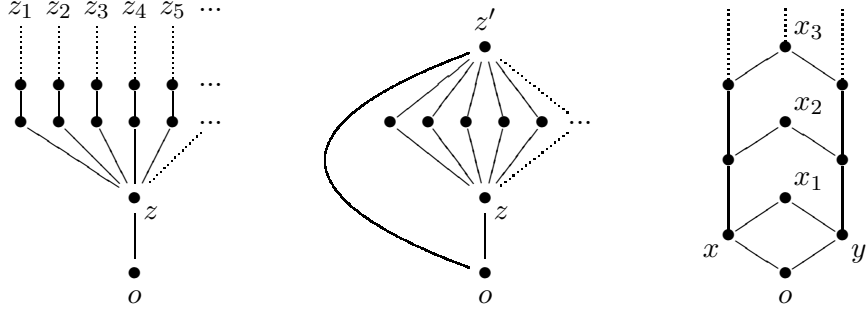

\centerline{
\xygraph{
!{<0cm, 0cm>; <1cm,0cm>: <0cm,1cm>::}
!{(0,0) }*+{{z_1}}="a1"
!{(0.5,0) }*+{{z_2}}="b1"
!{(1,0) }*+{{z_3}}="c1"
!{(1.5,0) }*+{{z_4}}="d1"
!{(2,0) }*+{{z_5}}="e1"
!{(2.5,0) }*+{...}="f1"
!{(0,-1) }*{\bullet}="g1"
!{(0.5,-1) }*{\bullet}="h1"
!{(1,-1) }*{\bullet}="i1"
!{(1.5,-1) }*{\bullet}="j1"
!{(2,-1) }*{\bullet}="k1"
!{(2.5,-1) }*+{...}="l1"
!{(0,-1.5) }*{\bullet}="m1"
!{(0.5,-1.5) }*{\bullet}="n1"
!{(1,-1.5) }*{\bullet}="o1"
!{(1.5,-1.5) }*{\bullet}="p1"
!{(2,-1.5) }*{\bullet}="q1"
!{(2.5,-1.5) }*+{...}="r1"
!{(1.5,-2.5) }*+{\bullet}="s1" *+!LU{z}
!{(1.5,-3.5) }*+{\bullet}="t1" *++!U{o}
!{(4,0) }*{ }
"a1"-@{.}"g1"
"b1"-@{.}"h1"
"c1"-@{.}"i1"
"d1"-@{.}"j1"
"e1"-@{.}"k1"
"g1"-"m1"
"h1"-"n1"
"i1"-"o1"
"j1"-"p1"
"k1"-"q1"
"m1"-"s1"
"n1"-"s1"
"o1"-"s1"
"p1"-"s1"
"q1"-"s1"
"s1"-"t1"
"s1"-@{.}"r1"
}
\xygraph{
!{<0cm, 0cm>; <1cm,0cm>: <0cm,1cm>::}
!{(1.25,-0.5) }*+{\bullet}="a2" *++!D{z^\prime}
!{(0,-1.5) }*{\bullet}="b2"
!{(0.5,-1.5) }*{\bullet}="c2"
!{(1,-1.5) }*{\bullet}="d2"
!{(1.5,-1.5) }*{\bullet}="e2"
!{(2,-1.5) }*{\bullet}="f2"
!{(2.5,-1.5) }*+{...}="g2"
!{(1.25,-2.5) }*+{\bullet}="h2" *+!LU{z}
!{(1.25,-3.5) }*+{\bullet}="i2" *++!U{o}
!{(4,-0.5) }*{ }
"a2"-"b2"
"a2"-"c2"
"a2"-"d2"
"a2"-"e2"
"a2"-"f2"
"a2"-@{.}"g2"
"h2"-"b2"
"h2"-"c2"
"h2"-"d2"
"h2"-"e2"
"h2"-"f2"
"h2"-@{.}"g2"
"h2"-"i2"
"i2"- @/^5pc/ "a2"
}
\xygraph{
!{<0cm, 0cm>; <1cm,0cm>: <0cm,1cm>::}
!{(0,0) }*{}="z4"
!{(0,-0.5) }*{\bullet}="z3" *+!LD{x_3}
!{(0,-1.5) }*{\bullet}="z2" *+!LD{x_2}
!{(0,-2,5) }*{\bullet}="z1" *+!LD{x_1}
!{(0,-3,5) }*{\bullet}="z0" *++!U{o}
!{(-0.75,0) }*{}="a4"
!{(-0.75,-1) }*{\bullet}="a3"
!{(-0.75,-2) }*{\bullet}="a2"
!{(-0.75,-3) }*{\bullet}="a1" *+!RU{x}
!{(0.75,0) }*{}="b4"
!{(0.75,-1) }*{\bullet}="b3"
!{(0.75,-2) }*{\bullet}="b2"
!{(0.75,-3) }*{\bullet}="b1" *+!LU{y}
"z0"-"a1"
"a1"-"a2"
"a2"-"a3"
"a3"-@{.}"a4"
"z0"-"b1"
"b1"-"b2"
"b2"-"b3"
"b3"-@{.}"b4"
"z1"-"a1"
"z1"-"b1"
"z2"-"a2"
"z2"-"b2"
"z3"-"a3"
"z3"-"b3"
"z3"-@{.}"z4"
}
}
\caption{Example \emph{(a)}, \emph{(b)} and \emph{(c)}} \label{Figure}
\end{figure}

\end{remark}

\begin{definition}
Let $(K,o)$ be a connected rooted graph and let \begin{eqnarray} \nonumber \pi:\mathcal{B}(\ell^{2}(K))\rightarrow\mathcal{B}(\ell^{2}(K))/\mathcal{K}(\ell^{2}(K)) \end{eqnarray} be the quotient map where $\mathcal{K}(\ell^{2}(K))$ denotes the compact operators in $\mathcal{B}(\ell^{2}(K))$. For every element $x\in K$ let $P_{x} \in\ell^{\infty}(K)\subseteq\mathcal{B}(\ell^{2}(K))$ be the orthogonal projection onto the subspace
\begin{eqnarray}
\nonumber
\overline{\text{Span}\left\{ \delta_{y}\mid y\in K\text{ with } y\leq x \right\} }\subseteq\ell^{2}(K)\text{.}
\end{eqnarray}
Denote by $\mathcal{D}(K,o)$ the (commutative) unital C$^\ast$-algebra generated by all $P_{x}$, $x\in K$. Note that $P_o=1$.
\end{definition}

\begin{proposition} \label{C*}
Let $(K,o)$ be a connected rooted graph. Then, $\text{Spec}(\mathcal{D}(K,o))\cong\overline{(K,o)}$ where $\text{Spec}(\mathcal{D}(K,o))$ denotes the character spectrum of $\mathcal{D}(K,o)$. In particular, $\overline{(K,o)}$ is a compact Hausdorff space.
Further, $\text{Spec}(\pi(\mathcal{D}(K,o)))\cong \overline{\partial(K,o)}$ where $\text{Spec}(\pi(\mathcal{D}(K,o)))$ denotes the character spectrum of $\pi(\mathcal{D}(K,o))$ and $\overline{\partial(K,o)}$ is the closure of $\partial (K,o)$ in $\overline{(K,o)}$.
\end{proposition}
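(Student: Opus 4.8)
The plan is to read off both homeomorphisms from Gelfand duality, exploiting that every generator $P_{x}$ is a projection, so any character $\chi$ of $\mathcal{D}(K,o)$ satisfies $\chi(P_{x})\in\{0,1\}$ and is determined by the function $x\mapsto\chi(P_{x})$ (as the $P_{x}$ generate the algebra). For a sequence $\mathbf{x}=(x_{i})$ in $K$ that $o$-converges I would look at the evaluation characters $\mathrm{ev}_{y}\in\operatorname{Spec}(\mathcal{D}(K,o))$, $\mathrm{ev}_{y}(P_{x})=\langle P_{x}\delta_{y},\delta_{y}\rangle=\mathbf{1}[x\le y]$. The definition of $o$-convergence says exactly that $\mathrm{ev}_{x_{i}}(P_{x})=\mathbf{1}[x\le x_{i}]$ is eventually constant for every $x$, so $\mathrm{ev}_{x_{i}}$ converges pointwise on the generators; a routine $\varepsilon/3$-argument (using that characters have norm one and that polynomials in the $P_{x}$ are dense) upgrades this to weak$^{\ast}$-convergence to a character $\chi_{\mathbf{x}}$ with $\chi_{\mathbf{x}}(P_{x})=1\Leftrightarrow x\le\mathbf{x}$. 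This gives a map $\Phi\colon\overline{(K,o)}\to\operatorname{Spec}(\mathcal{D}(K,o))$, $[\mathbf{x}]\mapsto\chi_{\mathbf{x}}$, well defined on $\sim_{o}$-classes straight from the definition of $\sim_{o}$.

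Next I would verify $\Phi$ is a homeomorphism. Injectivity is immediate: $\chi_{\mathbf{x}}=\chi_{\mathbf{y}}$ forces $\{x:x\le\mathbf{x}\}=\{x:x\le\mathbf{y}\}$, i.e.\ $\mathbf{x}\sim_{o}\mathbf{y}$. For surjectivity, given $\chi$ I would enumerate $K=\{y_{1},y_{2},\dots\}$ and note that for each $n$ the diagonal projection $a_{n}:=\prod_{j\le n,\,\chi(P_{y_{j}})=1}P_{y_{j}}\cdot\prod_{j\le n,\,\chi(P_{y_{j}})=0}(1-P_{y_{j}})$ has $\chi(a_{n})=1$, hence is nonzero, hence its range meets $K$; choosing $x_{n}$ there makes $\mathrm{ev}_{x_{n}}\to\chi$ and $(x_{n})$ $o$-converge with $\Phi([x_{n}])=\chi$ (one can instead argue via density of $\{\mathrm{ev}_{y}\}$ together with metrizability of the spectrum, as $K$ is countable). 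Finally $\Phi^{-1}(\{\chi:\chi(P_{x})=1\})=\mathcal{U}_{x}$ and $\Phi^{-1}(\{\chi:\chi(P_{x})=0\})=\mathcal{U}_{x}^{c}$, so $\Phi$ carries the defining subbasis bijectively onto a subbasis of the spectrum; a bijection matching subbases is a homeomorphism, so no separate compactness input is needed, and the compact Hausdorff property of $\overline{(K,o)}$ is inherited from the spectrum.

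For the second isomorphism I would use that $\pi$ restricts to a surjective $\ast$-homomorphism $\mathcal{D}(K,o)\to\pi(\mathcal{D}(K,o))$ with kernel the ideal $\mathcal{I}:=\mathcal{D}(K,o)\cap\mathcal{K}(\ell^{2}(K))$, so by Gelfand duality $\operatorname{Spec}(\pi(\mathcal{D}(K,o)))$ is the hull of $\mathcal{I}$, the closed set $\{z\in\overline{(K,o)}:\chi_{z}(a)=0\text{ for all }a\in\mathcal{I}\}$. Since every element of $\mathcal{D}(K,o)$ is a multiplication operator $M_{f}$, one has $\mathcal{I}=\{M_{f}\in\mathcal{D}(K,o):f\in c_{0}(K)\}$. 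The easy inclusion is that each boundary point lies in the hull: for $z=[\mathbf{x}]\in\partial(K,o)$ and $M_{f}\in\mathcal{I}$, $\chi_{z}(M_{f})=\lim_{i}f(x_{i})$, and were this a nonzero $c$, then eventually $x_{i}\in\{x:|f(x)|\ge|c|/2\}$, a finite set, giving a constant subsequence and hence (by Hausdorffness) $z\in K$, contradicting $o$-convergence to infinity. As the hull is closed, $\overline{\partial(K,o)}\subseteq\operatorname{Spec}(\pi(\mathcal{D}(K,o)))$.

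The reverse inclusion is the \emph{main obstacle}. The same finite-support computation is the engine of its contrapositive: if $\chi_{z}(M_{f})\ne0$ for some $f\in c_{0}(K)$, then $z$ equals an evaluation character at a vertex $x^{\ast}\in K$, and the open set $O=\{|f|>|f(x^{\ast})|/2\}$ meets $K$ in a finite set; since $K$ is dense, $O\subseteq\overline{O\cap K}=O\cap K$, forcing $O$ finite and $x^{\ast}$ isolated in $\overline{(K,o)}$, while conversely every isolated vertex carries the compact point-projection $\mathbf{1}_{\{x^{\ast}\}}\in\mathcal{I}$ and thus escapes the hull. This identifies the hull with the set of non-isolated points of $\overline{(K,o)}$, and the remaining, most delicate, step is to match this ``spectral infinity'' with the ``combinatorial infinity'' $\overline{\partial(K,o)}$. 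Here I would feed in Lemma \ref{discreteness}: when $K$ is discrete with $\partial(K,o)=\overline{(K,o)}\setminus K$, the non-isolated points are exactly the accumulation points of $K$, which form the closed set $\overline{\partial(K,o)}$. I expect precisely this last identification to require the structural hypotheses on the graph, as the notion of a point surviving the Calkin quotient need not coincide with $o$-convergence to infinity once $K$ carries vertices of infinite degree.
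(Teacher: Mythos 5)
Your treatment of the first homeomorphism is correct and is essentially the paper's own proof: the same map $[\mathbf{x}]\mapsto\lim_i\langle(\cdot)\delta_{x_i},\delta_{x_i}\rangle$, the same well-definedness and injectivity arguments, and the same surjectivity device (your nonzero diagonal projections $a_n$ are exactly the paper's statement that the intersections $\mathfrak{I}_i$ are non-empty); your subbasis-matching conclusion is an equivalent repackaging of the paper's ``continuous, closed bijection'' step. On the second homeomorphism, however, you and the paper part ways: the paper dismisses it with ``follows in a similar way'', while you give a hull--kernel argument, identifying $\text{Spec}(\pi(\mathcal{D}(K,o)))$ with the hull of $\mathcal{I}=\mathcal{D}(K,o)\cap\mathcal{K}(\ell^2(K))$ inside $\overline{(K,o)}$ and computing that this hull is exactly the set of non-isolated points of $\overline{(K,o)}$. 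That computation is correct, as is the inclusion $\overline{\partial(K,o)}\subseteq\text{Spec}(\pi(\mathcal{D}(K,o)))$.

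The step you could not complete --- that every non-isolated point lies in $\overline{\partial(K,o)}$ --- cannot be completed, because the second claim of Proposition \ref{C*} is false for general connected rooted graphs; your suspicion about vertices of infinite degree is exactly right, and the paper's own second graph in Figure \ref{Figure} (Remark \ref{no-geodesic}(b)) is a counterexample. That graph has diameter $2$, so no sequence $o$-converges to infinity and $\partial(K,o)=\emptyset$; yet the sequence of unlabelled middle vertices, call them $(m_i)_{i\in\mathbb{N}}$, $o$-converges to a point of $\overline{(K,o)}$ whose lower set is $\{o,z,z'\}$, a point lying in neither $K$ nor $\partial(K,o)$ and not isolated. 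Concretely, every $f\in\mathcal{D}(K,o)$ has a limit along $(m_i)$, and $f$ is compact precisely when this limit is $0$ (since $K\setminus\{m_i\mid i\in\mathbb{N}\}$ is finite); hence $\pi(\mathcal{D}(K,o))\cong\mathbb{C}$ has a one-point spectrum, while $\overline{\partial(K,o)}=\emptyset$. So the statement needs an extra hypothesis: your appeal to Lemma \ref{discreteness} closes the argument precisely when $K$ is locally finite (then every vertex is isolated, so the non-isolated points form the closed set $\overline{(K,o)}\setminus K=\partial(K,o)$), and that is the only case the paper actually uses later (Remark \ref{remark}(c) and the Coxeter group applications, where Cayley graphs are locally finite). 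In short: part one matches the paper; part two is a different and more careful route, and the ``gap'' you flagged is a genuine counterexample to the proposition as stated, not a missing idea in your argument.
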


\begin{proof}
Let $\mathbf{x}$ be a sequence in $K$ which $o$-converges. It is clear that $\lim_i \langle(\cdot)\delta_{{x}_{i}},\delta_{{x}_{i}}\rangle\in\text{Spec}(\mathcal{D}(K,o))$ is well-defined where the limit is taken in the weak-$\ast$ topology. Define a map $\psi:\overline{(K,o)}\rightarrow\text{Spec}(\mathcal{D}(K,o))$ by $z\mapsto\lim\langle(\cdot)\delta_{x_{i}},\delta_{x_{i}}\rangle$ for $z\in \overline{\left(K,o\right)}$ where $\mathbf{x}$ is a sequence representing $z$. The image of $z$ does not depend on the choice of the representing sequence for $z$. Indeed, let $\mathbf{x}$ and $\mathbf{y}$ be sequences in $K$ which $o$-converge and are equivalent to each other. For all $x\in K$ one has
\begin{eqnarray} \nonumber
\lim_{i}\langle P_{x}\delta_{x_{i}},\delta_{x_{i}}\rangle=\begin{cases}
1 & \text{, if }x\leq\mathbf{x}\\
0 & \text{, else}
\end{cases}=\begin{cases}
1 & \text{, if }x\leq \mathbf{y}\\
0 & \text{, else}
\end{cases}=\lim_{i}\langle P_{x}\delta_{y_{i}},\delta_{y_{i}}\rangle\text{,}
\end{eqnarray}
implying that $\lim\langle(\cdot)\delta_{x_{i}},\delta_{x_{i}}\rangle$ and $\lim\langle(\cdot)\delta_{y_{i}},\delta_{y_{i}}\rangle$ coincide on $\ast\text{-Alg}\left(\left\{ P_{x}\mid x\in K\right\} \right)$. Hence, $\psi$ is well-defined.

We proceed by showing that $\psi$ is continuous, injective, surjective and closed.
\begin{itemize}
\item \emph{Continuity:} The continuity follows in the same way as the well-definedness above.
\item \emph{Injectivity:} Let $\mathbf{x}$ and $\mathbf{y}$ be sequences in $K$ which $o$-converge and which are not equivalent to each other. Without loss of generality we can assume that there exists $x\in K$ with $x\leq\mathbf{x}$ and $x\nleq\mathbf{y}$. Then,
\begin{eqnarray} \nonumber
\qquad \qquad \lim_{i}\langle P_{x}\delta_{x_{i}},\delta_{x_{i}}\rangle=1 \: \text{ and } \: \lim_{i}\langle P_{x}\delta_{y_{i}},\delta_{y_{i}}\rangle=0\text{,}
\end{eqnarray}
which implies that $\psi\left(\left[\mathbf{x}\right]\right)\neq\psi\left(\left[\mathbf{y}\right]\right)$.
\item \emph{Surjectivity:} Let $\chi\in\text{Spec}(\mathcal{D}(K,o))$ be
a character on $\mathcal{D}(K,o)$. Define the set $\mathcal{S}:=\{x\in K\mid\chi(P_{x})=1\}$
and choose an enumeration $y_{1},y_{2},...$ of $\mathcal{S}$ (where
we assume that the sequence becomes constant if $\mathcal{S}$ is
finite) and an enumeration $y_{1}^{\prime},y_{2}^{\prime},...$ of
$K\setminus\mathcal{S}$ (where we assume that the sequence becomes
constant if $K\setminus\mathcal{S}$ is finite). For every $i\in\mathbb{N}$
the intersection $\mathfrak{I}_{i}=K\cap(\mathcal{U}_{y_{1}}\cap\mathcal{U}_{y_{1}^{\prime}}^{c})\cap...\cap(\mathcal{U}_{y_{i}}\cap\mathcal{U}_{y_{i}^{\prime}}^{c})$
must be non-empty because otherwise $P_{y_{1}}(1-P_{y_{1}^{\prime}})...P_{y_{i}}(1-P_{y_{i}^{\prime}})=0$
and hence 
\[
\qquad 1=\chi(P_{y_{1}})\chi(1-P_{y_{1}})...\chi(P_{y_{i}})\chi(1-P_{y_{i}^{\prime}})=\chi(P_{y_{1}}(1-P_{y_{1}^{\prime}})...P_{y_{i}}(1-P_{y_{i}^{\prime}}))=0\text{.}
\]
So choose for every $i\in\mathbb{N}$ an element $x_{i}\in\mathfrak{I}_{i}$
and consider the sequence $\mathbf{x}:=(x_{i})_{i\in\mathbb{N}}$
in $K$. By construction the
sequence $o$-converges and for $z:=[\mathbf{x}]\in \overline{(K,o)}$
we have $\psi(z)=\chi$. The surjectivity
follows.
\item \emph{Closedness:} It suffices to show that for every $x\in K$ the sets $\psi(\mathcal{U}_{x})$ and $\psi(\mathcal{U}_{x}^{c})$ are closed in $\text{Spec}(\mathcal{D}(K,o))$. Fix $x\in K$, let $(z^{i})_{i\in I}\subseteq\mathcal{U}_{x}$ be a net and let $z\in\overline{\left(K,o\right)}$ with $\psi(z^{i})\rightarrow\psi(z)$. We have $\left(\psi(z)\right)(P_{x})=\lim\left(\psi(z^{i})\right)(P_{x})=1$, so $z\in\mathcal{U}_{x}$. Hence, $\psi(\mathcal{U}_{x})$ is closed in $\text{Spec}(\mathcal{D}(K,o))$. The closedness of $\psi(\mathcal{U}_{x}^{c})$ follows in the same way.
\end{itemize}

We have shown that $\psi$ is a homeomorphism. The existence of a homeomorphism between $\text{Spec}(\pi(\mathcal{D}(K,o)))$ and $\overline{\partial(K,o)}$ follows in a similar way.
\end{proof}

Motivated by Proposition \ref{C*} we will often speak about $\overline{(K,o)}$ as the \emph{compactification of the graph $K$}.

\begin{remark} \label{remark}
\emph{(a)} The maps in Proposition \ref{C*} induce isomorphisms $\mathcal{D}(K,o)\cong C(\overline{(K,o)})$ via $P_{x}\mapsto\chi_{\mathcal{U}_{x}}$ and $\pi(\mathcal{D}(K,o))\cong C(\overline{\partial(K,o)})$ via $\pi(P_{x}) \mapsto\chi_{\mathcal{U}_{x}\cap \overline{\partial(K,o)}}$, where $\chi_{\mathcal{U}_{x}}$ (resp. $\chi_{\mathcal{U}_{x}\cap \overline{\partial(K,o)}}$) denotes the characteristic function on $\mathcal{U}_{x}$ (resp. $\mathcal{U}_{x}\cap \overline{\partial(K,o)}$).\\
\noindent \emph{(b)} The C$^\ast$-algebra $\mathcal{D}(K,o)$ appearing in Proposition \ref{C*} is separable. This implies that the topological space $\overline{(K,o)}$ is metrizable. The same holds for $\overline{\partial(K,o)}$ and hence for $\partial(K,o)$.\\
\noindent \emph{(c)} If $(K,o)$ is a locally finite connected rooted graph, then Lemma \ref{discreteness} and Proposition \ref{C*} imply that $\partial(K,o) = \overline{\partial(K,o)} \cong  \text{Spec}(\pi(\mathcal{D}(K,o)))$ is a (metrizable) compact Hausdorff space.
\end{remark}

As mentioned in Remark \ref{no-geodesic}, in general not every element of the compactification $\overline{(K,o)}$ is represented by a (possibly finite) geodesic path. Proposition \ref{characterization}  characterizes, when this is the case. Its proof requires the following simple lemma.

\begin{lemma} \label{monotone}
Let $(K,o)$ be a connected rooted graph and $(x_{i})_{i\in\mathbb{N}}\subseteq K$ a sequence with $x_{1}\leq x_{2}\leq...$. Then the sequence converges to a point in $\overline{(K,o)}$ which can be represented by a (possibly finite) geodesic path.
\end{lemma}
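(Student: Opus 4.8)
The plan is to exploit monotonicity, which makes the convergence part nearly automatic, and then to build the representing geodesic by concatenating the geodesic segments joining consecutive terms of the sequence.

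First I would record the basic consequence of monotonicity together with transitivity: for a fixed $x\in K$, if $x\leq x_{i_0}$ for some $i_0$ then $x\leq x_{i_0}\leq x_i$, hence $x\leq x_i$ for all $i\geq i_0$. Thus for every $x\in K$ exactly one of the two alternatives in the definition of $o$-convergence occurs (either $x\leq x_i$ for all large $i$, or $x\nleq x_i$ for every $i$), so $\mathbf{x}=(x_i)_i$ $o$-converges and determines a point $z:=[\mathbf{x}]\in\overline{(K,o)}$. Moreover, by Lemma \ref{poset}, $\{y\in K\mid y\leq z\}=\bigcup_i\{y\in K\mid y\leq x_i\}=:D$. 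To obtain topological convergence $x_i\to z$ it suffices, since convergence may be tested on the generating subbase, to check the subbasic sets $\mathcal{U}_x$ and $\mathcal{U}_x^c$ containing $z$: if $x\leq z$ then $x\in D$, so $x\leq x_i$ eventually; if $x\nleq z$ then $x\notin D$, so $x\nleq x_i$ for every $i$. Both are immediate from the description of $D$.

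For representability I would construct $\alpha$ inductively. Fix a geodesic $\beta^{(1)}$ from $o$ to $x_1$; having a geodesic $\beta^{(i)}$ from $o$ to $x_i$, use $x_i\leq x_{i+1}$ to append a geodesic segment from $x_i$ to $x_{i+1}$, obtaining a path $\beta^{(i+1)}$ from $o$ to $x_{i+1}$ that contains $\beta^{(i)}$ as an initial segment. Since $x_i$ lies on a geodesic from $o$ to $x_{i+1}$, we have $d_K(o,x_{i+1})=d_K(o,x_i)+d_K(x_i,x_{i+1})$, so $\beta^{(i+1)}$ has length $d_K(o,x_{i+1})$ and is therefore geodesic. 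Let $\alpha$ be the increasing union of the $\beta^{(i)}$. Any two vertices of $\alpha$ lie on a common prefix $\beta^{(i)}$, which is geodesic, so $\alpha$ is geodesic; if the lengths $d_K(o,x_i)$ are bounded they are eventually constant, forcing the sequence to be eventually constant and $\alpha$ to be a finite geodesic (extended by repetition), while if they are unbounded $\alpha$ is an infinite geodesic ray.

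It then remains to identify $[\alpha]$ with $z$. Each vertex $\alpha_n$ lies on some $\beta^{(i)}$, hence on a geodesic from $o$ to $x_i$, so $\alpha_n\leq x_i$ and thus $\alpha_n\in D$; as $D$ is downward closed and the $\alpha_n$ increase, $\{y\mid y\leq\alpha\}=\bigcup_n\{y\mid y\leq\alpha_n\}\subseteq D$. Conversely each $x_i$ is one of the vertices $\alpha_n$, so any $y\in D$, satisfying $y\leq x_i$ for some $i$, satisfies $y\leq\alpha_n$; hence $D\subseteq\{y\mid y\leq\alpha\}$. Therefore $\{y\mid y\leq\alpha\}=\{y\mid y\leq z\}$, which by the equivalence relation defining $\overline{(K,o)}$ gives $[\alpha]=z$. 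The only genuinely technical point is verifying that the concatenation $\alpha$ is geodesic (via the telescoping length identity); everything else is direct bookkeeping of the order-theoretic definitions, which is why the lemma is \emph{easy to check}.
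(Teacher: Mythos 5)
Your proposal is correct and follows essentially the same route as the paper: both build the representing path $\alpha$ by concatenating geodesic segments $[o,x_1][x_1,x_2]\cdots$ (using $x_i\leq x_{i+1}$ and the telescoping identity $d_K(o,x_{i+1})=d_K(o,x_i)+d_K(x_i,x_{i+1})$ to see each partial concatenation is geodesic), and both verify convergence by checking the subbasic sets $\mathcal{U}_y$ and $\mathcal{U}_y^c$ via the order-theoretic equivalence between $\{y\mid y\leq\alpha\}$ and $\bigcup_i\{y\mid y\leq x_i\}$. Your extra intermediate steps (first establishing that $(x_i)$ $o$-converges to define $z$, then identifying $[\alpha]=z$) are a harmless reorganization of the same argument, not a different method.
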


\begin{proof}
Choose a geodesic path starting in $o$ and ending in $x_{1}$ and denote it by $\left[o,x_{1}\right]$. Since $x_{1}\leq x_{2}$, there exists a geodesic path starting in $o$ and ending in $x_{2}$ which passes $x_{1}$. Denote by $\left[x_{1},x_{2}\right]$ its tail starting in $x_{1}$ and ending in $x_{2}$. Further, let $\left[o,x_{1}\right]\left[x_{1},x_{2}\right]$ be the concatenation of $\left[o,x_{1}\right]$ and $\left[x_{1},x_{2}\right]$. It is geodesic as well. Proceeding like this we get a geodesic path $\alpha:=\left[o,x_{1}\right]\left[x_{1},x_{2}\right]\left[x_{2},x_{3}\right]...$. If the path is finite, then the convergence of the sequence $(x_{i})_{i\in\mathbb{N}}$ is clear, so assume that $\alpha$ is infinite. We claim that $x_{i}\rightarrow\left[\alpha\right]$. Indeed, if $y\leq\left[\alpha\right]$, then $y\leq\alpha_{i}$ for all large enough $i$ and hence $y\leq x_{n}$ for all large enough $n$. If $y\nleq\left[\alpha\right]$, then $y\nleq\alpha_{i}$ for all large enough $i$ and then also $y\nleq x_{n}$ for all large enough $n$. Hence, $x_{i}\rightarrow\left[\alpha\right]$.
\end{proof}

\begin{proposition} \label{characterization}
Let $(K,o)$ be a connected rooted graph. Then the following statements
are equivalent:
\begin{enumerate}
\item Every element in $\overline{(K,o)}$ is represented by a (possibly
finite) geodesic path in $K$;
\item For every clopen subset $S\subseteq\overline{(K,o)}$ the number of
minimal elements in $S$ is finite;
\item For every $x,y\in K$ the number of minimal elements in $\mathcal{U}_{x}\cap\mathcal{U}_{y}$
is finite.
\end{enumerate}
In particular, if $(K,o)$ satisfies one (and hence all) of the conditions
above, then $\partial(K,o)=\overline{(K,o)}\setminus K$.
\end{proposition}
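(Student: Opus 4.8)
The plan is to prove the equivalences in the cyclic order $(1) \Rightarrow (3) \Rightarrow (2) \Rightarrow (1)$, and then extract the final assertion as a consequence of $(1)$. Throughout I would use freely the identification $\overline{(K,o)} \cong \text{Spec}(\mathcal{D}(K,o))$ from Proposition \ref{C*} and the description of the extended graph order from Lemma \ref{poset}, together with the clopen sets $\mathcal{U}_x$ generating the topology. The key conceptual link is that a geodesic path $\alpha$ starting at $o$ yields, by Lemma \ref{monotone}, a monotone increasing chain $\alpha_0 \leq \alpha_1 \leq \dots$ whose limit $[\alpha]$ has the property that the set $\{x \in K \mid x \leq [\alpha]\}$ is totally ordered; conversely, if the elements below a point $z$ form a chain, one can realize $z$ by a geodesic path. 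So condition $(1)$ should be reformulated as: for every $z \in \overline{(K,o)}$, the set $\{x \in K \mid x \leq z\}$ is totally ordered under $\leq_o$.

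For $(1) \Rightarrow (3)$ I would argue by contraposition. Suppose $\mathcal{U}_x \cap \mathcal{U}_y$ has infinitely many minimal elements $m_1, m_2, \dots$. The idea is to build a sequence in $K$ that $o$-converges to a point $z$ lying above all (or cofinally many) of these minimal elements, so that $\{w \in K \mid w \leq z\}$ fails to be a chain, contradicting $(1)$. Concretely, since each $m_k \geq x$ and each $m_k \geq y$, the point $z$ will sit above both $x$ and $y$ along incomparable branches; passing to a suitable subsequence of the $m_k$ using a diagonal/compactness argument (here Remark \ref{remark}(b) on metrizability or the compactness of $\overline{(K,o)}$ from Proposition \ref{C*} is useful to extract a convergent subnet) produces a limit point whose down-set contains two incomparable elements. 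That obstructs representability by a single geodesic, giving the contrapositive of $(1)$.

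For $(3) \Rightarrow (2)$, I would observe that every clopen subset $S \subseteq \overline{(K,o)}$ is, by the definition of the subbasic topology and compactness, a finite Boolean combination of the sets $\mathcal{U}_x$; writing $S$ in disjunctive normal form as a finite union of finite intersections $\bigcap \mathcal{U}_{x_j} \cap \bigcap \mathcal{U}_{y_k}^c$, it suffices to bound the number of minimal elements of each such piece, and each is controlled by a finite intersection of the form $\mathcal{U}_x \cap \mathcal{U}_y$ (a single $\mathcal{U}_x$ has $x$ itself as its unique minimal element, and intersecting with complements $\mathcal{U}_y^c$ only removes elements). The finiteness in $(3)$ then propagates to $S$. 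For $(2) \Rightarrow (1)$, taking $S = \overline{(K,o)}$ itself (which is clopen) shows there are finitely many minimal elements; combined with the meet-semilattice structure and an induction on word length, one shows every down-set $\{x \mid x \leq z\}$ is a chain and hence $z$ is represented by a geodesic via Lemma \ref{monotone}. Finally, for the last sentence, assuming $(1)$: any $z \in K$ is obviously represented by a finite geodesic, and conversely if $z \in \overline{(K,o)} \setminus K$ its representing geodesic must be infinite (a finite geodesic, extended by repetition, represents a vertex of $K$), so $\sup_{x \leq z} d_K(x,o) = \infty$, placing $z$ in $\partial(K,o)$; thus $\partial(K,o) = \overline{(K,o)} \setminus K$.

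I expect the main obstacle to be the implication $(1) \Rightarrow (3)$, specifically the extraction of a limit point whose down-set genuinely contains two incomparable elements from infinitely many minimal elements of $\mathcal{U}_x \cap \mathcal{U}_y$. The delicate point is that the minimal elements could a priori ``escape to infinity'' in a way that their limit collapses to a single geodesic; ruling this out requires carefully exploiting that each $m_k$ dominates both $x$ and $y$ while being minimal, so that the branches through $x$ and through $y$ remain distinct in the limit. Organizing this via the meet operation (the meet of two incomparable minimal elements, which exists by the complete meet-semilattice structure when a lower bound such as $o$ is present) should make the incomparability in the limit rigorous.
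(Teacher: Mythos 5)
Your proposal rests on a false reformulation of condition (1), and this breaks the whole cyclic scheme. You claim that ``$z$ is represented by a geodesic path'' is equivalent to ``the down-set $\left\{ x\in K\mid x\leq z\right\}$ is totally ordered''. This is not true: if $z=[\alpha]$ for a geodesic path $\alpha$, then $x\leq z$ holds whenever $x\leq\alpha_{i}$ for all large $i$, so the down-set of $z$ is the union of the down-sets of the vertices $\alpha_{i}$, and these typically contain incomparable pairs. A four-cycle $o,x,y,z$ with edges $(o,x),(o,y),(x,z),(y,z)$ already gives a counterexample: every point of $\overline{(K,o)}=K$ is represented by a finite geodesic, yet $x$ and $y$ are incomparable elements below $z$; the same phenomenon occurs for boundary points of Cayley graphs of Coxeter groups, which satisfy (1) by Corollary \ref{cor}. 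Consequently your contrapositive argument for $(1)\Rightarrow(3)$ --- producing a limit point whose down-set contains two incomparable elements --- does not contradict (1) at all, and your $(2)\Rightarrow(1)$ step collapses for the same reason. That step has further problems: taking $S=\overline{(K,o)}$ yields no information, since $o$ is always the unique minimal element of the whole space, and your appeal to ``the meet-semilattice structure'' and ``induction on word length'' imports hypotheses the proposition does not make (the meet-semilattice property is precisely the \emph{extra} assumption of Corollary \ref{cor}, and word length is Coxeter-specific). Finally, in $(3)\Rightarrow(2)$ the claim that ``intersecting with complements $\mathcal{U}_{y}^{c}$ only removes elements'' and therefore preserves finiteness of minimal elements is unjustified: deleting elements from a poset can create new minimal elements, and condition (3) speaks only of two-fold intersections of the positive sets $\mathcal{U}_{x}$, so the reduction of a general Boolean combination to this case needs an argument you do not give.

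For comparison, the paper's proof runs the cycle in the other direction and avoids order-theoretic reformulations of (1) entirely. For $(1)\Rightarrow(2)$ it argues by compactness: if a clopen $S$ had infinitely many minimal elements, each boundary point of $S$ lies above some vertex of $K\cap S$ (as $S$ is open and boundary points are represented by geodesics through $o$), so all minimal elements lie in $K$; then $\bigcup_{x\in T}\mathcal{U}_{x}$ is an open cover of the compact set $S$ with no finite subcover. The implication $(2)\Rightarrow(3)$ is immediate since $\mathcal{U}_{x}\cap\mathcal{U}_{y}$ is clopen. For $(3)\Rightarrow(1)$, given $z$ one enumerates $\mathcal{S}=\left\{ x\in K\mid x\leq z\right\}$ as $y_{1},y_{2},\dots$ and inductively builds an increasing chain $x_{1}\leq x_{2}\leq\dots$ below $z$ with $y_{i}\leq x_{i+1}$, using at each step that $\mathcal{U}_{x_{i}}\cap\mathcal{U}_{y_{i}}$ is covered by the sets $\mathcal{U}_{y}$ over its finitely many minimal elements $y$, so one of these satisfies $y\leq z$; Lemma \ref{monotone} then produces a geodesic representative of $z$. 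This inductive chain construction is the key idea missing from your proposal.
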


\begin{proof}
``$\text{(1)}\Rightarrow\text{(2)}$'' Assume that
every element in $\overline{(K,o)}$ is represented by a (possibly
finite) geodesic path in $K$ and that there exists a clopen subset
$S\subseteq\overline{(K,o)}$ of $K$ for which $\#T=\infty$, where
$T:=\left\{ z\in S\mid z\text{ minimal element in }S\right\} $. By
the first assumption we have $\overline{(K,o)}=K\cup\partial(K,o)$.
Further, for every boundary point $z\in S\cap\partial(K,o)$ represented
by an infinite geodesic path $\alpha=(\alpha_{i})_{i\in\mathbb{N}}$
with $\alpha_{0}=o$ we find $i\in\mathbb{N}$ with $\alpha_{i}\in S$ (as $S$ is open).
But then $\alpha_{i}\leq z$ and hence $z\notin T$. This implies
that $T\subseteq K$, so $\bigcup_{x\in T}\mathcal{U}_{x}$ is an
infinite open cover of $S$ which has no finite subcover. But by the
compactness of $\overline{(K,o)}$ the subset $S$ must be compact
as well. This leads to a contradiction. 

``$\text{(2)}\Rightarrow\text{(3)}$'' This implication is clear.

``$\text{(3)}\Rightarrow\text{(1)}$'' Assume that for every $x,y\in K$
the number of minimal elements in $\mathcal{U}_{x}\cap\mathcal{U}_{y}$
is finite and let $z\in\overline{(K,o)}$. Define the subset $\mathcal{S}:=\left\{ x\in K\mid x\leq z\right\} $
of $K$ and choose a (possibly finite) enumeration $y_{1},y_{2},...$
of $\mathcal{S}$. We inductively define elements $x_{1}\leq x_{2}\leq...$
in $\mathcal{S}$ with $y_{i}\leq x_{i+1}<z$ for all $i$. Set $x_{1}:=o\in\mathcal{S}$
and assume that for $i\in\mathbb{N}$ elements $x_{1},...,x_{i}\in\mathcal{S}$
with $o=x_{1}\leq...\leq x_{i}$ and $y_{j}\leq x_{j+1}<z$ for $j=1,...,i-1$
have been defined. The intersection $K\cap\mathcal{U}_{x_{i}}\cap\mathcal{U}_{y_{i}}$
must be non-empty and $\mathcal{U}_{x_{i}}\cap\mathcal{U}_{y_{i}}\subseteq\bigcup_{y\in\mathcal{U}_{x_{i}}\cap\mathcal{U}_{y_{i}}\text{ minimal}}\mathcal{U}_{y}$
where the union is finite by our assumption. We hence find $y\in\mathcal{U}_{x_{i}}\cap\mathcal{U}_{y_{i}}$
with $z\in\mathcal{U}_{y}$. Set $x_{i+1}:=y$, then this element
satisfies the condition $x_{1}\leq...\leq x_{i+1}$ and $y_{i}\leq x_{i+1}<z$.
Now, Lemma \ref{monotone} implies that the sequence $(x_{i})_{i\in\mathbb{N}}$
converges to a boundary point $z^{\prime}$ which can be represented
by an infinite geodesic path. By construction, for every $x\in\mathcal{S}$
one has $x\leq z^{\prime}$ and for $x\in K\setminus\mathcal{S}$
one has $x\nleq z^{\prime}$. Hence, $z^{\prime}=z$.

We have shown the equivalence of the statements (1), (2) and (3).
If $(K,o)$ satisfies one (hence all) of these conditions, it is clear
that $\partial(K,o)=\overline{(K,o)}\setminus K$. The claim follows.
\end{proof}

\begin{remark} \label{rootdependence}
\noindent Let $(K,o)$ be a connected rooted graph. One can show that the set of equivalence classes of infinite geodesic paths in $K$ does not depend on the choice of the root $o\in K$. Indeed, let $o^{\prime}\in K$ be a second root. Assume that $\alpha$, $\beta$ are infinite geodesic paths which are equivalent with respect to $o$. One finds $M\in\mathbb{N}$ such that for all $n\geq M$ there exist $k_{n},l_{n}\in\mathbb{N}$ with $\alpha_{n}\leq_{o}\beta_{k_{n}}\leq_{o}\alpha_{l_{n}}$. That in particular implies that we find a geodesic path starting in $\alpha_{n}$, passing $\beta_{k_{n}}$ and ending in $\alpha_{l_{n}}$. Denote this path by $\left[\alpha_{n},\alpha_{l_{n}}\right]$. Now, there is a geodesic path $\alpha^{\prime}$ starting in $o^{\prime}$ which eventually flows into $\alpha$, i.e. there exist $N\in\mathbb{N}$, $i\in\mathbb{Z}$ such that $\alpha_{n}^{\prime}=\alpha_{i+n}$ for all $n\geq N$ (see e.g. \cite[Lemma E.2]{BrownOzawa}). For $n\geq N-i$ write $\left[o^{\prime},\alpha_{n}\right]$ for the corresponding head of this path starting in $o^{\prime}$ and ending in $\alpha_{n}$. Then, for $n\geq\max\left\{ M,N+i\right\}$  we have that the concatenation $\left[o^{\prime},\alpha_{n}\right]\left[\alpha_{n},\alpha_{l_{n}}\right]$ is a geodesic path starting in $o^{\prime}$, passing $\alpha_{n}$, passing $\beta_{k_{n}}$ and ending in $\alpha_{l_{n}}$. We get that $\alpha_{n}\leq_{o^{\prime}}\beta_{k_{n}}\leq_{o^{\prime}}\alpha_{l_{n}}$ for all $n\geq\max\left\{ M,N+i\right\}$. It is then obvious that $\alpha$ and $\beta$ are equivalent with respect to $o^{\prime}$. However, even though the set of equivalence classes of infinite geodesic paths in $K$ does not depend on the choice of the root $o\in K$, the topology of $\overline{(K,o)}$ can; even in the setting of Proposition \ref{characterization}. Consider for instance the third graph in Figure \ref{Figure}. Then the limit of the indicated sequence $(x_i)_{i \in \mathbb{N}} \subseteq K$ depends on whether one views it as a sequence in $\overline{(K,x)}$ or as a sequence in $\overline{(K,y)}$. Note that the connected rooted graphs $(K,x)$ and $(K,y)$ satisfy the equivalent conditions in Proposition \ref{characterization} whereas $(K,o)$ does not.
\end{remark}

In general, the graph order of a connected rooted graph $(K,o)$ does not necessarily define a (complete) meet-semilattice. However, the graphs that we are mainly interested in in this paper satisfy this condition (see Example \ref{Example}). The meet-semilattice property has the technical advantage that $\mathcal{U}_{x}\cap\mathcal{U}_{y}=\mathcal{U}_{x\vee y}$, $P_{x}P_{y}=P_{x\vee y}$ if $\left\{ x,y\right\}$  has an upper bound and $\mathcal{U}_{x}\cap\mathcal{U}_{y}=\emptyset,P_{x}P_{y}=0$ else. This in particular implies that
\[
\ast\text{-Alg}\left(\left\{ P_{x}\mid x\in K\right\} \right)=\text{Span}\left(\left\{ P_{x}\mid x\in K\right\} \right)\text{.}
\]

\begin{example} \label{Example}
\emph{(a)} The graph orders of connected rooted trees define complete meet-semilattices.\\
\noindent \emph{(b)} Let $(W,S)$ be a Coxeter group, let $\text{Cay}(W,S)$ be the Cayley graph of $W$ with respect to the generating set $S$ and identify elements in $W$ with the corresponding vertices in the graph. It is easy to check that under this identification the graph order of the connected rooted graph $(\text{Cay}(W,S),e)$ coincides with the weak right Bruhat order of $(W,S)$. In particular, by  \cite[Theorem 3.2.1]{combinatorics}, the graph order of $(\text{Cay}(W,S),e)$ defines a complete meet-semilattice.
\end{example}

In combination with the discussion above Proposition \ref{characterization} immediately implies the following statement.

\begin{corollary} \label{cor}
Let $(K,o)$ be a connected rooted graph
whose graph order defines a complete meet-semilattice. Then $\partial(K,o)=\overline{(K,o)}\setminus K$
and every element in $\partial(K,o)$ is represented by an infinite
geodesic path in $K$.
\end{corollary}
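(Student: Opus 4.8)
The plan is to show that the complete meet-semilattice hypothesis forces condition (3) of Proposition \ref{characterization}, and then to read off both assertions of the corollary from that proposition together with the definition of the boundary.

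First I would exploit the observation recorded in the discussion preceding Example \ref{Example}: when the graph order is a complete meet-semilattice, for $x,y\in K$ one has $\mathcal{U}_{x}\cap\mathcal{U}_{y}=\mathcal{U}_{x\vee y}$ whenever $\{x,y\}$ admits an upper bound, and $\mathcal{U}_{x}\cap\mathcal{U}_{y}=\emptyset$ otherwise. (The join $x\vee y$ exists in the first situation by the lemma on complete meet-semilattices.) In the nonempty case the set $\mathcal{U}_{x\vee y}=\{z\in\overline{(K,o)}\mid x\vee y\leq z\}$ has $x\vee y$ as its least element, hence a single minimal element; in the empty case it has none. Thus for every pair $x,y\in K$ the number of minimal elements in $\mathcal{U}_{x}\cap\mathcal{U}_{y}$ is at most one, in particular finite, which is exactly condition (3) of Proposition \ref{characterization}.

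Having verified (3), I would invoke Proposition \ref{characterization} directly: it yields that every element of $\overline{(K,o)}$ is represented by a (possibly finite) geodesic path and that $\partial(K,o)=\overline{(K,o)}\setminus K$, which is the first assertion. It then remains to upgrade ``possibly finite'' to ``infinite'' for boundary points. For this I would note that a finite geodesic path, extended in the standard way by repeating its endpoint, gives an eventually constant sequence and therefore represents a vertex of $K$; such a sequence does not $o$-converge to infinity. Since any $z\in\partial(K,o)$ does $o$-converge to infinity and lies in $\overline{(K,o)}\setminus K$, its representing geodesic path cannot be finite, hence must be infinite, giving the second assertion.

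The only genuinely substantive step is the reduction to condition (3); everything after that is a direct appeal to Proposition \ref{characterization} and to the definition of $o$-convergence to infinity. Accordingly I expect no real obstacle beyond carefully treating the empty-intersection case when confirming the bound on the number of minimal elements.
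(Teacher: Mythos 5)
Your proof is correct and follows the paper's own route: the paper likewise combines the meet-semilattice discussion (which gives that $\mathcal{U}_{x}\cap\mathcal{U}_{y}$ is either empty or equals $\mathcal{U}_{x\vee y}$, hence has at most one minimal element, verifying condition (3)) with Proposition \ref{characterization}, the only difference being that you spell out the routine final step that a finite geodesic path represents a vertex of $K$ and so cannot represent a point of $\partial(K,o)$.
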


\begin{theorem} \label{universal}
Let $(K,o)$ be a connected rooted graph for which the graph order defines a complete meet-semilattice. Then, $\mathcal{D}(K,o)$ is the universal C$^\ast$-algebra generated by projections $(P_{x})_{x\in K}$ with $P_{x}P_{y}=P_{x\vee y}$ for all $x,y\in K$ where we will by convention assume that $P_{x\vee y}=0$ if the join $x\vee y$ does not exist.
\end{theorem}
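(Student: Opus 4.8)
The plan is to realize $\mathcal{D}(K,o)$ as the function-algebra incarnation of the universal object and then to match characters. Write $A$ for the universal unital C$^\ast$-algebra generated by projections $(p_x)_{x\in K}$ subject to $p_xp_y=p_{x\vee y}$, with the convention $p_{x\vee y}=0$ when the join fails. Since the generators are projections, hence of norm at most one, $A$ exists; and since $p_xp_y=p_{x\vee y}=p_{y\vee x}=p_yp_x$, the algebra $A$ is commutative, so $A\cong C(\mathrm{Spec}(A))$. Moreover $p_o$ acts as a unit on every generator (because $o\le x$ forces $o\vee x=x$), whence $p_o=1$. The family $(P_x)_{x\in K}$ in $\mathcal{D}(K,o)$ satisfies exactly these relations (this was recorded in the discussion preceding Example \ref{Example}), and the $P_x$ generate $\mathcal{D}(K,o)$; so there is a canonical surjective $\ast$-homomorphism $\rho\colon A\to\mathcal{D}(K,o)$, $p_x\mapsto P_x$. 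It remains to prove that $\rho$ is injective. As $\rho$ is a surjection of commutative C$^\ast$-algebras, its transpose $\rho^{*}\colon\mathrm{Spec}(\mathcal{D}(K,o))\to\mathrm{Spec}(A)$ is a continuous injection with closed image, and $\rho$ is an isomorphism precisely when $\rho^{*}$ is onto. Thus the goal reduces to showing that every character of $A$ factors through $\rho$.

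Next I would read off the combinatorial shape of a character. Fix $\chi\in\mathrm{Spec}(A)$ and set $F:=\{x\in K\mid\chi(p_x)=1\}$; since each $p_x$ is a projection and $\chi$ is multiplicative, $\chi(p_x)\in\{0,1\}$, so $F$ records all of $\chi$. From $p_o=1$ we get $o\in F$. If $x,y\in F$ then $1=\chi(p_x)\chi(p_y)=\chi(p_xp_y)$; were the join $x\vee y$ to fail this would read $\chi(0)=0$, a contradiction, so $x\vee y$ exists and, as $\chi(p_{x\vee y})=1$, lies in $F$. Finally, if $x\in F$ and $y\le x$ then $y\vee x=x$, so $\chi(p_y)=\chi(p_y)\chi(p_x)=\chi(p_yp_x)=\chi(p_x)=1$ and $y\in F$. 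Hence $F$ is a nonempty, downward closed subset of $K$ that is directed upward, any two of its elements having a join, which again lies in $F$.

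Finally I would realize $F$ as the lower set of a point of $\overline{(K,o)}$. Enumerate $F=\{f_1,f_2,\dots\}$ and put $x_n:=f_1\vee\cdots\vee f_n$; by upward directedness and closure under joins each $x_n$ exists and lies in $F$, and $x_1\le x_2\le\cdots$, so by Lemma \ref{monotone} the sequence converges to some $z\in\overline{(K,o)}$. Using that $x_n\to z$ means precisely that, for $x\in K$, one has $x\le z$ if and only if $x\le x_n$ for all large $n$, I would verify $\{x\in K\mid x\le z\}=F$: if $x=f_k\in F$ then $x\le x_n$ for all $n\ge k$, so $x\le z$; conversely if $x\le z$ then $x\le x_m$ for some $m$, whence $x\le f_1\vee\cdots\vee f_m\in F$ and, by downward closure, $x\in F$. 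By the description of $\psi$ in Proposition \ref{C*}, the character $\psi(z)\in\mathrm{Spec}(\mathcal{D}(K,o))$ satisfies $\psi(z)(P_x)=1$ exactly when $x\le z$, i.e.\ exactly when $x\in F$; hence $\psi(z)\circ\rho$ and $\chi$ agree on all generators $p_x$ and therefore on $A$. This exhibits $\chi$ in the image of $\rho^{*}$, proving surjectivity and completing the argument. The main obstacle is exactly this last realization step: extracting from the purely algebraic datum $\chi$ an honest boundary point whose order ideal is $F$. The completeness of the meet-semilattice (guaranteeing the joins $x_n$ exist) together with Lemma \ref{monotone} (guaranteeing the monotone sequence converges) is what makes it work, and the two inclusions establishing $\{x\le z\}=F$ are where the convergence characterization of the limit must be invoked carefully.
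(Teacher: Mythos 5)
Your proof is correct, and its overall strategy coincides with the paper's: form the canonical surjection $\rho$ from the universal algebra onto $\mathcal{D}(K,o)$, and prove injectivity by showing that every character $\chi$ of the universal algebra arises from a point $z\in\overline{(K,o)}$, where $z$ is obtained as the limit of an increasing sequence of joins of elements of $F=\{x\in K\mid\chi(p_x)=1\}$, using Lemma \ref{monotone} and the homeomorphism $\psi$ from Proposition \ref{C*}. The one substantive difference is how the increasing sequence is built, and your variant is in fact the more robust one. The paper takes $x_i:=\bigvee\{x\in F\mid d_K(x,o)\le i\}$, a join over a ball intersected with $F$; since the theorem does not assume local finiteness, these sets can be infinite, and in a complete meet-semilattice an infinite set has a join only if it has an upper bound (this is the unnumbered lemma in Section \ref{section 1}), which the character relations do not supply: multiplicativity of $\chi$ only forces \emph{finite} subsets of $F$ to have joins, and one can construct non-locally-finite graphs satisfying the hypotheses, together with characters (e.g.\ with $F=K$), for which already the ball of radius $1$ in $F$ has no upper bound. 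Moreover, even when such an infinite join exists, nothing guarantees it lies in $F$, which is what the downward-closure argument for the inclusion $\{x\in K\mid x\le z\}\subseteq F$ needs. Your choice $x_n=f_1\vee\cdots\vee f_n$ along an enumeration of $F$ sidesteps both issues at once: finite joins of elements of $F$ exist and remain in $F$ purely by the relations ($\chi(p_xp_y)=1$ forces $x\vee y$ to exist and to lie in $F$), and downward closedness of $F$ then gives $\{x\in K\mid x\le z\}=F$ exactly as you verify. Aside from this, your explicit Gelfand-duality reduction (injectivity of $\rho$ is equivalent to surjectivity of $\rho^{*}$) is just the precise form of the paper's terser ``it suffices to show that $P_x\mapsto\chi(\tilde P_x)$ defines a character on $\mathcal{D}(K,o)$'', so the two arguments otherwise match step for step.
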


\begin{proof}
Let $\mathcal{A}$ be the universal C$^\ast$-algebra generated by projections $(\tilde{P}_{x})_{x\in K}$ with $\tilde{P}_{x}\tilde{P}_{y}=\tilde{P}_{x\vee y}$ for all $x,y\in K$ and let $\chi$ be a character on $\mathcal{A}$. It suffices to show that the map $P_{x}\mapsto\chi(\tilde{P}_{x})$ defines a character on $\mathcal{D}(K,o)$. As in the proof of Proposition \ref{C*}, define the set $\mathcal{S}:=\{ x\in K\mid\chi(\tilde{P}_{x})=1\}$. For $i\in\mathbb{N}$ define $x_{i}:=\bigvee_{x\in\mathcal{S}:d_{K}(x,o)\leq i}x$ and let $z\in \overline{(K,o)}$ be the point this sequence converges to. Further, let $\psi$ be the homeomorphism appearing in the proof of Proposition \ref{C*}. Then, $(\psi(z))(P_{x})=1=\chi(\tilde{P}_{x})$ if $x\in\mathcal{S}$ and $(\psi(z))(P_{x})=0=\chi(\tilde{P}_{x})$ if $x\notin\mathcal{S}$. The claim follows.
\end{proof}

We finish this subsection with three spaces which arise as special cases from our construction and which serve as additional motivation.

\begin{example}
\emph{(a)} Let $S$ be a countable
(discrete) set and let $\left\{ \bullet\right\} $ be the one-point
set. Define a graph $K=(V,E)$ via $V:=S\cup\left\{ \bullet\right\} $
and $E:=\{(\bullet,s)\mid s\in S\}\cup\{(s,\bullet)\mid s\in S\}$.
Then, by Proposition \ref{C*}, $\overline{(K,\bullet)}$ is a compact Hausdorff
space. It is easy to check that it identifies with the one-point compactification
of the discrete set $S$.

\noindent \emph{(b)} Let $\mathbf{n}:=(n_{i})_{i\in\mathbb{N}}\subseteq\mathbb{N}$
be a sequence of natural numbers with $n_{i}>1$, define $N_{0}:=1$,
$N_{1}:=n_{1}$, $N_{2}:=n_{1}n_{2}$, ... and consider the set $\mathcal{G}_{\mathbf{n}}$
consisting of all formal sums of the form $\mathbf{x}:=\sum_{i=1}^{\infty}x_{i}N_{i-1}$
with $x_{i}\in\{0,1,...,n_{i}-1\}$ where only finitely many of the
coefficients $x_{i}$ are non-zero. Set $0:=\sum_{i=1}^{\infty}0N_{i-1}$.
The set $\mathcal{G}_{\mathbf{n}}$ induces a locally finite graph
$K=(V,E)$ via $V:=\mathcal{G}_{\mathbf{n}}$ and where distinct vertices
$\mathbf{x}=\sum_{i=1}^{\infty}x_{i}N_{i-1}$ and $\mathbf{y}=\sum_{i=1}^{\infty}y_{i}N_{i-1}$
are adjacent to each other if and only if there exists $L\in\mathbb{N}$
such that 
\[
x_{1}=y_{1},...,x_{L}=y_{L}\text{, }x_{L+1}=0\neq y_{L+1}\text{ and }x_{L+2}=y_{L+2}=x_{L+3}=y_{L+3}=...=0
\]
or 
\[
x_{1}=y_{1},...,x_{L}=y_{L}\text{, }x_{L+1}\neq0=y_{L+1}\text{ and }x_{L+2}=y_{L+2}=x_{L+3}=y_{L+3}=...=0\text{.}
\]
The graph order of $(K,0)$ gives rise to a complete meet-semilattice.
Hence, by Proposition \ref{C*} the compactification
$\overline{\mathcal{G_{\mathbf{n}}}}:=\overline{(K,0)}$ is a metrizable
compact space. It identifies with the Cantor set $\prod_{i=1}^{\infty}\mathbb{Z}/n_{i}\mathbb{Z}$
where the product is equipped with the product topology and where
the $\mathbb{Z}/n_{i}\mathbb{Z}$ are viewed as discrete topological
spaces. Note that the map $\mathcal{G}_{\mathbf{n}}\rightarrow\mathcal{G}_{\mathbf{n}}$,
$\mathbf{x}\mapsto\mathbf{x}+1N_{0}$ (addition with carryover) induces a homeomorphism of $\overline{\mathcal{G}_{\mathbf{n}}}$.
The induced action of $\mathbb{Z}$ on $\overline{\mathcal{G_{\mathbf{n}}}}$
is the well-studied \emph{odometer action} with respect to $\mathbf{n}$ and the corresponding crossed product
$C(\overline{\mathcal{G}_{\mathbf{n}}})\rtimes_{r}\mathbb{Z}$ is
the \emph{Bunce-Deddens algebra}
with respect to $\mathbf{n}$ (see \cite{BunceDeddens}
and \cite[Sections VIII.4 and V.3]{Davidson}).
In the case where $n_{i}$ is the $i$-th prime number, $\overline{\mathcal{G}_{\mathbf{n}}}$
identifies with the well-known  \emph{profinite completion}
of the integers.

\noindent \emph{(c)} The positive integers $\mathbb{N}_{\geq1}$ induce a locally
finite graph $K=(V,E)$ with $V:=\mathbb{N}_{\geq1}$ and
\[
E:=\{(m,n)\in\mathbb{N}_{\geq1}\times\mathbb{N}_{\geq1}\mid m=pn\text{ or }n=pm\text{ for a prime number }p\}\text{.}
\]
In the graph order with respect to the root $o:=1$ one has $m\leq n$
for $m,n\in\mathbb{N}_{\geq1}$ if and only if $m\mid n$. It hence
defines a complete meet-semilattice where the meet of a subset $T\subseteq K$
is given by the greatest common divisor $\text{gcd}(T)$. By Theorem
\ref{C*} the corresponding compactification $\overline{(\mathbb{N}_{\geq1},1)}$
is a metrizable compact space and by Proposition \ref{characterization} every element in $\overline{(\mathbb{N}_{\geq1},1)}$ can be represented by a (possibly infinite) geodesic path. The space can hence be viewed as a compactification
of the set of \emph{supernatural numbers}. Further, the semigroup action of $\mathbb{N}_{\geq1}$ on itself by
multiplication extends to a continuous semigroup action on $\overline{(\mathbb{N}_{\geq1},1)}$.
\end{example}


\subsection{Hyperbolic graphs and trees}

In the following subsection we will see that for a hyperbolic connected rooted graph $(K,o)$ the topological spaces $\partial(K,o)$ and $\overline{(K,o)}$ behave well with respect to the hyperbolic (Gromov) boundary $\partial_{h}K$ and the corresponding compactification $K\cup\partial_{h}K$ of $K$. In the case where $K$ is a tree, both spaces turn out to be homeomorphic to each other.

\begin{theorem} \label{hyperbolic}
Let $(K,o)$ be a hyperbolic connected rooted graph. Then the map $\phi:\partial(K,o)\rightarrow\partial_{h}K$ given by $\phi\left(\left[\mathbf{x}\right]\right)=\left[\mathbf{x}\right]_{h}$ for a sequence $\mathbf{x}$ which $o$-converges to infinity is well-defined, continuous and surjective. If the graph is locally finite, then $\phi$ extends to a continuous surjection $\tilde{\phi}:\overline{(K,o)}\rightarrow K\cup\partial_{h}K$ with $\tilde{\phi}|_{K}=\text{id}_{K}$.
\end{theorem}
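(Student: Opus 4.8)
The plan rests on one elementary but decisive observation: if $x\leq_o y$ then some geodesic from $o$ to $y$ passes through $x$, so $d_K(o,y)=d_K(o,x)+d_K(x,y)$ and hence $\langle x,y\rangle_o=d_K(o,x)$. Combined with the Gromov-product form of hyperbolicity, $\langle a,c\rangle_o\geq\min(\langle a,b\rangle_o,\langle b,c\rangle_o)-\delta$ (valid in any $\delta$-slim graph after possibly enlarging $\delta$), this lets one control Gromov products purely through the graph order. First I would show $\phi$ is well-defined. If $\mathbf{x}=(x_i)$ $o$-converges to infinity, then for every $R$ there is $x\in K$ with $x\leq_o\mathbf{x}$ and $d_K(o,x)>R+\delta$; for all large $i,j$ one has $x\leq_o x_i$ and $x\leq_o x_j$, whence $\langle x_i,x_j\rangle_o\geq\min(\langle x_i,x\rangle_o,\langle x,x_j\rangle_o)-\delta=d_K(o,x)-\delta>R$. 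Thus $\mathbf{x}$ converges to infinity and $[\mathbf{x}]_h$ is defined. Running the identical estimate with $\mathbf{y}$ in place of the second copy of $\mathbf{x}$ shows that $\mathbf{x}\sim_o\mathbf{y}$, which forces $\{x:x\leq_o\mathbf{x}\}=\{x:x\leq_o\mathbf{y}\}$, implies $\liminf_{i,j}\langle x_i,y_j\rangle_o=\infty$, i.e. $[\mathbf{x}]_h=[\mathbf{y}]_h$. Hence $\phi$ descends to equivalence classes.

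For continuity I would work directly with the subbasic topology on $\overline{(K,o)}$. Fix $z=[\mathbf{x}]\in\partial(K,o)$ and $R>0$. Since $z$ $o$-converges to infinity, choose $a\in K$ with $a\leq_o z$ and $d_K(o,a)=:N>R+\delta$. Then $\mathcal{U}_a\cap\partial(K,o)$ is an open neighbourhood of $z$, and for any $z'=[\mathbf{y}]$ in it one has $a\leq_o x_i$ and $a\leq_o y_j$ for all large $i,j$, so exactly as above $\langle x_i,y_j\rangle_o\geq N-\delta>R$ for large $i,j$; therefore $\liminf_{i,j}\langle x_i,y_j\rangle_o\geq N-\delta>R$, which is precisely the statement $\phi(z')\in U(\phi(z),R)$. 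As the $U(\phi(z),R)$ form a neighbourhood basis of $\phi(z)$, this gives continuity.

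Surjectivity is where I expect the real work. Given $z=[\mathbf{x}]_h$ with $\mathbf{x}=(x_i)$, the natural idea is to produce a geodesic ray from $o$ representing $z$, since such a ray yields an increasing chain $o=v_0\leq_o v_1\leq_o\cdots$ which, by Lemma \ref{monotone}, converges to a point of $\overline{(K,o)}$ that $o$-converges to infinity and is sent by $\phi$ to $z$. In the locally finite case this ray is extracted by a König/diagonal argument: the balls $B(o,n)$ are finite, so after passing to nested subsequences the initial vertices of the geodesics $[o,x_i]$ stabilise level by level to a ray $\gamma=(v_n)_n$ each of whose initial segments is shared by infinitely many $[o,x_i]$; using $\langle v_n,x_i\rangle_o=n$ for those $i$ together with the $\delta$-inequality one checks $[\gamma]_h=z$. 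The general case is the main obstacle: balls need no longer be finite, so the geodesics $[o,x_i]$ may fellow-travel within $2\delta$ without ever sharing vertices, and the level-by-level stabilisation can fail. A diagonal argument over the countable vertex set still yields a subsequence of $\mathbf{x}$ that $o$-converges and retains the class $z$; the delicate point that remains is to guarantee that such a representative (or a suitable modification) actually $o$-converges to infinity, i.e. that its set of graph-order lower bounds is unbounded, and this is the step requiring the most care.

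Finally, for the extension in the locally finite case I would use Lemma \ref{discreteness}, by which $\overline{(K,o)}=K\sqcup\partial(K,o)$ with $K$ open and discrete. Setting $\tilde{\phi}|_K=\operatorname{id}_K$ and $\tilde{\phi}|_{\partial(K,o)}=\phi$ makes $\tilde{\phi}$ automatically continuous at the isolated points of $K$ and surjective (since $\phi$ is), so only continuity at boundary points needs checking. For a net $x_\alpha\in K$ with $x_\alpha\to z=[\mathbf{x}]\in\partial(K,o)$ I would again pick $a\leq_o z$ with $d_K(o,a)=N>R+\delta$; convergence in $\overline{(K,o)}$ gives $a\leq_o x_\alpha$ eventually, and then $\langle x_i,x_\alpha\rangle_o\geq N-\delta>R$ for large $i$, so $\liminf_i\langle x_i,x_\alpha\rangle_o>R$, i.e. $x_\alpha\in U'(\phi(z),R)$ eventually. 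Hence $x_\alpha\to\phi(z)$ in $K\cup\partial_hK$, and combined with the continuity of $\phi$ this yields continuity of $\tilde{\phi}$, together with $\tilde{\phi}|_K=\operatorname{id}_K$.
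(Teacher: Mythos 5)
Your treatment of well-definedness, continuity, and the extension $\tilde{\phi}$ is correct, and it is essentially the paper's own argument. The only cosmetic difference is in the key estimate: the paper never invokes the four-point $\delta$-inequality at all, but uses the bare triangle inequality together with your own ``decisive observation'' — if $v\le_o x$ then $d_K(v,x)=d_K(o,x)-d_K(o,v)$, so a common lower bound $v$ of $x$ and $y$ gives $\langle x,y\rangle_o\ge d_K(o,v)$ exactly, with no loss of $\delta$. Your continuity argument via $\phi(\mathcal{U}_a\cap\partial(K,o))\subseteq U(\phi(z),R)$ and your treatment of $\tilde{\phi}$ at boundary points (using Lemma \ref{discreteness} to split $\overline{(K,o)}=K\sqcup\partial(K,o)$ in the locally finite case) match the paper's reasoning. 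Your surjectivity argument in the locally finite case (K\H{o}nig extraction of a limit geodesic ray, then Lemma \ref{monotone}) is also correct — and is in fact more than the paper supplies, since the paper's entire proof of surjectivity reads ``That is clear.''

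The step you declined to complete — surjectivity without local finiteness — is a genuine gap in your proposal, but your instinct that it is ``the step requiring the most care'' is exactly right, and you are not missing a trick contained in the paper: the paper gives no argument there, and the claim is in fact false for general hyperbolic graphs. Consider the graph with vertex set $\{o\}\cup\{(n,k)\mid k\in\mathbb{N},\,1\le n\le k+1\}$ and edges $o\sim(1,k)$, vertical edges $(n,k)\sim(n+1,k)$, and horizontal edges $(n,j)\sim(n,k)$ for $j\neq k$, whenever the vertices involved exist. Only vertical edges raise the level $n$, so $d_K(o,(n,k))=n$ and the unique geodesic from $o$ to $(n,k)$ is the column below it; hence $(m,j)\le_o(n,k)$ if and only if $j=k$ and $m\le n$. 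Geodesics never leave the level interval spanned by their endpoints, each level is a clique, and given a point on one side of a geodesic triangle the union of the other two sides contains a vertex at the same level, hence at distance at most $1$; so all triangles are $2$-slim and the graph is hyperbolic. Since every column is finite, no sequence has an unbounded set of graph-order lower bounds, so $\partial(K,o)=\emptyset$. On the other hand, the column tops $x_k:=(k+1,k)$ satisfy $d_K(x_i,x_j)=|i-j|+1$ for $i\neq j$ (a shortest path must make one horizontal step), so $\langle x_i,x_j\rangle_o=\min(i,j)+\tfrac{1}{2}\rightarrow\infty$, and $\partial_hK\neq\emptyset$. Thus no surjection $\partial(K,o)\rightarrow\partial_hK$ exists: the sequence $(x_k)$ $o$-converges, but its class lies in $\overline{(K,o)}\setminus\bigl(K\cup\partial(K,o)\bigr)$, which is precisely the phenomenon that local finiteness (Lemma \ref{discreteness}) rules out. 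So the difficulty you isolated is not merely delicate but insurmountable; with ``locally finite'' added to the hypotheses of the first sentence of Theorem \ref{hyperbolic} (which is harmless for the rest of the paper, since it is applied only to Cayley graphs of finite rank Coxeter groups), your proposal constitutes a complete and correct proof.
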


\begin{proof}
\emph{Well-defined:} Let $\mathbf{x}$, $\mathbf{y}$ be equivalent sequences which $o$-converge to infinity. Assume that $\mathbf{x} \nsim_{h}\mathbf{y}$. Then, by definition, $\liminf_{m,n\rightarrow\infty}\langle x_{m},y_{n}\rangle_{o}<\infty$, i.e. there exist strictly increasing sequences $(m_{i})_{i\in\mathbb{N}}, (n_{i})_{i\in\mathbb{N}}\subseteq\mathbb{N}$ such that $\lim_{i\rightarrow\infty}\langle x_{m_{i}},y_{n_{i}}\rangle_{o}<\infty$. Since $\mathbf{x}$ and $\mathbf{y}$ are equivalent, for every $i\in\mathbb{N}$ there exists an element $z_{i}\in K$ such that $z_{i}\leq x_{m_{i}}$, $z_{i}\leq y_{n_{i}}$. Further, we can choose $z_{i}$ in such a way that $d_{K}(z_{i},o)\rightarrow\infty$. This implies
\begin{eqnarray} \nonumber
\left\langle x_{m_{i}},y_{n_{i}}\right\rangle _{o}&=&\frac{1}{2}\left(d_{K}(x_{m_{i}},o)+d_{K}(y_{n_{i}},o)-d_{K}(x_{m_{i}},y_{n_{i}})\right)\\
\nonumber
&\geq& \frac{1}{2}\left(d_{K}(x_{m_{i}},o)+d_{K}(y_{n_{i}},o)-(d_{K}(z_{i},x_{m_{i}})+d_{K}(z_{i},y_{n_{i}}))\right)\\
\nonumber
&=& \frac{1}{2}\left(d_{K}(x_{m_{i}},o)+d_{K}(y_{n_{i}},o)-(d_{K}(x_{m_{i}},o)-d_{K}(z_{i},o)+d_{K}(y_{n_{i}},o)-d_{K}(z_{i},o))\right)\\
\nonumber
&=& d_{K}(z_{i},o)\\
\nonumber
&\rightarrow& \infty
\end{eqnarray}
in contradiction to our assumption. Hence, $\mathbf{x}$ and $\mathbf{y}$ must have been equivalent. In particular, the map $\phi$ is well-defined.

\emph{Continuity:} Let $(\left[\mathbf{x}^{i}\right])_{i\in I}\subseteq\partial(K,o)$ be a net of equivalence classes of sequences $\mathbf{x}^{i}$, $i\in I$ which $o$-converge to infinity that converges to a point $z\in\partial(K,o)$. Let $\mathbf{x}$ be a sequence which $o$-converges to infinity and which represents $z$. We claim that $\left[\mathbf{x}^{i}\right]_{h}\rightarrow \phi(z)=\left[\mathbf{x}\right]_{h}$. As the sets $\left\{ U\left(\phi(z),R\right)\right\} _{R>0}$ with
\begin{eqnarray}
\nonumber
U\left(\phi(z),R\right):=\{z^{\prime}\in \partial_h K & \mid &\text{there are sequences }\mathbf{y}^{1},\mathbf{y}^{2}\text{ converging to infinity }\\
\nonumber
& & \text{ with } \phi(z)=\left[\mathbf{y}^{1}\right]_{h}\text{, }z^{\prime}=\left[\mathbf{y}^{2}\right]_{h}\text{ and }\liminf_{i,j\rightarrow\infty}\left\langle y_{i}^{1},y_{j}^{2}\right\rangle _{o}>R\}
\end{eqnarray}
define a neighborhood basis of $\phi(z)$, it suffices to show that for every $R>0$, $\left[\mathbf{x}^{i}\right]_{h}\in U(\phi(z),R)$ for $i$ large enough. For $R>0$ we find $y_{R}\in K$ with $y_{R}\leq z$ and $d_{K}(y_{R},o)>R$. Further, as $\left[\mathbf{x}^{i}\right]\rightarrow z$, there exists $i_{0}(R)\in I$ such that $y_{R}\leq\left[\mathbf{x}^{i}\right]$ for every $i\geq i_{0}(R)$. We claim that $\left[\mathbf{x}^{i}\right]_{h}\in U(\phi(z),R)$ for every $i\geq i_{0}(R)$. Assume that this is not the case. Then, as above, for fixed $i\geq i_{0}(R)$ we find strictly increasing sequences $(m_{j})_{j\in\mathbb{N}}, (n_{j})_{j\in\mathbb{N}}\subseteq\mathbb{N}$ such that $\lim_{j\rightarrow\infty}\langle x_{m_{j}}, x_{n_{j}}^{i}\rangle_{o}\leq R$. Without loss of generality we can assume that $x_{R}\leq x_{m_{j}}, x_{n_{j}}^{i}$ for every $j\in\mathbb{N}$. Then,
\begin{eqnarray}
\nonumber
\langle x_{m_{j}},x_{n_{j}}^{i}\rangle_{o} &=& \frac{1}{2}\left(d_{K}(x_{m_{j}},o)+d_{K}(x_{n_{j}}^{i},o)-d_{K}(x_{m_{j}},x_{n_{j}}^{i})\right)\\
\nonumber
&\geq& \frac{1}{2}\left(d_{K}(x_{m_{j}},o)+d_{K}(x_{n_{j}}^{i},o)-(d_{K}(y_{R},x_{m_{j}})+d_{K}(y_{R},x_{n_{j}}^{i}))\right)\\
\nonumber
&=& d_{K}\left(y_{R},o\right)\\
\nonumber
&>& R
\end{eqnarray}
in contradiction to our assumption. This implies that $\left[x^{i}\right]_{h}\in U(\phi(z),R)$ for $i\geq i_{0}(R)$, so $\left[x^{i}\right]_{h}\rightarrow \phi(z)=\left[\mathbf{x}\right]_{h}$.

\emph{Surjectivity:} That is clear.

We have shown that the map $\phi$ is well-defined, continuous and surjective. If the graph is locally finite, $K$ is an open subset of $\overline{(K,o)}$. Using this, one checks in the same way as above that the identity map on $K$ continuously extends to a surjection $\tilde{\phi}:\overline{(K,o)}\rightarrow K\cup\partial_{h}K$ with $\tilde{\phi}|_{\partial(K,o)}=\phi$.
\end{proof}

In the case of a tree Theorem \ref{hyperbolic} can be strengthened.

\begin{corollary} \label{homeomorphism}
Let $(\mathcal{T},o)$ be a connected rooted tree. Then the identity on $\mathcal{T}$ extends to a homeomorphism $\partial(\mathcal{T},o)\cong\partial_{h}\mathcal{T}$.
\end{corollary}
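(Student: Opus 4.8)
The plan is to show that the continuous surjection $\phi$ supplied by Theorem \ref{hyperbolic} is in fact a homeomorphism. Since $\mathcal{T}$ is a tree, Example \ref{Example}(a) tells us that its graph order is a complete meet-semilattice, so Corollary \ref{cor} applies: every point of $\partial(\mathcal{T},o)$ is represented by an infinite geodesic path. First I would pin down this representative. For $z\in\partial(\mathcal{T},o)$, the set $L(z):=\{x\in\mathcal{T}\mid x\leq_o z\}$ consists, by uniqueness of geodesics in a tree together with Lemma \ref{poset}, exactly of the vertices lying on a geodesic ray issuing from $o$; as $L(z)$ is totally ordered with a single vertex at each distance from $o$, this ray $\gamma^z$ is \emph{unique}. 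Thus $\partial(\mathcal{T},o)$ is identified with the set of infinite geodesic rays based at $o$, and $\phi(z)=[\gamma^z]_h$ (which is the sense in which $\phi$ extends the identity on $\mathcal{T}$, since the two pictures use the same vertices). As $\phi$ is already continuous and surjective, it remains to prove that it is injective and that its inverse is continuous. The point worth stressing is that $\mathcal{T}$ is \emph{not} assumed locally finite, so neither boundary is known to be compact and I cannot simply invoke ``a continuous bijection of compact Hausdorff spaces is a homeomorphism''; the inverse must be handled directly.

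The whole argument rests on one elementary feature of trees: if $\gamma,\gamma'$ are rays from $o$ whose longest common initial segment ends at the vertex $\gamma_j=\gamma'_j$, then for all $m,n>j$ the unique geodesic $[\gamma_m,\gamma'_n]$ passes through $\gamma_j$, whence
\[
\langle\gamma_m,\gamma'_n\rangle_o=\tfrac{1}{2}\bigl(m+n-(m+n-2j)\bigr)=j.
\]
In other words, along rays the Gromov product stabilises to the length of the shared prefix. Injectivity of $\phi$ follows at once: if $z\neq z'$ then $\gamma^z\neq\gamma^{z'}$ diverge at some finite level $j$, so $\liminf_{m,n}\langle\gamma^z_m,\gamma^{z'}_n\rangle_o=j<\infty$ and hence $[\gamma^z]_h\neq[\gamma^{z'}]_h$.

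For continuity of $\phi^{-1}$ I would take a net $w^\lambda\to w$ in $\partial_h\mathcal{T}$, write $z^\lambda=\phi^{-1}(w^\lambda)$, $z=\phi^{-1}(w)$, with associated rays $\gamma^\lambda,\gamma$, and let $c_\lambda$ denote the length of the common initial segment of $\gamma^\lambda$ and $\gamma$. By the displayed identity (read off from ray representatives) $\liminf_{m,n}\langle\gamma_m,\gamma^\lambda_n\rangle_o=c_\lambda$, so $w^\lambda\in U(w,R)$ precisely when $c_\lambda>R$; thus $w^\lambda\to w$ amounts to $c_\lambda\to\infty$. To conclude $z^\lambda\to z$ in $\partial(\mathcal{T},o)$ I must verify the two kinds of subbasic conditions. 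If $x\leq_o z$, then $x=\gamma_k$ for some $k$, and as soon as $c_\lambda>k$ the ray $\gamma^\lambda$ agrees with $\gamma$ up to level $k$, so $x\leq_o z^\lambda$. If $x\nleq_o z$, let $j$ be the level at which the geodesic $[o,x]$ leaves $\gamma$; should $x\leq_o z^\lambda$ hold, then $[o,x]$ is an initial segment of $\gamma^\lambda$, forcing $\gamma^\lambda$ to diverge from $\gamma$ at level $\leq j$, i.e.\ $c_\lambda\leq j$ --- impossible once $c_\lambda>j$. Since $c_\lambda\to\infty$, both conditions hold eventually, giving $z^\lambda\to z$. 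The main obstacle, and the only genuinely nontrivial step, is precisely this last verification: translating Gromov-closeness (a long shared prefix) into the order-theoretic convergence of Lemma \ref{poset}, in particular ruling out the ``spurious'' vertices $x\nleq_o z$ by means of the stabilisation identity; everything else is formal once the ray description is in place.
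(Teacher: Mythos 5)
Your proposal is correct, and it is in fact more complete than the paper's own proof. The paper's argument consists essentially of the injectivity step alone: by Proposition \ref{characterization} one may work with infinite geodesic rays, and two rays with $[\alpha]_h=[\beta]_h$ must eventually flow together in a tree, whence $[\alpha]=[\beta]$; the paper then simply asserts that proving injectivity ``suffices''. That assertion is automatic only when $\partial(\mathcal{T},o)$ is compact (e.g.\ when $\mathcal{T}$ is locally finite, by Remark \ref{remark}(c)), since then a continuous bijection onto the Hausdorff space $\partial_{h}\mathcal{T}$ is a homeomorphism; for non-locally-finite trees neither boundary need be compact, exactly as you observe. Your injectivity argument is the same in substance as the paper's (divergence of the rays at level $j$ forces the Gromov product to stabilise at the value $j$), but you additionally supply the missing half: a direct verification that $\phi^{-1}$ is continuous, translating Gromov-closeness (a long common prefix $c_\lambda$) into the two subbasic order conditions $x\leq_o z$ and $x\nleq_o z$. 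This covers precisely the case the paper's one-line reduction glosses over, and is a genuine gain in generality and rigour.

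One small point you should make explicit: the equivalence ``$w^\lambda\in U(w,R)$ precisely when $c_\lambda>R$'' requires, in the direction you actually use ($w^\lambda\to w$ implies $c_\lambda\to\infty$), that the $\liminf$ of Gromov products does not depend on the chosen representative sequences, because the definition of $U(w,R)$ quantifies existentially over representatives. Your displayed identity, as stated, covers only the ray representatives themselves, which gives the other direction. The fix is one line in a tree: if $[\mathbf{x}]_h=w$ and $[\mathbf{y}]_h=w^\lambda$, then eventually $[o,x_i]$ passes through $\gamma_{c_\lambda+1}$ and $[o,y_j]$ passes through $\gamma^{\lambda}_{c_\lambda+1}$, so the geodesic $[x_i,y_j]$ passes through the branch vertex and $\langle x_i,y_j\rangle_o=c_\lambda$ eventually; hence every choice of representatives yields the same $\liminf$, namely $c_\lambda$, and $w^\lambda\in U(w,R)$ does force $c_\lambda>R$.
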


\begin{proof}
It suffices to show that the map $\left[\mathbf{x}\right]\mapsto\left[\mathbf{x}\right]_{h}$ is injective. By Proposition \ref{characterization} it further suffices to consider equivalence classes of infinite geodesic rays. So let $\alpha$, $\beta$ be infinite geodesic paths with $\left[\alpha\right]_{h}=\left[\beta\right]_{h}$, i.e. $\sup_{i}d_{\mathcal{T}}(\alpha_{i},\beta_{i})<\infty$. Then, since $\mathcal{T}$ is a tree, $\alpha$ and $\beta$ must eventually flow together which implies that $\left[\alpha\right]=\left[\beta\right]$.
\end{proof}

Besides from Corollary \ref{homeomorphism} the compactification of trees has another useful property.

\begin{lemma} \label{trees}
Let $(\mathcal{T},o)$ be a rooted tree. Then, every element $z\in\partial(\mathcal{T},o)$ is \emph{maximal} in the sense that if $z^{\prime}\in\partial(\mathcal{T},o)$ is another element with $z\leq z^{\prime}$ (in the partial order from Lemma \ref{poset}), then $z=z^{\prime}$.
\end{lemma}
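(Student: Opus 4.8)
The plan is to use Corollary \ref{cor}, together with Example \ref{Example}(a) (which guarantees that the graph order of a tree is a complete meet-semilattice), to represent both boundary points by infinite geodesic rays emanating from the root. Concretely, I would write $z=[\alpha]$ and $z'=[\beta]$, where $\alpha=\alpha_0\alpha_1\dots$ and $\beta=\beta_0\beta_1\dots$ are infinite geodesic paths with $\alpha_0=\beta_0=o$.

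The crucial observation, which I would record first, is that in a tree the down-set of a boundary point coincides exactly with the vertex set of its representing ray. Since geodesics in $\mathcal{T}$ are unique, $x\leq_o y$ holds precisely when $x$ lies on the unique geodesic $[o,y]$; applying this along the ray $\alpha$ gives $\{x\in\mathcal{T}\mid x\leq_o z\}=\{\alpha_j\mid j\geq0\}$. Indeed, a vertex $x$ satisfies $x\leq_o\alpha_i$ for all large $i$ if and only if $x=\alpha_j$ for some $j$, because $[o,\alpha_i]=\{\alpha_0,\dots,\alpha_i\}$ for the geodesic ray $\alpha$.

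Next I would unwind the definition of the extended order from Lemma \ref{poset}. The hypothesis $z\leq_o z'$ says that every $x$ with $x\leq_o z$ satisfies $x\leq_o z'$; by the previous step this means $\alpha_j\leq_o z'$ for every $j$, i.e.\ each vertex $\alpha_j$ lies on the ray $\beta$. I would then conclude by induction on $j$ that $\alpha_j=\beta_j$: since $\beta$ is geodesic, $d_{\mathcal{T}}(o,\beta_k)=k$, so the only vertex of $\beta$ at distance $j$ from $o$ is $\beta_j$; as $\alpha$ is geodesic, $d_{\mathcal{T}}(o,\alpha_j)=j$, and hence $\alpha_j=\beta_j$. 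Therefore $\alpha=\beta$, so $z=z'$.

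The only mildly delicate point is the identification of the down-set with the vertex set of the ray, which rests entirely on the uniqueness of geodesics in a tree together with the definition of $o$-convergence; everything after that is a routine inductive comparison of two rays, so I do not expect a genuine obstacle.
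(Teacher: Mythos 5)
Your proposal is correct and follows essentially the same route as the paper's proof: represent $z$ and $z'$ by infinite geodesic rays from $o$ (you cite Corollary \ref{cor} via Example \ref{Example}(a), the paper cites Proposition \ref{characterization}, which amounts to the same thing), then use uniqueness of geodesics in a tree to force $\alpha_j=\beta_j$ for all $j$. Your explicit identification of the down-set of a boundary point with the vertex set of its representing ray is just a slightly more structured phrasing of the paper's step-by-step comparison, so there is nothing to correct.
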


\begin{proof}
Let $z,z^{\prime}\in\partial(\mathcal{T},o)$ be elements with $z\leq z^{\prime}$. By Proposition \ref{characterization} there exist infinite geodesic paths $\alpha$, $\beta$ which represent $z$, $z^{\prime}$. Assume that $\alpha_{0}=\beta_{0}=o$. We have $\alpha_{1}\leq z$ and hence $\alpha_{1}\leq z^{\prime}$. Since geodesic paths between two points of a tree are unique, $\beta$ passes $\alpha_{1}$ and hence $\alpha_{1}=\beta_{1}$. By the same argument we get $\alpha_{2}=\beta_{2}$, $\alpha_{3}=\beta_{3}$, ..., therefore $z=z^{\prime}$.
\end{proof}


\section{Boundaries of Coxeter groups} \label{section 3}

The most important graphs that we consider in this paper are Cayley graphs of Coxeter systems. Even though some of the results hold in greater generality we restrict to finite rank Coxeter groups to avoid technical subtleties and to keep the statements consistent with each other.

\begin{definition}
Let $(W,S)$ be a finite rank Coxeter system. As before, let $\text{Cay}(W,S)$ be the Cayley graph of $W$ with respect to the generating set $S$ and view it as a rooted graph with root $e\in W$. We call $\partial(W,S):=\partial(\text{Cay}(W,S),e)$ the \emph{boundary of $\left(W,S\right)$} and $\overline{(W,S)}:=\overline{(\text{Cay}(W,S),e)}$ the \emph{compactification of $\left(W,S\right)$}. For convenience, we will often write $\partial W$ and $\overline{W}$ if the generating set $S$ is clear.
\end{definition}

By what we have seen so far, the spaces $\partial(W,S)$, $\overline{(W,S)}$ are metrizable compact spaces and $W\subseteq\overline{(W,S)}$ is both dense and discrete with $\partial(W,S)=\overline{(W,S)}\setminus W$. Further, by Corollary \ref{cor} every element in $\partial(W,S)$ is represented by an infinite geodesic path. In the following we will make use of these facts without any further mention.


\subsection{Left actions of Coxeter groups on their compactification} \label{2.1}

\begin{proposition}[{\cite[Proposition 3.1.2 (vi)]{combinatorics}}] \label{help}
Let $\left(W,S\right)$ be a Coxeter system, $\mathbf{v},\mathbf{w}\in W$ and $s\in S$ with $s\leq\mathbf{v}$, $s\leq\mathbf{w}$. Then, $\mathbf{v}\leq\mathbf{w}$ if and only if $s\mathbf{v}\leq s\mathbf{w}$.
\end{proposition}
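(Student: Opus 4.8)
The plan is to argue directly from the length-theoretic definition of the weak right Bruhat order recorded in the preliminaries, namely that $\mathbf{a}\leq\mathbf{b}$ holds precisely when $\left|\mathbf{a}^{-1}\mathbf{b}\right|=\left|\mathbf{b}\right|-\left|\mathbf{a}\right|$ (recall that $\leq$ abbreviates $\leq_{R}$). Under this convention the hypotheses $s\leq\mathbf{v}$ and $s\leq\mathbf{w}$ say exactly that $\left|s\mathbf{v}\right|=\left|\mathbf{v}\right|-1$ and $\left|s\mathbf{w}\right|=\left|\mathbf{w}\right|-1$, since $\left|s^{-1}\mathbf{v}\right|=\left|s\mathbf{v}\right|$ as $s=s^{-1}$. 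Thus both $\mathbf{v}$ and $\mathbf{w}$ admit reduced expressions beginning with $s$, and left multiplication by $s$ simply strips off this initial letter, lowering each length by exactly one.

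The key observation is that the relevant quotient element is left unchanged when one passes from the pair $(\mathbf{v},\mathbf{w})$ to the pair $(s\mathbf{v},s\mathbf{w})$. Indeed, using $s^{2}=e$,
\[
\left(s\mathbf{v}\right)^{-1}\left(s\mathbf{w}\right)=\mathbf{v}^{-1}s^{-1}s\mathbf{w}=\mathbf{v}^{-1}\mathbf{w}.
\]
Hence the single group element $\mathbf{v}^{-1}\mathbf{w}$ governs both comparisons, and its word length is the common quantity appearing on the left-hand side of each of the two defining equalities.

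It then remains only to compare the two right-hand sides. On the one hand $\mathbf{v}\leq\mathbf{w}$ means $\left|\mathbf{v}^{-1}\mathbf{w}\right|=\left|\mathbf{w}\right|-\left|\mathbf{v}\right|$. On the other hand $s\mathbf{v}\leq s\mathbf{w}$ means $\left|\left(s\mathbf{v}\right)^{-1}\left(s\mathbf{w}\right)\right|=\left|s\mathbf{w}\right|-\left|s\mathbf{v}\right|$, and substituting the identity above together with $\left|s\mathbf{v}\right|=\left|\mathbf{v}\right|-1$ and $\left|s\mathbf{w}\right|=\left|\mathbf{w}\right|-1$ rewrites this as $\left|\mathbf{v}^{-1}\mathbf{w}\right|=\left(\left|\mathbf{w}\right|-1\right)-\left(\left|\mathbf{v}\right|-1\right)=\left|\mathbf{w}\right|-\left|\mathbf{v}\right|$. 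The two conditions are therefore literally identical, and the asserted equivalence follows at once.

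I do not expect a genuine obstacle here: the entire argument reduces to a one-line substitution. The only point requiring care is to invoke both hypotheses $s\leq\mathbf{v}$ and $s\leq\mathbf{w}$ at the correct moment, rewriting $\left|s\mathbf{v}\right|$ and $\left|s\mathbf{w}\right|$ as $\left|\mathbf{v}\right|-1$ and $\left|\mathbf{w}\right|-1$; it is precisely the simultaneous decrease of both lengths by one that makes the two defining equalities coincide. If only one of the two elements started with $s$, the two right-hand sides would instead differ by $2$, so both hypotheses are genuinely needed.
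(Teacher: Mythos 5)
Your proof is correct. Note that the paper itself offers no argument for this proposition: it is quoted verbatim from Bj\"{o}rner--Brenti \cite[Proposition 3.1.2 (vi)]{combinatorics}, so there is no internal proof to compare against. Your verification is exactly the kind of bookkeeping the cited source relies on: since the paper defines $\mathbf{v}\leq_{R}\mathbf{w}$ by the length identity $\left|\mathbf{v}^{-1}\mathbf{w}\right|=\left|\mathbf{w}\right|-\left|\mathbf{v}\right|$, the cancellation $\left(s\mathbf{v}\right)^{-1}\left(s\mathbf{w}\right)=\mathbf{v}^{-1}\mathbf{w}$ together with the hypotheses $\left|s\mathbf{v}\right|=\left|\mathbf{v}\right|-1$ and $\left|s\mathbf{w}\right|=\left|\mathbf{w}\right|-1$ makes the two defining equalities literally coincide, and your closing remark correctly identifies why both descent hypotheses are indispensable (dropping one shifts the right-hand side by $2$). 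This is a complete, self-contained proof of the cited fact.
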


\begin{theorem} \label{coxeteraction}
Let $(W,S)$ be a finite rank Coxeter system. Then, the action $W\curvearrowright W$ by left multiplication extends to a continuous action $W\curvearrowright\overline{(W,S)}$ with $W.(\partial(W,S))=\partial(W,S)$.
\end{theorem}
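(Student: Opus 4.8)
The plan is to realize the action first on the abelian C$^\ast$-algebra $\mathcal{D}(W,S)$ and then dualize via the homeomorphism $\overline{(W,S)}\cong\text{Spec}(\mathcal{D}(W,S))$ of Proposition \ref{C*}. Let $\lambda$ be the left regular representation of $W$ on $\ell^2(W)$, so $\lambda(\mathbf{w})\delta_{\mathbf{v}}=\delta_{\mathbf{w}\mathbf{v}}$, and recall (this is why $P_o=1$, cf. Remark \ref{remark}(a)) that $P_x$ is the diagonal projection with $P_x\delta_{\mathbf{v}}=\delta_{\mathbf{v}}$ if $x\leq\mathbf{v}$ and $P_x\delta_{\mathbf{v}}=0$ otherwise, i.e. $P_x=\chi_{\mathcal{U}_x}$. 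Since $\lambda$ is a homomorphism and $W$ is generated by $S$, the whole statement reduces to showing that for each $s\in S$ the inner automorphism $\Ad(\lambda(s))$ of $\mathcal{B}(\ell^2(W))$ maps $\mathcal{D}(W,S)$ onto itself. Granting this, $\mathbf{w}\mapsto\Ad(\lambda(\mathbf{w}))|_{\mathcal{D}(W,S)}$ is a homomorphism $W\rightarrow\Aut(\mathcal{D}(W,S))$, which by Gelfand duality dualizes to a continuous action $W\curvearrowright\text{Spec}(\mathcal{D}(W,S))\cong\overline{(W,S)}$.

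The heart of the matter, and where I expect the real work to lie, is the claim that $\Ad(\lambda(s))$ preserves $\mathcal{D}(W,S)$. A direct computation shows $\lambda(s)P_x\lambda(s)$ is again a diagonal projection, namely the one onto $\text{Span}\{\delta_{\mathbf{v}}\mid x\leq s\mathbf{v}\}$, so it suffices to show that for fixed $x$ the condition ``$x\leq s\mathbf{v}$'' is a Boolean combination of conditions ``$x'\leq\mathbf{v}$''. I would split into cases according to whether $s\leq\mathbf{v}$ and whether $s\leq x$, using that $s\leq s\mathbf{v}$ holds exactly when $s\nleq\mathbf{v}$. Proposition \ref{help} then converts each case into a prefix condition on $\mathbf{v}$: when $s\leq x$ one gets $x\leq s\mathbf{v}\Leftrightarrow(s\nleq\mathbf{v}\text{ and }sx\leq\mathbf{v})$, while when $s\nleq x$ one gets $x\leq s\mathbf{v}\Leftrightarrow(s\leq\mathbf{v}\text{ and }sx\leq\mathbf{v})\text{ or }(s\nleq\mathbf{v}\text{ and }s(s\vee x)\leq\mathbf{v})$. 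Here the join $s\vee x$ is taken in the weak right Bruhat order, which by Example \ref{Example}(b) is a complete meet-semilattice, so $s\vee x$ exists precisely when $s$ and $x$ have a common upper bound (and the corresponding term is dropped otherwise). Translated back into projections these identities read
\[
\lambda(s)P_x\lambda(s)=\begin{cases} P_{sx}(1-P_s), & s\leq x,\\ P_sP_{sx}+(1-P_s)P_{s(s\vee x)}, & s\nleq x,\end{cases}
\]
which visibly lie in $\mathcal{D}(W,S)$.

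Once this is in hand, $\Ad(\lambda(s))$ maps the generating set $\{P_x\}_{x\in W}$ into $\mathcal{D}(W,S)$ and hence maps $\mathcal{D}(W,S)$ into itself; since $\lambda(s)^2=\lambda(e)=1$ the map $\Ad(\lambda(s))$ is its own inverse, so it is in fact a $\ast$-automorphism of $\mathcal{D}(W,S)$. This produces the action $W\curvearrowright\mathcal{D}(W,S)$ and the continuous dual action $W\curvearrowright\overline{(W,S)}$. A short check on point evaluations confirms that it extends left multiplication: identifying $\mathbf{v}\in W$ with the character $f\mapsto f(\mathbf{v})$ and computing $\Ad(\lambda(\mathbf{w}))(f)(\mathbf{v})=f(\mathbf{w}^{-1}\mathbf{v})$ shows the induced homeomorphism sends $\mathbf{v}$ to $\mathbf{w}\mathbf{v}$.

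Finally, for the invariance of the boundary I would invoke Corollary \ref{cor}, giving $\partial(W,S)=\overline{(W,S)}\setminus W$. The homeomorphism induced by $\mathbf{w}$ restricts on the dense discrete subset $W$ to the bijection $\mathbf{v}\mapsto\mathbf{w}\mathbf{v}$, hence maps $W$ bijectively onto $W$ and therefore maps the complement $\partial(W,S)$ bijectively onto itself, so $W.(\partial(W,S))=\partial(W,S)$. The only genuinely delicate point in the argument is the case analysis of the second paragraph; in particular one must handle with care the possibility that $x$ starts with several generators (so that ``$x\nleq s\mathbf{v}$'' cannot be read off from first letters alone), which is exactly what the meet-semilattice structure and Proposition \ref{help} are there to control.
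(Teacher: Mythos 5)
Your proof is correct, but it takes a genuinely different route from the paper's. The paper argues directly on the topological side: it defines $s.\left[\alpha\right]:=\left[(s\alpha_{n})_{n}\right]$ for an infinite geodesic path $\alpha$, checks well-definedness by a two-case application of Proposition \ref{help}, and then proves sequential continuity by hand, where the case $s\nleq z$ requires a compactness/subsequence argument. You instead work on the function-algebra side: all the combinatorial content is packaged into the two projection identities for $\lambda(s)P_{x}\lambda(s)$, which I have verified are correct --- in the case $s\leq x$ one indeed has $x\leq s\mathbf{v}\Leftrightarrow(s\nleq\mathbf{v}\text{ and }sx\leq\mathbf{v})$, and in the case $s\nleq x$ the subcase $s\nleq\mathbf{v}$ is exactly where the complete meet-semilattice property enters, since $s\mathbf{v}$ is then an upper bound of $\left\{ s,x\right\}$ forcing $s\vee x$ to exist, and Proposition \ref{help} applied to $s\vee x$ and $s\mathbf{v}$ gives the stated equivalence. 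Continuity then comes for free from Gelfand duality applied to the homomorphism $W\rightarrow\Aut(\mathcal{D}(W,S))$, and the boundary invariance is read off from $\partial(W,S)=\overline{(W,S)}\setminus W$ (Corollary \ref{cor}), which the paper's proof never needs to invoke. The trade-offs: your argument is shorter on the analytic side and immediately exhibits the covariance of $\mathcal{D}(W,S)$ under $\Ad(\lambda(\mathbf{w}))$ that the paper uses later for the identification $\mathfrak{A}(W)\cong C(\overline{(W,S)})\rtimes_{r}W$ in Section \ref{applications}; the paper's argument, by contrast, directly produces the description of the action on representatives, $s.\left[\alpha\right]=\left[(s\alpha_{n})_{n}\right]$, which is used repeatedly afterwards (e.g. in the proof of Theorem \ref{CombinatorialCompactification}). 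That description is still recoverable from your construction by one extra easy observation: a representing sequence converges to its class in $\overline{(W,S)}$, so by continuity of your action and the fact that it restricts to left multiplication on the dense subset $W$, one gets $s.\left[\alpha\right]=\lim_{n}s\alpha_{n}=\left[(s\alpha_{n})_{n}\right]$.
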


\begin{proof}
If suffices to show that for every $s\in S$ the map $\mathbf{w}\mapsto s\mathbf{w}$ continuously extends to the boundary. First, let $\alpha$, $\beta$ be equivalent infinite geodesic paths. It is clear that $(s.\alpha_{n})_{n\in\mathbb{N}}$, $(s.\beta_{n})_{n\in\mathbb{N}}$ are infinite geodesic paths as well, hence the elements $\left[s.\alpha\right]:=\left[(s.\alpha_{n})_{n\in\mathbb{N}}\right]$ and $\left[s.\beta\right]:=\left[(s.\beta_{n})_{n\in\mathbb{N}}\right]\in\partial W$ are well-defined. Without loss of generality we can assume that $\alpha_{0}=\beta_{0}=e$. Then, for every $n\in\mathbb{N}$ there exist minimal $k_{n},l_{n}\in\mathbb{N}$ with $\alpha_{n}\leq\beta_{k_{n}}$ and $\beta_{n}\leq\alpha_{l_{n}}$.

\begin{itemize}
\item \emph{Case 1:} Assume that $s\leq\left[\alpha\right]=\left[\beta\right]$ and let $\mathbf{v}\leq\left[s.\alpha\right]$. Then there exists $N\in\mathbb{N}$ such that $s\leq\alpha_{n}$ and $\mathbf{v}\leq s\alpha_{n}$ for all $n\geq N$. Since then $s\leq\alpha_{n}\leq\beta_{k_{n}}$, Proposition \ref{help} implies that $\mathbf{v}\leq s\alpha_{n}\leq s\beta_{k_{n}}\leq\left[s.\beta\right]$ for all $n\geq N$. The same argument can be used to show that if $\mathbf{v}\leq\left[s.\beta\right]$, then $\mathbf{v}\leq\left[s.\alpha\right]$ which implies that $\left[s.\alpha\right]=\left[s.\beta\right]$.
\item \emph{Case 2:} Assume that $s\nleq\left[\alpha\right]=\left[\beta\right]$ and let $\mathbf{v}\leq s.\left[\alpha\right]$. Then, there exists $N\in\mathbb{N}$ such that $s\nleq\alpha_{n}$, $s\nleq\beta_{k_{n}}$ and $\mathbf{v}\leq s\alpha_{n}$ for all $n\geq N$. Again, an application of Proposition \ref{help} to $s\alpha_n$ and $s \beta_{k_n}$ implies that $\mathbf{v}\leq s\alpha_{n}\leq s\beta_{k_{n}}\leq\left[s.\beta\right]$ for all $n\geq N$. The same argument implies that if $\mathbf{v}\leq\left[s.\alpha\right]$, then $\mathbf{v}\leq\left[s.\beta\right]$. We get $\left[s.\alpha\right]=\left[s.\beta\right]$.
\end{itemize}

We have shown that the map $\mathbf{w}\mapsto s\mathbf{w}$ extends to the boundary via $s.\left[\alpha\right]:=\left[(s.\alpha_{n})_{n\in\mathbb{N}}\right]\in\partial W$ for $\left[\alpha\right]\in\partial W$. It remains to show that the extension is continuous. Since $\overline{W}$ is metrizable, it suffices to consider sequences. Let $(z^{i})_{i\in\mathbb{N}}\subseteq\overline{W}$ be a sequence converging to a boundary point $z\in\partial W$ and let $\alpha^{i}$ (resp. $\alpha$) be (possibly finite) geodesic paths representing $z^{i}$ (resp. $z$). Again, we can assume that $\alpha_{0}^{i}=\alpha_{0}=e$.

\begin{itemize}
\item \emph{Case 1:} Assume that $s\leq z=\left[\alpha\right]\in\partial W$ and let $\mathbf{v}\in W$ with $\mathbf{v}\leq s.z$. There exists $N\in\mathbb{N}$ with $s\leq\alpha_{n}$ and $\mathbf{v}\leq s\alpha_{n}$ for all $n\geq N$. Further, for $n\geq N$ there exists $i_{0}\left(n\right)\in\mathbb{N}$ with $s\leq\alpha_{n}\leq z^{i}$ for $i\geq i_{0}\left(n\right)$. Proposition \ref{help} implies that $\mathbf{v}\leq s\alpha_{n}\leq s.z^{i}$ for all $n\geq N$, $i\geq i_{0}\left(n\right)$, so in particular $\mathbf{v}\leq s.z^{i}$ for all $i\geq i_{0}\left(N\right)$. Now, let $\mathbf{v}\in W$ with $\mathbf{v}\nleq s.z$. We have to show that $\mathbf{v}\nleq s.z^{i}$ for $i$ large enough. Assume without loss of generality that $\mathbf{v}\leq s.z^{i}$ for all $i\in\mathbb{N}$. There exists $i_{0}\in\mathbb{N}$ with $s\leq z^{i}$ for all $i\geq i_{0}$ and hence $s\nleq\mathbf{v}$. We get with Proposition \ref{help} that $s\mathbf{v}\leq z^{i}$ for all $i\geq i_{0}$. But $z^{i}\rightarrow z$, so $s\mathbf{v}\leq z$ as well. Again, using Proposition \ref{help} we get $\mathbf{v}\leq s.z$ in contradiction to our choice of $\mathbf{v}$. This implies that $s.z^{i}\rightarrow s.z$.
\end{itemize}

We have hence shown that if $z\in\partial W$ with $s\leq z$, then $s.z^{i}\rightarrow s.z$ for every sequence $(z^{i})_{i\in\mathbb{N}}\subseteq\overline{W}$ with $z^{i}\rightarrow z$.

\begin{itemize}
\item \emph{Case 2:} Assume that $s\nleq z=\left[\alpha\right]\in\partial W$ and let $\mathbf{v}\in W$ with $\mathbf{v}\leq s.z$. There exists $N\in\mathbb{N}$ with $s\nleq\alpha_{n}$ and $\mathbf{v}\leq s\alpha_{n}$ for all $n\geq N$. Further, for $n\in\mathbb{N}$ there exists $i_{0}\left(n\right)\in\mathbb{N}$ with $\alpha_{n}\leq z^{i}$ and $s\nleq z^{i}$ for all $i\geq i_{0}\left(n\right)$. Proposition \ref{help} implies that $\mathbf{v}\leq s\alpha_{n}\leq s.z^{i}$ for all $n\geq N$, $i\geq i_{0}\left(n\right)$, so in particular $\mathbf{v}\leq s.z^{i}$ for all $i\geq i_{0}\left(N\right)$. Now, let $\mathbf{v}\in W$ with $\mathbf{v}\nleq s.z$. Again, we have to show that $\mathbf{v}\nleq s.z^{i}$ for $i$ large enough. Assume without loss of generality that $\mathbf{v}\leq s.z^{i}$ for all $i\in\mathbb{N}$. Since $\overline{W}$ is (sequentially) compact, we find a subsequence $(s.z^{i_{k}})_{k\in\mathbb{N}}$ of $(s.z^{i})_{i\in\mathbb{N}}$ converging to a boundary point $z^{\prime}\in\partial W$. Then, $s\leq z^{\prime}$ and $\mathbf{v}\leq z^{\prime}$. By what we have shown in \emph{Case 1}, we get that $z^{i_{k}}\rightarrow s.z^{\prime}$ which implies $s.z^{\prime}=z$. But then $\mathbf{v}\leq z^{\prime}=s.z$ in contradiction to our choice of $\mathbf{v}$. This implies that $s.z^{i}\rightarrow s.z$.
\end{itemize}

The claim follows.
\end{proof}

An immediate implication of Theorem \ref{hyperbolic} is the following.

\begin{corollary} \label{hyperbolic 2}
Let $\left(W,S\right)$ be a word hyperbolic Coxeter system. Then the map $\tilde{\phi} :\overline{\left(W,S\right)}\rightarrow W\cup\partial_{h}W$ given by $\tilde{\phi}\left(\mathbf{w}\right)=\mathbf{w}$ for $\mathbf{w}\in W$ and $\tilde{\phi}\left(\left[\alpha\right]\right)=\left[\alpha\right]_{h}$ for an infinite geodesic path $\alpha$ is well-defined, continuous, $W$-equivariant and surjective with $\tilde{\phi} (\partial (W,S)) = \partial_h W$.
\end{corollary}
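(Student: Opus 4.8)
The plan is to extract almost everything from Theorem \ref{hyperbolic} and to verify only the $W$-equivariance directly. Since $(W,S)$ has finite rank, $S$ is finite and $\mathrm{Cay}(W,S)$ is (uniformly) locally finite; since $W$ is word hyperbolic, $\mathrm{Cay}(W,S)$ is a hyperbolic graph. Thus Theorem \ref{hyperbolic}, applied to $K=\mathrm{Cay}(W,S)$ with root $o=e$, already supplies a continuous surjection $\tilde{\phi}:\overline{(W,S)}\to W\cup\partial_h W$ with $\tilde{\phi}|_W=\mathrm{id}_W$ and $\tilde{\phi}|_{\partial(W,S)}=\phi$. The identity $\tilde{\phi}(\partial(W,S))=\partial_h W$ then follows at once, since $\phi$ surjects onto $\partial_h W$ while $\tilde{\phi}(W)=W$. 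The only assertion not handed to us for free is $W$-equivariance.

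For equivariance I would first record both group actions at the level of representatives. On the one hand, the proof of Theorem \ref{coxeteraction} shows that $W\curvearrowright\overline{(W,S)}$ is implemented by left multiplication on representing paths: for $s\in S$ and an infinite geodesic path $\alpha$ one has $s.[\alpha]=[(s\alpha_n)_{n\in\mathbb{N}}]$, while on the dense copy of $W$ it is ordinary left multiplication. On the other hand, as recalled in the preliminaries on hyperbolic compactifications, the left-multiplication isometry $\mathbf{w}\mapsto s\mathbf{w}$ of $\mathrm{Cay}(W,S)$ extends to the homeomorphism of $W\cup\partial_h W$ sending a Gromov class $[\mathbf{x}]_h$ to $[(sx_i)_i]_h$. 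Both are genuine actions by homeomorphisms, so it suffices to intertwine the generator actions and then bootstrap to all of $W$.

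For $z=[\alpha]\in\partial(W,S)$ represented by an infinite geodesic path $\alpha$, the sequence $(s\alpha_n)_n$ again $o$-converges to infinity and represents $s.z$; hence, by the very definition of $\phi$, one computes $\tilde{\phi}(s.z)=\phi([(s\alpha_n)_n])=[(s\alpha_n)_n]_h=s.[\alpha]_h=s.\tilde{\phi}(z)$. For $\mathbf{w}\in W$ the equality $\tilde{\phi}(s.\mathbf{w})=s\mathbf{w}=s.\tilde{\phi}(\mathbf{w})$ is immediate from $\tilde{\phi}|_W=\mathrm{id}_W$. Thus $\tilde{\phi}$ intertwines each $s\in S$. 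Since the set of group elements intertwined by $\tilde{\phi}$ is closed under products and inverses (by the usual computation, replacing $z$ by $\mathbf{g}^{-1}.z$), it is a subgroup of $W$ containing $S$, and therefore equals $W$.

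The only genuine technical points are the two bookkeeping facts: that $(s\alpha_n)_n$ both $o$-converges to infinity and represents $s.z$ on the $\overline{(W,S)}$-side, and that the isometric extension acts on Gromov classes by the same left multiplication on representatives. Both are precisely what is established in the proof of Theorem \ref{coxeteraction} and in the standard functoriality of the Gromov boundary under isometries, so once they are matched up the intertwining is a one-line computation. I expect no substantive obstacle beyond carefully aligning these two descriptions of \emph{left multiplication on representing sequences}, which is exactly why the corollary qualifies as an immediate consequence of Theorem \ref{hyperbolic}.
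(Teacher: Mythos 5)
Your proposal is correct and takes exactly the route the paper intends: the paper states this corollary without a written proof, as an ``immediate implication'' of Theorem \ref{hyperbolic}, and your argument --- extracting well-definedness, continuity, and surjectivity from that theorem applied to the locally finite hyperbolic graph $\mathrm{Cay}(W,S)$, then checking $W$-equivariance generator by generator by matching the description $s.[\alpha]=[(s\alpha_n)_n]$ from the proof of Theorem \ref{coxeteraction} against the standard action on Gromov classes, and bootstrapping to all of $W$ --- is precisely the gloss the paper relies on. No gaps.
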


In particular, in the setting of Corollary \ref{hyperbolic 2}, the action of $W$ on the compactification $\overline{W}$ is (topologically) amenable. But we can do better, as we will see in the following subsection.


\subsection{Combinatorial compactifications and horofunction compactifications} \label{2.2}

The author is grateful to Sven Raum and Adam Skalski, who contacted
him after receiving an earlier draft of this paper to point out that
the space $\overline{(W,S)}$ coincides with Caprace-Lécureux's minimal
combinatorial compactification associated with the Coxeter complex
of the system $(W,S)$ (see \cite{Caprace}, \cite{Lecureux})
and hence relates to Lam-Thomas' results in \cite{Lam}.
Let us elaborate on this, for details see \cite{Caprace}.
Let $X$ be a locally finite building of type $(W,S)$ with chamber
set $\text{Ch}(X)$ and denote the corresponding set of spherical
residues by $\text{Res}_{\text{sph}}(X)$. Given a spherical residue
$\sigma\in\text{Res}_{\text{sph}}(X)$ the associated \emph{combinatorial
projection} $\text{proj}_{\sigma}:\text{Ch}(X)\rightarrow\text{St}(\sigma)$
associates to a chamber $C$ the chamber of the star $\text{St}(\sigma)$
of $\sigma$ (i.e. the set of all residues containing $\sigma$ in
their boundaries) closest to $C$. It may be extended to a map on
the set of all (spherical) residues of $X$ and hence induces a map
\[
\pi_{\text{Res}}:\text{Res}_{\text{sph}}(X)\rightarrow\prod_{\sigma\in\text{Res}_{\text{sph}}(X)}\text{St}(\sigma),R\mapsto(\text{proj}_{\sigma}(R))_{\sigma\in\text{Res}_{\text{sph}}(X)}.
\]
Equip $\prod_{\sigma\in\text{Res}_{\text{sph}}(X)}\text{St}(\sigma)$
with the product topology where each star is discrete. Then the \emph{minimal
combinatorial compactification $\mathcal{C}_{1}(X)$} of $X$ can
be defined as the closure $\mathcal{C}_{1}(X):=\overline{\pi_{\text{Res}}(\text{Ch}(X))}$
(see \cite[Proposition 2.12]{Caprace}) and the
\emph{maximal combinatorial compactification} of $X$ is $\mathcal{C}_{\text{sph}}(X):=\overline{\pi_{\text{Res}}(\text{Res}_{\text{sph}}(X))}$.
In particular, $\mathcal{C}_{1}(X)$ is a closed subset of $\mathcal{C}_{\text{sph}}(X)$.
One can show that the $\text{Aut}(X)$-action on $X$ extends in a
canonical way to continuous actions on $\mathcal{C}_{1}(X)$ and $\mathcal{C}_{\text{sph}}(X)$.

The following theorem builds a connection between Caprace-Lécureux's
combinatorial compactifications and our construction. It holds in
greater generality, but we restrict to compactifications of finite
rank Coxeter systems. The theorem states that for a finite rank Coxeter
system $(W,S)$ and the Coxeter complex $\Sigma$ Caprace-Lécureux's
minimal combinatorial compactification $\mathcal{C}_{1}(W,S):=\mathcal{C}_{1}(\Sigma)$
coincides with the space $\overline{(W,S)}$. Its proof is based on
the characterization \cite[Theorem 3.1]{Caprace} of $\mathcal{C}_{1}(W,S)$ as the horofunction
compactification of the chamber graph (i.e. the set of chambers with
the gallery distance) of the locally finite building $\Sigma$. Recall
that the chamber graph of $\Sigma$ is just the Cayley graph $\text{Cay}(W,S)$
with the usual metric.

The horofunction compactificiation is constructed as follows. Following
\cite[Chapter II.8]{BridsonHaefliger}, let $(Y,d)$ be a metric space and consider the space $C(Y)$
of continuous functions on $Y$ equipped with the topology of uniform
convergence on bounded sets. Given a base point $y_{0}\in Y$ define
the subspace $C(Y,y_{0}):=\{f\in C(Y)\mid f(y_{0})=0\}$. It is homeomorphic
to the quotient $C_{\ast}(Y)$ of $C(Y)$ by the 1-dimensional subspace
of constant functions, so in particular $C(Y,y_{0})$ does not depend
on the choice of $y_{0}\in Y$. The space $Y$ (continuously and injectively)
embeds into $C(Y,y_{0})$ via $y\mapsto f_{y}:=d(y,\cdot)-d(y,y_{0})$.
We can hence view $Y$ as a subspace of $C(Y,y_{0})$. The closure
of $Y$ in $C(Y,y_{0})$ is then denoted by $\widehat{Y}$. If $Y$
is proper, $\widehat{Y}$ is a compact Hausdorff space (see \cite[Exercise 8.15]{BridsonHaefliger}) which
is called the \emph{horofunction compactification} of $Y$.

Note that the chamber graph $\text{Cay}(W,S)$ of $\Sigma$ is a proper
metric space and that in this case the topology of uniform convergence
on bounded sets on $C(W,e):=C(\text{Cay}(W,S),e)$ coincides with
the topology of pointwise convergence. By \cite[Theorem
3.1]{Caprace} the minimal combinatorial compactification $\mathcal{C}_{1}(W,S)$
is $\text{Aut}(\Sigma)$-equivariantly homeomorphic to the horofunction
compactification of $\text{Cay}(W,S)$ via $\pi_{\text{Res}}(\text{Ch}(\Sigma))\ni\pi_{\text{Res}}(\mathbf{w})\mapsto f_{\mathbf{w}}=\left|\mathbf{w}^{-1}(\cdot)\right|-\left|\mathbf{w}\right|$
where $\mathbf{w}\in\text{Ch}(\Sigma)=W$.

\begin{theorem} \label{CombinatorialCompactification}
Let $(W,S)$ be a finite rank Coxeter system. Then
the map
\[
W\rightarrow C(W,e), \mathbf{w}\mapsto f_{\mathbf{w}}:=\left|\mathbf{w}^{-1}(\cdot)\right|-\left|\mathbf{w}\right|
\]
extends to a $W$-equivariant homeomorphism between $\overline{(W,S)}$
and $\widehat{\text{Cay}(W,S)}$. In particular, $\overline{(W,S)}$
is $W$-equivariantly homeomorphic to the minimal combinatorial compactification
$\mathcal{C}_{1}(W,S)$.
\end{theorem}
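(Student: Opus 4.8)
The plan is to reformulate the statement as an equality of two unital commutative C$^\ast$-subalgebras of $\ell^\infty(W)$ and then compare their spectra. Recall from Proposition \ref{C*} and Remark \ref{remark} that $\overline{(W,S)}\cong\text{Spec}(\mathcal{D}(W,S))$, where we regard each $P_x\in\ell^\infty(W)$ as the indicator function $\mathbf{w}\mapsto\mathbf{1}_{[x\leq\mathbf{w}]}$ and $\mathcal{D}(W,S)=C^\ast(\{P_x\mid x\in W\})$. On the other side, for $\mathbf{v}\in W$ define the bounded function $h_{\mathbf{v}}\in\ell^\infty(W)$ by $h_{\mathbf{v}}(\mathbf{w}):=f_{\mathbf{w}}(\mathbf{v})=|\mathbf{v}^{-1}\mathbf{w}|-|\mathbf{w}|$; the reverse triangle inequality gives $\|h_{\mathbf{v}}\|_\infty\leq|\mathbf{v}|$ and $h_{\mathbf{v}}$ is integer-valued. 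Since $(W,S)$ has finite rank, $\text{Cay}(W,S)$ is proper, so (as recalled before the theorem) the horofunction topology is that of pointwise convergence, whence $\widehat{\text{Cay}(W,S)}\cong\text{Spec}(\mathcal{H})$ for $\mathcal{H}:=C^\ast(\{h_{\mathbf{v}}\mid\mathbf{v}\in W\}\cup\{1\})$, exactly as in the proof of Proposition \ref{C*}. Both spaces are thus compactifications of the dense discrete set $W$, and it suffices to prove the equality $\mathcal{D}(W,S)=\mathcal{H}$ inside $\ell^\infty(W)$: the resulting identification of spectra restricts to $\text{id}_W$ and sends the character ``evaluation at $\mathbf{w}$'' to the tuple $(h_{\mathbf{v}}(\mathbf{w}))_{\mathbf{v}}=f_{\mathbf{w}}$, which is precisely the asserted extension.

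For the inclusion $\mathcal{D}(W,S)\subseteq\mathcal{H}$ I would exploit the order-theoretic meaning of the minimum of a horofunction. The triangle inequality $|\mathbf{w}|\leq|x|+|x^{-1}\mathbf{w}|$ shows $h_x(\mathbf{w})\geq-|x|$ for all $\mathbf{w}$, with equality if and only if $|x^{-1}\mathbf{w}|=|\mathbf{w}|-|x|$, i.e. if and only if $x\leq\mathbf{w}$ in the weak right Bruhat order. Hence $P_x=\mathbf{1}_{[h_x=-|x|]}$. As $h_x$ is a self-adjoint element of $\mathcal{H}$ with finite spectrum (being bounded and integer-valued), continuous functional calculus applied to a function $g$ with $g(-|x|)=1$ that vanishes on the remaining values of $h_x$ yields $P_x=g(h_x)\in\mathcal{H}$. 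Since the $P_x$ generate $\mathcal{D}(W,S)$, this gives $\mathcal{D}(W,S)\subseteq\mathcal{H}$.

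The reverse inclusion $\mathcal{H}\subseteq\mathcal{D}(W,S)$ is the crux. Here I would first record that, by Gelfand duality and density of $W$ in $\overline{(W,S)}$, a bounded function on $W$ lies in $\mathcal{D}(W,S)$ if and only if it extends continuously to $\overline{(W,S)}$. It therefore suffices to show that each $h_{\mathbf{v}}$ so extends, which I would do by induction on $|\mathbf{v}|$ using the telescoping identity
\[
h_{\mathbf{v}}=h_{s\mathbf{v}}\circ L_s+h_s,\qquad L_s\colon\mathbf{w}\mapsto s\mathbf{w},
\]
valid whenever $s$ is a left descent of $\mathbf{v}$ (so that $|s\mathbf{v}|=|\mathbf{v}|-1$); this is immediate from $f_{\mathbf{w}}(\mathbf{v})=(|(s\mathbf{v})^{-1}s\mathbf{w}|-|s\mathbf{w}|)+(|s\mathbf{w}|-|\mathbf{w}|)$ together with $(s\mathbf{v})^{-1}s=\mathbf{v}^{-1}$. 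The cases $|\mathbf{v}|\leq1$ are direct: $h_e=0$, and for a generator $h_s=1-2P_s\in\mathcal{D}(W,S)$ (which also supplies the additive term in the recursion). For the inductive step, $h_{s\mathbf{v}}$ extends continuously by hypothesis, and the decisive point is that $L_s$ extends to a homeomorphism of $\overline{(W,S)}$ by Theorem \ref{coxeteraction}; hence $h_{s\mathbf{v}}\circ L_s$, and therefore $h_{\mathbf{v}}$, extend continuously. I expect this step --- converting the metric quantity $f_{\mathbf{w}}(\mathbf{v})$ into something manifestly continuous on the order-theoretic compactification --- to be the main obstacle, and the already-established continuity of the $W$-action from Theorem \ref{coxeteraction} is exactly what removes it.

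Finally, for equivariance I would invoke the standard isometric action on horofunctions, $(\mathbf{u}\cdot f)(\mathbf{v})=f(\mathbf{u}^{-1}\mathbf{v})-f(\mathbf{u}^{-1})$, under which one checks $f_{\mathbf{u}\mathbf{w}}=\mathbf{u}\cdot f_{\mathbf{w}}$; since the homeomorphism intertwines the two $W$-actions on the dense subset $W$ and both actions are continuous, it is $W$-equivariant by density. The concluding assertion that $\overline{(W,S)}$ is $W$-equivariantly homeomorphic to $\mathcal{C}_1(W,S)$ is then immediate from the $\text{Aut}(\Sigma)$-equivariant identification of $\mathcal{C}_1(W,S)$ with $\widehat{\text{Cay}(W,S)}$ recalled from \cite[Theorem 3.1]{Caprace}.
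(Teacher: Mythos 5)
Your proof is correct, and it reaches the theorem by a genuinely different route than the paper's. The paper works pointwise: for $z\in\overline{(W,S)}$ represented by a geodesic path $\alpha$ it defines $(\phi(z))(\mathbf{v}):=\lim_{i}\left(\left|\alpha_{i}^{-1}\mathbf{v}\right|-\left|\alpha_{i}\right|\right)$, proves well-definedness and continuity by telescoping over a reduced expression of $\mathbf{v}$ (equation \eqref{reduction}, which is exactly your recursion $h_{\mathbf{v}}=h_{s\mathbf{v}}\circ L_{s}+h_{s}$ iterated to the end) together with Theorem \ref{coxeteraction}, proves injectivity from the fact that the limit value $-\left|\mathbf{v}\right|$ is attained precisely when $\mathbf{v}\leq z$ --- the same order-theoretic observation underlying your identity $P_{x}=\mathbf{1}_{[h_{x}=-|x|]}$ --- and then concludes with compactness of $\overline{(W,S)}$. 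You instead dualize everything: both compactifications become spectra of explicit unital C$^\ast$-subalgebras of $\ell^{\infty}(W)$, and the theorem is reduced to the single algebraic identity $\mathcal{D}(W,S)=\mathcal{H}$, where $\mathcal{H}:=C^{\ast}(\{h_{\mathbf{v}}\mid\mathbf{v}\in W\}\cup\{1\})$; one inclusion follows by continuous functional calculus on the integer-valued horofunctions, the other by induction on word length via the recursion plus Theorem \ref{coxeteraction}. So the combinatorial engine (telescoping over reduced words, continuity of the left action, the characterization of where $h_{x}$ attains its minimum) is shared, but the packaging differs. What your framing buys: injectivity, surjectivity and bicontinuity of the identification come for free from Gelfand duality, no geodesic representatives or case distinctions are needed, and you isolate a reusable criterion (a bounded function on $W$ lies in $\mathcal{D}(W,S)$ if and only if it extends continuously to $\overline{(W,S)}$). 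What the paper's framing buys: an explicit geometric formula for the homeomorphism on boundary points as a limit along geodesic representatives, which keeps visible how boundary points act as horofunctions. Your treatment of equivariance (checking $f_{\mathbf{u}\mathbf{w}}=\mathbf{u}\cdot f_{\mathbf{w}}$ on the dense orbit and extending by continuity of both actions) is also slightly more detailed than the paper's, which essentially only remarks that the map on $W$ is $W$-equivariant.
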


\begin{proof}
By the compactness of $\overline{W}$ it suffices
to show that the $W$-equivariant map $W\rightarrow C(W,e)$, $\mathbf{w}\mapsto f_{\mathbf{w}}:=\left|\mathbf{w}^{-1}(\cdot)\right|-\left|\mathbf{w}\right|$
extends to a well-defined, bijective and continuous map $\phi:\overline{W}\rightarrow\widehat{\text{Cay}(W,S)}$
via $(\phi(z))(\mathbf{v}):=\lim_{i}\left(\left|\alpha_{i}^{-1}\mathbf{v}\right|-\left|\alpha_{i}\right|\right)$,
where $\mathbf{v}\in W$ and $\alpha$ is a (possibly finite) geodesic
path representing $z\in\overline{W}$.

\emph{Well-defined:} Let $\alpha$ and $\beta$ be equivalent infinite
geodesic paths and $\mathbf{v}\in W$ with reduced expression $\mathbf{v}=s_{1}...s_{n}$.
We have that 
\begin{eqnarray} \label{reduction}
\left|\alpha_{i}^{-1}\mathbf{v}\right|-\left|\alpha_{i}\right| = \sum_{j=0}^{n-1}\left(\left|\left(s_{j}...s_{1}\alpha_{i}\right)^{-1}s_{j+1}\right|-\left|\left(s_{j}...s_{1}\alpha_{i}\right)^{-1}\right|\right)\text{.}
\end{eqnarray}
Theorem \ref{coxeteraction} implies that for every
$j=0,...,n-1$ the sequences $(s_{j}...s_{1}\alpha_{i})_{i\in\mathbb{N}}$
and $(s_{j}...s_{1}\beta_{i})_{i\in\mathbb{N}}$ are equivalent infinite
geodesic paths. By \eqref{reduction} it hence suffices to show that $\lim_{i}\left(\left|\alpha_{i}^{-1}s\right|-\left|\alpha_{i}\right|\right)=\lim_{i}\left(\left|\beta_{i}^{-1}s\right|-\left|\beta_{i}\right|\right)$
for all $s\in S$. Since $\alpha$ and $\beta$ are equivalent we either have $s\leq\alpha_{i},\beta_{i}$ for
$i$ large enough or $s\nleq\alpha_{i},\beta_{i}$ for $i$ large
enough. In the first case, 
\[
\lim_{i\rightarrow\infty}\left(\left|\alpha_{i}^{-1}s\right|-\left|\alpha_{i}\right|\right)=1=\lim_{i\rightarrow\infty}\left(\left|\beta_{i}^{-1}s\right|-\left|\beta_{i}\right|\right)
\]
and in the second one
\[
\lim_{i\rightarrow\infty}\left(\left|\alpha_{i}^{-1}s\right|-\left|\alpha_{i}\right|\right)=(-1)=\lim_{i\rightarrow\infty}\left(\left|\beta_{i}^{-1}s\right|-\left|\beta_{i}\right|\right)\text{.}
\]
We get that $\phi$ is indeed well-defined.

\emph{Continuity:} Let $\mathbf{v}\in W$ with reduced expression
$\mathbf{v}=s_{1}...s_{n}$. The equality \eqref{reduction} implies that for
every $z\in\overline{W}$, $(\phi(z))(\mathbf{v})=\sum_{j=0}^{n-1}\phi(s_{j}...s_{1}.z)(s_{j+1})\text{.}$
It hence suffices to show that for every $s\in S$ and every sequence
$(z^{i})_{i\in\mathbb{N}}\subseteq\overline{W}$ converging to a point
$z\in\overline{W}$ the equality $(\phi(z))(s)=\lim_{i}(\phi(z^{i}))(s)$
holds. A straightforward modification of the argument above implies
the desired statement.

\emph{Surjectivity:} The surjectivity is clear.

\emph{Injectivity:} Let $z,z^{\prime}\in\overline{W}$ with $z\neq z^{\prime}$.
Then there exists $\mathbf{v}\in W$ with $\mathbf{v}\leq z$ but
$\mathbf{v}\nleq z^\prime$. Let $\alpha$ be a (possibly finite) geodesic
path representing $z$ and $\beta$ a (possibly finite) geodesic path
representing $z^\prime$ with $\alpha_{0}=\beta_{0}=e$. Then, 
\[
(\phi(z))(\mathbf{v})=\lim_{i\rightarrow\infty}\left(\left|\alpha_{i}^{-1}\mathbf{v}\right|-\left|\alpha_{i}\right|\right)=-\left|\mathbf{v}\right|\neq\lim_{i\rightarrow\infty}\left(\left|\beta_{i}^{-1}\mathbf{v}\right|-\left|\beta_{i}\right|\right)=(\phi(z^{\prime}))(\mathbf{v})
\]
and hence $\phi(z)\neq\phi(z^{\prime})$. The claim follows.
\end{proof}

\begin{remark}
In general it is not true that for a connected rooted graph
$(K,o)$ the map $K\rightarrow C(K,o)$, $x\mapsto f_{x}:=d_{K}(x,\cdot)-d_{K}(x,o)$
extends to a homeomorphism between $\overline{(K,o)}$ and $\widehat{K}$;
not even in the locally finite case or in the setting of Proposition \ref{characterization}. Indeed, as mentioned above
the horofunction compactification does not depend on the choice of
the base point whereas $\overline{(K,o)}$ can depend on the choice
of the root $o$ (see for instance Remark \ref{rootdependence}).
\end{remark}

\begin{definition}
An action of a discrete group $G$ on a compact
space $Y$ is called \emph{amenable} if there exists a net of continuous
maps $m_{i}:Y\rightarrow\text{Prob}\left(G\right)$ such that for
each $g\in G$ 
\[
\lim_{i\rightarrow\infty}\left(\sup_{y\in Y}\left\Vert g.m_{i}^{y}-m_{i}^{g.y}\right\Vert _{1}\right)=0\text{,}
\]
 where $g.m_{i}^{y}(g^{\prime}):=m_{i}^{y}(g^{-1}g^{\prime})$, $g^{\prime}\in G$.
Here, $\text{Prob}(G)$ denotes the space of probability measures
on the group $G$.
\end{definition}

The main result in \cite[Section 5]{Lecureux}
states that for a Coxeter system $(W,S)$ the action of $W$ on the
maximal combinatorial compactification $\mathcal{C}_{\text{sph}}(\Sigma)$
is amenable. The argument makes use of a construction by Dranishnikov
and Januszkiewicz, see \cite{DJ}. Similar constructions
appear in \cite{Fe} and \cite{Vinberg}.
Since $\mathcal{C}_{1}(W,S)$ is a closed subset of $\mathcal{C}_{\text{sph}}(\Sigma)$
we deduce the following corollary.

\begin{corollary} \label{Total amenability}
Let $(W,S)$ be a finite rank Coxeter system.
Then the actions $W\curvearrowright\overline{(W,S)}$ and $W\curvearrowright\partial(W,S)$
are amenable.
\end{corollary}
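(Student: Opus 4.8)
The plan is to deduce the corollary from L\'ecureux's amenability result via the identification recorded in Theorem \ref{CombinatorialCompactification}, using nothing beyond the formal fact that amenability of an action on a compact space descends to closed invariant subspaces. First I would apply Theorem \ref{CombinatorialCompactification} to obtain a $W$-equivariant homeomorphism $\overline{(W,S)} \cong \mathcal{C}_{1}(W,S) = \mathcal{C}_{1}(\Sigma)$, where $\Sigma$ is the Coxeter complex of $(W,S)$ regarded as a (thin) locally finite building of type $(W,S)$ whose chamber graph is $\text{Cay}(W,S)$. Since amenability is transported by $W$-equivariant homeomorphisms, it then suffices to establish amenability of $W \curvearrowright \mathcal{C}_{1}(\Sigma)$.

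For this I would invoke the inclusion $\mathcal{C}_{1}(\Sigma) \subseteq \mathcal{C}_{\text{sph}}(\Sigma)$ as a closed $W$-invariant subset, together with the result of \cite[Section 5]{Lecureux} that $W \curvearrowright \mathcal{C}_{\text{sph}}(\Sigma)$ is amenable. The key observation is the restriction property of amenable actions: if $(m_{i})_{i}$ is a net of continuous maps $\mathcal{C}_{\text{sph}}(\Sigma) \to \text{Prob}(W)$ witnessing amenability on the larger space, then the restrictions $m_{i}|_{\mathcal{C}_{1}(\Sigma)}$ are again continuous, and for each $g \in W$ the supremum of $\|g.m_{i}^{y} - m_{i}^{g.y}\|_{1}$ taken over $y \in \mathcal{C}_{1}(\Sigma)$ is bounded by the corresponding supremum over all of $\mathcal{C}_{\text{sph}}(\Sigma)$, hence tends to $0$. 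This yields amenability of $W \curvearrowright \mathcal{C}_{1}(\Sigma)$, and therefore of $W \curvearrowright \overline{(W,S)}$.

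To treat the boundary I would note that $\partial(W,S)$ is a closed $W$-invariant subset of $\overline{(W,S)}$: invariance is precisely the statement $W.\partial(W,S) = \partial(W,S)$ from Theorem \ref{coxeteraction}, while closedness follows from the uniform local finiteness of $\text{Cay}(W,S)$, which by Lemma \ref{discreteness} gives $\partial(W,S) = \overline{(W,S)} \setminus W$ with $W$ open. Applying the same restriction argument once more, now to a net witnessing amenability of $W \curvearrowright \overline{(W,S)}$, delivers amenability of $W \curvearrowright \partial(W,S)$.

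I expect the only point requiring genuine care to be the setup in the preceding paragraph, namely confirming that $\Sigma$ legitimately fits the building-theoretic framework of \cite{Caprace} and \cite{Lecureux} so that their amenability statement applies to $\mathcal{C}_{\text{sph}}(\Sigma)$; this is already furnished by Theorem \ref{CombinatorialCompactification}. Everything downstream is the purely formal observation that continuous almost-equivariant maps into $\text{Prob}(W)$ restrict to closed invariant subspaces without spoiling the defining limit, so no substantial obstacle remains.
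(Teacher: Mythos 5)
Your proposal is correct and takes essentially the same route as the paper: the paper likewise identifies $\overline{(W,S)}$ with $\mathcal{C}_{1}(W,S)$ via Theorem \ref{CombinatorialCompactification} and then deduces the corollary from L\'ecureux's amenability of $W\curvearrowright\mathcal{C}_{\text{sph}}(\Sigma)$ together with the fact that $\mathcal{C}_{1}(W,S)$ is a closed $W$-invariant subset. The only difference is that the paper leaves the restriction argument (continuous almost-equivariant maps into $\text{Prob}(W)$ restrict to closed invariant subsets, with the relevant suprema only decreasing) and the passage to $\partial(W,S)$ implicit, whereas you spell both out correctly.
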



\subsection{Smallness at infinity}

\begin{definition} [{\cite[Definition 5.1.6]{BrownOzawa}}] \label{infty}
A \emph{compactification} of a group $G$ is a compact Hausdorff space $\overline{G}$ containing $G$ as an open dense subset. It is called (left) \emph{equivariant} if the left translation action of $G$ on $G$ extends to a continuous action on $\overline{G}$. The compactification $\overline{G}$ is said to be \emph{small at infinity} if for every net $(g_{i})_{i\in I}\subseteq G$ converging to a point $z\in\overline{G}\setminus G$ and every $g^{\prime}\in G$, one has that $g_{i}g^{\prime}\rightarrow z$.
\end{definition}

By what we have shown earlier it is clear that for every finite rank Coxeter system $(W,S)$ the corresponding space $\overline{(W,S)}$ is indeed an equivariant compactification in the sense of Definition \ref{infty}. 

\begin{definition}
We call a finite rank Coxeter system $(W,S)$ \emph{small at infinity} if $\overline{(W,S)}$ is small at infinity. If the generating set $S$ is clear, we will also say that $W$ is small at infinity.
\end{definition}

As we will see in Subsection \ref{Akemann}, the notion of smallness at infinity has implications for the Hecke operator algebras of the system. The aim of this subsection is a characterization of Coxeter groups that are small at infinity.

\begin{theorem} \label{infinity}
Let $(W,S)$ be a finite rank Coxeter system. Then the following statements are equivalent:
\begin{enumerate}[label=(\arabic*)]
\item $W$ is small at infinity;
\item $\#C_{W}(s)<\infty$ for every $s\in S$.
\end{enumerate}

\noindent Here $C_{W}(s):=\left\{ \mathbf{w}\in W\mid s\mathbf{w}=\mathbf{w}s\right\}$  denotes the centralizer of $s$ in $W$.
\end{theorem}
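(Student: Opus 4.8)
The plan is to prove the two implications separately, in each case by contraposition, after two preliminary reductions. First, since $\overline{(W,S)}$ is metrizable (Remark \ref{remark}), smallness at infinity may be tested on sequences rather than nets. Second, writing an arbitrary element as $s_1\cdots s_k$ and telescoping, one sees that $\overline{(W,S)}$ fails to be small at infinity if and only if there are a single generator $s\in S$, a point $z\in\partial(W,S)$ and a sequence $g_i\to z$ with $g_i s\not\to z$; so it suffices to study right multiplication by one generator. Throughout I would use that, by Corollary \ref{cor}, boundary points are represented by infinite geodesics and are determined by their sets of prefixes in the weak right Bruhat order, so that $g_i\to z$ means precisely that for every $\mathbf{v}\in W$ one has $\mathbf{v}\le z$ iff $\mathbf{v}\le g_i$ for all large $i$. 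Since $z\in\partial(W,S)$ forces $|g_i|\to\infty$, such sequences escape every finite set.

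For $(1)\Rightarrow(2)$ I would prove the contrapositive: assume $\#C_W(s)=\infty$ for some $s$. Pick $\mathbf{w}_n\in C_W(s)$ with $|\mathbf{w}_n|\to\infty$ and, by compactness and metrizability, pass to a subsequence $g_i:=\mathbf{w}_{n_i}$ converging to some $z\in\partial(W,S)$; a further subsequence makes either $s\le g_i$ for all $i$ or $s\nleq g_i$ for all $i$. As $g_i\in C_W(s)$ we have $g_i s=s g_i$, and the key point is that left multiplication by $s$ toggles whether $s$ is a prefix: if $s\nleq g_i$ then $|sg_i|=|g_i|+1$, so $s\le s g_i=g_i s$ while $s\nleq z$; if $s\le g_i$ then $|s g_i|=|g_i|-1$, so $s\nleq s g_i=g_i s$ while $s\le z$. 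In either case $g_i s\not\to z$, so $\overline{(W,S)}$ is not small at infinity.

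For $(2)\Rightarrow(1)$ I would again argue contrapositively. Given a single-generator failure $g_i\to z$, $g_i s\not\to z$, I pass to a subsequence with $g_i s\to z'\ne z$; as $|g_i s|\ge|g_i|-1\to\infty$ we have $z'\in\partial(W,S)$. Splitting on the value of $|g_i s|$, the case $|g_i s|=|g_i|-1$ reduces to the other one after replacing $(g_i,z,z')$ by $(g_i s,z',z)$ (note $(g_i s)s=g_i$), so I may assume $|g_i s|=|g_i|+1$ for all $i$. Appending $s$ to a reduced word for $g_i$ shows every prefix of $g_i$ is a prefix of $g_i s$, hence the prefix set of $z$ is contained in that of $z'$. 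Choosing $\mathbf{v}=\mathbf{v}'r$ with $r\in S$ of minimal length among elements with $\mathbf{v}\le z'$, $\mathbf{v}\nleq z$, minimality gives $\mathbf{v}'\le z$; thus for large $i$ we have $\mathbf{v}'\le g_i$, $\mathbf{v}'r\nleq g_i$ and $\mathbf{v}'r\le g_i s$. Writing $g_i=\mathbf{v}'h_i$ reduced and comparing lengths, these translate into $|rh_i|=|h_i s|=|h_i|+1$ and $|rh_i s|=|h_i|$, with $|h_i|\to\infty$.

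The main obstacle is then the local rigidity statement that whenever $|rh|=|hs|=|h|+1$ and $|rhs|=|h|$ one must have $rh=hs$, i.e. $h^{-1}rh=s$, which I would extract from the exchange condition (Theorem \ref{cancellation}). Indeed, since $|r(hs)|<|hs|$, the exchange condition provides a letter of the reduced word $hs=x_1\cdots x_p s$ whose deletion represents $r(hs)$; if that letter lay in the $h$-part $x_1\cdots x_p$, then right-multiplying the resulting identity by $s$ would display $rh$ as a word of length at most $|h|-1$, contradicting $|rh|=|h|+1$, so the deleted letter is the final $s$, giving $rhs=h$ and hence $rh=hs$. Applied to the $h_i$ this yields $h_i^{-1}rh_i=s$ for infinitely many distinct $h_i$ (as $|h_i|\to\infty$); since the solution set of $h^{-1}rh=s$ is a left coset of $C_W(s)$, the centralizer $C_W(s)$ is infinite, contradicting $(2)$. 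I expect the bookkeeping here — isolating the minimal distinguishing prefix, tracking the three length relations, and reducing the descent case by symmetry — to be the delicate part, whereas the exchange-condition computation is short once the configuration $\left(|rh|,|hs|,|rhs|\right)=\left(|h|+1,|h|+1,|h|\right)$ has been pinned down.
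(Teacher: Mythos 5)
Your proof is correct and follows essentially the same strategy as the paper's: both implications are proved by contraposition, the failure of smallness is reduced to right multiplication by a single generator, and the exchange condition is used to force a conjugation identity $h_i^{-1}rh_i=s$ for infinitely many distinct $h_i$, whence $C_W(s)$ is infinite because the solution set of that equation is a left coset of $C_W(s)$. The only difference is bookkeeping in $(2)\Rightarrow(1)$: the paper normalizes to the descending case $|\mathbf{w}_i s|=|\mathbf{w}_i|-1$, stabilizes along a subsequence which letter of a fixed lost prefix gets cancelled and left-translates it to the front, whereas you normalize to the ascending case and localize at the minimal prefix distinguishing the two limit points $z\neq z'$ --- mirror images of the same argument.
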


\begin{proof}
``$\ensuremath{\left(1\right)\Rightarrow\left(2\right)}$'': Let $s\in S$ be a generator with $\#C_{W}(s)=\infty$. By the compactness of $\overline{W}$ one can find a sequence $(\mathbf{w}_{i})_{i\in\mathbb{N}}\subseteq C_{W}(s)$ converging to a boundary point $z\in\partial W$. It can be chosen in such a way that $s\nleq\mathbf{w}_{i}$ for every $i\in\mathbb{N}$. But then $\mathbf{w}_{i}s\nrightarrow z$ since $s \nleq z$, i.e. $W$ is not small at infinity.

``$\ensuremath{\left(2\right)\Rightarrow\left(1\right)}$'': Let $W$ not be small at infinity. Choose a convergent sequence $(\mathbf{w}_{i})_{i\in\mathbb{N}}\subseteq W$ with limit point $z\in\partial W$ and an element $\mathbf{v}\in W$ such that $\mathbf{w}_{i}\mathbf{v}\nrightarrow z$. One can assume that $\mathbf{v}=s$ for some $s\in S$ and that there exist $\mathbf{w}\in W$, $i_{0}\in\mathbb{N}$ with $\mathbf{w}\leq\mathbf{w}_{i}$ and $\mathbf{w}\nleq\mathbf{w}_{i}s$ for all $i\geq i_{0}$. Further, we can assume that $s$ always cancels the first letter of $\mathbf{w}_{i}$. Indeed, for $i\geq i_{0}$, $\mathbf{w}_{i}$ is of the form $\mathbf{w}_{i}=\mathbf{w}\mathbf{u}_{i}$ with $\left|\mathbf{w}\mathbf{u}_{i}\right|=\left|\mathbf{w}\right|+\left|\mathbf{u}_{i}\right|$ and the multiplication of $\mathbf{w}\mathbf{u}_{i}$ with $s$ cancels some letter in the reduced expression $t_{1}...t_{n}$ for $\mathbf{w}$. As $\mathbf{w}$ consists of finitely many letters, by possibly going over to some subsequence, we can assume that multiplication by $s$ always cancels the same letter, say $t_{j}$, in the expression. Then, by possibly replacing $\mathbf{w}_{i}$ by $(t_{1}...t_{j-1})^{-1}\mathbf{w}_{i}$, we can further assume that $s$ cancels the first letter of $\mathbf{w}_{i}$. Call this letter $t$. We get that for $i\geq i_{0}$, $\mathbf{w}_{i}$ is of the form $\mathbf{w}_{i}=t\mathbf{v}_{i}$ where $\left|t\mathbf{v}_{i}\right|=\left|\mathbf{v}_{i}\right|+1$ and $\mathbf{w}_{i}s=\mathbf{v}_{i}$. This implies
\begin{eqnarray}
\nonumber
s=\mathbf{w}_{i}^{-1}t\mathbf{w}_{i}=(\mathbf{w}_{i_{0}}^{-1}\mathbf{w}_{i})^{-1}s(\mathbf{w}_{i_{0}}^{-1}\mathbf{w}_{i})\text{,}
\end{eqnarray}
i.e. $\mathbf{w}_{i_{0}}^{-1}\mathbf{w}_{i}\in C_{W}(s)$ for every $i\geq i_{0}$. We get that $\#C_{W}(s)=\infty$.
\end{proof}

Reflection centralizers of Coxeter groups have been studied in \cite{Allcock} and \cite{Brink}. The main theorem in \cite{Brink} gives the description of the centralizer $C_{W}(s)$ of a generator $s$ in a Coxeter group $W$ as a semidirect product of its reflection subgroup by the fundamental group of the connected component of the odd Coxeter diagram of $W$ containing $s$. In combination with Theorem \ref{infinity} this has the following immediate consequence.

\begin{corollary} \label{cycle}
Let $(W,S)$ be a finite rank Coxeter system for which the corresponding odd Coxeter diagram contains a cycle. Then $\overline{(W,S)}$ is not small at infinity.
\end{corollary}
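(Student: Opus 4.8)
The plan is to combine Theorem~\ref{infinity} with Brink's description of the reflection centralizers. By the contrapositive of the implication $(2)\Rightarrow(1)$ in Theorem~\ref{infinity}, in order to show that $(W,S)$ is not small at infinity it suffices to exhibit a single generator $s\in S$ with $\#C_{W}(s)=\infty$.

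First I would locate the cycle. By hypothesis the odd Coxeter diagram contains a cycle, and since a cycle is connected it lies entirely inside one connected component $\Gamma$ of the odd Coxeter diagram. I then pick any generator $s\in S$ that is a vertex of $\Gamma$, so that $\Gamma$ is precisely the connected component of the odd Coxeter diagram containing $s$. Regarded as a topological graph (a $1$-dimensional CW-complex), $\Gamma$ is connected and is not a tree, hence its fundamental group $\pi_{1}(\Gamma)$ is free of rank $\#E(\Gamma)-\#V(\Gamma)+1\geq 1$; in particular $\pi_{1}(\Gamma)$ is infinite.

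Finally, Brink's theorem identifies $C_{W}(s)$ with a semidirect product of its reflection subgroup by $\pi_{1}(\Gamma)$. Since the acting quotient $\pi_{1}(\Gamma)$ is infinite, the whole semidirect product is infinite, so $\#C_{W}(s)=\infty$, and Theorem~\ref{infinity} then gives that $(W,S)$ is not small at infinity. There is no serious obstacle here: the entire content is the observation that a connected graph carrying a cycle has infinite fundamental group, together with the fact that a semidirect product with an infinite quotient is infinite (the reflection subgroup is irrelevant to the cardinality count). The only point requiring minimal care is that the component $\Gamma$ appearing in Brink's theorem must be the one containing $s$, which is arranged by choosing $s$ inside the component that carries the cycle.
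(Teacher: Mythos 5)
Your proof is correct and follows exactly the route the paper intends: the paper presents this corollary as an immediate consequence of Theorem~\ref{infinity} together with Brink's semidirect product description of $C_{W}(s)$, which is precisely your argument (choose $s$ in the component carrying the cycle, note its fundamental group is free of positive rank, hence $\#C_{W}(s)=\infty$). Nothing is missing; the details you supply about $\pi_{1}(\Gamma)$ are just the elaboration the paper leaves implicit.
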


Let us collect some other consequences of Theorem \ref{infinity}.

The following proposition relies on the well-known fact that irreducible affine Coxeter groups arise as subgroups generated by (affine) reflections associated with crystallographic root systems (for details see \cite{Humphreys}). Recall that a \emph{crystallographic root system} $\Phi$ is a set of finitely many vectors that span a real Euclidean space $V$ and satisfy certain geometrical properties. For every $\alpha\in\Phi$, $i\in\mathbb{Z}$ the set $H_{\alpha,i}:=\left\{ x\in V\mid\left\langle x,\alpha\right\rangle =i\right\}$  defines an affine \emph{hyperplane} in $V$. Write $r_{\alpha,i}$ for the (unique) non-trivial isometry of $V$ that fixes $H_{\alpha,i}$. Then the set $R:=\left\{ r_{\alpha,i}\mid\alpha\in\Phi,i\in\mathbb{Z}\right\}$  generates an affine Coxeter group that has $R$ as its set of reflections. Every irreducible affine Coxeter group arises in that way. A \emph{translation} in $V$ is a map of the form $t_{v}:x\mapsto x+v$ for some $v\in V$. Note that for each $\alpha\in\Phi$ the product $r_{\alpha,1}r_{\alpha,0}$ is a non-zero translation in the direction of $\alpha$ with $r_{\alpha,1}r_{\alpha,0}=t_{\alpha^{\vee}}$, $\alpha^{\vee}:=\frac{2}{\left\langle \alpha,\alpha\right\rangle }\alpha$.

\begin{proposition} \label{affine}
An irreducible Coxeter system of affine type is small at infinity if and only if it is the infinite dihedral group.
\end{proposition}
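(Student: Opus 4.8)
The plan is to reduce the whole statement to the centralizer criterion of Theorem~\ref{infinity}: $(W,S)$ is small at infinity if and only if $\#C_W(s)<\infty$ for every $s\in S$. So I must show that, for an irreducible affine system, all centralizers $C_W(s)$ are finite exactly when $W$ is the infinite dihedral group $D_\infty$ (the only rank-two irreducible affine type, namely $\tilde{I}_1$, for which $\dim V=1$). I would split into the two implications accordingly.

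For the direction ``$D_\infty$ is small at infinity'', I would work directly in $W=D_\infty=\langle s,t\mid s^2=t^2=e\rangle$, writing its elements as alternating reduced words. Every element is either a rotation $(st)^k$ or a reflection. The identity $s(st)^k s=(st)^{-k}$ shows that a rotation commutes with $s$ only when $k=0$, and two distinct reflections never commute since their product is a rotation $(st)^k$ with $k\neq0$, which has infinite order and so cannot satisfy $(r_1r_2)^2=e$. Hence $C_W(s)=\{e,s\}$, and likewise $C_W(t)$ is finite, so by Theorem~\ref{infinity} the system is small at infinity.

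For the converse I argue by contraposition, using the crystallographic description recalled just before the statement. If the irreducible affine system is not $D_\infty=\tilde{I}_1$, then $n:=\dim V\ge 2$, and $W$ contains the coroot lattice $Q^\vee=\langle\alpha^\vee\mid\alpha\in\Phi\rangle$ as a group of translations, a lattice of full rank $n$ in $V$. Fix any generator $s\in S$; it is a reflection $s=r_{\alpha,i}$ across a wall $H_{\alpha,i}$ of the fundamental alcove. The key computation is the conjugation formula $t_v\, r_{\alpha,i}\, t_{-v}=r_{\alpha,\,i+\langle v,\alpha\rangle}$, which follows from $r_{\alpha,i}(x)=x-(\langle x,\alpha\rangle-i)\alpha^\vee$; consequently $t_v$ commutes with $s$ precisely when $\langle v,\alpha\rangle=0$. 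Restricting the linear functional $v\mapsto\langle v,\alpha\rangle$ to $Q^\vee$ gives a homomorphism $Q^\vee\to\mathbb{Z}$ (its values are Cartan integers) whose image is nonzero, since $Q^\vee$ spans $V$ and $\alpha\neq0$; hence its kernel $Q^\vee\cap\alpha^\perp$ has rank $n-1\ge 1$ and is infinite. All translations $t_v$ with $v$ in this kernel lie in $C_W(s)$, so $\#C_W(s)=\infty$ and $W$ is not small at infinity.

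The main work, and the step I expect to require the most care, is the commutation identity $t_v\, r_{\alpha,i}\, t_{-v}=r_{\alpha,\,i+\langle v,\alpha\rangle}$ together with the identification of each abstract generator $s\in S$ with a concrete reflection $r_{\alpha,i}$ in a wall of the fundamental alcove; once these are pinned down, the rank count $n-1\ge1$ for $n\ge2$ is immediate. A secondary point deserving a sentence is that $Q^\vee$ is a full-rank ($=n$) translation lattice inside $W$ and that $n\ge 2$ for every irreducible affine type other than $\tilde{I}_1=D_\infty$, which is what isolates the infinite dihedral group as the sole exception.
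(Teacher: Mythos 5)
Your proof is correct and takes essentially the same route as the paper: both arguments reduce the statement to the centralizer criterion of Theorem \ref{infinity}, handle the infinite dihedral case by direct inspection, and, for every other irreducible affine type, use the crystallographic realization to produce translations in $W$ that fix the reflection hyperplane of a generator $s=r_{\alpha,i}$ and hence commute with $s$. The only difference is in how these translations are exhibited: you count the rank of the kernel lattice $Q^\vee\cap\alpha^\perp$ (rank $n-1\geq 1$), whereas the paper writes down a single explicit infinite-order translation $t_{\beta^\vee+\gamma^\vee}$ with $\gamma:=\beta-\left\langle \alpha,\beta\right\rangle \alpha^{\vee}$, which lies in $H_{\alpha,0}$.
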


\begin{proof}
Let $(W,S)$ be an irreducible affine Coxeter system and denote the associated crystallographic root system by $\Phi$. The Coxeter diagram is of one of the following forms: $(\tilde{A}_{n})_{n\geq2}$, $(\tilde{B}_{n})_{n\geq3}$, $(\tilde{C}_{n})_{n\geq2}$, $(\tilde{D}_{n})_{n\geq4}$, $(\tilde{E}_{n})_{6\leq n\leq8}$, $\tilde{F}_{4}$, $\tilde{G}_{2}$, $\tilde{I}_{1}$.

\emph{Case 1:} If the Coxeter system is of the form $(\tilde{A}_{n})_{n\geq2}$, $(\tilde{B}_{n})_{n\geq3}$, $(\tilde{C}_{n})_{n\geq2}$, $(\tilde{D}_{n})_{n\geq4}$, $(\tilde{E}_{n})_{6\leq n\leq8}$, $\tilde{F}_{4}$ or $\tilde{G}_{2}$, then $\left|S\right|\geq3$. Therefore, the reflection hyperplane $H_{\alpha,i}$ with $\alpha\in\Phi$, $i\in\mathbb{Z}$ corresponding to a generator $s\in S$ is at least $1$-dimensional and one finds an element $\beta\in\Phi$ that is linearly independent from $\alpha$. We have that $\gamma:=\beta-\left\langle \alpha,\beta\right\rangle \alpha^{\vee}\in\Phi$ and the translation $t_{\beta^{\vee}+\gamma^{\vee}}=t_{\beta^{\vee}}t_{\gamma^{\vee}}$ corresponds to an infinite order element in the Coxeter group $W$. By
\begin{eqnarray} \nonumber
\beta^{\vee}+\gamma^{\vee}=\frac{2}{\left\langle \beta,\beta\right\rangle }\left(2\beta-\frac{2\left\langle \alpha,\beta\right\rangle }{\left\langle \alpha,\alpha\right\rangle }\alpha\right)\in H_{\alpha,0}\text{,}
\end{eqnarray}
the translation $t_{\beta^{\vee}+\gamma^{\vee}}$ stabilizes the hyperplane $H_{\alpha,i}$, hence the element commutes with $s$. It follows from Theorem \ref{infinity} that $W$ is not small at infinity.

\emph{Case 2:} If $(W,S)$ is infinite dihedral, i.e. $W=\left\langle s,t\mid s^{2}=t^{2}=e\right\rangle$, then obviously $\#C_{W}(s)=\#C_{W}(t)=2$.
\end{proof}

Recall that by Corollary \ref{hyperbolic 2} for every word hyperbolic Coxeter system $(W,S)$ the map \begin{eqnarray} \nonumber \tilde{\phi}:\overline{(W,S)}\rightarrow W\cup\partial_{h}W \end{eqnarray} given by $\tilde{\phi}(\mathbf{w})=\mathbf{w}$ for $\mathbf{w}\in W$ and $\tilde{\phi}(\left[\alpha\right])=\left[\alpha\right]_{h}$ for an infinite geodesic path $\alpha$ is well-defined, continuous, $W$-equivariant and surjective with $\tilde{\phi}(\partial(W,S))=\partial_{h}W$. The injectivity of $\tilde{\phi}$ gives information on whether or not the system is small at infinity, as the next theorem illustrates.

\begin{theorem} \label{hyperbolic 3}
Let $(W,S)$ be a finite rank Coxeter system. Then $\left(W,S\right)$ is small at infinity if and only if $W$ is word hyperbolic and the map $\tilde{\phi}$ (resp. its restriction $\tilde{\phi}|_{\partial(W,S)}$) from Corollary \ref{hyperbolic 2} is a homeomorphism.
\end{theorem}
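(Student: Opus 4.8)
The plan is to prove the two implications separately, treating the forward direction as the substantial part. I first note that the two formulations of the homeomorphism condition agree: since $\tilde\phi$ restricts to the identity on the open dense subset $W$ (Corollary \ref{hyperbolic 2}), it is a homeomorphism if and only if its restriction $\phi=\tilde\phi|_{\partial(W,S)}$ is. For the converse implication, assume $W$ is word hyperbolic and $\tilde\phi$ a homeomorphism; then $\tilde\phi$ is a $W$-equivariant isomorphism of compactifications fixing $W$ pointwise, so by Definition \ref{infty} it suffices to recall that $W\cup\partial_h W$ is small at infinity for word hyperbolic $W$. Indeed, if $g_i\to z\in\partial_h W$ and $g'\in W$, then $d_K(g_i,g_ig')=|g'|$ is bounded, so the Gromov products $\langle g_ig',\cdot\rangle_o$ and $\langle g_i,\cdot\rangle_o$ differ by at most $|g'|$ and hence $g_ig'\to z$ as well (see \cite[Chapter 5.3]{BrownOzawa}). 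Transporting along $\tilde\phi$ shows $(W,S)$ is small at infinity.

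For the forward direction, assume $(W,S)$ is small at infinity. I would first deduce word hyperbolicity by contraposition via Moussong's characterization \cite[Theorem 17.1]{Moussong}: if $W$ is not word hyperbolic, there is $T\subseteq S$ such that either (a) $(W_T,T)$ is irreducible affine of rank $\geq 3$, or (b) $(W_T,T)=(W_{T_1}\times W_{T_2},T_1\cup T_2)$ with both $W_{T_1},W_{T_2}$ infinite. In case (a), Proposition \ref{affine} shows $(W_T,T)$ is not small at infinity, so Theorem \ref{infinity} provides $s\in T$ with $\#C_{W_T}(s)=\infty$, and the inclusion $C_{W_T}(s)\subseteq C_W(s)$ forces $\#C_W(s)=\infty$. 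In case (b), any $s\in T_1$ commutes with the whole infinite group $W_{T_2}$, so again $\#C_W(s)=\infty$. Either way Theorem \ref{infinity} yields that $(W,S)$ is not small at infinity, a contradiction; hence $W$ is word hyperbolic.

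It then remains to show $\phi$ is injective, since by Corollary \ref{hyperbolic 2} and Remark \ref{remark} it is already a continuous surjection from the compact space $\partial(W,S)$ onto the Hausdorff space $\partial_h W$, so injectivity upgrades it to a homeomorphism and makes $\tilde\phi$ a continuous bijection from a compact space onto a Hausdorff space. Given $[\alpha],[\beta]\in\partial(W,S)$ with $\phi([\alpha])=\phi([\beta])$, I represent them by infinite geodesic rays $\alpha,\beta$ with $\alpha_0=\beta_0=e$ (Corollary \ref{cor}). Then $[\alpha]_h=[\beta]_h$, so $\alpha$ and $\beta$ are asymptotic and, by the fellow-traveling property for geodesic rays from a common point in a $\delta$-hyperbolic graph, $\sup_i d_K(\alpha_i,\beta_i)<\infty$. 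Writing $g_i:=\alpha_i^{-1}\beta_i$ we have $|g_i|=d_K(\alpha_i,\beta_i)$ bounded; as balls in $W$ are finite, I may pass to a subsequence along which $g_{i_k}\equiv g$ is constant, so $\beta_{i_k}=\alpha_{i_k}g$. Since $(\alpha_i)$ is increasing in the graph order, Lemma \ref{monotone} gives $\alpha_{i_k}\to[\alpha]$; smallness at infinity then forces $\alpha_{i_k}g\to[\alpha]$, while $\alpha_{i_k}g=\beta_{i_k}\to[\beta]$. By Hausdorffness $[\alpha]=[\beta]$, proving injectivity, hence that $\tilde\phi$ is a homeomorphism.

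The main obstacle is the word-hyperbolicity step: it rests on the external input of Moussong's theorem to isolate the two failure modes of hyperbolicity and on Proposition \ref{affine} to convert the affine obstruction into an infinite reflection centralizer. The only other delicate point is the passage from equality in the Gromov boundary to the bounded-distance (fellow-traveling) property of the representing rays; once that bound is in hand, extracting a constant subsequence and invoking smallness at infinity is routine.
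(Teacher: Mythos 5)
Your proof is correct and follows essentially the same route as the paper: Moussong's characterization together with Proposition \ref{affine} and Theorem \ref{infinity} to force word hyperbolicity, fellow-traveling plus a constant subsequence $\alpha_{i_k}^{-1}\beta_{i_k}\equiv g$ and smallness at infinity for injectivity of $\tilde{\phi}$, and the known smallness at infinity of the hyperbolic compactification (\cite[Chapter 5.3]{BrownOzawa}) for the converse. You even make explicit two steps the paper glosses over, namely the passage from $\#C_{W_T}(s)=\infty$ to $\#C_W(s)=\infty$ and the justification of $\sup_i|\alpha_i^{-1}\beta_i|<\infty$.
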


\begin{proof}
``$\ensuremath{\Rightarrow}$'': Let $(W,S)$ be small at infinity and assume that the system is not word hyperbolic. By Moussong's characterization of word hyperbolic Coxeter groups \cite[Theorem 17.1]{Moussong}, $S$ contains a subset $T\subseteq S$ such that $(W_{T},T)$ is either of affine type with $\#T\geq3$ or the Coxeter system decomposes as $(W_{T},T)=(W_{T^{\prime}}\times W_{T^{\prime\prime}},T^{\prime}\cup T^{\prime\prime})$ with both $W_{T^{\prime}}$ and $W_{T^{\prime\prime}}$ infinite. In the first case we deduce with Proposition \ref{affine} that $W$ is not small at infinity which contradicts our assumption. In the second case the same contradiction follows from Theorem \ref{infinity} and $C_{W}(s)\supseteq W_{s}\times W_{T^{\prime\prime}}$ for every $s\in T^{\prime}$. Hence, $(W,S)$ must be word hyperbolic. It remains to show that the map $\tilde{\phi}$ is injective. For this, let $\alpha$ and $\beta$ be infinite geodesic paths with $\left[\alpha\right]_{h}=\left[\beta\right]_{h}$. By $\sup_{i}\left|\alpha_{i}^{-1}\beta_{i}\right|<\infty$, the set $\left\{ \alpha_{i}^{-1}\beta_{i}\mid i\in\mathbb{N}\right\} \subseteq W$ is bounded with respect to the word metric on $W$. We hence find a strictly increasing sequence $(i_{k})_{k\in\mathbb{N}}\subseteq\mathbb{N}$ and an element $\mathbf{w}\in W$ with $\alpha_{i_{k}}^{-1}\beta_{i_{k}}=\mathbf{w}$ for all $k\in\mathbb{N}$. But $(W,S)$ is small at infinity, so $\left[\beta\right]=\lim_{k}\beta_{i_{k}}=\lim_{k}\alpha_{i_{k}}\mathbf{w}=\left[\alpha\right]$. This implies that $\tilde{\phi}$ is indeed injective.

``$\ensuremath{\Leftarrow}$'': It is well-known that the hyperbolic compactification of a word hyperbolic group is small at infinity, see for instance \cite[Proposition 5.3.18]{BrownOzawa}. Hence, if $(W,S)$ is word hyperbolic and the map $\tilde{\phi}$ is a homeomorphism, then $(W,S)$ is small at infinity.
\end{proof}

\begin{proposition} \label{free}
Let $(W,S)$ be a finite rank Coxeter system that is a free product of finite Coxeter groups, meaning that $S$ is the disjoint union of non-empty subsets $S_{1},...,S_{n}\subseteq S$ whose corresponding special subgroups $W_{S_{1}}, ..., W_{S_{n}}$ are all finite with $W=W_{S_{1}}\ast...\ast W_{S_{n}}$. Then $(W,S)$ is small at infinity.
\end{proposition}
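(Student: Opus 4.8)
The plan is to invoke Theorem \ref{infinity}, which reduces smallness at infinity to the finiteness of all reflection centralizers: it suffices to show $\#C_W(s)<\infty$ for every $s\in S$. Fix such an $s$. By hypothesis $S$ is the disjoint union $S_1\sqcup\cdots\sqcup S_n$ and $W=W_{S_1}\ast\cdots\ast W_{S_n}$, so $s$ lies in a unique factor, say $s\in S_i\subseteq W_{S_i}$, and $W_{S_i}$ is finite. The whole problem thus becomes the computation of the centralizer of an element of a free factor inside a free product.

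The key step is the standard fact that in a free product the centralizer of a nontrivial element of a factor is contained in that factor. I would establish this via the Bass--Serre tree $T$ associated with the free product decomposition of $W$: here $W$ acts on $T$ with vertex stabilizers the conjugates of the factors $W_{S_1},\ldots,W_{S_n}$ and with trivial edge stabilizers. Since $s$ has order $2$, it generates a finite subgroup and therefore fixes a vertex; because the edge stabilizers are trivial, the fixed-point set $\mathrm{Fix}(s)$ --- which is a subtree --- contains no edge, so it is the single vertex $v$ whose stabilizer is $W_{S_i}$ (the base vertex of the factor containing $s$).

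Now if $g\in C_W(s)$, then for every $x\in\mathrm{Fix}(s)$ one has $s.(g.x)=g.(g^{-1}sg).x=g.(s.x)=g.x$, so $g$ preserves $\mathrm{Fix}(s)=\{v\}$ and therefore $g.v=v$. Hence $C_W(s)\subseteq\mathrm{Stab}_W(v)=W_{S_i}$, which gives $C_W(s)=C_{W_{S_i}}(s)$; this is finite because $W_{S_i}$ is finite. By Theorem \ref{infinity} the system $(W,S)$ is then small at infinity.

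The main obstacle is exactly the centralizer computation, i.e.\ confirming $C_W(s)\subseteq W_{S_i}$; everything else is formal. If one prefers to avoid the language of trees, the same conclusion can be obtained directly from the normal form theorem for free products: writing a putative centralizing element in reduced syllable form $g=g_1\cdots g_m$ and comparing the reduced forms of $gs$ and $sg$ forces $m\leq1$ with $g_1\in W_{S_i}$, which again yields $C_W(s)=C_{W_{S_i}}(s)$. In either approach it is the order-two torsion of $s$ that guarantees it fixes a vertex (equivalently, that it sits inside a single factor), and this is the feature that makes the argument go through.
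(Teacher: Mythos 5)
Your proof is correct, but it takes a genuinely different route from the paper. The paper never touches Theorem~\ref{infinity} here: it invokes Theorem~\ref{hyperbolic 3} instead, noting that a free product of finite Coxeter groups is word hyperbolic, and then verifies by hand that the map $\tilde{\phi}$ onto the Gromov compactification is injective. That verification is combinatorial: given hyperbolically equivalent infinite geodesic paths $\alpha\sim_h\beta$ in $\mathrm{Cay}(W,S)$, one tracks how the reduced expressions of $\alpha_i$ and $\beta_i$ fill up syllables of the free product, uses the finiteness of the factors to find positions where both words simultaneously complete a syllable, and deduces from $\sup_j\left|\alpha_j^{-1}\beta_j\right|<\infty$ that $\alpha_{i_k}=\beta_{i_k}$ along a subsequence, whence $[\alpha]=[\beta]$. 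Your reduction via Theorem~\ref{infinity} combined with the free product centralizer fact ($C_W(s)=C_{W_{S_i}}(s)$ for $s\in S_i\setminus\{e\}$, proved by Bass--Serre theory or by the normal form theorem) is shorter and more conceptual, bypasses hyperbolicity entirely, and there is no circularity since Theorem~\ref{infinity} is established independently; what the paper's argument buys instead is a self-contained picture, within its own graph-theoretic framework, of how geodesics representing the same hyperbolic boundary point must eventually agree. One small imprecision on your side: for $n\geq 3$ factors the Bass--Serre tree of the star-shaped graph of groups also has an orbit of vertices with \emph{trivial} stabilizer, so vertex stabilizers are not exactly ``the conjugates of the factors''; this is harmless, since the vertex fixed by $s$ still has stabilizer $W_{S_i}$, edge stabilizers are still trivial, and the rest of your argument goes through verbatim.
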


\begin{proof}
Let $(W,S)$ be an irreducible Coxeter system that is a free product of finite Coxeter groups. The corresponding Cayley graph $\text{Cay}(W,S)$ is locally finite and hyperbolic. By Theorem \ref{hyperbolic 3} it suffices to prove the injectivity of the map $\tilde{\phi}$. Let $\alpha$ and $\beta$ be two infinite geodesic paths with $\alpha\sim_{h}\beta$. For every $i\in\mathbb{N}$ let $s_{1}...s_{i}$ be a reduced expression for $\alpha_{i}$ and let $t_{1}...t_{i}$ be a reduced expression for $\beta_{i}$. It is clear that $s_{1}$ and $t_{1}$ must lie in the same component of the free product. The same is true for $s_{2}$, $t_{2}$, .... As the free product components $W_{S_{1}}$, ..., $W_{S_{n}}$ are finite, there exists exists $i\in\mathbb{N}$ such that $s_{1},...,s_{i}$ (and hence $t_{1},...,t_{i}$) all lie in the same component and such that $s_{i+1}$ (resp. $t_{i+1}$) lies in a different component. By $\sup_{j}|\alpha_{j}^{-1}\beta_{j}|<\infty$ we then get $s_{i}...s_{1}t_{1}...t_{i}=e$. Proceeding like this, one concludes that there exists an increasing sequence $(i_{k})_{k\in\mathbb{N}}\subseteq\mathbb{N}$ with $\alpha_{i_{k}}=\beta_{i_{k}}$ for every $k\in\mathbb{N}$. This implies $\left[\alpha\right]=\left[\beta\right]$, i.e. $\tilde{\phi}$ is injective.
\end{proof}

\begin{corollary}
An irreducible finite rank right-angled Coxeter system is small at infinity if and only if it is a free product of finite Coxeter groups.
\end{corollary}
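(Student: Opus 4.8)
The plan is to prove the two implications separately, with the forward direction resting entirely on the centralizer criterion of Theorem~\ref{infinity}. The reverse implication is already available: if $(W,S)$ is a free product of finite Coxeter groups, then it is small at infinity by Proposition~\ref{free} (this needs neither irreducibility nor the right-angled hypothesis). So the real content is to show that an irreducible finite rank right-angled system which is small at infinity must decompose as such a free product.

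For the forward direction I would first invoke Theorem~\ref{infinity} to replace ``small at infinity'' by the purely combinatorial statement that $\#C_W(s) < \infty$ for every $s \in S$. I then define a relation $\approx$ on $S$ by declaring $s \approx t$ if and only if $s = t$ or $s$ and $t$ commute (that is, $m_{st} = 2$), and the key step is to show that $\approx$ is an \emph{equivalence relation}. Reflexivity and symmetry are immediate; the crucial point is transitivity. Suppose $s \approx t$ and $t \approx u$ with $s,t,u$ distinct, so that both $s$ and $u$ commute with $t$. Then every word in $s$ and $u$ commutes with $t$, whence the special subgroup $W_{\{s,u\}} = \langle s,u\rangle$ is contained in $C_W(t)$. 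Here the right-angled hypothesis enters decisively: if $s$ and $u$ did not commute, then $m_{su} = \infty$ and $\langle s,u\rangle \cong D_\infty$ would be infinite, forcing $\#C_W(t) = \infty$ and contradicting smallness. Hence $m_{su} = 2$, i.e. $s \approx u$, which gives transitivity.

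Finally I would read off the free product decomposition from the equivalence classes $S_1, \dots, S_n$ of $\approx$. Within each class all generators pairwise commute, so $W_{S_i} \cong (\mathbb{Z}/2\mathbb{Z})^{|S_i|}$ is finite; between two distinct classes the right-angled dichotomy forces $m_{st} = \infty$ for all $s \in S_i$, $t \in S_j$ with $i \neq j$, so there are no relations linking different blocks and the Coxeter presentation splits as $W = W_{S_1} \ast \cdots \ast W_{S_n}$. This is exactly a free product of finite Coxeter groups, completing the forward implication.

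I expect the main obstacle to be conceptual rather than computational: recognizing that Theorem~\ref{infinity} reduces the entire statement to the transitivity of commutation, and being careful that it is precisely the right-angled dichotomy $m_{st} \in \{2,\infty\}$ that upgrades ``$s$ and $u$ do not commute'' to ``$\langle s,u\rangle$ is infinite''. Without right-angledness a non-commuting pair could generate a finite dihedral group and the argument would break down. Note that irreducibility is not actually needed for the forward direction, since the argument produces a free product decomposition regardless; the hypothesis merely ensures the clean biconditional in the intended range of applications.
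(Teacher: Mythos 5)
Your proof is correct and follows essentially the same route as the paper: the reverse direction is Proposition \ref{free}, and the forward direction rests on Theorem \ref{infinity} together with the observation that a triple $r,s,t$ with $m_{rs}=m_{rt}=2$ and $m_{st}=\infty$ places $\mathbf{D}_{\infty}=\langle s,t\rangle$ inside $C_{W}(r)$. The only difference is presentational: the paper argues by contrapositive and dismisses the existence of such a triple with ``one easily checks'', whereas your equivalence-relation formulation of commutation spells out precisely that combinatorial step (and correctly notes that irreducibility is not actually needed).
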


\begin{proof}
``$\ensuremath{\Leftarrow}$'': This follows from Lemma \ref{free}.
``$\ensuremath{\Rightarrow}$'': Let $(W,S)$ be an irreducible right-angled Coxeter system that is not a free product of finite Coxeter groups. One easily checks that $S$ contains elements $r,s,t$ with coefficients $m_{rs}=m_{rt}=2$ and $m_{st}=\infty$. In particular, $C_{W}(r)\supseteq\left\langle s,t\right\rangle \cong\mathbf{D}_{\infty}$ where $\mathbf{D}_{\infty}$ denotes the infinite dihedral group. But then $\#C_{W}(r)=\infty$, so $W$ is not small at infinity by Theorem \ref{infinity}.
\end{proof}

\begin{remark}
Not every Coxeter system that is small at infinity is a free product of finite Coxeter groups. Consider for instance the group $W$ represented by
\begin{eqnarray}
\nonumber
\left\langle r,s,t\mid m_{rs}=3,m_{rt}=2,m_{st}=\infty\right\rangle \text{.}
\end{eqnarray}
It is irreducible and non-affine. Obviously, all of its reflection centralizers are finite, so $W$ is small at infinity. However, $W$ can not be decomposed into a non-trivial free product since, by $m_{rs}=3$ and $m_{rt}=2$, the generators $r$, $s$ and $t$ would all have to sit in the same component of that decomposition.
\end{remark}


\subsection{Boundary actions of Coxeter groups}

The notion of (topological) boundary actions was introduced by Furstenberg in \cite{Fu1} and \cite{Fu2} in the context of rigidity questions of semisimple Lie groups. It recently gained a lot of attention based on results by Kalantar and Kennedy \cite{KalantarKennedy} who established a connection between the dynamical properties of the Furstenberg boundary of a given group and the question for simplicity, uniqueness of trace and tightness of nuclear embedding of the corresponding reduced group C$^\ast$-algebra. A series of breakthrough works followed (see e.g. \cite{BKKO}, \cite{Haagerup}, \cite{Adam} and also \cite{BK}, \cite{HK})

In this subsection we will study two classes of Coxeter groups $\left(W,S\right)$ whose corresponding boundary $\partial(W,S)$ is a $W$-boundary in the sense of Furstenberg. We will further consider related properties of the action $W\curvearrowright\partial(W,S)$ which relate to the operator algebras of the group $W$. We will pick up some of the implications in Section \ref{applications}.

\begin{definition}
Let $G$ be a discrete group acting (continuously) on a compact Hausdorff space $X$.
\begin{itemize}
\item The action is called \emph{minimal} if for every $x\in X$ the $G$-orbit $G.x:=\left\{ g.x\mid g\in G\right\}$  is dense in $X$.
\item The action is called \emph{strongly proximal} if for every probability measure $\nu\in \text{Prob}(X)$ the weak-$\ast$ closure of the $G$-orbit $G.\nu$ contains a point mass $\delta_{x}\in \text{Prob}(X)$ for some $x\in X$.
\item $X$ is called a \emph{G-boundary} if the action of $G$ on $X$ is both minimal and strongly proximal. In that case the action is called a \emph{boundary action}.
\end{itemize}
\end{definition}

Furstenberg proved in \cite[Proposition 4.6]{Fu2} that every discrete group $G$ admits a unique $G$-boundary $\partial_{F}G$ that is universal in the sense that every other $G$-boundary is a continuous $G$-equivariant image of $\partial_{F}G$. It is called the \emph{Furstenberg boundary} of the group $G$.

In the case of an irreducible right-angled Coxeter system we can completely characterize when the corresponding action on the boundary is a boundary action. Note that the only Coxeter group generated by one element is the finite group $\mathbb{Z}_2$ whose boundary is empty.

\begin{theorem} \label{r.a.}
Let $\left(W,S\right)$ be a finite rank right-angled irreducible Coxeter system. Then the following statements hold:
\begin{itemize}
\item If $\left|S\right|=2$, then the action $W\curvearrowright\partial(W,S)$ is minimal but not strongly proximal;
\item If $\left|S\right|\geq3$, then the action $W\curvearrowright\partial(W,S)$ is a boundary action.
\end{itemize}
\end{theorem}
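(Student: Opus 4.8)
The plan is to treat the two regimes by quite different means: the degenerate case $|S|=2$ by an explicit computation, and the case $|S|\geq 3$ through a single contraction principle. When $|S|=2$, irreducibility forces $m_{st}=\infty$, so $W=\langle s,t\mid s^{2}=t^{2}=e\rangle$ is the infinite dihedral group $\mathbf{D}_{\infty}$ and $\mathrm{Cay}(W,S)$ is a bi-infinite line. Rooted at $e$ there are exactly two infinite geodesic rays, $s\,st\,sts\cdots$ and $t\,ts\,tst\cdots$, so by Corollary \ref{cor} the boundary is a two-point set $\partial(W,S)=\{\xi_{+},\xi_{-}\}$. First I would observe that $s$ acts on the line as the reflection $p\mapsto 1-p$ and hence interchanges $\xi_{+}$ and $\xi_{-}$; the action is thus transitive on two points and therefore minimal. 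To rule out strong proximality I would exhibit a $W$-invariant measure that is not a point mass: the symmetric measure $\nu=\tfrac12(\delta_{\xi_{+}}+\delta_{\xi_{-}})$ is fixed by the reflections (which swap the two atoms) and by the translations (which fix each end), so its orbit closure is $\{\nu\}$ and contains no point mass.

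For $|S|\geq 3$ the plan is to prove the stronger statement that the action is \emph{extremely proximal}, i.e.\ that for every proper closed $C\subsetneq\partial(W,S)$ and every nonempty open $U$ there is $\mathbf{w}\in W$ with $\mathbf{w}.C\subseteq U$. This yields the theorem at once: it is a standard fact in topological dynamics (Glasner) that on a compact Hausdorff space with more than two points an extremely proximal action is automatically minimal (take $C$ a single point) and strongly proximal, hence a boundary action. Since $\partial(W,S)$ is compact, metrizable and totally disconnected (Remark \ref{remark}), and since the meet-semilattice property gives $\mathcal{U}_{x}\cap\mathcal{U}_{y}=\mathcal{U}_{x\vee y}$ or $\emptyset$, every clopen set is a finite Boolean combination of the basic cylinders $\mathcal{U}_{v}\cap\partial(W,S)$. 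I would use this to reduce extreme proximality to a purely combinatorial statement: given a target prefix cylinder $\mathcal{U}_{u}$ meeting $\partial(W,S)$ and a nonempty clopen $O$, produce $\mathbf{w}\in W$ pushing the complement of $O$ into $\mathcal{U}_{u}$.

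The engine driving the contraction is left multiplication read through the graph order, controlled by Proposition \ref{help}: for a generator $s$ this proposition tracks exactly when the relation $s\leq(\cdot)$ is preserved under $\mathbf{v}\mapsto s\mathbf{v}$, and iterating it along a reduced word $u=u_{1}\cdots u_{k}$ tells me precisely when prepending $u$ lands a boundary point inside $\mathcal{U}_{u}$. Concretely I would combine two moves already available from Theorem \ref{coxeteraction}: the shift $\alpha_{n}^{-1}.z$ along a fixed ray $\alpha$ representing a point of $O$, which deletes the first $n$ letters and collapses everything agreeing with $\alpha$ up to level $n$; and prepending a fixed reduced word $u$ to steer the image into $\mathcal{U}_{u}\subseteq U$. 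This is exactly where both hypotheses enter: irreducibility together with $|S|\geq 3$ guarantees, at every prefix, a further non-commuting generator, so one can always choose a ray leaving the finite obstruction that cuts out $O$ and a letter to prepend without being trapped by the commutation relations. That extra freedom is precisely the symmetry absent for the two generators of $\mathbf{D}_{\infty}$, which explains the dichotomy.

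\textbf{The main obstacle} will be controlling cancellation: left multiplication is only partially isometric on geodesics, so I must ensure that applying $\mathbf{w}=u\,\alpha_{n}^{-1}$ to points of $\partial(W,S)\setminus O$ neither shortens them into the wrong cylinder nor creates spurious elements below the image. This is a combinatorial task governed by the deletion and exchange conditions (Theorem \ref{cancellation}) together with the sharpened right-angled rule recorded after it, that any cancellation $s_{1}\cdots s_{n}=s_{1}\cdots\widehat{s_{i}}\cdots\widehat{s_{j}}\cdots s_{n}$ forces $s_{i}=s_{j}$ to commute past everything between them. A secondary subtlety, invisible in the tree setting, is that for non-hyperbolic right-angled systems (which genuinely occur once $|S|\geq 5$) boundary points need not be maximal, in contrast with Lemma \ref{trees}; consequently the pure cylinders $\mathcal{U}_{\alpha_{n}}$ do not form a neighbourhood basis, and the reduction above must honestly retain the complementary constraints $\mathcal{U}_{w}^{c}$ defining $O$ and show that the contraction respects them. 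Verifying these two cancellation-theoretic points is where I expect the real work to lie; the dynamical packaging via extreme proximality is then formal.
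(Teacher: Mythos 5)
Your $|S|=2$ argument is exactly the paper's: $W=\mathbf{D}_{\infty}$, the boundary consists of the two points $z_{1}=stst\cdots$ and $z_{2}=tsts\cdots$, minimality is clear, and the symmetric measure $\frac{1}{2}(\delta_{z_{1}}+\delta_{z_{2}})$ is $W$-invariant, ruling out strong proximality. For $|S|\geq3$ your packaging --- prove extreme proximality and invoke Glasner's result that an extremely proximal action on a compact space with at least three points is (minimal and) strongly proximal --- is legitimate and genuinely different from the paper, which proves minimality and strong proximality directly. But the proposal has a genuine gap: it stops exactly where the theorem lives. You never construct the group elements implementing the contraction nor verify that they contract; you explicitly defer the ``cancellation-theoretic points'' as the place where the real work lies. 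Moreover, two of the steps you treat as formal reductions are themselves unproved and nontrivial. First, pushing $\partial(W,S)\setminus O$ into a pure cylinder $\mathcal{U}_{u}\subseteq U$ yields extreme proximality only if every nonempty open subset of $\partial(W,S)$ contains a nonempty set of the form $\mathcal{U}_{u}\cap\partial(W,S)$; since, as you concede, such cylinders fail to be a neighbourhood basis in the non-hyperbolic case, this is a genuine combinatorial assertion (one must extend a given prefix to a word $u$ for which the joins $u\vee y_{j}$ with the finitely many excluded prefixes $y_{j}$ cease to exist), and you give no argument for it. Second, extreme proximality is a uniform statement: a single element must move the whole closed set into $U$. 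That is strictly more than the paper ever establishes; the paper only needs pointwise north-south dynamics ($\mathbf{g}^{k}.z\rightarrow\mathbf{g}^{\infty}$ for every $z\neq\mathbf{g}^{-\infty}$, cf. Remark \ref{limitelements}), which passes to measures by integration and already gives strong proximality. Your route needs convergence uniform over $\partial(W,S)\setminus O$, and nothing in the sketch produces it.

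What makes the paper's combinatorics work, and what your sketch only gestures at with ``irreducibility guarantees a further non-commuting generator'', is a concrete device: words tracing walks in the Coxeter diagram whose consecutive letters do not commute, available because irreducibility makes the diagram connected. For minimality the paper picks $t\in S$ with $t\leq_{L}\beta_{i_{k}}$ for infinitely many $k$, a diagram path $s_{0}\cdots s_{n+1}$ from $\alpha_{1}$ to $t$ with $m_{s_{j}s_{j+1}}=\infty$, and proves $(\beta_{i_{k}}s_{n}\cdots s_{1}).[\alpha]\rightarrow[\beta]$; the right-angled cancellation rule stated after Theorem \ref{cancellation} is precisely what prevents the steering word from being absorbed, and the same computation disposes of the negative constraints, since each $\mathbf{w}\nleq[\beta]$ is shown to remain a non-prefix of the approximants. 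For strong proximality it takes $\mathbf{g}=s_{1}\cdots s_{n}$ a closed walk covering the whole diagram with $m_{s_{1}s_{n}}=\infty$, yielding attracting and repelling points $\mathbf{g}^{\pm\infty}$, and then a second such walk $\mathbf{h}$ with initial letter $t_{1}\notin\{s_{1},s_{n}\}$ to collapse $\lambda\delta_{\mathbf{g}^{\infty}}+(1-\lambda)\delta_{\mathbf{g}^{-\infty}}$ to the point mass $\delta_{\mathbf{h}^{\infty}}$. Until your plan specifies its contracting elements with comparable precision and carries out the corresponding cancellation bookkeeping (Proposition \ref{help} together with the right-angled deletion rule), it remains a plausible outline rather than a proof.
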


\begin{proof}
In the case $\left|S\right|=2$ the Coxeter group $W$ is the infinite dihedral group
\begin{eqnarray}
\nonumber
\mathbf{D}_{\infty}=\left\langle s,t\mid s^{2}=t^{2}=e\right\rangle
\end{eqnarray}
whose boundary $\partial\mathbf{D}_{\infty}$ consists of the two points $z_{1}:=stst...$ and $z_{2}:=tsts...$. It is clear that the action $\mathbf{D}_{\infty}\curvearrowright\partial\mathbf{D}_{\infty}$ is minimal. It is not strongly proximal since for the probability measure $\mu:=\frac{1}{2}(\delta_{z_{1}}+\delta_{z_{2}})\in \text{Prob}(\partial\mathbf{D}_{\infty})$ the equalities $s.\mu=t.\mu=\mu$ hold, i.e. $\overline{W.\mu}=\left\{ \mu\right\}$.

Let us now assume that $(W,S)$ is a right-angled irreducible Coxeter system with $\left|S\right|\geq3$. Recall that if we have cancellation of the form $s_1...s_n = s_1 ...\widehat{s_i} ... \widehat{s_j}...s_n$ for $s_1 ,...,s_n \in S$, then $s_i = s_j$ and $s_i$ commutes with every letter in the reduced expression for $s_{i+1}...s_{j-1}$ (see the remark after Theorem \ref{cancellation}). In the following we will often implicitly make use of this property.

\emph{Minimality:} Let $\alpha$ and $\beta$ be arbitrary infinite geodesic paths with $\alpha_{0}=\beta_{0}=e$. We have to show that $\left[\beta\right]\in\overline{W.\left[\alpha\right]}$. Since $S$ is finite, we find $t\in S$ and a strictly increasing sequence $(i_{k})_{k\in\mathbb{N}}\subseteq\mathbb{N}$ with $t\leq_{L}\beta_{i_{k}}$ for every $k\in\mathbb{N}$. Further, let $t^{\prime}:=\alpha_{1}\in S$ and choose a path $s_{0}...s_{n+1}$ in the Coxeter diagram of $(W,S)$ that connects $t^{\prime}$ and $t$, meaning that $s_{0},...,s_{n+1}\in S$ with $s_{0}=t^{\prime}$, $s_{n+1}=t$ and $m_{s_{j}s_{j+1}}=\infty$ for $j=0,...,n$. We claim that $(\beta_{i_{k}}s_{n}...s_{1}).\left[\alpha\right]\rightarrow\left[\beta\right]$. Indeed, by the choice of $s_{1},...,s_{n}$ one gets $\beta_{i_{k}}\leq(\beta_{i_{k}}s_{n}...s_{1})\alpha_{j}\leq(\beta_{i_{k}}s_{n}...s_{1}).\left[\alpha\right]$ for all $j,k\in\mathbb{N}$, so for every $\mathbf{w}\in W$ with $\mathbf{w}\leq\left[\beta\right]$ one eventually has $(\beta_{i_{k}}s_{n}...s_{1}).\left[\alpha\right]\in\mathcal{U}_{\mathbf{w}}=\left\{ z\in\overline{W}\mid\mathbf{w}\leq z\right\}$.

Now let $\mathbf{w}\in W$ with $\mathbf{w}\nleq\left[\beta\right]$ and let $\mathbf{w}=t_{1}...t_{n}$ be a reduced expression for $\mathbf{w}$. We have to show that $\mathbf{w}\nleq(\beta_{i_{k}}s_{n}...s_{1}).\left[\alpha\right]$ eventually. Assume that this is not the case. By possibly going over to a subsequence we can then assume that $\mathbf{w}\leq(\beta_{i_{k}}s_{n}...s_{1}).\left[\alpha\right]$ for infinitely many $k\in\mathbb{N}$. Let us proceed inductively:
\begin{itemize}
\item By the choice of $s_{1},...,s_{n}$ one either has $t_{1}\leq\beta_{i_{k}}$ or $t_{1}=s_{n}$ and $t_{1}$ commutes with every letter of $\beta_{i_{k}}$. Only the first case is possible since $m_{s_{n},t}=\infty$, so $t_{1}\leq\beta_{i_{k}}$.
\item Further, one either has $t_{1}t_{2}\leq\beta_{i_{k}}$ or $t_{2}=s_{n}$ and $t_{2}$ commutes with every letter of $t_{1}\beta_{i_{k}}$. In the second case we would get that $t_{1}=t$ and that $t$ commutes with every letter of $\beta_{i_{k}}$. But for $k\geq1$ the letter $t$ appears more than once in the reduced expression for $\beta_{i_{k}}$ which leads to a contradiction. Hence, $t_{1}t_{2}\leq\beta_{i_{k}}$ for $k\geq1$.
\end{itemize}
Proceeding like this, we get that $\mathbf{w}\leq\beta_{i_{k}}$ for large enough $k$, in contradiction to $\mathbf{w}\nleq\left[\beta\right]$. Theorefore, for every $\mathbf{w}\in W$ with $\mathbf{w}\nleq\left[\beta\right]$, $(\beta_{i_{k}}s_{n}...s_{1}).\left[\alpha\right]\in\mathcal{U}_{\mathbf{w}}^{c}$ eventually. This implies that indeed $(\beta_{i_{k}}s_{n}...s_{1}).\left[\alpha\right]\rightarrow\left[\beta\right]$, i.e. $\left[\beta\right]\in\overline{W.\left[\alpha\right]}$.

\emph{Strong proximality:} We have to show that for every probability measure $\mu\in \text{Prob}(\partial W)$ there exists $z\in\partial W$ with $\delta_{z}\in\overline{W.\mu}$ where the closure is taken in the weak-$\ast$ topology. The argument is similar to the one above. Let $z\in\partial W$, choose a path $s_{1}...s_{n}$ in the Coxeter diagram of $(W,S)$ that covers the whole graph (i.e. $m_{s_{j}s_{j+1}}=\infty$ for $j=1,...,n-1$) with $m_{s_{1}s_{n}}=\infty$ and set $\mathbf{g}:=s_{1}...s_{n}$. Obviously, the sequences $\left(\mathbf{g}^{k}\right)_{k\in\mathbb{N}}$ and $\left(\mathbf{g}^{-k}\right)_{k\in\mathbb{N}}$ converge to boundary points $\mathbf{g}^{\infty}$ and $\mathbf{g}^{-\infty}$. We either have $s_{1}\leq\mathbf{g}^{k}.z$ for some $k\in\mathbb{N}$ or $z=\mathbf{g}^{-\infty}$. In the first case, $\mathbf{g}^{k}.z\rightarrow\mathbf{g}^{\infty}$ and in the second case $\mathbf{g}^{k}.z\rightarrow\mathbf{g}^{-\infty}$. This implies that for $\mu\in \text{Prob}(\partial W)$ there exists $\lambda\in\left[0,1\right]$ with
\begin{eqnarray}
\nonumber
\lambda\delta_{\mathbf{g}^{\infty}}+\left(1-\lambda\right)\delta_{\mathbf{g}^{-\infty}}=\lim_{k\rightarrow\infty}\mathbf{g}^{k}.\mu\in\overline{W.\mu}\text{.}
\end{eqnarray}
Now, choose a second path $t_{1}...t_{m}$ in the Coxeter diagram of $(W,S)$ that covers the whole graph (i.e. $m_{t_{j}t_{j+1}}=\infty$ for $j=1,...,m-1$) with $t_{1}\notin\left\{ s_{1},s_{n}\right\}$ , $m_{t_{1}t_{m}}=\infty$ and set $\mathbf{h}:=t_{1}...t_{m}$. Again, the sequences $\left(\mathbf{h}^{k}\right)_{k\in\mathbb{N}}$ and $\left(\mathbf{h}^{-k}\right)_{k\in\mathbb{N}}$ converge to boundary points $\mathbf{h}^{\infty}$ and $\mathbf{h}^{-\infty}$. Further, $\mathbf{h}^{k}.\mathbf{g}^{\infty}\rightarrow\mathbf{h}^{\infty}$ and $\mathbf{h}^{k}.\mathbf{g}^{-\infty}\rightarrow\mathbf{h}^{\infty}$ from which we conclude that
\begin{eqnarray}
\nonumber
\delta_{\mathbf{h}^{\infty}}=\lim_{k\rightarrow\infty}\left(\lambda\delta_{\mathbf{g}^{\infty}}+\left(1-\lambda\right)\delta_{\mathbf{g}^{-\infty}}\right)\in\overline{W.\mu}\text{.}
\end{eqnarray}
The claim follows.
\end{proof}

For Coxeter systems which are small at infinity a characterization of the form as in Theorem \ref{r.a.} is possible as well. Note that by Theorem \ref{infinity} and Proposition \ref{affine} the only amenable finite rank irreducible Coxeter groups that are small at infinity are either the finite ones or the infinite dihedral group which is already covered by Theorem \ref{r.a.}.

\begin{theorem} \label{s.a.i.}
Let $\left(W,S\right)$ be a non-amenable finite rank Coxeter system that is small at infinity. Then the action $W\curvearrowright\partial(W,S)$ is a boundary action.
\end{theorem}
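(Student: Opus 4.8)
The plan is to transport the problem to the Gromov boundary and invoke the classical fact that a non-elementary word hyperbolic group acts as a boundary action there. Since $(W,S)$ is small at infinity, Theorem \ref{hyperbolic 3} shows that $W$ is word hyperbolic and that the map $\tilde{\phi}$ of Corollary \ref{hyperbolic 2} restricts to a $W$-equivariant homeomorphism $\tilde{\phi}|_{\partial(W,S)}\colon \partial(W,S)\to\partial_{h}W$. As minimality and strong proximality are invariant under $W$-equivariant homeomorphisms, it suffices to prove that $W\curvearrowright\partial_{h}W$ is a boundary action. I would first record that $W$ is non-elementary: the only elementary (i.e. finite or virtually cyclic) word hyperbolic groups are amenable, so non-amenability forces $W$ to contain infinitely many independent infinite-order hyperbolic elements, each acting on $\partial_{h}W$ with North--South dynamics.

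For minimality I would use the standard fact that the $W$-action on its Gromov boundary is minimal whenever $W$ is non-elementary (the whole boundary coincides with the minimal limit set); see e.g. \cite{Kapovich}. Concretely, the attracting fixed points of hyperbolic elements are dense in $\partial_{h}W$, and for a hyperbolic $g$ with fixed points $g^{+}$ (attracting) and $g^{-}$ (repelling) one has $g^{n}.z\to g^{+}$ for every $z\neq g^{-}$; combining these two observations shows that every orbit is dense.

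For strong proximality I would exploit North--South dynamics directly. Given $\mu\in\text{Prob}(\partial_{h}W)$ and a hyperbolic element $g$ with fixed points $g^{\pm}$, set $\lambda:=\mu(\{g^{-}\})$. Testing against continuous functions and using that $g^{n}.z\to g^{+}$ for $z\neq g^{-}$ together with $g.g^{-}=g^{-}$, dominated convergence yields the weak-$\ast$ limit $g^{n}.\mu\to\lambda\,\delta_{g^{-}}+(1-\lambda)\,\delta_{g^{+}}$, a measure supported on two points. Since $W$ is non-elementary I may then pick a second hyperbolic element $h$ whose repelling fixed point $h^{-}$ avoids the finite set $\{g^{+},g^{-}\}$; applying North--South dynamics for $h$ to this two-point measure gives $h^{m}.\bigl(\lambda\,\delta_{g^{-}}+(1-\lambda)\,\delta_{g^{+}}\bigr)\to\delta_{h^{+}}$. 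Hence the point mass $\delta_{h^{+}}$ lies in the weak-$\ast$ closure $\overline{W.\mu}$, which is precisely strong proximality.

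The technical core, and the place I would be most careful, is the North--South dynamics of hyperbolic elements on $\partial_{h}W$ and the existence of a hyperbolic $h$ with $h^{-}\notin\{g^{+},g^{-}\}$; both are standard consequences of the convergence-group behaviour of non-elementary hyperbolic groups, which I would cite from \cite{Kapovich} rather than reprove. An alternative, closer to the combinatorial style of Theorem \ref{r.a.}, would be to produce the needed hyperbolic elements and their fixed points explicitly as limits of infinite geodesic rays in $\text{Cay}(W,S)$ built from paths in the Coxeter diagram; but routing through the classical boundary-action property of non-elementary hyperbolic groups is the most economical approach.
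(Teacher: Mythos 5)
Your proposal is correct and takes essentially the same route as the paper: both reduce the problem via Theorem \ref{hyperbolic 3} (and the $W$-equivariance from Corollary \ref{hyperbolic 2}) to the action $W\curvearrowright\partial_{h}W$, and then invoke the classical fact that a non-amenable (hence non-elementary) word hyperbolic group acts on its Gromov boundary as a boundary action. The only difference is cosmetic: the paper cites this fact outright (pointing to \cite{KalantarKennedy}), whereas you additionally sketch its standard North--South dynamics proof before deferring the technical core to \cite{Kapovich}.
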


\begin{proof}
By Theorem \ref{hyperbolic 3} the group $W$ is word hyperbolic and the boundary $\partial(W,S)$ coincides with the hyperbolic boundary $\partial_{h}W$. It is well-known that the action of a non-amenable word hyperbolic group is a boundary action (see for instance \cite{KalantarKennedy}). This proves the statement.
\end{proof}

\begin{remark} \label{limitelements}
Let $(W,S)$ be a right-angled irreducible Coxeter system with $3\leq\left|S\right|<\infty$. Note that by the same argument as in the proof of Theorem \ref{r.a.} the action $W\curvearrowright\overline{(W,S)}$ is strongly proximal. Indeed, the elements $\mathbf{g}$ and $\mathbf{h}$ appearing in the proof of Theorem \ref{r.a.} have the property that the limits $\mathbf{g}^{\pm\infty}:=\lim\mathbf{g}^{\pm l}$ and $\mathbf{h}^{\pm\infty}:=\lim\mathbf{h}^{\pm l}$ exist and that $\mathbf{g}^{k}.z\rightarrow\mathbf{g}^{\infty}$ for every $z\in\overline{(W,S)}\setminus\left\{ \mathbf{g}^{-\infty}\right\}$  and $\mathbf{h}^{k}.z\rightarrow\mathbf{h}^{\infty}$ for every $z\in\overline{(W,S)}\setminus\left\{ \mathbf{h}^{-\infty}\right\}$. Further, $\mathbf{h}^{-\infty}\neq\mathbf{g}^{\pm\infty}$. We deduce that the action $W\curvearrowright\overline{(W,S)}$ is strongly proximal. If the Coxeter system $(W,S)$ is non-amenable and small at infinity, the strong proximality of the action $W\curvearrowright\overline{(W,S)}$ also holds. It follows from Theorem \ref{hyperbolic 3} and \cite[Corollaire 20]{Harpe}.
\end{remark}

\begin{definition}
Let $G$ be a discrete group acting (continuously) on a compact Hausdorff space $X$. The action is \emph{topologically free} if for every $g\in G\setminus\left\{ e\right\}$  the set $X^{g}:=\left\{ x\in X\mid g.x=x\right\}$  has no inner points.
\end{definition}

\begin{lemma} \label{topfree}
Let $(W,S)$ be a finite rank Coxeter system. Then the natural action of $W$ on its compactification $\overline{(W,S)}$ is topologically free.
\end{lemma}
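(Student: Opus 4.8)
The plan is to exploit the fact that $W$ sits inside $\overline{(W,S)}$ as a dense subset (by Lemma \ref{discreteness}, together with the discussion following the definition of $\partial(W,S)$, which also gives $\partial(W,S)=\overline{(W,S)}\setminus W$) and that the action of $W$ on $\overline{(W,S)}$ extends the ordinary left multiplication action of $W$ on itself (by Theorem \ref{coxeteraction}). In essence this is a soft density argument, and no finer structure of the compactification will be needed.

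First I would fix a nontrivial element $\mathbf{w}\in W\setminus\{e\}$ and examine its fixed point set $\overline{(W,S)}^{\mathbf{w}}:=\{z\in\overline{(W,S)}\mid\mathbf{w}.z=z\}$. The key observation is that this set contains no element of $W$. Indeed, for $\mathbf{v}\in W$ the extended action restricts to left multiplication, so $\mathbf{w}.\mathbf{v}=\mathbf{w}\mathbf{v}$, and the equality $\mathbf{w}\mathbf{v}=\mathbf{v}$ would force $\mathbf{w}=e$, contradicting $\mathbf{w}\neq e$. Hence $\overline{(W,S)}^{\mathbf{w}}\cap W=\emptyset$, which means that $\overline{(W,S)}^{\mathbf{w}}\subseteq\partial(W,S)$.

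I would then conclude by invoking density: since $W$ is dense in $\overline{(W,S)}$, every nonempty open subset of $\overline{(W,S)}$ meets $W$. As $\overline{(W,S)}^{\mathbf{w}}$ is disjoint from $W$, it can therefore contain no nonempty open set, i.e.\ it has empty interior (no inner points). Because $\mathbf{w}\in W\setminus\{e\}$ was arbitrary, this shows that the action of $W$ on $\overline{(W,S)}$ is topologically free. The only steps requiring any care — and these are precisely the ones I would want to state explicitly rather than grind through — are that the restriction of the extended action to $W$ genuinely is ordinary left multiplication, which is exactly what Theorem \ref{coxeteraction} provides, and that left multiplication on a group is free away from the identity; beyond these there is no real obstacle.
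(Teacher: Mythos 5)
Your proof is correct and follows essentially the same route as the paper, which disposes of the lemma in one line by noting that the statement "immediately follows from the fact that $W$ is a dense subset of $\overline{W}$". Your write-up simply makes explicit what that one line leaves implicit: left translation on $W$ is free away from $e$, so each fixed-point set avoids the dense subset $W$ and hence has empty interior.
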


\begin{proof}
The statement immediately follows from the fact that $W$ is a dense subset of $\overline{W}$.
\end{proof}

Again, in the right-angled case we can characterize when the corresponding action of the Coxeter groups on its boundary is topologically free. The argument requires a technical lemma.

\begin{lemma} \label{technical}
Let $\left(W,S\right)$ be a finite rank right-angled irreducible Coxeter system. For $\mathbf{w}\in W\setminus\left\{ e\right\}$, $z\in\partial(W,S)$ with $\mathbf{w}.z=z$ there exist elements $\mathbf{u},\mathbf{v}\in W$ with $\mathbf{w}=\mathbf{u}\mathbf{v}^{-1}$, $\left|\mathbf{w}\right|=\left|\mathbf{u}\right|+\left|\mathbf{v}\right|$ and $\mathbf{u},\mathbf{v}\leq z$.
\end{lemma}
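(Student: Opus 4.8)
The plan is to realise $\mathbf{w}$ as the transition between two points lying on the geodesic representing $z$, where the second point differs from the first by exactly the portion of the geodesic that cancels into $\mathbf{w}$. By Corollary \ref{cor} the point $z$ is represented by a unique infinite geodesic path $\alpha=(\alpha_{n})_{n\in\mathbb{N}}$ with $\alpha_{0}=e$, and the prefixes of $\alpha$ are precisely the elements $\mathbf{x}\in W$ with $\mathbf{x}\leq z$ (by the definition of the extended order in Lemma \ref{poset}). Since the left action is continuous (Theorem \ref{coxeteraction}) and $\alpha_{n}\to z$, we have $\mathbf{w}\alpha_{n}=\mathbf{w}.\alpha_{n}\to\mathbf{w}.z=z$. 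The assertion $\mathbf{w}=\mathbf{u}\mathbf{v}^{-1}$ with $|\mathbf{w}|=|\mathbf{u}|+|\mathbf{v}|$ and $\mathbf{u},\mathbf{v}\leq z$ is equivalent to producing a prefix $\mathbf{v}$ of $\alpha$ such that $\mathbf{w}$ ends in $\mathbf{v}^{-1}$ (so that $\mathbf{v}$ cancels completely into $\mathbf{w}$ from the right) and such that $\mathbf{u}:=\mathbf{w}\mathbf{v}$ is again a prefix of $\alpha$.

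First I would control the cancellation between $\mathbf{w}$ and the initial segments of $\alpha$. Put $c_{n}:=\tfrac{1}{2}(|\mathbf{w}|+n-|\mathbf{w}\alpha_{n}|)$. Since $\mathbf{w}\alpha_{n+1}=(\mathbf{w}\alpha_{n})(\alpha_{n}^{-1}\alpha_{n+1})$ multiplies $\mathbf{w}\alpha_{n}$ by the single generator $\alpha_{n}^{-1}\alpha_{n+1}\in S$, we have $|\mathbf{w}\alpha_{n+1}|-|\mathbf{w}\alpha_{n}|\in\{-1,+1\}$, whence $c_{n+1}-c_{n}\in\{0,1\}$. Thus $(c_{n})_{n}$ is non-decreasing and bounded above by $|\mathbf{w}|$, so it is eventually constant, say $c_{n}=c$ for all $n\geq N_{0}$. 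For such $n$ the length increases at each step, i.e. $\mathbf{w}\alpha_{n}\leq_{R}\mathbf{w}\alpha_{n+1}$, so $(\mathbf{w}\alpha_{n})_{n\geq N_{0}}$ is an increasing chain. By Lemma \ref{monotone} it converges to a point represented by a geodesic and lying above all of its terms; since the same sequence converges to $z$ and $\overline{(W,S)}$ is Hausdorff (Proposition \ref{C*}), this limit is $z$. In particular $\mathbf{w}\alpha_{N}\leq z$ for every $N\geq N_{0}$.

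Next I would isolate the cancelling part. Fix $N\geq N_{0}$ and apply the standard cancellation decomposition for Coxeter groups (a repeated application of the deletion/exchange condition of Theorem \ref{cancellation}) to the pair $\mathbf{w},\alpha_{N}$: there exist $\mathbf{u},d,\beta\in W$ with $\mathbf{w}=\mathbf{u}d$, $\alpha_{N}=d^{-1}\beta$ and $\mathbf{w}\alpha_{N}=\mathbf{u}\beta$, all three products reduced, where $|d|=\tfrac{1}{2}(|\mathbf{w}|+N-|\mathbf{w}\alpha_{N}|)=c\leq|\mathbf{w}|$. Setting $\mathbf{v}:=d^{-1}$ and $\mathbf{u}:=\mathbf{w}d^{-1}=\mathbf{w}\mathbf{v}$, I obtain $\mathbf{w}=\mathbf{u}\mathbf{v}^{-1}$ and $|\mathbf{w}|=|\mathbf{u}|+|\mathbf{v}|$ directly from $\mathbf{w}=\mathbf{u}d$ being reduced. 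Moreover $\mathbf{v}=d^{-1}\leq_{R}\alpha_{N}\leq z$ since $\alpha_{N}=d^{-1}\beta$ is reduced, and $\mathbf{u}\leq_{R}\mathbf{u}\beta=\mathbf{w}\alpha_{N}\leq z$ since $\mathbf{u}\beta$ is reduced; hence $\mathbf{u},\mathbf{v}\leq z$, as required.

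The main obstacle is the length condition $|\mathbf{w}|=|\mathbf{u}|+|\mathbf{v}|$. The obvious choice $\mathbf{v}=\alpha_{N}$, $\mathbf{u}=\mathbf{w}\alpha_{N}$ keeps both elements on the geodesic but yields $|\mathbf{u}|+|\mathbf{v}|=|\mathbf{w}|+2(N-c)$, which is far too large. Forcing equality means $\mathbf{v}$ must cancel \emph{entirely} into $\mathbf{w}$, so $\mathbf{v}$ has to be exactly the cancellation element $d^{-1}$ of length $c\leq|\mathbf{w}|$ rather than the whole segment $\alpha_{N}$; the one genuine input beyond bookkeeping is therefore the existence of this reduced decomposition, which is precisely where the deletion condition does the work. (I note in passing that neither right-angledness nor irreducibility is actually used, so the statement holds for arbitrary finite rank Coxeter systems.)
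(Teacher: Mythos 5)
Your first two paragraphs are correct and rather elegant: the stabilization of $c_{n}=\tfrac{1}{2}(|\mathbf{w}|+n-|\mathbf{w}\alpha_{n}|)$ along the geodesic, the resulting increasing chain $(\mathbf{w}\alpha_{n})_{n\geq N_{0}}$, and the identification of its limit with $z$ via continuity of the action, Lemma \ref{monotone} and Hausdorffness all go through, and they give $\mathbf{w}\alpha_{N}\leq z$ exactly as you intend. (Two side remarks you make there are false but harmless, since you never use them: a boundary point is in general \emph{not} represented by a unique geodesic, and the elements $\mathbf{x}\leq z$ are not only the prefixes $\alpha_{k}$ but everything lying below some prefix in the weak order; your final argument correctly uses only transitivity, so neither claim is load-bearing.) This route is genuinely different from the paper's, which instead runs an induction over the letters of a reduced word for $\mathbf{w}$, peeling generators off the right end and sorting them into two interleaved commuting subwords.

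The genuine gap is the step you call the ``standard cancellation decomposition for Coxeter groups.'' The existence of $\mathbf{u},d,\beta$ with $\mathbf{w}=\mathbf{u}d$, $\alpha_{N}=d^{-1}\beta$, $\mathbf{w}\alpha_{N}=\mathbf{u}\beta$, all three products reduced, is \emph{not} a consequence of the deletion/exchange condition for general Coxeter systems, and it is false in general: in $W=\left\langle s,t\mid s^{2}=t^{2}=(st)^{3}=e\right\rangle$ take $\mathbf{w}=\mathbf{x}=st$; then $\mathbf{w}\mathbf{x}=stst=ts$ has length $2$, so $c=1$, but no $d$ of length $1$ satisfies both $d\leq_{L}\mathbf{w}$ and $d^{-1}\leq_{R}\mathbf{x}$ (the reduction $stst=ts$ is accomplished by a braid move, not by cancelling a common factor). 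Consequently your parenthetical claim that right-angledness is not used -- and that the lemma follows for arbitrary finite rank Coxeter systems -- is incorrect. In the right-angled case the decomposition \emph{is} true, but proving it requires precisely the right-angled deletion property recorded after Theorem \ref{cancellation}: when a letter of $\mathbf{w}$ cancels against a letter of $\alpha_{N}$, it equals that letter and commutes with everything in between, so it can be moved to the interface; iterating this builds $d$ one generator at a time. So your argument is repairable, but only by invoking right-angledness at this point -- the same place where it enters the paper's own induction -- and as written the key combinatorial input is asserted with a justification that would fail.
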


\begin{proof}
Let $\mathbf{w}\in W$, $z\in\partial(W,S)$ be elements with $\mathbf{w}.z=z$ and let $\mathbf{w}=s_{1}...s_{n}$ be a reduced expression for $\mathbf{w}$. We claim that for every $1\leq k\leq n$ we find integers $i_{1}<...<i_{l}$ and $j_{1}<...<j_{m}$ such that $\left\{ n-k+1,...,n\right\} =\left\{ i_{1},...,i_{l},j_{1},...,j_{m}\right\}$, $\mathbf{w}=(s_{1}...s_{n-k})(s_{i_{1}}...s_{i_{l}})(s_{j_{1}}...s_{j_{m}})$ is a reduced expression for $\mathbf{w}$, $s_{j_{m}}...s_{j_{1}}\leq z$ and $s_{i_{1}}...s_{i_{l}}\leq(s_{i_{1}}...s_{i_{l}})(s_{j_{1}}...s_{j_{m}}).z$. We prove this by induction over $k$.

\begin{itemize}
\item For $k=1$ we have that either $s_{n}\leq z$ or $s_{n}\nleq z$. In the first case set $l=0$, $m=1$ and $j_{1}=n$. Then, $\mathbf{w}=\left(s_{1}...s_{n-1}\right)s_{j_{1}}$ is a reduced expression for $\mathbf{w}$ with $s_{j_{1}}\leq z$. In the second case set $l=1$, $m=0$ and $i_{1}=n$. Then again, $\mathbf{w}=(s_{1}...s_{n-1})s_{i_{1}}$ is a reduced expression for $\mathbf{w}$ with $s_{i_{1}}\leq s_{i_{1}}.z$. We get that for $k=1$ the claimed statement holds.
\item Now assume that the claim holds for $k\in\mathbb{N}$, i.e. we have $i_{1}<...<i_{l}$ and $j_{1}<...<j_{m}$ with $\left\{ n-k+1,...,n\right\} =\left\{ i_{1},...,i_{l},j_{1},...,j_{m}\right\}$ such that $\mathbf{w}=(s_{1}...s_{n-k})(s_{i_{1}}...s_{i_{l}})(s_{j_{1}}...s_{j_{m}})$ is a reduced expression for $\mathbf{w}$, $s_{j_{m}}...s_{j_{1}}\leq z$ and $s_{i_{1}}...s_{i_{l}}\leq(s_{i_{1}}...s_{i_{l}})(s_{j_{1}}...s_{j_{m}}).z$. Now, either $s_{n-k}\leq(s_{i_{1}}...s_{i_{l}})(s_{j_{1}}...s_{j_{m}}).z$ or $s_{n-k}\nleq(s_{i_{1}}...s_{i_{l}})(s_{j_{1}}...s_{j_{m}}).z$. In the first case, since $s_{n-k}(s_{i_{1}}...s_{i_{l}})(s_{j_{1}}...s_{j_{m}})$ is reduced and $s_{i_{1}}...s_{i_{l}}\leq(s_{i_{1}}...s_{i_{l}})(s_{j_{1}}...s_{j_{m}}).z$, we get that $s_{n-k}$ commutes with $s_{i_{1}}...s_{i_{l}}$ and $s_{n-k}\leq(s_{j_{1}}...s_{j_{m}}).z$. Hence,
\begin{eqnarray} \nonumber
\left\{ n-k,...,n\right\} =\left\{ i_{1},...,i_{l},n-k,j_{1},...,j_{m}\right\} \text{, }
\end{eqnarray}
$\mathbf{w}=(s_{1}...s_{n-k-1})(s_{i_{1}}...s_{i_{l}})(s_{n-k}s_{j_{1}}...s_{j_{m}})$ is a reduced expression for $\mathbf{w}$, $s_{j_{m}}...s_{j_{1}}s_{n-k}\leq z$ and $s_{i_{1}}...s_{i_{l}}\leq(s_{i_{1}}...s_{i_{l}})(s_{n-k}s_{j_{1}}...s_{j_{m}}).z$. In the second case,
\begin{eqnarray}
\nonumber
\left\{ n-k,...,n\right\} =\left\{ n-k,i_{1},...,i_{l},j_{1},...,j_{m}\right\},
\end{eqnarray}
$\mathbf{w}=(s_{1}...s_{n-k-1})(s_{n-k}s_{i_{1}}...s_{i_{l}})(s_{j_{1}}...s_{j_{m}})$ is a reduced expression for $\mathbf{w}$, $s_{j_{1}}...s_{j_{m}}\leq z$ and $s_{n-k}s_{i_{1}}...s_{i_{l}}\leq(s_{n-k}s_{i_{1}}...s_{i_{l}})(s_{j_{1}}...s_{j_{m}}).z$. In both cases we get that the claim also holds for $k+1$.
\end{itemize}
This completes the induction argument.

For $k=n$ we get that there exist $i_{1}<...<i_{l}$ and $j_{1}<...<j_{m}$ with $\left\{ 1,...,n\right\} =\left\{ i_{1},...,i_{l},j_{1},...,j_{m}\right\}$  such that $\mathbf{w}=(s_{i_{1}}...s_{i_{l}})(s_{j_{1}}...s_{j_{m}})$ is a reduced expression for $\mathbf{w}$, $s_{j_{m}}...s_{j_{1}}\leq z$ and $s_{i_{1}}...s_{i_{l}}\leq(s_{i_{1}}...s_{i_{l}})(s_{j_{1}}...s_{j_{m}}).z=\mathbf{w}.z=z$. The lemma then follows via $\mathbf{u}:=s_{i_{1}}...s_{i_{l}}$ and $\mathbf{v}:=s_{j_{m}}...s_{j_{1}}$.
\end{proof}

\begin{proposition} \label{r.a.2}
Let $\left(W,S\right)$ be a right-angled irreducible Coxeter system with $2\leq\left|S\right|<\infty$. Then the action $W\curvearrowright\partial(W,S)$ is topologically free if and only if $\left|S\right|\geq 3$.
\end{proposition}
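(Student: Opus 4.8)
The plan is to prove the two implications separately, with the backward direction being routine and the forward direction resting entirely on Lemma \ref{technical}. For the contrapositive of ``topologically free $\Rightarrow\left|S\right|\geq3$'', note that an irreducible right-angled system with $\left|S\right|=2$ has $m_{st}=\infty$, so $W=\mathbf{D}_{\infty}=\langle s,t\mid s^{2}=t^{2}=e\rangle$ and, as computed before Theorem \ref{r.a.}, $\partial(W,S)=\{z_{1},z_{2}\}$ with $z_{1}=stst\ldots$, $z_{2}=tsts\ldots$ is discrete. The nontrivial translation $st$ fixes both $z_{1}$ and $z_{2}$, so $X^{st}=\partial(W,S)$ has nonempty interior and the action is not topologically free.

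For the direction $\left|S\right|\geq3\Rightarrow$ topologically free, fix $\mathbf{w}\neq e$; I must show $X^{\mathbf{w}}$ has empty interior. For each reduced factorization $\mathbf{w}=\mathbf{u}\mathbf{v}^{-1}$ with $\left|\mathbf{w}\right|=\left|\mathbf{u}\right|+\left|\mathbf{v}\right|$ I set $F_{\mathbf{u},\mathbf{v}}:=\{z\in\partial(W,S)\mid\mathbf{u},\mathbf{v}\leq z,\ \mathbf{u}^{-1}.z=\mathbf{v}^{-1}.z\}$. There are only finitely many such factorizations; each $F_{\mathbf{u},\mathbf{v}}$ is closed, being $\mathcal{U}_{\mathbf{u}}\cap\mathcal{U}_{\mathbf{v}}\cap\partial(W,S)$ intersected with the equalizer of the two continuous maps $z\mapsto\mathbf{u}^{-1}.z$ and $z\mapsto\mathbf{v}^{-1}.z$ (continuous by Theorem \ref{coxeteraction}). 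Lemma \ref{technical} shows every fixed point lies in some $F_{\mathbf{u},\mathbf{v}}$, while conversely $z\in F_{\mathbf{u},\mathbf{v}}$ gives $\mathbf{w}.z=\mathbf{u}.(\mathbf{v}^{-1}.z)=\mathbf{u}.(\mathbf{u}^{-1}.z)=z$; hence $X^{\mathbf{w}}=\bigcup_{(\mathbf{u},\mathbf{v})}F_{\mathbf{u},\mathbf{v}}$.

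Next I reduce each $F_{\mathbf{u},\mathbf{v}}$ to a single element. For $z\in F_{\mathbf{u},\mathbf{v}}$ put $\mathbf{m}:=\mathbf{u}\wedge\mathbf{v}\leq z$ and write $\mathbf{u}=\mathbf{m}\mathbf{a}$, $\mathbf{v}=\mathbf{m}\mathbf{b}$ with lengths adding and $\mathbf{a}\wedge\mathbf{b}=e$. Writing $z^{*}:=\mathbf{u}^{-1}.z=\mathbf{v}^{-1}.z$, stripping the common prefix $\mathbf{m}$ (via Proposition \ref{help} and the extended order of Lemma \ref{poset}) yields $\mathbf{a}.z^{*}=\mathbf{b}.z^{*}=\mathbf{m}^{-1}.z$ together with $\mathbf{a}\leq\mathbf{a}.z^{*}$ and $\mathbf{b}\leq\mathbf{b}.z^{*}$. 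Comparing \emph{first letters} of $\mathbf{a}.z^{*}$ and $\mathbf{b}.z^{*}$ then forces $\mathbf{a}=e$ or $\mathbf{b}=e$, since otherwise the common first letter would lie in $\mathbf{a}\wedge\mathbf{b}=e$. Thus $F_{\mathbf{u},\mathbf{v}}=\emptyset$ unless $\mathbf{u},\mathbf{v}$ are comparable, and when say $\mathbf{v}\leq\mathbf{u}$, $\mathbf{u}=\mathbf{v}\mathbf{a}$, the homeomorphism $z\mapsto\mathbf{v}^{-1}.z$ identifies $F_{\mathbf{u},\mathbf{v}}$ with $G_{\mathbf{a}}:=\{y\mid\mathbf{a}\leq y,\ \mathbf{a}.y=y\}$, $\mathbf{a}\neq e$. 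Because the $W$-action is a genuine action (Theorem \ref{coxeteraction}), $\mathbf{a}.y=y$ gives $\mathbf{a}^{n}.y=y$ for all $n$; combined with $\mathbf{a}\leq y$ and the concatenation property of the weak order this produces $\mathbf{a}^{n}\leq y$ for every $n$, forcing $\mathbf{a}$ to have infinite order and, by Lemma \ref{monotone}, $y=\mathbf{a}^{\infty}$. Hence each $F_{\mathbf{u},\mathbf{v}}$ is empty or a singleton, so $X^{\mathbf{w}}$ is finite. Since $\partial(W,S)$ is an infinite compact metrizable space on which $W$ acts minimally (Theorem \ref{r.a.}), it has no isolated points, so the finite set $X^{\mathbf{w}}$ has empty interior and the action is topologically free.

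The main obstacle I expect is the third step: making the \emph{first-letter/meet} computation fully rigorous (legitimately cancelling the prefix $\mathbf{m}$ on the boundary through Proposition \ref{help} and Lemma \ref{poset}), and pinning down $G_{\mathbf{a}}=\{\mathbf{a}^{\infty}\}$. The delicate point in the latter is excluding boundary points $y$ \emph{strictly above} $\mathbf{a}^{\infty}$ that are still fixed by $\mathbf{a}$; one must argue that the down-set $\{x\in W\mid x\leq y\}$ of such a fixed $y$ cannot properly exceed that of $\mathbf{a}^{\infty}$ (the analogue of the maximality statement of Lemma \ref{trees}, which is automatic only for trees). Should this turn out to require more than a singleton bound, the same conclusion still follows by showing $G_{\mathbf{a}}$ is closed and nowhere dense and invoking that a finite union of closed nowhere-dense sets has empty interior in the Baire space $\partial(W,S)$.
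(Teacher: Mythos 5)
Your $|S|=2$ direction is fine, and so is the decomposition $X^{\mathbf{w}}=\bigcup_{(\mathbf{u},\mathbf{v})}F_{\mathbf{u},\mathbf{v}}$ into finitely many closed sets via Lemma \ref{technical}. But both steps you use to shrink the pieces $F_{\mathbf{u},\mathbf{v}}$ are invalid, and in fact their conclusions are false. The ``first-letter'' step is the fatal one: from $\mathbf{a}\leq y$, $\mathbf{b}\leq y$ and $\mathbf{a}\wedge\mathbf{b}=e$ you cannot conclude $\mathbf{a}=e$ or $\mathbf{b}=e$, because a point of $\overline{(W,S)}$ has no well-defined first letter; two elements with trivial meet routinely share an upper bound (already $s,t\leq st$ with $s\wedge t=e$ for commuting generators $s\neq t$). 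Concretely, take $S=\{s,t,a,b,c\}$ with $m_{st}=m_{ab}=m_{cs}=m_{ca}=\infty$ and all other $m_{xy}=2$; this system is irreducible and right-angled. For $\mathbf{w}=stab$ the point $z:=\lim_{n}(st)^{n}(ba)^{n}\in\partial(W,S)$ is fixed by $\mathbf{w}$, since $st$ commutes with $ab$ and $ab(ba)^{n}=(ba)^{n-1}$, so that $\mathbf{w}.z=\lim_{n}(st)^{n+1}(ba)^{n-1}=z$. One has $st\leq z$ and $ba\leq z$, so $z\in F_{st,ba}$ with $st\wedge ba=e$ incomparable; on the other hand $t\nleq z$ and $a\nleq z$, and the only comparable factorization pairs of $stab$ are $(stab,e)$ and $(e,bats)$, neither of which has both entries below $z$. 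So your dichotomy would delete the fixed point $z$ from $X^{\mathbf{w}}$ altogether.

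The singleton claim collapses too, and with it the ``$X^{\mathbf{w}}$ is finite'' endgame. In the same group the fixed-point set of $st$ contains the pairwise distinct points $\mathbf{x}.(st)^{\infty}=\lim_{n}\mathbf{x}(st)^{n}$ for every $\mathbf{x}$ in the infinite group $\langle a,b\rangle\cong\mathbf{D}_{\infty}$ (each such $\mathbf{x}$ commutes with $st$, and distinct $\mathbf{x}$ give distinct down-sets), so $G_{st}=F_{st,e}$ is infinite: fixed-point sets in irreducible right-angled systems need not be finite, only of empty interior. Your derivation of $G_{\mathbf{a}}=\{\mathbf{a}^{\infty}\}$ already breaks before that: the step ``$\mathbf{a}\leq y$ and $\mathbf{a}.y=y$ imply $\mathbf{a}^{n}\leq y$'' requires $|\mathbf{a}^{n}|=n|\mathbf{a}|$, which is not automatic ($\mathbf{a}$ could a priori be a reflection such as $sas$), and establishing it is exactly the content of the laborious ``Claim'' inside the paper's proof. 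The Baire fallback you offer does not rescue the argument, since it would require nowhere-density of every $F_{\mathbf{u},\mathbf{v}}$, including the non-comparable ones, which is essentially the proposition itself. The paper's proof avoids all of this by a different mechanism: it takes $\mathbf{w}$ of \emph{minimal} word length whose fixed set has an inner point $z$; minimality powers the letter-by-letter commutation analysis (ruling out shared letters of $\mathbf{u}$ and $\mathbf{v}$ and eventually forcing $\mathbf{w}=st$ with $m_{st}=\infty$), and the inner-point hypothesis is then contradicted by explicit perturbation sequences such as $\mathbf{w}_{i}\mathbf{w}^{i}s(rs)^{\infty}\rightarrow z$ lying outside the fixed set, which is where $|S|\geq3$ enters. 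Your proposal has no substitute for that minimality hypothesis, and that is precisely where it fails.
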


\begin{proof}
``$\ensuremath{\Rightarrow}$'': Again, for $\left|S\right|=2$ the Coxeter group $W$ is the infinite dihedral group
\begin{eqnarray}
\nonumber
\mathbf{D}_{\infty}=\left\langle s,t\mid s^{2}=t^{2}=e\right\rangle
\end{eqnarray}
with boundary $\partial\mathbf{D}_{\infty}=\left\{ z_{1},z_{2}\right\}$  where $z_{1}:=stst...$ and $z_{2}:=tsts...$. Obviously, $\partial\mathbf{D}_{\infty}$ carries the discrete topology and $(\partial\mathbf{D}_{\infty})^{st}=\left\{ z_{1},z_{2}\right\}$. Hence, the action is not topologically free.

``$\ensuremath{\Leftarrow}$'': Let $\left|S\right| \geq 3$ and assume that the action is not topologically free. We find $\mathbf{w}\in W\setminus\left\{ e\right\}$  such that $(\partial W)^{\mathbf{w}}$ contains an inner point. Without loss of generality we can assume that $\mathbf{w}$ with that property has minimal length. Fix some inner point $z\in(\partial W)^{\mathbf{w}}$. By Lemma \ref{technical} there exist $\mathbf{u},\mathbf{v}\in W$ with $\mathbf{w}=\mathbf{u}\mathbf{v}^{-1}$, $\left|\mathbf{w}\right|=\left|\mathbf{u}\right|+\left|\mathbf{v}\right|$ and $\mathbf{u},\mathbf{v}\leq z$. Let $\mathbf{u}=s_{1}...s_{n}$, $\mathbf{v}=t_{1}...t_{m}$ be reduced expressions for $\mathbf{u}$, $\mathbf{v}$. Without loss of generality one can assume that $n\leq m$. We claim that every letter of $\mathbf{u}$ commutes with every letter of $\mathbf{v}$ and that the letters are pairwise different.
\begin{itemize}
\item If $s_{1}=t_{1}$, then $(\partial W)^{(s_{2}...s_{n})(t_{m}...t_{2})}=s_{1}.(\partial W)^{\mathbf{w}}$. But we assumed $\mathbf{w}$ to have minimal length, so $s_{1}\neq t_{1}$. By $s_{1},t_{1}\leq z$ for $z\in(\partial W)^{\mathbf{w}}$ we further get $m_{s_{1}t_{1}}=2$.
\item If $s_{2}=t_{1}$, then $(\partial W)^{\left(s_{1}s_{3}...s_{n}\right)\left(t_{m}...t_{2}\right)}=s_{2}.(\partial W)^{\mathbf{w}}$. Again, by the minimality of $\mathbf{w}$ we get $s_{2}\neq t_{1}$ with $m_{s_{2}t_{1}}=2$. In the same way, $t_{2}\neq s_{1}$, $m_{s_{1}t_{2}}=2$ and $s_{2}\neq t_{2}$, $m_{s_{2}t_{2}}=2$.
\item ....
\end{itemize}
Proceeding like this we find that every letter of $\mathbf{u}$ commutes with every letter of $\mathbf{v}$ and that the letters are pairwise different.\\

\emph{Claim.} We have $\left|\mathbf{u}^{n}\right|=n\left|\mathbf{u}\right|$, $\left|\mathbf{v}^{n}\right|=n\left|\mathbf{v}\right|$ and $\mathbf{u}^{n},\mathbf{v}^{n}\leq z$ for every $n\in\mathbb{N}$.

\emph{Proof of Claim.} Let $\alpha$ be an infinite geodesic path representing $z$ with $\alpha_{0}=e$. By $\mathbf{u},\mathbf{v}\leq z$ and the above we can assume that $\alpha_{l}=t_{1}...t_{m}s_{1}...s_{n}\mathbf{w}_{l}$ for $l\geq m+n+1$ with $\left|\alpha_{l}\right|=\left|\mathbf{u}\right|+\left|\mathbf{v}\right|+\left|\mathbf{w}_{l}\right|$. The identities $\mathbf{u}\leq z$ and $z=\mathbf{u}\mathbf{v}^{-1}z$ imply that for every $i\in\left\{ 1,...,n-1\right\}$  one has $s_{i}s_{i+1}...s_{n}\left(t_{m}...t_{1}\right)z=\left(s_{i-1}...s_{1}\right)z\geq s_{i}$, so for $l$ large enough $s_{i}\leq s_{i}s_{i+1}...s_{n}s_{1}...s_{n}\mathbf{w}_{l}=s_{i}s_{i+1}...s_{n}\mathbf{u}\mathbf{w}_{l}$. We get $\left|s_{i}\left(s_{i+1}...s_{n}\right)\mathbf{u}\right|=\left|\left(s_{i+1}...s_{n}\right)\mathbf{u}\right|+1$ and hence (via induction over $i$, starting with $i=n$) that $\left|\mathbf{u}^{2}\right|=2\left|\mathbf{u}\right|$. This implies $\left|\mathbf{u}^{n}\right|=n\left|\mathbf{u}\right|$ for every $n\in\mathbb{N}$ and in a similar way $\left|\mathbf{v}^{n}\right|=n\left|\mathbf{v}\right|$ for every $n\in\mathbb{N}$. Now, since each letter of $\mathbf{u}$ commutes with each letter of $\mathbf{v}$, we have $\mathbf{u}^{-n}z=\mathbf{v}^{-n}z\geq\mathbf{u}$ for every $n\in\mathbb{N}$. Inductively we get that $\mathbf{u}^{n}\leq z$ for every $n\in\mathbb{N}$. In a similar way, $\mathbf{v}^{n}\leq z$ for every $n\in\mathbb{N}$. The claim follows.\\

The claim in particular implies that $\mathbf{v}^{-1}.z=z$ and hence $z\in (\partial W)^{\mathbf{u}}$. But then $\mathbf{w}=\mathbf{u}$ and $\mathbf{v}=e$ by the minimality of $\mathbf{w}$ and $n\leq m$. Heuristically, $z$ starts with arbitrarily large powers of $\mathbf{w}$, but there can also appear other expressions in front of $z$. To make this precise, for every $i\in\mathbb{N}$ one can find $\mathbf{w}_{i}\in W$ with $\mathbf{w}_{i}\mathbf{w}=\mathbf{w}\mathbf{w}_{i}$ and $\left|\mathbf{w}_{i}\mathbf{w}\right|=\left|\mathbf{w}_{i}\right|+\left|\mathbf{w}\right|$ such that $\mathbf{w}_{i}\mathbf{w}^{i}\rightarrow z$. Let $s,t\in S$ with $s\leq_{L}\mathbf{w}$, $m_{st}=\infty$ and write $\left(ts\right)^{\infty}:=\lim_{k}(ts)^{k}\in\partial W$. Assume that $\mathbf{w}$ is not of the form $\mathbf{w}=(st)^{l}$ for some $l\in\mathbb{N}$. Then $\mathbf{w}_{i}\mathbf{w}^{i}(ts)^{\infty}\notin(\partial W)^{\mathbf{w}}$ for every $i\in\mathbb{N}$. But $\mathbf{w}_{i}\mathbf{w}^{i}(ts)^{\infty}\in\partial W\setminus(\partial W)^{\mathbf{w}}$ is a sequence converging to $z$ which contradicts our assumption that $z$ is an inner point. Hence, $\mathbf{w}=(st)^{l}$ for some $l\in\mathbb{N}$. By the minimality of $\mathbf{w}$, $l=1$, so in particular $\mathbf{w}_{i}(st)^{i}\rightarrow z$. Because $\left|S\right|\geq3$ one can find $r\in S$ such that either $m_{sr}=\infty$ or $m_{tr}=\infty$. If $m_{sr}=\infty$, then $\mathbf{w}_{i}(st)^{i}s(rs)^{\infty}\in\partial W\setminus(\partial W)^{\mathbf{w}}$ is a sequence converging to $z$ and if $m_{tr}=\infty$, then $\mathbf{w}_{i}(st)^{i}(rt)^{\infty}\in\partial W\setminus(\partial W)^{\mathbf{w}}$ is a sequence converging to $z$ where $(rs)^{\infty}:=\lim_{k}(rs)^{k}$ and $(rt)^{\infty}:=\lim_{k}(rt)^{k}$. In both cases $z$ turns out not to be an inner point, in contradiction to our assumption. Hence, the action $W\curvearrowright\partial W$ must be topologically free.
\end{proof}

\begin{remark}
The proof of Proposition \ref{r.a.2} is direct and only uses combinatorial arguments. We chose to present it that way because of its self-containedness. However, the same statement can also be shown by an operator algebraic approach. Indeed, if $(W,S)$ is an irreducible right-angled Coxeter system with $3\leq\left|S\right|<\infty$, then $W$ is C$^\ast$-simple (see for instance \cite{Fe}, \cite{DeLaHarpe}, \cite{Cornulier} or \cite{Mario}). The C$^\ast$-simplicity and the minimality of the action $W\curvearrowright\partial(W,S)$ then imply with \cite[Theorem 7.1]{BKKO} that the reduced crossed product $C(\partial(W,S))\rtimes_{r}W$ is simple. By Corollary \ref{Total amenability} and \cite[Theorem 4.3.4]{BrownOzawa} the reduced crossed product coincides with the universal one. The topological freeness of the action $W\curvearrowright\partial(W,S)$ hence follows with \cite[Theorem 2]{Archbold}.
\end{remark}

\begin{lemma}
Let $\left(W,S\right)$ be a finite rank non-amenable Coxeter system that is small at infinity. Then the action $W\curvearrowright\partial(W,S)$ is topologically free.
\end{lemma}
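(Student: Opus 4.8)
The plan is to mirror the operator-algebraic argument recorded in the Remark following Proposition \ref{r.a.2}, after first reducing to the situation where $(W,S)$ is irreducible of non-affine type, which is exactly what makes the C$^\ast$-simplicity input available.

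First I would pin down the structure of $W$. By Theorem \ref{hyperbolic 3} smallness at infinity forces $W$ to be word hyperbolic, so by \cite[Theorem 17.1]{Moussong} $W$ contains no subgroup isomorphic to $\mathbb{Z}\times\mathbb{Z}$; in particular $(W,S)$ cannot split as a direct product of two infinite special subgroups, since each infinite factor contains an element of infinite order. Suppose $(W,S)$ were reducible, say $W=W_{T_{1}}\times\cdots\times W_{T_{k}}$ with $k\geq 2$ and irreducible factors. Non-amenability forces at least one factor to be infinite, while the absence of $\mathbb{Z}\times\mathbb{Z}$ forces all but one factor to be finite. But then, choosing a generator $s$ inside a nontrivial finite factor, the complementary infinite factor lies in $C_{W}(s)$, so $\#C_{W}(s)=\infty$, contradicting Theorem \ref{infinity}. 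Hence $(W,S)$ is irreducible; being infinite and non-amenable it is of non-affine type. By the results of \cite{Fe}, \cite{DeLaHarpe}, \cite{Cornulier} the reduced group C$^\ast$-algebra $C_{r}^{\ast}(W)$ is therefore simple, i.e. $W$ is C$^\ast$-simple.

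With C$^\ast$-simplicity in hand I would run the argument verbatim from the Remark after Proposition \ref{r.a.2}. By Theorem \ref{s.a.i.} the action $W\curvearrowright\partial(W,S)$ is a boundary action, in particular minimal. C$^\ast$-simplicity together with minimality yields, via \cite[Theorem 7.1]{BKKO}, that the reduced crossed product $C(\partial(W,S))\rtimes_{r}W$ is simple. By Corollary \ref{Total amenability} the action is amenable, so by \cite[Theorem 4.3.4]{BrownOzawa} the reduced crossed product coincides with the universal one. Finally, simplicity of the crossed product together with minimality of the action forces topological freeness through \cite[Theorem 2]{Archbold}: a failure of topological freeness would produce a nonzero ideal obstructing simplicity.

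I expect the only genuine obstacle to be the first paragraph, namely checking that a non-amenable small-at-infinity system is automatically irreducible of non-affine type so that the C$^\ast$-simplicity input applies; once this reduction is in place the remainder is a direct transcription of the already-recorded operator-algebraic argument. An alternative, purely dynamical route would instead identify $\partial(W,S)$ with $\partial_{h}W$ through Theorem \ref{hyperbolic 3}, note that a non-amenable word hyperbolic group is non-elementary, and use north-south dynamics (an infinite-order element has exactly two boundary fixed points, so empty interior, while the finite-order case is controlled through the maximal elementary subgroups and the triviality of the finite radical of an infinite irreducible Coxeter group). This bypasses the crossed-product machinery but relies on the same irreducibility reduction to handle torsion, so I would still open with that structural step.
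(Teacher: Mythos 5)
Your proof is correct, but it follows a genuinely different route from the paper's, which is a two-line argument: by Theorem \ref{hyperbolic 3}, smallness at infinity makes $W$ word hyperbolic with $\partial(W,S)$ $W$-equivariantly homeomorphic to the Gromov boundary $\partial_{h}W$, and topological freeness of the boundary action of a non-elementary word hyperbolic group is then quoted directly from \cite[Corollaire 20]{Harpe} --- precisely the ``purely dynamical'' alternative you sketch at the end, with the torsion/north--south analysis outsourced to that reference. Your main argument is instead operator-algebraic: you first prove the structural fact that non-amenability plus smallness at infinity forces $(W,S)$ to be irreducible of non-affine type, hence $W$ is C$^\ast$-simple by \cite{Fe}, \cite{DeLaHarpe}, \cite{Cornulier}, and you then transcribe the crossed-product argument of the remark following Proposition \ref{r.a.2}: minimality (from Theorem \ref{s.a.i.}) and C$^\ast$-simplicity give simplicity of $C(\partial(W,S))\rtimes_{r}W$ via \cite[Theorem 7.1]{BKKO}, amenability of the action (Corollary \ref{Total amenability} together with \cite[Theorem 4.3.4]{BrownOzawa}) identifies the reduced with the universal crossed product, and \cite[Theorem 2]{Archbold} converts simplicity of the full crossed product into topological freeness. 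Each step checks out, so the proof stands. What your route buys is an observation not recorded in the paper --- that a non-amenable system which is small at infinity is automatically irreducible, non-affine and C$^\ast$-simple --- while what it costs is substantially heavier machinery where the paper needs only a classical fact about hyperbolic boundaries. Two minor remarks: your irreducibility reduction can be shortened, since it follows from Theorem \ref{infinity} alone with no appeal to hyperbolicity or Moussong (in any nontrivial decomposition $W=W_{T_{1}}\times W_{T_{2}}$ of an infinite $W$, one factor is infinite, and any generator $s$ taken in the other factor satisfies $C_{W}(s)\supseteq W_{T_{2}}$ or $C_{W}(s)\supseteq W_{T_{1}}$, whichever is the infinite one); and you are right that the purely dynamical route still needs this irreducibility input to handle finite-order elements --- the paper's own proof absorbs that issue into the citation of \cite{Harpe}.
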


\begin{proof}
As in the proof of Theorem \ref{s.a.i.}, the group $W$ is word hyperbolic and the boundary $\partial(W,S)$ coincides with the hyperbolic boundary $\partial_{h}W$. The topological freeness then follows from \cite[Corollaire 20]{Harpe}.
\end{proof}

An extension of the results above to broader classes (or even a complete characterization) of Coxeter systems $(W,S)$ whose respective boundary defines a boundary in the sense of Furstenberg and whose respective action $W\curvearrowright\partial(W,S)$ is topologically free would be very interesting.

Note that an action of a group $G$ on a compact Hausdorff space $X$ is minimal if and only if $C(X)$ does not contain any non-trivial $G$-invariant ideal. We close this section with a result that relates to the ideal structure of the C$^\ast$-algebra $C(\partial(W,S))$. Recall that by Proposition \ref{C*} (and Remark \ref{remark}), $\pi(\mathcal{D}(W,S))\cong C(\partial(W,S))$ via $\pi(P_{\mathbf{w}})\mapsto\chi_{\mathcal{U}_{\mathbf{w}}\cap\partial(W,S)}$ where $\mathcal{D}(W,S):=\mathcal{D}(\text{Cay}(W,S),e)$.

\begin{proposition}
Let $\left(W,S\right)$ be a finite rank Coxeter system and let $I$ be a non-zero ideal in $\pi(\mathcal{D}(W,S))$. Then $I$ intersects non-trivially with the $\ast$-algebra $\text{Span}\left\{ \pi(P_{\mathbf{w}})\mid\mathbf{w}\in W\right\} \subseteq\pi(\mathcal{D}(W,S))$.
\end{proposition}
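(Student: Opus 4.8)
The plan is to pass through the identification $\pi(\mathcal{D}(W,S))\cong C(\partial(W,S))$ and to recognise $\text{Span}\{\pi(P_{\mathbf{w}})\mid\mathbf{w}\in W\}$ as a unital $\ast$-subalgebra of locally constant functions which already contains the indicators of a basis of clopen sets of $\partial(W,S)$; a non-zero ideal is then forced to contain one such indicator. First I would dispose of the trivial case: if $W$ is finite, then $\ell^2(W)$ is finite dimensional, $\mathcal{K}(\ell^2(W))=\mathcal{B}(\ell^2(W))$, so $\pi(\mathcal{D}(W,S))=0$ and there is no non-zero ideal. Hence we may assume $W$ infinite. Since $\text{Cay}(W,S)$ is uniformly locally finite, Corollary \ref{cor} and Remark \ref{remark}(c) show that $\partial(W,S)$ is a compact metrizable space with $\overline{\partial(W,S)}=\partial(W,S)$, and Proposition \ref{C*} together with Remark \ref{remark}(a) provides a $\ast$-isomorphism $\pi(\mathcal{D}(W,S))\cong C(\partial(W,S))$ under which $\pi(P_{\mathbf{w}})$ corresponds to the characteristic function $\chi_{\mathcal{U}_{\mathbf{w}}\cap\partial(W,S)}$ of the clopen set $\mathcal{U}_{\mathbf{w}}\cap\partial(W,S)$.

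Next I would record the structural fact that drives the argument. By Example \ref{Example}(b) the graph order of $(\text{Cay}(W,S),e)$ is a complete meet-semilattice, so $P_{\mathbf{v}}P_{\mathbf{w}}=P_{\mathbf{v}\vee\mathbf{w}}$ when $\{\mathbf{v},\mathbf{w}\}$ has an upper bound and $P_{\mathbf{v}}P_{\mathbf{w}}=0$ otherwise; consequently $\text{Span}\{P_{\mathbf{w}}\}$ is a $\ast$-subalgebra of $\mathcal{D}(W,S)$ (this is exactly the identity $\ast\text{-Alg}(\{P_{\mathbf{w}}\})=\text{Span}(\{P_{\mathbf{w}}\})$ noted above Example \ref{Example}). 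Applying $\pi$ and the isomorphism above, $B:=\text{Span}\{\pi(P_{\mathbf{w}})\mid\mathbf{w}\in W\}$ is a unital $\ast$-subalgebra of $C(\partial(W,S))$ (its unit is $\pi(P_e)=\pi(1)$, the image of $1$) containing $\chi_{\mathcal{U}_{\mathbf{w}}\cap\partial(W,S)}$ for every $\mathbf{w}$. Being closed under products and containing $1$, it therefore also contains the indicator of every finite intersection $V=\bigcap_i(\mathcal{U}_{\mathbf{w}_i}\cap\partial(W,S))\cap\bigcap_j(\partial(W,S)\setminus\mathcal{U}_{\mathbf{v}_j})$, since $\chi_V=\prod_i\chi_{\mathcal{U}_{\mathbf{w}_i}\cap\partial(W,S)}\cdot\prod_j(1-\chi_{\mathcal{U}_{\mathbf{v}_j}\cap\partial(W,S)})$ and each product of the $\chi_{\mathcal{U}}$'s collapses to a single such indicator or to $0$. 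These sets $V$ are precisely the basic clopen sets of the subspace topology on $\partial(W,S)$, as this topology is generated by the subbase $\{\mathcal{U}_{\mathbf{w}}\cap\partial(W,S),\ \mathcal{U}_{\mathbf{w}}^c\cap\partial(W,S)\}$.

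Finally I would invoke the structure of ideals of $C(\partial(W,S))$: a non-zero closed ideal $I$ equals $C_0(U)$ for some non-empty open $U\subseteq\partial(W,S)$. Picking $z\in U$ and using that the basic clopen sets $V$ form a neighborhood basis, I obtain such a $V$ with $z\in V\subseteq U$; then $\chi_V$ is a non-zero element of $B$ supported in $U$, whence $\chi_V\in I\cap B$, which is the assertion. The main obstacle — indeed the only place the hypotheses genuinely enter — is the second step: one must verify that $\chi_V$ lies in the span $B$ \emph{itself}, not merely in its closure, and this is exactly what the complete meet-semilattice property of the weak right Bruhat order guarantees. (If one wishes to allow $I$ to be a non-closed algebraic ideal, the same conclusion follows by choosing a non-zero $f\in I$, shrinking $V$ so that $f$ is non-vanishing on the clopen set $V$, and observing that $g:=\chi_V/f$ extends by $0$ to a continuous function on $\partial(W,S)$ with $gf=\chi_V\in I\cap B$.)
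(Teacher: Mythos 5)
Your proof is correct, but it takes a genuinely different route from the paper's. Both arguments ultimately rest on the same structural fact: since the weak right Bruhat order is a complete meet-semilattice (Example \ref{Example}), the set $B:=\text{Span}\{\pi(P_{\mathbf{w}})\mid\mathbf{w}\in W\}$ is a unital $\ast$-algebra which, under the identification $\pi(\mathcal{D}(W,S))\cong C(\partial(W,S))$ from Proposition \ref{C*} and Remark \ref{remark}, contains the indicator function of every basic clopen subset of $\partial(W,S)$. The packaging, however, differs. The paper argues by contradiction: assuming $I\cap B=0$, the quotient map $\rho:C(\partial(W,S))\rightarrow C(\partial(W,S))/I$ is non-zero on every non-zero projection in $B$; for a finite sum $x=\sum_{\mathbf{w}}\lambda_{\mathbf{w}}\chi_{\mathcal{U}_{\mathbf{w}}\cap\partial W}$ it picks a point $z$ where $\Vert x\Vert$ is attained (compactness), an infinite geodesic path $(\alpha_{i})_{i}$ representing $z$, and the projections $\mathbf{P}_{i}:=\chi_{\mathcal{U}_{\alpha_{i}}\cap\partial W}\prod_{\mathbf{v}\in\mathfrak{S}}\chi_{\mathcal{U}_{\mathbf{v}}^{c}\cap\partial W}\in B$ (exactly your functions $\chi_{V}$), and computes $\Vert\rho(x)\Vert\geq\lim_{i}\Vert\rho(x\mathbf{P}_{i})\Vert=\Vert x\Vert$; thus $\rho$ is isometric on the dense subalgebra $B$ and hence on all of $C(\partial(W,S))$, forcing $I=0$. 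You argue directly instead: you invoke the classification of closed ideals of $C(X)$ as those functions vanishing on a closed set, note that the basic clopen sets form a neighborhood basis, and exhibit an explicit non-zero element $\chi_{V}\in I\cap B$. Your route is shorter and more transparent, it produces a concrete element of the intersection rather than deriving a contradiction, and your division trick $g:=\chi_{V}/f$ cleanly covers algebraic (non-closed) ideals, a case the paper's quotient-norm argument handles only implicitly. What the paper's route buys is self-containedness: it never invokes the Gelfand-type description of ideals of $C(X)$, using only compactness and the quotient norm, and its intermediate conclusion (any ideal meeting $B$ trivially yields an isometric quotient) is a quantitative strengthening in the spirit of the operator-algebraic techniques used elsewhere in the paper.
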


\begin{proof}
Let $I$ be a non-zero ideal in $C(\partial W)\cong\pi(\mathcal{D}(W,S))$ and assume that $I$ intersects the $\ast$-algebra $\text{Span}\left\{ \chi_{\mathcal{U}_{\mathbf{w}}\cap\partial W}\mid\mathbf{w}\in W\right\}$ trivially. Denote the quotient map $C(\partial W)\rightarrow C(\partial W)/I$ by $\rho$. Let further $x:=\sum_{\mathbf{w}\in W}\lambda_{\mathbf{w}}\chi_{\mathcal{U}_{\mathbf{w}}\cap\partial W}\in C(\partial W)$ with $\lambda_{\mathbf{w}} \in \mathbb{C}$ be a non-zero element where we assume that the sum is finite. The space $\partial W$ is compact, therefore there exists $z\in\partial W$ with
\begin{eqnarray}
\nonumber
\left\Vert x\right\Vert =\left|\sum_{\mathbf{w}\in W:\mathbf{w}\leq z}\lambda_{\mathbf{w}}\right|.
\end{eqnarray}
Define the finite set $\mathfrak{S}:=\left\{ \mathbf{v}\in W\mid\lambda_{\mathbf{v}}\neq0\text{ and }\mathbf{v}\nleq z\right\}$ and let $(\alpha_{i})_{i\in\mathbb{N}}\subseteq W$ be an infinite geodesic path representing the element $z$. Then, for every $i\in\mathbb{N}$ the continuous function $\mathbf{P}_{i}:=\chi_{\mathcal{U}_{\alpha_{i}}\cap\partial W}\prod_{\mathbf{v}\in\mathfrak{S}}\chi_{\mathcal{U}_{\mathbf{v}}^{c}\cap\partial W}\in C(\partial W)$ is a projection with $\rho(\mathbf{P}_{i})\neq0$. Indeed, $\mathbf{P}_{i}(z)=1$ implies that $\mathbf{P}_{i}\neq0$ and hence $\rho(\mathbf{P}_{i})\neq0$ since $\mathbf{P}_{i}\in\text{Span}\left\{ \chi_{\mathcal{U}_{\mathbf{w}}\cap\partial W}\mid\mathbf{w}\in W\right\}$. We get that
\begin{eqnarray}
\nonumber
\left\Vert \rho(x)\right\Vert &\geq& \lim_{i\rightarrow\infty}\left\Vert \sum_{\mathbf{w}\in W}\lambda_{\mathbf{w}}\rho(\chi_{\mathcal{U}_{\mathbf{w}}\cap\partial W}\mathbf{P}_{i})\right\Vert \\
\nonumber
&=& \lim_{i\rightarrow\infty}\left\Vert \sum_{\mathbf{w}\in W:\mathbf{w}\notin\mathfrak{S}}\lambda_{\mathbf{w}}\rho(\chi_{\mathcal{U}_{\mathbf{w}\vee\alpha_{i}}\cap\partial W}\prod_{\mathbf{v}\in\mathfrak{S}}\chi_{\mathcal{U}_{\mathbf{v}}^{c}\cap\partial W})\right\Vert\\
\nonumber
&=& \lim_{i\rightarrow\infty}\left\Vert \left(\sum_{\mathbf{w}\in W:\mathbf{w}\leq z}\lambda_{\mathbf{w}}\right)\rho(\mathbf{P}_{i})\right\Vert\\
\nonumber
&=& \left\Vert x\right\Vert \text{.}
\end{eqnarray}
But then $\rho$ must be isometric, i.e. $I=0$ in contradiction to our assumption. We deduce the claim.
\end{proof}


\section{Applications to Hecke C$^\ast$-algebras} \label{applications} \label{section 4}

In the following section we will apply our earlier results to (operator algebras associated with) Hecke algebras by studying certain embeddings of Hecke C$^\ast$-algebras, the (strong) Akemann-Ostrand property of Hecke-von Neumann algebras associated with Coxeter groups which are small at infinity and properties that are widely related to injective envelopes of Hecke C$^\ast$-algebras.

Let $(W,S)$ be a finite rank Coxeter system. Recall that for $\mathbf{w}\in W$ we defined $P_{\mathbf{w}}\in\ell^{\infty}(W)\subseteq\mathcal{B}(\ell^{2}(W))$ to be the orthogonal projection onto the subspace
\begin{eqnarray}
\nonumber
\overline{\text{Span}\left\{ \delta_{\mathbf{v}}\mid\mathbf{v}\in W\text{ with }\mathbf{w}\leq\mathbf{v}\right\} }\subseteq\ell^{2}(W)\text{,}
\end{eqnarray}
with $P_{e}=1$. Further, we denoted the quotient map of $\mathcal{B}(\ell^{2}(W))$ onto $\mathcal{B}(\ell^{2}(W))/\mathcal{K}$ by $\pi$ where $\mathcal{K}:=\mathcal{K}(\ell^{2}(W))$ is the space of compact operators on $\ell^{2}(W)$. For every $q=(q_{s})_{s\in S}\in\mathbb{R}_{>0}^{(W,S)}$, $s\in S$ the operator $T_{s}^{(q)}$ can be written as $T_{s}^{(q)}=T_{s}^{(1)}+p_{s}(q)P_{s}$ and the map $q_{s}\mapsto \frac{q_{s}-1}{\sqrt{q_{s}}}$ is injective on $\mathbb{R}_{>0}$. This implies that for all $q^{1},q^{2}\in\mathbb{R}_{>0}^{(W,S)}$ with $q_{s}^{1}\neq q_{s}^{2}$, $s\in S$ the C$^\ast$-subalgebra $\mathfrak{A}(W)$ of $\mathcal{B}(\ell^{2}(W))$ generated by $C_{r,q^{1}}^{\ast}(W)$ and $C_{r,q^{2}}^{\ast}(W)$ does not depend on the choice of different parameters $q^{1}$ and $q^{2}$. It is the smallest C$^\ast$-subalgebra of $\mathcal{B}(\ell^{2}(W))$ that contains all Hecke C$^\ast$-algebras of the system $(W,S)$ and there exists a natural isomorphism $\iota:\mathfrak{A}(W)\cong C(\overline{W})\rtimes_{r}W$ via $\iota(P_{\mathbf{w}})=\chi_{\mathcal{U}_{\mathbf{w}}}$, $\iota(T_{\mathbf{w}}^{(1)})=\lambda_{\mathbf{w}}$ for $\mathbf{w} \in W$. Here, $C(\overline{W})\rtimes_{r}W\subseteq\mathcal{B}(\ell^{2}(W)\otimes\ell^{2}(W))$ denotes the reduced crossed product associated with the canonical action $W\curvearrowright\overline{W}$. The isomorphism is being implemented by conjugation with the unitary $U\in\mathcal{B}(\ell^{2}(W) \otimes \ell^2(W))$ defined by $U(\delta_{\mathbf{v}}\otimes\delta_{\mathbf{w}}):=\delta_{\mathbf{v}}\otimes\delta_{\mathbf{w}\mathbf{v}}$ for $\mathbf{v},\mathbf{w}\in W$ and Proposition \ref{C*}. In the same way, there exists an isomorphism $\kappa: \pi (\mathfrak{A}(W)) \cong C(\partial W)\rtimes_{r}W$ with $\kappa\circ\pi(P_{\mathbf{w}})=\chi_{\mathcal{U}_{\mathbf{w}}\cap C(\partial W)}$, $\kappa\circ \pi (T_{\mathbf{w}}^{(1)})=\lambda_{\mathbf{w}}$ for $\mathbf{w}\in W$. Corollary \ref{Total amenability} and \cite[Theorem 4.3.4]{BrownOzawa} imply the following statement.

\begin{corollary} \label{nuclear}
Let $(W,S)$ be a finite rank Coxeter system. Then the C$^\ast$-algebras $\mathfrak{A}(W)$ and $\pi(\mathfrak{A}(W))$ are nuclear.
\end{corollary}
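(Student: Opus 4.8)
The plan is to recognize Corollary \ref{nuclear} as an immediate consequence of two facts already at our disposal: the identification of $\mathfrak{A}(W)$ and $\pi(\mathfrak{A}(W))$ with reduced crossed products, and the amenability of the underlying actions. Concretely, the discussion preceding the statement furnishes isomorphisms $\iota:\mathfrak{A}(W)\cong C(\overline{W})\rtimes_{r}W$ and $\kappa:\pi(\mathfrak{A}(W))\cong C(\partial W)\rtimes_{r}W$, so it suffices to prove that both of these reduced crossed products are nuclear.

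First I would note that the coefficient algebras $C(\overline{W})$ and $C(\partial W)$ are commutative, hence nuclear; this is the only structural input needed from the base spaces. Next, I would invoke Corollary \ref{Total amenability}, which guarantees that the canonical actions $W\curvearrowright\overline{(W,S)}$ and $W\curvearrowright\partial(W,S)$ are (topologically) amenable. With these two ingredients in hand, the conclusion follows from the standard permanence result \cite[Theorem 4.3.4]{BrownOzawa}: for an amenable action of a discrete group on a nuclear C$^\ast$-algebra, the reduced crossed product is again nuclear (and moreover coincides with the full crossed product). Applying this once with coefficient algebra $C(\overline{W})$ and once with $C(\partial W)$ yields the nuclearity of $\mathfrak{A}(W)$ and of $\pi(\mathfrak{A}(W))$, respectively.

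There is essentially no obstacle here beyond bookkeeping: all the genuine work has already been carried out, namely establishing the crossed product descriptions (via the unitary $U$ and Proposition \ref{C*}) and proving amenability of the actions (the content of Corollary \ref{Total amenability}, itself deduced from the identification with the Caprace-Lécureux combinatorial compactification in Theorem \ref{CombinatorialCompactification} together with \cite{Lecureux}). The only point one should double-check is that the cited form of \cite[Theorem 4.3.4]{BrownOzawa} indeed delivers nuclearity of the reduced crossed product from amenability of the action together with nuclearity of the coefficient algebra, which it does; since commutative C$^\ast$-algebras are automatically nuclear, no further hypothesis on $C(\overline{W})$ or $C(\partial W)$ is required.
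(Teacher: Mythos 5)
Your proof is correct and is exactly the paper's argument: the text preceding the corollary establishes the isomorphisms $\iota:\mathfrak{A}(W)\cong C(\overline{W})\rtimes_{r}W$ and $\kappa:\pi(\mathfrak{A}(W))\cong C(\partial W)\rtimes_{r}W$, and the paper then deduces nuclearity in one line from Corollary \ref{Total amenability} together with \cite[Theorem 4.3.4]{BrownOzawa}, just as you do. Your additional remark that commutative coefficient algebras are automatically nuclear is a harmless elaboration of the same appeal to that theorem.
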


The existence of the maps $\iota$ and $\kappa$ provides a direct link between the topological spaces $\overline{W}$, $\partial W$ (or their respective abelian C$^\ast$-algebras) and the Hecke C$^\ast$-algebras $C_{r,q}^{\ast}(W)$, $q\in\mathbb{R}_{>0}^{(W,S)}$. The aim of the following section is, among other things, to collect implications of the previous results that follow from this observation.

Let us first investigate, when the restriction of $\kappa\circ\pi$ to $C_{r,q}^{\ast}(W)$, $q\in \mathbb{R}_{>0}^{(W,S)}$ factors to an embedding of $C_{r,q}^{\ast}(W)$ into $C(\partial W)\rtimes_{r}W$. For this we will need the following inequality.

\begin{lemma} \label{inequality}
Let $(W,S)$ be a finite rank Coxeter system, $q=(q_{s})_{s\in S}\in\mathbb{R}_{>0}^{(W,S)}$ and $\xi\in\ell^{2}(W)$. Then, 
\begin{eqnarray}
\nonumber
\left(\prod_{i=1}^{n}\min\{ q_{s_{i}}^{1/2}, q_{s_{i}}^{-1/2}\} \right)\left\Vert \xi\right\Vert _{2}\leq\Vert T_{\mathbf{w}}^{(q)}\xi\Vert _{2}\leq\left(\prod_{i=1}^{n}\max\{ q_{s_{i}}^{1/2}, q_{s_{i}}^{-1/2}\} \right)\left\Vert \xi\right\Vert _{2}
\end{eqnarray}
for all $\mathbf{w}\in W$ where $\mathbf{w}=s_{1}...s_{n}$ is a reduced expression for $\mathbf{w}$.
\end{lemma}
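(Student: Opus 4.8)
The plan is to reduce the bound for a general $\mathbf{w}$ to a product of bounds for single generators, and then to diagonalize the generator operators explicitly. First I would note that since $\mathbf{w}=s_{1}\ldots s_{n}$ is reduced, repeated application of the first case of the multiplication rule \eqref{multiplication} (each left multiplication by $T_{s_i}^{(q)}$ strictly increases the length) gives the factorization
\[
T_{\mathbf{w}}^{(q)}=T_{s_{1}}^{(q)}T_{s_{2}}^{(q)}\cdots T_{s_{n}}^{(q)}.
\]
Thus it suffices to establish, for a single generator $s\in S$, the two-sided estimate
\[
\min\{q_{s}^{1/2},q_{s}^{-1/2}\}\,\|\xi\|_{2}\leq \|T_{s}^{(q)}\xi\|_{2}\leq \max\{q_{s}^{1/2},q_{s}^{-1/2}\}\,\|\xi\|_{2},
\]
and then to chain these inequalities $n$ times.

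The key step is to understand $T_{s}^{(q)}$ through its invariant subspaces. For fixed $s$, left multiplication by $s$ is a fixed-point-free involution of $W$, so $W$ partitions into two-element orbits $\{\mathbf{v},s\mathbf{v}\}$; writing $\mathbf{v}$ for the shorter element (so $|s\mathbf{v}|>|\mathbf{v}|$), each two-dimensional space $\text{Span}\{\delta_{\mathbf{v}},\delta_{s\mathbf{v}}\}$ is invariant under $T_{s}^{(q)}$, and these spaces decompose $\ell^{2}(W)$ orthogonally. Reading off the two cases of the action of $T_{s}^{(q)}$ on basis vectors, in the ordered basis $(\delta_{\mathbf{v}},\delta_{s\mathbf{v}})$ the operator acts by the self-adjoint matrix
\[
M=\begin{pmatrix} 0 & 1 \\ 1 & p_{s}(q)\end{pmatrix},\qquad p_{s}(q)=q_{s}^{-1/2}(q_{s}-1)=q_{s}^{1/2}-q_{s}^{-1/2},
\]
which is the \emph{same} matrix on every block. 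The characteristic polynomial is $\lambda^{2}-p_{s}(q)\lambda-1$, whose discriminant is $p_{s}(q)^{2}+4=(q_{s}^{1/2}+q_{s}^{-1/2})^{2}$, so the eigenvalues of $M$ are exactly $q_{s}^{1/2}$ and $-q_{s}^{-1/2}$. Hence the singular values of $T_{s}^{(q)}$ are $q_{s}^{1/2}$ and $q_{s}^{-1/2}$, uniformly over all blocks, which yields the single-generator estimate with constants $\max\{q_{s}^{1/2},q_{s}^{-1/2}\}$ and $\min\{q_{s}^{1/2},q_{s}^{-1/2}\}$.

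Finally I would assemble the two bounds. For the upper bound, applying the single-generator estimate successively to $T_{s_{1}}^{(q)},\ldots,T_{s_{n}}^{(q)}$ gives $\|T_{\mathbf{w}}^{(q)}\xi\|_{2}\leq\bigl(\prod_{i=1}^{n}\max\{q_{s_{i}}^{1/2},q_{s_{i}}^{-1/2}\}\bigr)\|\xi\|_{2}$; the lower bound follows identically, using the lower single-generator estimate at each step. Nothing beyond bookkeeping is involved here. I expect the main (though still routine) obstacle to be the block-decomposition argument: one must verify that the pairs $\{\mathbf{v},s\mathbf{v}\}$ genuinely partition $W$ and that $T_{s}^{(q)}$ preserves each block with the stated matrix. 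Both facts are immediate from $s^{2}=e$ and the two cases defining the action of $T_{s}^{(q)}$ on $\delta_{\mathbf{w}}$, after which everything reduces to the explicit $2\times 2$ eigenvalue computation above.
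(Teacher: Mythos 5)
Your proof is correct, but it takes a genuinely different route from the paper. The paper also reduces to a single generator and chains the estimates by induction, but its single-generator bound comes from the quadratic Hecke relation $(T_{s}^{(q)})^{2}=1+p_{s}(q)T_{s}^{(q)}$: writing $\Vert T_{s}^{(q)}\xi\Vert_{2}^{2}=\Vert\xi\Vert_{2}^{2}+p_{s}(q)\langle T_{s}^{(q)}\xi,\xi\rangle$, applying Cauchy--Schwarz with attention to the sign of $p_{s}(q)$ (hence a case split $q_{s}\leq 1$ versus $q_{s}\geq 1$), and solving the resulting quadratic inequality to get $\Vert\xi\Vert_{2}\leq\max\{q_{s}^{1/2},q_{s}^{-1/2}\}\Vert T_{s}^{(q)}\xi\Vert_{2}$; the upper bound is then simply quoted as the operator norm formula $\Vert T_{s}^{(q)}\Vert=\max\{q_{s}^{1/2},q_{s}^{-1/2}\}$. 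You instead decompose $\ell^{2}(W)$ into the two-dimensional invariant subspaces $\mathrm{Span}\{\delta_{\mathbf{v}},\delta_{s\mathbf{v}}\}$ coming from the fixed-point-free involution $\mathbf{v}\mapsto s\mathbf{v}$, observe that $T_{s}^{(q)}$ acts by the same self-adjoint $2\times 2$ matrix on every block, and compute its eigenvalues $q_{s}^{1/2}$ and $-q_{s}^{-1/2}$ explicitly. This buys more than the paper's argument: you obtain the exact spectrum of $T_{s}^{(q)}$, hence both the lower bound and the norm formula (which the paper uses without proof) in one stroke and with no case distinction; the cost is the extra bookkeeping of verifying the orbit decomposition, which you correctly reduce to $s^{2}=e$ and the fact that $|s\mathbf{v}|$ and $|\mathbf{v}|$ always differ. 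The paper's route is shorter on the page precisely because it imports the norm formula and works purely with the algebraic relation, never needing to exhibit invariant subspaces.
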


\begin{proof}
For $q=(q_{s})_{s\in S}\in\mathbb{R}_{>0}^{(W,S)}$, $\xi \in \ell^2(W)$ we have
\begin{eqnarray}
\nonumber
\Vert T_{s}^{\left(q\right)}\xi\Vert _{2}^{2} &=& \left\langle (T_{s}^{\left(q\right)})^{2} \xi \text{, } \xi \right\rangle\\
\nonumber
&=& \left\langle (1+p_{s}(q)T_{s}^{\left(q\right)}) \xi \text{, } \xi \right\rangle\\
\nonumber
&=& \left\Vert \xi\right\Vert _{2}^{2}+p_{s}(q)\left\langle T_{s}^{\left(q\right)}\xi \text{, }\xi \right\rangle
\end{eqnarray}
for every $s\in S$. If we assume that $0<q_{s}\leq1$ one gets $p_s(q) \leq 0$ and hence
\begin{eqnarray}
\nonumber
\left\Vert \xi\right\Vert _{2}^{2} &=& \Vert T_{s}^{\left(q\right)}\xi\Vert _{2}^{2}-p_{s}(q)\langle T_{s}^{\left(q\right)}\xi \text{, }\xi \rangle\\
\nonumber
& \leq& \Vert T_{s}^{\left(q\right)}\xi\Vert _{2}^{2}-p_{s}(q)\Vert T_{s}^{\left(q\right)}\xi\Vert _{2}\left\Vert \xi\right\Vert _{2}
\end{eqnarray}
and hence
\begin{eqnarray}
\nonumber
\left\Vert \xi\right\Vert _{2}^{2}+p_{s}(q)\Vert T_{s}^{\left(q\right)}\xi\Vert _{2}\left\Vert \xi\right\Vert _{2}-\Vert T_{s}^{\left(q\right)}\xi\Vert _{2}^{2}\leq0\text{.}
\end{eqnarray}
By solving the quadratic equation this implies $\left\Vert \xi\right\Vert _{2}\leq\frac{1}{\sqrt{q_{s}}}\Vert T_{s}^{(q)}\xi\Vert _{2}$. If $q_{s}\geq1$, one has in the same way $\left\Vert \xi\right\Vert _{2}\leq\sqrt{q_{s}}\Vert T_{s}^{(q)}\xi\Vert _{2}$. The left inequality then follows via induction. The right inequality is immediate from $\Vert T_{s}^{\left(q\right)}\Vert=\max\{ q_{s}^{1/2}, q_s^{-1/2}\}$.
\end{proof}

For a finite rank Coxeter system $(W,S)$ and $z\in\mathbb{C}^{(W,S)}$ define $z_{\mathbf{w}}:=z_{s_{1}}...z_{s_{n}}$ where $\mathbf{w}=s_{1}...s_{n}$ is a reduced expression for $\mathbf{w}\in W$. Again, this does not depend on the choice of the reduced expression. The growth series of $W$ is the power series in $z$ given by \begin{eqnarray} \nonumber W(z):=\sum_{\mathbf{w}\in W}z_{\mathbf{w}}\text{.} \end{eqnarray} We denote its region of convergence by $\mathcal{R}$ and set
\begin{eqnarray}
\nonumber
\mathcal{R}^{\prime}:=\left\{ (q_{s}^{\epsilon_{s}})_{s\in S}\mid q\in\mathcal{R}\cap\mathbb{R}_{>0}^{(W,S)},\epsilon\in\left\{ -1,1\right\} ^{(W,S)}\right\} \text{.}
\end{eqnarray}

For right-angled Coxeter systems the following statement is an immediate consequence of the results in \cite{Raum} (see also \cite{Gar}). It further follows that for those Coxeter groups the intersection $\mathcal{N}_{q}(W)\cap\mathcal{K}$ is exactly one or zero dimensional.

\begin{theorem} \label{compacts}
Let $(W,S)$ be a finite rank Coxeter system with $W$ being infinite. Further let $q\in\mathbb{R}_{>0}^{(W,S)}$. Then $\mathcal{N}_{q}(W)\cap\mathcal{K}\neq0$ if and only if $q\in\mathcal{R}^{\prime}$.
\end{theorem}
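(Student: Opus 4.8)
The plan is to detect $\mathcal{N}_q(W)\cap\mathcal{K}\neq0$ through finite rank projections and to translate their existence into the convergence of a growth series with signs, using that $\delta_e$ is a cyclic separating trace vector for $\mathcal{N}_q(W)$ and the norm estimate of Lemma \ref{inequality}. Throughout I would use that $T_{\mathbf{w}}^{(q),r}\delta_e=\delta_{\mathbf{w}}$ for reduced $\mathbf{w}$, that $\mathcal{N}_q(W)'=\mathcal{N}_q^r(W)$, and that Lemma \ref{inequality} holds verbatim for the right operators $T_{\mathbf{w}}^{(q),r}$ (its proof only uses the relation $(T_s^{(q)})^2=1+p_s(q)T_s^{(q)}$, which the right operators also satisfy).

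For the implication ``$q\in\mathcal{R}^{\prime}\Rightarrow\mathcal{N}_q(W)\cap\mathcal{K}\neq0$'' I would build a common eigenvector of the right operators. The one-dimensional $\ast$-representations of the Hecke algebra send $T_s^{(q)}$ to $q_s^{1/2}$ or $-q_s^{-1/2}$, so for $\epsilon\in\{-1,1\}^{(W,S)}$ I set $\mu_s:=q_s^{1/2}$ if $\epsilon_s=1$ and $\mu_s:=-q_s^{-1/2}$ if $\epsilon_s=-1$. Solving the recursion coming from $T_s^{(q),r}\xi=\mu_s\xi$ coefficientwise shows that $\xi_\epsilon:=\sum_{\mathbf{w}}\big(\prod_i\mu_{s_i}\big)\delta_{\mathbf{w}}$ (for $\mathbf{w}=s_1\ldots s_n$ reduced) is a well-defined common eigenvector of all $T_s^{(q),r}$; independence of the coefficient $\prod_i\mu_{s_i}$ of the chosen reduced word reduces to $\mu_s=\mu_t$ whenever $s,t$ are conjugate, which is exactly the constraint encoded in $\{-1,1\}^{(W,S)}$. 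Since $\lVert\xi_\epsilon\rVert_2^2=\sum_{\mathbf{w}}\prod_i q_{s_i}^{\epsilon_{s_i}}=:\sum_{\mathbf{w}}(q^{\epsilon})_{\mathbf{w}}$, and a choice of $\epsilon$ with $(q^{\epsilon})\in\mathcal{R}$ is possible precisely when $q\in\mathcal{R}^{\prime}$, such a choice places $\xi_\epsilon$ in $\ell^2(W)$. As the $\mu_s$ are real and each $T_s^{(q),r}$ is self-adjoint, $\mathbb{C}\xi_\epsilon$ is a reducing subspace for $\mathcal{N}_q^r(W)$, so the rank-one projection $P_{\xi_\epsilon}$ lies in $\mathcal{N}_q^r(W)'=\mathcal{N}_q(W)$ and is a nonzero compact operator.

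For the converse I would start from any $0\neq x\in\mathcal{N}_q(W)\cap\mathcal{K}$ and pass to a spectral projection of the positive compact operator $x^{\ast}x$ to obtain a nonzero finite rank projection $P\in\mathcal{N}_q(W)$; its range $V=P\ell^2(W)$ is finite dimensional, and since $P\in\mathcal{N}_q(W)$ it is invariant under $\mathcal{N}_q^r(W)$. As $\tau_q$ is faithful, $v:=P\delta_e\neq0$. The key computation is
\[
\dim V=\operatorname{Tr}(P)=\sum_{\mathbf{w}\in W}\lVert P\delta_{\mathbf{w}}\rVert_2^2=\sum_{\mathbf{w}\in W}\lVert T_{\mathbf{w}}^{(q),r}v\rVert_2^2,
\]
where the last step uses $\delta_{\mathbf{w}}=T_{\mathbf{w}}^{(q),r}\delta_e$ and the commutation $PT_{\mathbf{w}}^{(q),r}=T_{\mathbf{w}}^{(q),r}P$. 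Choosing $\epsilon^{\ast}\in\{-1,1\}^{(W,S)}$ with $q_s^{\epsilon^{\ast}_s}=\min\{q_s,q_s^{-1}\}$ (this respects conjugacy since $q_s=q_t$ for conjugate $s,t$), the lower bound in the right-handed version of Lemma \ref{inequality} gives $\lVert T_{\mathbf{w}}^{(q),r}v\rVert_2^2\geq(q^{\epsilon^{\ast}})_{\mathbf{w}}\lVert v\rVert_2^2$. Summing over $\mathbf{w}$ then yields $\sum_{\mathbf{w}}(q^{\epsilon^{\ast}})_{\mathbf{w}}\leq\dim V/\lVert v\rVert_2^2<\infty$, hence $(q^{\epsilon^{\ast}})\in\mathcal{R}$ and $q\in\mathcal{R}^{\prime}$.

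The conceptual steps (finite rank projection from a compact operator, invariance of its range under the commutant, separation by $\delta_e$) are routine, and in particular I would not need minimal projections or central supports. The only places requiring care are the two points where the combinatorics of reduced words enters: checking that $\xi_\epsilon$ is consistent under braid moves (sufficiency), and identifying the finite sum $\sum_{\mathbf{w}}\lVert T_{\mathbf{w}}^{(q),r}v\rVert_2^2$ with a growth series through the uniform lower bound of Lemma \ref{inequality} (necessity). I expect the main subtlety to be purely bookkeeping: verifying that $W(q^{\epsilon})=\lVert\xi_\epsilon\rVert_2^2$ and that $\epsilon^{\ast}$ lies in $\{-1,1\}^{(W,S)}$, both of which are immediate.
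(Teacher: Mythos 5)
Your proof is correct, and it reaches the statement by a more self-contained route than the paper's, although both rest on the same two pillars: Lemma \ref{inequality} and the interplay between finite rank projections and the commutation $\mathcal{N}_{q}(W)'=\mathcal{N}_{q}^{r}(W)$. For sufficiency the paper constructs nothing: it cites Garncarek's one-dimensional projection (\cite[Theorem 5.3]{Gar}) for $q\in\mathcal{R}\cap\mathbb{R}_{>0}^{(W,S)}$ and then passes to general $q\in\mathcal{R}'$ via the unitarily implemented isomorphism of \cite[Proposition 4.7]{Mario}. Your eigenvector $\xi_{\epsilon}$, with eigenvalues $\mu_{s}\in\{q_{s}^{1/2},-q_{s}^{-1/2}\}$ solving $\mu_{s}^{2}=1+p_{s}(q)\mu_{s}$, is exactly the object underlying both citations, but building it for an arbitrary admissible sign pattern $\epsilon$ treats all of $\mathcal{R}'$ in one stroke and removes the dependence on external results; the braid-move check you flag is the same conjugacy-invariance argument that makes $q_{\mathbf{w}}$ well defined, so it is indeed only bookkeeping. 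For necessity the paper first reduces to $0<q_{s}\leq1$ and $q\notin\mathcal{R}$ (again via \cite[Proposition 4.7]{Mario}), moves the projection into $\mathcal{N}_{q}^{r}(W)$ with the inversion unitary, and then runs a two-case dichotomy (a uniform lower bound on $q_{\mathbf{w}}^{-1}\|P\delta_{\mathbf{w}}\|_{2}^{2}$ versus a vanishing subsequence); your identity $\dim V=\operatorname{Tr}(P)=\sum_{\mathbf{w}}\|T_{\mathbf{w}}^{(q),r}P\delta_{e}\|_{2}^{2}$, combined with the lower bound of Lemma \ref{inequality} for the sign pattern $\epsilon^{\ast}$, collapses both the dichotomy and the parameter reduction into a single summation, which is genuinely cleaner (in effect you sum the paper's pointwise inequality over $\mathbf{w}$, which is what the dichotomy circumvents). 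Two minor remarks. First, with the paper's conventions one actually has $T_{\mathbf{w}}^{(q),r}\delta_{e}=\delta_{\mathbf{w}^{-1}}$ rather than $\delta_{\mathbf{w}}$ (e.g. $T_{s_{1}s_{2}}^{(q),r}\delta_{e}=T_{s_{1}}^{(q),r}\delta_{s_{2}}=\delta_{s_{2}s_{1}}$); since $\mathbf{w}\mapsto\mathbf{w}^{-1}$ is a length-preserving bijection of $W$ with $q_{\mathbf{w}}=q_{\mathbf{w}^{-1}}$, this only reindexes your sums and nothing breaks. Second, the right-handed version of Lemma \ref{inequality} is most quickly justified not by redoing its proof but from $T_{\mathbf{w}}^{(q),r}=U^{\ast}T_{\mathbf{w}}^{(q)}U$ with $U$ the unitary $\delta_{\mathbf{v}}\mapsto\delta_{\mathbf{v}^{-1}}$. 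Finally, like the paper you read ``$q^{\epsilon}\in\mathcal{R}$'' as convergence of the positive series $\sum_{\mathbf{w}}(q^{\epsilon})_{\mathbf{w}}$; this matches the paper's own operational use of the region of convergence, so no discrepancy arises there.
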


\begin{proof}
``$\ensuremath{\Rightarrow}$'': The construction of the one-dimensional projection in the proof of \cite[Theorem 5.3]{Gar} translates to the multi-parameter case (compare also with \cite[Lemma 19.2.5]{Davis} but note that our notational conventions differ slightly). It implies that for $q\in\mathcal{R}\cap\mathbb{R}_{>0}^{(W,S)}$ the intersection $\mathcal{N}_{q}(W)\cap\mathcal{K}$ is non-trivial. For general $q\in\mathcal{R}^{\prime}$ note that the isomorphism in \cite[Proposition 4.7]{Mario} is unitarily implemented and extends to the von Neumann algebraic level. Hence, the non-triviality of $\mathcal{N}_{q}(W)\cap\mathcal{K}$ follows from the above.

``$\ensuremath{\Leftarrow}$'': First let $q\in\mathbb{R}_{>0}^{(W,S)}\setminus\mathcal{R}$ with $0<q_{s}\leq1$ for every $s\in S$ and assume that $\mathcal{N}_{q}(W)\cap\mathcal{K}\neq0$. Then, $\mathcal{N}_{q}(W)\cap\mathcal{K}$ contains a non-zero positive operator and also its finite-rank spectral projections. Since $\mathcal{N}_{q}(W)\cong\mathcal{N}_{q}^{r}(W)$ via $T_{\mathbf{w}}^{(q)}\mapsto T_{\mathbf{w}}^{(q),r}=U^{\ast}T_{\mathbf{w}}^{(q)}U$ for $\mathbf{w}\in W$, where $U\in\mathcal{B}(\ell^{2}(W))$ is the unitary that maps an orthonormal basis element $\delta_{\mathbf{v}}$ to $\delta_{\mathbf{v}^{-1}}$, this implies that $\mathcal{N}_{q}^{r}(W)$ also contains a finite-rank projection which we denote by $P$. Since $P$ commutes with the elements in $\mathcal{N}_{q}(W)$, the Hilbert subspace $\mathcal{H}:=P\ell^{2}(W)$ is invariant under $\mathcal{N}_{q}(W)$. Let $(\xi_{i})_{i=1,...,n}$ be an orthonormal basis of $P\ell^{2}(W)$. Then, by Lemma \ref{inequality},
\begin{eqnarray}
\nonumber
\left\Vert P\delta_{e}\right\Vert _{2}^{2}\leq q_{\mathbf{w}}^{-1}\Vert T_{\mathbf{w}}^{\left(q\right)}P\delta_{e}\Vert _{2}^{2}=\sum_{i=1}^{n}q_{\mathbf{w}}^{-1}|\langle \xi_{i},T_{\mathbf{w}}^{\left(q\right)}P\delta_{e}\rangle |^{2}=\sum_{i=1}^{n}q_{\mathbf{w}}^{-1}\left|\left\langle \xi_{i},\delta_{\mathbf{w}}\right\rangle \right|^{2}
\end{eqnarray}
for every $\mathbf{w}\in W$. Let us distinguish two cases:\\

\begin{itemize}
\item \emph{Case 1:} Assume that there exists a constant $C>0$ such that for every $\mathbf{w}\in W$ there exists some $1\leq i\leq n$ with $q_{\mathbf{w}}^{-1}\left|\left\langle \xi_{i},\delta_{\mathbf{w}}\right\rangle \right|^{2}>C$. We get that
\begin{eqnarray}
\nonumber
\sum_{i=1}^{n}\left\Vert \xi_{i}\right\Vert _{2}^{2}=\sum_{i=1}^{n}\sum_{\mathbf{w}\in W}\left|\left\langle \xi_{i},\delta_{\mathbf{w}}\right\rangle \right|^{2}>C\sum_{\mathbf{w}\in W}q_{\mathbf{w}}\text{.}
\end{eqnarray}
But $q\in\mathbb{R}_{>0}^{(W,S)}\setminus\mathcal{R}$, so the sum on the right-hand side diverges in contradiction to $\sum_{i=1}^{n}\left\Vert \xi_{i}\right\Vert _{2}^{2}<\infty$. This implies that there exists no such constant $C$.
\item \emph{Case 2:} Assume that there exists a sequence $(\mathbf{w}_{j})_{j\in\mathbb{N}}\subseteq W$ with $q_{\mathbf{w}_{j}}^{-1}\left|\left\langle \xi_{i},\delta_{\mathbf{w}_{j}}\right\rangle \right|^{2}\rightarrow0$ for $1\leq i\leq n$. Then,
\begin{eqnarray}
\nonumber
\left\Vert P\delta_{e}\right\Vert _{2}^{2}\leq\sum_{i=1}^{n}q_{\mathbf{w}_{j}}^{-1}\left|\left\langle \xi_{i},\delta_{\mathbf{w}_{j}}\right\rangle \right|^{2}\rightarrow0\text{,}
\end{eqnarray}
i.e. $ P\delta_{e}=0$. But then $P=0$ since $P\delta_{\mathbf{w}}=PT_{\mathbf{w}}^{(q)}\delta_{e}=T_{\mathbf{w}}^{(q)}P\delta_{e}=0$ for every $\mathbf{w}\in W$. This is a contradiction to our assumption.
\end{itemize}

Since both cases lead to a contradiction, the intersection $\mathcal{N}_{q}(W)\cap\mathcal{K}$ must be trivial. Again, for general $q\in\mathbb{R}_{>0}^{(W,S)}\setminus\mathcal{R}^{\prime}$ the statement follows with \cite[Proposition 4.7]{Mario}.
\end{proof}

\begin{corollary} \label{embedding2}
Let $(W,S)$ be a finite rank Coxeter system. For $q\in\mathbb{R}_{>0}^{(W,S)}\setminus\mathcal{R}^{\prime}$ the map $\kappa\circ(\pi|_{\mathfrak{A}(W)}):\mathfrak{A}(W)\rightarrow C(\partial W)\rtimes_{r}W $ restricts to an embedding of $C_{r,q}^{\ast}(W)$ into $C(\partial W)\rtimes_{r}W$.
\end{corollary}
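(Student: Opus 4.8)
The plan is to exploit that $\kappa$ is a genuine $\ast$-isomorphism, so that the only source of collapse in the composition $\kappa\circ(\pi|_{\mathfrak{A}(W)})$ is the passage to the Calkin algebra. First I would record that, since $\kappa:\pi(\mathfrak{A}(W))\to C(\partial W)\rtimes_{r}W$ is injective, the kernel of $\kappa\circ(\pi|_{\mathfrak{A}(W)})$ coincides with the kernel of $\pi|_{\mathfrak{A}(W)}$, which is precisely $\mathfrak{A}(W)\cap\mathcal{K}$.

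Next I would restrict this $\ast$-homomorphism to the Hecke C$^\ast$-algebra. By construction $\mathfrak{A}(W)$ contains all Hecke C$^\ast$-algebras of the system, so $C_{r,q}^{\ast}(W)\subseteq\mathfrak{A}(W)$, and therefore the kernel of the restriction $\kappa\circ(\pi|_{C_{r,q}^{\ast}(W)})$ equals $C_{r,q}^{\ast}(W)\cap\mathfrak{A}(W)\cap\mathcal{K}=C_{r,q}^{\ast}(W)\cap\mathcal{K}$.

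The decisive step is then to show that this intersection vanishes for $q\in\mathbb{R}_{>0}^{(W,S)}\setminus\mathcal{R}^{\prime}$. Here I would use $C_{r,q}^{\ast}(W)\subseteq\mathcal{N}_{q}(W)=(C_{r,q}^{\ast}(W))^{\prime\prime}$, which gives the containment $C_{r,q}^{\ast}(W)\cap\mathcal{K}\subseteq\mathcal{N}_{q}(W)\cap\mathcal{K}$. By Theorem \ref{compacts} the right-hand side is $0$ precisely because $q\notin\mathcal{R}^{\prime}$. Hence the restriction of $\kappa\circ\pi$ to $C_{r,q}^{\ast}(W)$ has trivial kernel, so it is an injective $\ast$-homomorphism between C$^\ast$-algebras, thus automatically isometric, and therefore an embedding of $C_{r,q}^{\ast}(W)$ into $C(\partial W)\rtimes_{r}W$.

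The substantive content is entirely packaged in Theorem \ref{compacts}; once that characterization of $\mathcal{N}_{q}(W)\cap\mathcal{K}$ is available the argument is purely formal. The only points requiring care are the direction of the containment $C_{r,q}^{\ast}(W)\cap\mathcal{K}\subseteq\mathcal{N}_{q}(W)\cap\mathcal{K}$ and the clean identification of the kernel of $\kappa\circ\pi|_{\mathfrak{A}(W)}$ with $\mathfrak{A}(W)\cap\mathcal{K}$, which rests on $\kappa$ being an isomorphism rather than merely a homomorphism.
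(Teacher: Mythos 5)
Your proof is correct and is precisely the argument the paper intends: the corollary is stated as an immediate consequence of Theorem \ref{compacts}, via the observation that $\ker(\pi|_{C_{r,q}^{\ast}(W)})=C_{r,q}^{\ast}(W)\cap\mathcal{K}\subseteq\mathcal{N}_{q}(W)\cap\mathcal{K}=0$ for $q\notin\mathcal{R}^{\prime}$, combined with the isomorphism $\kappa$ and automatic isometry of injective $\ast$-homomorphisms. Your write-up simply makes these steps explicit, so there is nothing to correct.
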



\subsection{The Akemann-Ostrand property for Hecke-von Neumann algebras} \label{Akemann}

In this subsection we will make use of a method used by Higson and Guentner (see \cite{HiGu}) in the context of word hyperbolic groups, to show the (strong) Akemann-Ostrand property for certain Hecke-von Neumann algebras. The same approach has been made in \cite[Section 5]{Caspers}. However, the proof presented there contains a gap since for general word hyperbolic Coxeter systems $(W,S)$ the space $\partial(W,S)$ does not identify with the hyperbolic boundary $\partial_{h}W$. We correct it in the case of Coxeter groups which are small at infinity.

\begin{definition} [{\cite[Definition 2.6]{Isono}}] \label{Definition}
Let $\mathcal{M}$ be a von Neumann algebra and $(\mathcal{M},\mathcal{H},J,\mathfrak{P})$ a standard form for $\mathcal{M}$. We say that $\mathcal{M}$ satisfies the \emph{strong Akemann-Ostrand condition} (\emph{strong condition $(\mathcal{A}O)$}) if there exist unital C$^\ast$-subalgebras $A\subseteq\mathcal{M}$, $\mathcal{C}\subseteq\mathcal{B}(\mathcal{H})$ such that:

\begin{enumerate}[label=(\arabic*)]
\item $A$ is exact and $\sigma$-weakly dense in $\mathcal{M}$;
\item $\mathcal{C}$ is nuclear and contains $A$;
\item  The set of commutators $\left[\mathcal{C},JAJ\right]:=\left\{ \left[c,JaJ\right]\mid c\in\mathcal{C},a\in A\right\}$  is contained in the compact operators $\mathcal{K}$.
\end{enumerate}
\end{definition}

\begin{theorem} \label{AO}
Let $(W,S)$ be a finite rank Coxeter system that is small at infinity and let $q\in\mathbb{R}_{>0}^{(W,S)}$. Then the Hecke-von Neumann algebra $\mathcal{N}_{q}(W)\subseteq\mathcal{B}(\ell^{2}(W))$ satisfies the strong condition $\left(\mathcal{AO}\right)$.
\end{theorem}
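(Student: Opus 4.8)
The plan is to verify Isono's strong condition $(\mathcal{AO})$ directly from Definition \ref{Definition}, working in the standard form $(\mathcal{N}_q(W),\ell^2(W),J,\mathfrak{P})$ attached to the faithful trace $\tau_q$. Since $\tau_q$ is tracial with cyclic and separating vector $\delta_e$ and $T_{\mathbf{w}}^{(q)}\delta_e=\delta_{\mathbf{w}}$, the modular conjugation is the antiunitary extension of $J\delta_{\mathbf{w}}=\delta_{\mathbf{w}^{-1}}$. As the candidate subalgebras I would take $A:=C_{r,q}^{\ast}(W)$ and $\mathcal{C}:=\mathfrak{A}(W)$, the C$^\ast$-algebra generated by all Hecke C$^\ast$-algebras of $(W,S)$. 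Conditions (1) and (2) are then immediate: $A$ is $\sigma$-weakly dense in $\mathcal{N}_q(W)=(C_{r,q}^{\ast}(W))''$ by definition, $\mathcal{C}$ is nuclear by Corollary \ref{nuclear} and contains $A$, and $A$ is exact, being a C$^\ast$-subalgebra of the nuclear algebra $\mathcal{C}$. The whole content therefore lies in condition (3), the compactness of all commutators in $[\mathcal{C},JAJ]$.

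For (3) I would first identify $JAJ$. A short computation on basis vectors, using $J\delta_{\mathbf{w}}=\delta_{\mathbf{w}^{-1}}$ and $p_t(q)\in\mathbb{R}$, gives $JT_t^{(q)}J=T_t^{(q),r}=\rho_t+p_t(q)P_t^{r}$, where $\rho_t$ is the right translation $\delta_{\mathbf{w}}\mapsto\delta_{\mathbf{w}t}$; hence $JAJ=C_{r,q}^{\ast,r}(W)$ is generated by the $T_t^{(q),r}$, $t\in S$. Since $\{x\in\mathcal{B}(\ell^2(W)):[x,y]\in\mathcal{K}\text{ for all }y\in JAJ\}$ is a C$^\ast$-algebra, and for each fixed $x$ the set $\{y:[x,y]\in\mathcal{K}\}$ is a C$^\ast$-algebra as well, a double application of this remark reduces the problem to checking that every generator of $\mathcal{C}$ commutes modulo $\mathcal{K}$ with every generator $T_t^{(q),r}$. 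Recalling from the discussion preceding Corollary \ref{nuclear} that $\mathcal{C}=\mathfrak{A}(W)$ is generated by the left translations $\lambda_s=T_s^{(1)}$ and the diagonal projections $P_s$ ($s\in S$), condition (3) splits into the four commutation statements, modulo $\mathcal{K}$, of $\{\lambda_s,P_s\}$ against $\{\rho_t,P_t^{r}\}$.

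Two of these are exact identities needing no hypothesis: left and right translations commute, $[\lambda_s,\rho_t]=0$, and the two families of diagonal multiplication operators commute, $[P_s,P_t^{r}]=0$. The remaining two mixed commutators are where smallness at infinity enters, and this is the hard part. Following the Higson--Guentner method, I would view $P_s=M_f$ as multiplication by $f=\chi_{\mathcal{U}_s}|_{W}\in\ell^{\infty}(W)$, which by Proposition \ref{C*} is the restriction of a continuous function on $\overline{W}$, and compute $[\rho_t,M_f]\delta_{\mathbf{v}}=(f(\mathbf{v})-f(\mathbf{v}t))\delta_{\mathbf{v}t}$. As $\{\delta_{\mathbf{v}t}\}_{\mathbf{v}}$ is orthonormal, this operator is compact as soon as $f(\mathbf{v})-f(\mathbf{v}t)$ vanishes at infinity; and that is exactly what smallness at infinity supplies, since for any sequence $\mathbf{v}_i\to z\in\partial(W,S)$ one has $\mathbf{v}_i t\to z$ by Definition \ref{infty}, forcing $f(\mathbf{v}_i)-f(\mathbf{v}_i t)\to f(z)-f(z)=0$, while compactness of $\overline{W}$ upgrades this to $f(\cdot)-f(\cdot\,t)\in c_0(W)$.

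The symmetric commutator $[\lambda_s,P_t^{r}]\in\mathcal{K}$ is handled in the same way: $P_t^{r}$ is multiplication by the restriction of a continuous function on the right-handed compactification of $(W,S)$ built from $\leq_L$, and this compactification is again small at infinity. Indeed, by Theorem \ref{infinity} smallness at infinity is equivalent to the finiteness of the reflection centralizers $C_W(s)$, a condition symmetric in the orders $\leq_L$ and $\leq_R$; alternatively one conjugates by the unitary $\delta_{\mathbf{w}}\mapsto\delta_{\mathbf{w}^{-1}}$, which interchanges $\lambda$ with $\rho$ and the two diagonal algebras, transporting the left-handed statement to the right-handed one. Assembling the four cases and invoking the C$^\ast$-algebra closure argument yields $[\mathcal{C},JAJ]\subseteq\mathcal{K}$, which is condition (3); together with (1) and (2) this shows that $\mathcal{N}_q(W)$ satisfies the strong condition $(\mathcal{AO})$. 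I expect the only genuine obstacle to be the careful extraction of compactness from smallness at infinity in the two mixed commutators, the rest being bookkeeping.
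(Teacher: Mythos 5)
Your proposal is correct and takes essentially the same route as the paper: the same choices $A=C_{r,q}^{\ast}(W)$ and $\mathcal{C}=\mathfrak{A}(W)$, the identification $JAJ=C_{r,q}^{\ast,r}(W)$, reduction to commutators of generators, and smallness at infinity supplying compactness of the two mixed commutators. The only differences are matters of citation versus direct verification: where the paper invokes \cite[Lemma 5.3.17]{BrownOzawa} for the mixed commutators and \cite[Theorem 6.1]{Mario} for exactness of $A$, you prove the weighted-shift compactness estimate by hand (together with the symmetry/conjugation argument for $[T_{s}^{(1)},P_{t}^{r}]$) and deduce exactness of $A$ from its inclusion in the nuclear algebra $\mathcal{C}$.
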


\begin{proof}
Set $A:=C_{r,q}^{\ast}(W)$ and $\mathcal{C}:=\mathfrak{A}(W)$. Property (1) of Definition \ref{Definition} follows from \cite[Theorem 6.1]{Mario}. Further, the nuclearity of $\mathcal{C}$ is clear by Corollary \ref{nuclear} (or follows from Corollary \ref{hyperbolic 2} and Theorem \ref{hyperbolic 3}). It remains to show that $\left[C,JAJ\right]\subseteq\mathcal{K}$ where $JAJ=C_{r,q}^{\ast,r}(W)$. Note that $\mathfrak{A}(W)$ is the unital C$^\ast$-subalgebra of $\mathcal{B}(\ell^{2}(W))$ generated by all operators $T_{s}^{(1)}$, $s\in S$ and $P_{\mathbf{w}}$, $\mathbf{w}\in W$. One can further write $T_{s}^{(q),r}=T_{s}^{(1),r}+p_{s}(q)P_{s}^{r}$ for all $s\in S$. We assumed $W$ to be small at infinity, so by \cite[Lemma 5.3.17]{BrownOzawa}
\begin{eqnarray}
\nonumber
\left[T_{s}^{(1)},P_{\mathbf{w}}^{r}\right]\in\mathcal{K} \text{, }\left[P_{\mathbf{w}},T_{s}^{(1),r}\right]\in\mathcal{K}
\end{eqnarray}
for $s\in S$, $\mathbf{w}\in W$ and further
\begin{eqnarray}
\nonumber
\left[T_{s}^{(1)},T_{t}^{(1),r}\right]=\left[P_{\mathbf{v}},P_{\mathbf{w}}^{r}\right]=0
\end{eqnarray}
for all $s,t\in S$ and $\mathbf{v},\mathbf{w}\in W$. Therefore, $\left[C,JAJ\right]\subseteq\mathcal{K}$.
\end{proof}

\begin{remark}
Theorem \ref{AO} implies in combination with \cite[Remark 2.7]{Isono} that Hecke-von Neumann algebras of Coxeter systems that are small at infinity satisfy Ozawa's property $(\mathcal{AO})$ (see \cite{Ozawa}) and Isono's property $(\mathcal{AO})^{+}$ (see \cite{Isono2}). Hence, we get from \cite[Theorem 6]{Ozawa} that these von Neumann algebras are solid, meaning that the relative commutant of any diffuse von Neumann subalgebra is injective. Further, if the Hecke-von Neumann algebra is a $\text{II}_{1}$-factor satisfying the weak-$\ast$ completely bounded approximation property, \cite[Theorem A]{Isono2} implies that it is strongly solid. The results in \cite{Isono2} rely on \cite{PopaVaes} and \cite{OzawaPopa}.
\end{remark}

Garncarek observed in \cite[Section 6]{Gar} that the interpolated free group factors $\mathcal{L}(\mathbb{F}_{t})$, $t\in\mathbb{R}_{>1}$ (cf. \cite{Dykema}, \cite{Radulescu}) can be realized as Hecke-von Neumann algebras of free products of finite right-angled Coxeter groups. For instance, for the Coxeter group $W:=(\mathbb{Z}_{2})^{\ast l}$ with $l\geq3$ one has $\mathcal{N}_{q}(W)\cong\mathcal{L}(\mathbb{F}_{2lq(1+q)^{-2}})$ for all $q\in\left[(l-1)^{-1},1\right]$. The interpolated free group factors are closely related to the famous free factor problem. By \cite{Dykema}, \cite{Radulescu}, they are either all isomorphic to each other or they are all non-isomorphic. Ozawa and Popa showed in \cite{OzawaPopa} that the interpolated free group factors are strongly solid which strengthens earlier indecomposability results by Voiculescu \cite{Voiculescu} and Ozawa \cite{Ozawa}.

The following corollary is an immediate consequence of Proposition \ref{free}, Theorem \ref{AO} and the discussion above. The statement is known to experts.

\begin{corollary}
For every $t\in\mathbb{R}_{>1}$ the interpolated free group factors $\mathcal{L}(\mathbb{F}_{t})$ satisfies the strong condition $\left(\mathcal{AO}\right)$.
\end{corollary}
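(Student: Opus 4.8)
The plan is to realize every interpolated free group factor $\mathcal{L}(\mathbb{F}_{t})$, $t>1$, as a Hecke-von Neumann algebra $\mathcal{N}_{q}(W)$ of a free product $W$ of finite Coxeter groups and then invoke Proposition \ref{free} together with Theorem \ref{AO}. Concretely, for $W=(\mathbb{Z}_{2})^{\ast l}$ the generators lie in distinct free factors and are pairwise non-conjugate, so by Garncarek's computation (\cite[Section 6]{Gar}, based on Dykema's free product theory) one has $\mathcal{N}_{q}(W)\cong\mathcal{L}(\mathbb{F}_{r})$ with free dimension $r=\sum_{s\in S}\tfrac{2q_{s}}{(1+q_{s})^{2}}$; in the single-parameter case this recovers the quoted formula $r=2lq(1+q)^{-2}$. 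Since $(\mathbb{Z}_{2})^{\ast l}$ is visibly a free product of finite Coxeter groups, Proposition \ref{free} shows it is small at infinity, and Theorem \ref{AO} then yields that $\mathcal{N}_{q}(W)$ satisfies the strong condition $(\mathcal{AO})$. Because the strong condition $(\mathcal{AO})$ depends only on the isomorphism class of the von Neumann algebra (the standard form being unique up to isomorphism, and $\mathcal{K}$ being carried onto $\mathcal{K}$), this property transfers to $\mathcal{L}(\mathbb{F}_{r})$.

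It then remains to determine which parameters are attained. First I would analyse the single-parameter formula: as $l\geq3$ and $q\in[(l-1)^{-1},1]$ vary, the value $2lq(1+q)^{-2}$ is increasing in $q$ on $(0,1]$ and ranges between $2(l-1)/l$ (at $q=(l-1)^{-1}$) and $l/2$ (at $q=1$). These intervals overlap and their union over all $l$ is exactly $[4/3,\infty)$. Hence $\mathcal{L}(\mathbb{F}_{t})$ satisfies the strong condition $(\mathcal{AO})$ for every $t\geq4/3$; in particular $\mathcal{L}(\mathbb{F}_{3/2})\cong\mathcal{N}_{1}((\mathbb{Z}_{2})^{\ast3})$ does.

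To cover the remaining range $t\in(1,4/3)$ I would invoke the dichotomy of \cite{Dykema} and \cite{Radulescu}: the factors $\mathcal{L}(\mathbb{F}_{t})$, $t>1$, are either all isomorphic or all pairwise non-isomorphic. In the first case the single example $\mathcal{L}(\mathbb{F}_{3/2})$ together with the isomorphism invariance of the strong condition $(\mathcal{AO})$ finishes the proof at once. In the second case I would realize each $t\in(1,4/3)$ directly: taking $W=(\mathbb{Z}_{2})^{\ast3}$ with a genuine multi-parameter $(q_{1},q_{2},q_{3})$ (legitimate since the three generators are non-conjugate), the free dimension $\sum_{i}2q_{i}(1+q_{i})^{-2}$ has each summand ranging over $(0,\tfrac12]$ and therefore takes every value in $(0,\tfrac32]$, covering $(1,4/3)$; factoriality in this regime, ensuring the algebra is genuinely $\mathcal{L}(\mathbb{F}_{t})$ rather than the hyperfinite factor, is guaranteed by Raum-Skalski \cite{Raum}. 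Applying Proposition \ref{free} and Theorem \ref{AO} as before completes this case.

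The main obstacle is exactly this final range $t\in(1,4/3)$, which the single-parameter realization of $(\mathbb{Z}_{2})^{\ast l}$ cannot reach, since each $\mathbb{Z}_{2}$-factor contributes free dimension at most $\tfrac12$ and two factors already yield the amenable group $\mathbf{D}_{\infty}$. Verifying that the multi-parameter deformation of $(\mathbb{Z}_{2})^{\ast3}$ remains factorial throughout $(1,4/3)$ — or, as an alternative route, establishing that the strong condition $(\mathcal{AO})$ is stable under the amplifications relating $\mathcal{L}(\mathbb{F}_{3/2})$ to $\mathcal{L}(\mathbb{F}_{t})$ for $t<3/2$ — is the one point requiring genuine care beyond the formal application of Proposition \ref{free} and Theorem \ref{AO}.
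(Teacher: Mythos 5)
Your overall strategy coincides with the paper's: its proof of this corollary is exactly the combination of Garncarek's observation (cited as \cite[Section 6]{Gar}) that every $\mathcal{L}(\mathbb{F}_{t})$, $t>1$, arises as $\mathcal{N}_{q}(W)$ for a free product of finite right-angled Coxeter groups, with Proposition \ref{free} and Theorem \ref{AO}; the isomorphism invariance of the strong condition $(\mathcal{AO})$ via uniqueness of the standard form, which you spell out, is used there implicitly. Where you go beyond the paper --- trying to verify the realization yourself rather than citing it --- your single-parameter analysis is correct, and it usefully makes explicit that the family $(\mathbb{Z}_{2})^{\ast l}$, $q\in[(l-1)^{-1},1]$, quoted in the paper's discussion only reaches $t\geq 4/3$. (A small attribution slip: the multi-parameter formula $r=\sum_{s}2q_{s}(1+q_{s})^{-2}$ is not in \cite{Gar}, which is a single-parameter paper; it follows from Dykema's theory, cf.\ \cite{Raum}.)

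There is, however, a genuine flaw in your treatment of $t\in(1,4/3)$: factoriality is \emph{not} ``guaranteed by Raum--Skalski in this regime''. Whether $\mathcal{N}_{q}((\mathbb{Z}_{2})^{\ast3})$ is a factor depends on the parameter $q$ itself, not on the value of the free-dimension sum. Indeed, by Theorem \ref{compacts} (equivalently \cite{Gar}, \cite{Raum}), for $q\in\mathcal{R}^{\prime}$ the algebra has a one-dimensional direct summand; taking equal parameters $q_{1}=q_{2}=q_{3}=q<1/2$ gives $\sum_{i}2q_{i}(1+q_{i})^{-2}=6q(1+q)^{-2}\in(1,4/3)$ while $q\in\mathcal{R}^{\prime}$, so the algebra is \emph{not} $\mathcal{L}(\mathbb{F}_{t})$. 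The repair is to exhibit, for each $t\in(1,4/3)$, an admissible parameter: take $q=(q_{1},1,1)$ with $q_{1}\in(0,1]$. Then $q\notin\mathcal{R}^{\prime}$, since the growth series of $(\mathbb{Z}_{2})^{\ast3}$ diverges at $(q_{1}^{\pm1},1,1)$ (one has $\tfrac{z_{1}}{1+z_{1}}+\tfrac12+\tfrac12>1$ there), and more concretely
\begin{equation*}
\mathcal{N}_{q}((\mathbb{Z}_{2})^{\ast3})\cong\mathcal{N}_{q_{1}}(\mathbb{Z}_{2})\ast\mathcal{L}(\mathbf{D}_{\infty})\text{,}
\end{equation*}
a tracial free product of $\mathbb{C}\oplus\mathbb{C}$ (weights $\tfrac{1}{1+q_{1}},\tfrac{q_{1}}{1+q_{1}}$) with a diffuse hyperfinite algebra, which by \cite{Dykema} equals $\mathcal{L}(\mathbb{F}_{r})$ with $r=1+2q_{1}(1+q_{1})^{-2}$; as $q_{1}$ runs through $(0,1]$ this sweeps out $(1,3/2]$ and closes the gap you flagged. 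With this substitution your argument is complete; note also that the Dykema--Radulescu dichotomy then becomes superfluous, since its second alternative requires exactly this construction anyway.
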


In the context of group algebras, property $(\mathcal{AO})$ has a number of interesting applications (see for instance \cite{Delaroche}). In particular, it relates to Connes's notion of fullness, introduced in \cite{Connes}. Recall that a factor $\mathcal{M}$ is said to be \emph{full} if for every bounded net $(x_{i})_{i\in I}\subseteq\mathcal{M}$ with $\lim_{i}\left\Vert \varphi(x_{i}\cdot)-\varphi(\cdot x_{i})\right\Vert =0$ for all $\varphi\in\mathcal{M}_{\ast}$, there exists a bounded net $(z_{i})_{i\in I}\subseteq\mathbb{C}$ with $x_{i}-z_{i}\rightarrow0$ in the strong operator topology. In the case of type $\text{II}_{1}$-factors this definition is equivalent to $\mathcal{M}$ not having Murray and von Neumann's \emph{property Gamma} (see \cite{Neumann}). In \cite{Connes2} Connes proved that a $\text{II}_{1}$-factor $\mathcal{M}$ is full if and only if $C^{\ast}(\mathcal{M},\mathcal{M}^{\prime})\cap\mathcal{K}(L^{2}(\mathcal{M}))\neq0$ where $L^{2}(\mathcal{M})$ denotes the GNS-space associated with the tracial state of $\mathcal{M}$.

Compare the following proposition with the results in \cite{Skandalis} and \cite{Delaroche}. The proof is close to \cite[Proposition 6.19]{Delaroche}.

\begin{proposition} \label{II1}
Let $(W,S)$ be a finite rank non-amenable Coxeter system which is small at infinity and let $q\in\mathbb{R}_{>0}^{(W,S)}$. Then, 
\begin{eqnarray}
\nonumber
C^{\ast}\left(C_{r,q}^{\ast}(W),C_{r,q}^{\ast,r}(W)\right)\cap\mathcal{K}\neq0\text{.}
\end{eqnarray}
If the corresponding Hecke-von Neumann algebra is a $\text{II}_{1}$-factor, then $\mathcal{N}_{q}(W)$ is full and
\begin{eqnarray}
\nonumber
C_{r,q}^{\ast}(W)\otimes C_{r,q}^{\ast,r}(W)\cong\pi\left(C^{\ast}\left(C_{r,q}^{\ast}(W),C_{r,q}^{\ast,r}(W)\right)\right)\text{.}
\end{eqnarray}
\end{proposition}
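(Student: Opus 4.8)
The plan is to reduce everything to the strong condition $(\mathcal{AO})$ from Theorem \ref{AO} and Connes's characterisation of fullness recalled above. Since $C_{r,q}^{\ast,r}(W)\subseteq\mathcal{N}_{q}^{r}(W)=\mathcal{N}_{q}(W)^{\prime}$ commutes with $C_{r,q}^{\ast}(W)$, the multiplication $a\odot b\mapsto ab$ is a $\ast$-homomorphism on the algebraic tensor product. The Higson--Guentner reformulation of property $(\mathcal{AO})$ (which follows from Definition \ref{Definition} with $A=C_{r,q}^{\ast}(W)$, $\mathcal{C}=\mathfrak{A}(W)$: one factors through $\mathcal{C}\otimes C_{r,q}^{\ast,r}(W)$, using that $\mathcal{C}$ is nuclear and that the commutators land in $\mathcal{K}$, and restricts along the exact $A\hookrightarrow\mathcal{C}$) says precisely that composing with $\pi$ yields a continuous $\ast$-homomorphism
\[
\nu:C_{r,q}^{\ast}(W)\otimes C_{r,q}^{\ast,r}(W)\rightarrow\mathcal{B}(\ell^{2}(W))/\mathcal{K},\qquad a\otimes b\mapsto\pi(ab)
\]
on the minimal tensor product. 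Its image is the $\ast$-algebra generated by $\pi(C_{r,q}^{\ast}(W))$ and $\pi(C_{r,q}^{\ast,r}(W))$, i.e. exactly $\pi(C^{\ast}(C_{r,q}^{\ast}(W),C_{r,q}^{\ast,r}(W)))$, so $\nu$ is automatically surjective onto the right-hand side of the asserted isomorphism.

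For the first assertion I would argue by contradiction. If $C^{\ast}(C_{r,q}^{\ast}(W),C_{r,q}^{\ast,r}(W))\cap\mathcal{K}=0$, then $\pi$ is isometric on $C^{\ast}(C_{r,q}^{\ast}(W),C_{r,q}^{\ast,r}(W))$, whence $\Vert ab\Vert=\Vert\pi(ab)\Vert=\Vert\nu(a\otimes b)\Vert\leq\Vert a\otimes b\Vert_{\min}$; thus the multiplication map $C_{r,q}^{\ast}(W)\otimes C_{r,q}^{\ast,r}(W)\rightarrow\mathcal{B}(\ell^{2}(W))$ is itself continuous on the minimal tensor product. Passing to weak closures via Kaplansky density makes the pair $(\mathcal{N}_{q}(W),\mathcal{N}_{q}(W)^{\prime})$ binormal, i.e. $\mathcal{N}_{q}(W)$ semidiscrete, hence injective. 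But by Theorem \ref{hyperbolic 3} the hypotheses force $W$ to be word hyperbolic and non-amenable, and a non-amenable Coxeter group has non-injective Hecke--von Neumann algebra (injectivity of $\mathcal{N}_{q}(W)$ would make $C_{r,q}^{\ast}(W)$ nuclear and hence $W$ amenable, contradicting the classification of amenable Coxeter groups recalled in Section \ref{section 1}). This contradiction gives $C^{\ast}(C_{r,q}^{\ast}(W),C_{r,q}^{\ast,r}(W))\cap\mathcal{K}\neq0$. Since this algebra sits inside $C^{\ast}(\mathcal{N}_{q}(W),\mathcal{N}_{q}(W)^{\prime})$ and $\ell^{2}(W)$ is the standard form of $\mathcal{N}_{q}(W)$ (with cyclic trace vector $\delta_{e}$), Connes's criterion \cite{Connes2} then shows $\mathcal{N}_{q}(W)$ is full whenever it is a $\text{II}_{1}$-factor.

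It remains to prove injectivity of $\nu$, which yields the isomorphism $C_{r,q}^{\ast}(W)\otimes C_{r,q}^{\ast,r}(W)\cong\pi(C^{\ast}(C_{r,q}^{\ast}(W),C_{r,q}^{\ast,r}(W)))$. Here I would exhibit a faithful witnessing state. The minimal tensor product carries the product trace $\tau_{q}\otimes\tau_{q}^{r}$, which is faithful because $\tau_{q}$ and $\tau_{q}^{r}$ are, so it suffices to produce a state $\omega$ on $\pi(C^{\ast}(C_{r,q}^{\ast}(W),C_{r,q}^{\ast,r}(W)))$ with $\omega\circ\nu=\tau_{q}\otimes\tau_{q}^{r}$. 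I would set $\omega(\pi(x)):=\lim_{n\to\mathcal{U}}\langle x\delta_{\mathbf{w}_{n}},\delta_{\mathbf{w}_{n}}\rangle$ for a free ultrafilter $\mathcal{U}$ and a geodesic sequence $\mathbf{w}_{n}\rightarrow\infty$; since $\langle k\delta_{\mathbf{w}_{n}},\delta_{\mathbf{w}_{n}}\rangle\to0$ for every $k\in\mathcal{K}$, this descends to the quotient. The crux is the identity $\omega(\nu(a\otimes b))=\tau_{q}(a)\tau_{q}^{r}(b)$, which I would check on generators $a=T_{\mathbf{v}}^{(q)}$, $b=T_{\mathbf{u}}^{(q),r}$: for a sufficiently generic ray the left factor only modifies an initial and the right factor only a terminal segment of $\mathbf{w}_{n}$, so the leading contribution of $T_{\mathbf{v}}^{(q)}T_{\mathbf{u}}^{(q),r}\delta_{\mathbf{w}_{n}}$ is a multiple of $\delta_{\mathbf{v}\mathbf{w}_{n}\mathbf{u}}$, giving $\langle T_{\mathbf{v}}^{(q)}T_{\mathbf{u}}^{(q),r}\delta_{\mathbf{w}_{n}},\delta_{\mathbf{w}_{n}}\rangle\to 0$ unless $\mathbf{v}=\mathbf{u}=e$, in agreement with $\tau_{q}(T_{\mathbf{v}}^{(q)})\tau_{q}^{r}(T_{\mathbf{u}}^{(q),r})$. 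Continuity then extends this to all of the tensor product, and faithfulness of $\tau_{q}\otimes\tau_{q}^{r}$ forces $\ker\nu=0$.

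The main obstacle I expect is precisely this last limit computation. One must choose the geodesic $\mathbf{w}_{n}$ so that, simultaneously for the finitely many group elements appearing in a fixed element $\sum_{i}a_{i}\otimes b_{i}$, the left and right multiplications decouple asymptotically and no cancellation reintroduces a diagonal ($\mathbf{v}\mathbf{w}_{n}\mathbf{u}=\mathbf{w}_{n}$) contribution, including for the lower-order terms generated by the quadratic Hecke relation \eqref{multiplication}. Controlling this uniformly is the delicate point, and smallness at infinity together with word hyperbolicity from Theorem \ref{hyperbolic 3} are exactly the tools that guarantee the two ends of a long reduced word never interfere in the limit. A secondary point to make fully rigorous is the non-injectivity input used in the second paragraph, namely that non-amenability of $W$ precludes injectivity (equivalently semidiscreteness) of $\mathcal{N}_{q}(W)$.
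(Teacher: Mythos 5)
Your handling of the first assertion and of fullness is essentially the paper's own argument in different packaging: assuming $C^{\ast}(C_{r,q}^{\ast}(W),C_{r,q}^{\ast,r}(W))\cap\mathcal{K}=0$, isometry of $\pi$ on this algebra plus min-continuity of $a\otimes b\mapsto\pi(ab)$ (from Theorem \ref{AO}) forces min-continuity of the multiplication map itself, contradicting non-amenability; fullness then follows from Connes's criterion, exactly as in the paper (which runs the same contradiction through the maximal tensor product maps $\mu$ and $Q$). Two of your inline justifications are nevertheless not correct, even though the facts you need are citable. Kaplansky density does not upgrade min-continuity of $C_{r,q}^{\ast}(W)\odot C_{r,q}^{\ast,r}(W)\rightarrow\mathcal{B}(\ell^{2}(W))$ to binormality of the pair $(\mathcal{N}_{q}(W),\mathcal{N}_{q}(W)^{\prime})$: the minimal tensor norm is not controlled along strong limits of bounded nets, and passing from weakly dense C$^\ast$-subalgebras to the von Neumann algebras is a genuine theorem about amenable traces --- precisely the result the paper cites as \cite[Theorem 6.2.7]{BrownOzawa}. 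Likewise, injectivity of $\mathcal{N}_{q}(W)$ in one faithful representation does not formally imply nuclearity of $C_{r,q}^{\ast}(W)$; the input actually needed is that non-amenability of $W$ excludes injectivity of $\mathcal{N}_{q}(W)$, which is what \cite[Theorem 6.2]{Mario} is cited for. Both points are repairable by citation, so this half of your proposal stands.

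The genuine gap is in your proof of the isomorphism, where you depart from the paper. Your witnessing state does not satisfy $\omega\circ\nu=\tau_{q}\otimes\tau_{q}^{r}$ unless $q=1$: taking $a=1$, $b=T_{s}^{(q),r}$, one has $\langle T_{s}^{(q),r}\delta_{\mathbf{w}_{n}},\delta_{\mathbf{w}_{n}}\rangle=p_{s}(q)$ if $\mathbf{w}_{n}$ ends with $s$, and $0$ otherwise. Every non-trivial $\mathbf{w}_{n}$ ends with some letter and $S$ is finite, so along any free ultrafilter there is a fixed $s_{0}\in S$ with which $\mathcal{U}$-almost every $\mathbf{w}_{n}$ ends; then $\omega(\nu(1\otimes T_{s_{0}}^{(q),r}))=p_{s_{0}}(q)\neq0=\tau_{q}^{r}(T_{s_{0}}^{(q),r})$ whenever $q_{s_{0}}\neq1$, and symmetrically for first letters and the left algebra. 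So the diagonal terms coming from the Hecke relation are not a uniformity issue that a generic ray avoids --- no choice of ray can avoid them, and the construction fails outright. In fact, given faithfulness of $\tau_{q}\otimes\tau_{q}^{r}$, a state $\omega$ with $\omega\circ\nu=\tau_{q}\otimes\tau_{q}^{r}$ exists if and only if $\nu$ is injective, so producing such a state is not easier than the original problem. A telling structural sign is that your injectivity argument never uses the $\text{II}_{1}$-factor hypothesis, whereas the paper's does, essentially: factoriality makes $C^{\ast}(C_{r,q}^{\ast}(W),C_{r,q}^{\ast,r}(W))$ irreducible, so the nonzero compact intersection gives $\mathcal{K}\subseteq C^{\ast}(C_{r,q}^{\ast}(W),C_{r,q}^{\ast,r}(W))$; then $|||\sum_{i}x_{i}\otimes y_{i}|||:=\Vert\sum_{i}x_{i}(Jy_{i}J)+\mathcal{K}\Vert$ is a C$^\ast$-seminorm on $\mathcal{N}_{q}(W)\odot\mathcal{N}_{q}(W)$ whose definiteness is deduced from simplicity of the $\text{II}_{1}$-factor $\mathcal{N}_{q}(W)$ and simplicity of spatial tensor products of simple C$^\ast$-algebras, and any C$^\ast$-norm dominates the minimal one, which is exactly the isometry of $\nu$ on a dense subalgebra. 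You would need to replace your third paragraph by an argument of this kind (or some other mechanism that genuinely exploits factoriality).
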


\begin{proof}
Set $\mathcal{A}:=C^{\ast}(C_{r,q}^{\ast}(W),C_{r,q}^{\ast,r}(W))\subseteq\mathcal{B}(\ell^{2}(W))$. By (the proof of) Theorem \ref{AO} the map $C_{r,q}^{\ast}(W)\odot C_{r,q}^{\ast,r}(W)\rightarrow\mathcal{B}(\ell^{2}(W))/\mathcal{K}$ given by $x\otimes y\mapsto xy + \mathcal{K}$ is continuous with respect to the minimal tensor norm. Denote the corresponding extension by $\rho$. Let $\mu:C_{r,q}^{\ast}(W)\otimes_{max}C_{r,q}^{\ast,r}(W)\rightarrow\mathcal{B}(\ell^{2}(W))$ and $Q:C_{r,q}^{\ast}(W)\otimes_{max}C_{r,q}^{\ast,r}(W)\rightarrow C_{r,q}^{\ast}(W)\otimes C_{r,q}^{\ast,r}(W)$ be the canonical maps. Then, $\pi\circ\mu=\rho\circ Q$. Since by our assumption $W$ is non-affine one can find an element $x\in C_{r,q}^{\ast}(W)\otimes_{max}C_{r,q}^{\ast,r}(W)$ with $\mu(x)\neq0$ and $Q(x)=0$. Indeed, if no such element exists then $\ker(Q)\subseteq\ker(\mu)$ and therefore the map $C_{r,q}^{\ast}(W)\odot C_{r,q}^{\ast,r}(W)\rightarrow\mathcal{B}(\ell^{2}(W))$ given by $T_{\mathbf{v}}^{(q)}\otimes T_{\mathbf{w}}^{(q),r}\mapsto T_{\mathbf{v}}^{(q)}T_{\mathbf{w}}^{(q),r}$ is continuous with respect to the minimal tensor norm. With \cite[Theorem 6.2.7]{BrownOzawa} and \cite[Theorem 6.2]{Mario} this leads to a contradiction. So let $x$ be an element with $\mu(x)\neq0$, $Q(x)=0$. Then, $0\neq\mu(x)\in\mathcal{A}\cap\mathcal{K}$ because $\pi\circ\mu(x)=\rho\circ Q(x)=0$.

In the case of a $\text{II}_{1}$-factor the fullness of $\mathcal{N}_{q}(W)$ follows from the discussion above. For the deduction of the existence of the isomorphism it suffices to show that $\rho$ is isometric. It is well-known (see for instance \cite[Cor. 4.1.10]{Dixmier}) that a C$^\ast$-algebra acting irreducibly on a Hilbert space $\mathcal{H}$ that intersects non-trivially with the compact operators on $\mathcal{H}$ contains all compact operators. Since, by the factoriality of $\mathcal{N}_{q}(W)$, the commutant of $\mathcal{A}$ is trivial, one hence gets $\mathcal{K}\subseteq\mathcal{A}$. We claim that 
\begin{eqnarray}
\nonumber
|||\cdot|||:\mathcal{N}_{q}(W)\odot\mathcal{N}_{q}(W)\rightarrow\mathbb{R}_{+}\text{, }\sum_{i}x_{i}\otimes y_{i}\mapsto\left\Vert \sum x_{i}(Jy_{i}J)+\mathcal{K}\right\Vert
\end{eqnarray}
defines a C$^\ast$-norm on $\mathcal{N}_{q}(W)\odot\mathcal{N}_{q}(W)$ where $J$ is the modular conjugation operator. Indeed, the only property that is not obvious is the definiteness of $|||\cdot|||$. It follows from the fact that the norm closure of
\begin{eqnarray}
\nonumber
\left\{ x\in\mathcal{N}_{q}(W)\odot\mathcal{N}_{q}(W)\mid|||x|||=0\right\} \subseteq\mathcal{N}_{q}(W)\odot\mathcal{N}_{q}(W)
\end{eqnarray}
is an ideal in $\mathcal{N}_{q}(W)\otimes\mathcal{N}_{q}(W)$, that $\mathcal{N}_{q}(W)$ is a $\text{II}_{1}$-factor (i.e. simple as a C$^\ast$-algebra) and that the spatial tensor product of two simple C$^\ast$-algebras is simple. Hence, $|||\cdot|||$ defines a C$^\ast$-norm. In particular, it majorizes the minimal tensor norm on $\mathcal{N}_{q}(W)\odot\mathcal{N}_{q}(W)$ so $\rho$ is indeed isometric.
\end{proof}


\subsection{Injective envelopes of Hecke C$^\ast$-algebras}

In \cite{KalantarKennedy} Kalantar and Kennedy built a connection between Hamana's theory of (C$^\ast$-dynamical) injective envelopes (see \cite{Hamana1}, \cite{Hamana2}, \cite{Hamana3}, \cite{Hamana4}) and Furstenberg's notion of boundary actions (see \cite{Fu1}, \cite{Fu2}). They used this connection to reformulate the longstanding open problem to determine which discrete groups are C$^\ast$-simple (meaning that the corresponding reduced group C$^\ast$-algebra is simple) in terms of the structure of the action of the group on its Furstenberg boundary. Their work led to a number of important results (see e.g. \cite{BKKO}, \cite{Haagerup}, \cite{Kennedy} and \cite{Adam}), some of which we will make use of to pick up operator algebraic implications of our earlier results.

One of the main results in \cite{KalantarKennedy} states that a discrete group $G$ is C$^\ast$-simple if and only if it admits a topologically free action on some $G$-boundary. This in particular implies that non-amenable word hyperbolic groups are C$^\ast$-simple (and have unique tracial state). In the context of right-angled Coxeter groups the theorem leads (in combination with Theorem \ref{r.a.} and Proposition \ref{r.a.2}) to a new proof of a well-known C$^\ast$-simplicity and trace-uniqueness result (see \cite{Fe}, \cite{DeLaHarpe}, \cite{Cornulier} and \cite{Mario}).

\begin{corollary}
Let $(W,S)$ be a right-angled irreducible Coxeter system with $3\leq\left|S\right|<\infty$. Then the reduced group C$^\ast$-algebra $C_{r}^{\ast}(W)$ is simple and has unique tracial state.
\end{corollary}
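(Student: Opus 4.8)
The plan is to read this corollary off the dynamical results of the previous subsection together with the Kalantar-Kennedy characterization of C$^\ast$-simplicity recalled just above. Since $(W,S)$ is right-angled, irreducible and satisfies $3\le\left|S\right|<\infty$, the group $W$ is infinite (in fact non-affine, hence non-amenable, so that $\partial(W,S)$ is a nonempty compact Hausdorff space). By Theorem \ref{r.a.} the action $W\curvearrowright\partial(W,S)$ is both minimal and strongly proximal, i.e.\ $\partial(W,S)$ is a $W$-boundary in the sense of Furstenberg. By Proposition \ref{r.a.2} this same action is topologically free. Hence $W$ admits a topologically free action on a $W$-boundary.

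First I would invoke the main result of \cite{KalantarKennedy}, stated above, that a discrete group $G$ is C$^\ast$-simple if and only if it admits a topologically free action on some $G$-boundary. Applied to $W$ and the boundary $\partial(W,S)$, the previous paragraph furnishes exactly such an action, so $C_r^\ast(W)$ is simple.

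For the uniqueness of the trace I would then appeal to the theorem of Breuillard-Kalantar-Kennedy-Ozawa \cite{BKKO} that C$^\ast$-simplicity implies the unique trace property; since the canonical vector state $\tau$ is already a faithful tracial state on $C_r^\ast(W)$ (see the preliminaries), it is the unique tracial state.

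The only nontrivial input beyond the already-established Theorem \ref{r.a.} and Proposition \ref{r.a.2} — and hence the point to be careful about — is that topological freeness on \emph{some} $W$-boundary, rather than on the Furstenberg boundary itself, already forces C$^\ast$-simplicity. This is precisely the content of the cited equivalence, so no further argument is required: the corollary is essentially a bookkeeping combination of the two internal dynamical statements with the external C$^\ast$-simplicity/unique-trace machinery.
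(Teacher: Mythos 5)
Your proposal is correct and follows exactly the paper's route: the corollary is deduced by combining Theorem \ref{r.a.} (boundary action) and Proposition \ref{r.a.2} (topological freeness) with the Kalantar--Kennedy characterization of C$^\ast$-simplicity via topologically free boundary actions, with the unique trace property then supplied by the results of \cite{KalantarKennedy} and \cite{BKKO}. Your preliminary observation that irreducibility, right-angledness and $\left|S\right|\geq3$ force $W$ to be infinite and non-amenable (so that $\partial(W,S)$ is a nonempty $W$-boundary) is a correct and welcome piece of bookkeeping that the paper leaves implicit.
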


Also in the understanding of simplicity and trace-uniqueness of reduced crossed products the content of \cite{KalantarKennedy} led to new insights. The corresponding results apply to the C$^\ast$-algebras $\mathfrak{A}(W)$ and $\pi(\mathfrak{A}(W))$ which, as we observed earlier, identify with C$^\ast$-algebraic crossed products.

\begin{corollary} \label{simplicity}
Let $(W,S)$ be a finite rank Coxeter system. Assume that $W$ is either small at infinity and non-amenable or that the system is irreducible and right-angled with $\left|S\right|\geq3$. Then $\pi(\mathfrak{A}(W))$ is simple. In particular, $\mathfrak{A}(W)$ contains $\mathcal{K}\cap\mathfrak{A}(W)$ as its unique non-trivial ideal. Further, $C(\overline{(W,S)})$ and $C(\partial(W,S))$ carry no $W$-invariant tracial states and both $\mathfrak{A}(W)$ and $\pi(\mathfrak{A}(W))$ are traceless.
\end{corollary}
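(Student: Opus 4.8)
The plan is to carry everything through the two identifications established just before this corollary, namely $\iota\colon\mathfrak{A}(W)\cong C(\overline{W})\rtimes_{r}W$ and $\kappa\colon\pi(\mathfrak{A}(W))\cong C(\partial W)\rtimes_{r}W$, and to read off simplicity, the ideal structure, and the absence of traces from the dynamical properties of $W\curvearrowright\partial(W,S)$ that we have already proved. Under both sets of hypotheses this action is a boundary action --- by Theorem \ref{r.a.} in the right-angled case and by Theorem \ref{s.a.i.} in the small-at-infinity case --- hence in particular \emph{minimal}; and it is \emph{topologically free}, by Proposition \ref{r.a.2} in the right-angled case and by the lemma establishing topological freeness for non-amenable small-at-infinity systems in the other case.

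First I would treat $\pi(\mathfrak{A}(W))$. Minimality says that $C(\partial W)$ has no non-trivial $W$-invariant ideals, while topological freeness gives, via the intersection property for reduced crossed products (\cite[Theorem 2]{Archbold}), that every non-zero ideal of $C(\partial W)\rtimes_{r}W$ meets $C(\partial W)$ non-trivially. Since the intersection of an ideal with $C(\partial W)$ is $W$-invariant, these two facts force $C(\partial W)\rtimes_{r}W$, and hence $\pi(\mathfrak{A}(W))$, to be simple.

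Next I would analyse $\mathfrak{A}(W)$ itself; this is the step I expect to require the most care. The key point is that $\mathcal{K}\subseteq\mathfrak{A}(W)$. On the one hand, the finite product $\prod_{s\in S}(1-P_{s})$ lies in $\mathcal{D}(W,S)\subseteq\mathfrak{A}(W)$ and equals the rank-one projection onto $\delta_{e}$, because an element $\mathbf{v}\in W$ satisfies $s\nleq\mathbf{v}$ for every $s\in S$ exactly when $\mathbf{v}=e$; thus $\mathfrak{A}(W)$ contains a non-zero compact operator. On the other hand, $\mathfrak{A}(W)$ acts irreducibly: any operator commuting with all $\lambda_{\mathbf{w}}=T_{\mathbf{w}}^{(1)}$ and fixing $\mathbb{C}\delta_{e}$ satisfies $T\delta_{\mathbf{w}}=T\lambda_{\mathbf{w}}\delta_{e}=\lambda_{\mathbf{w}}T\delta_{e}\in\mathbb{C}\delta_{\mathbf{w}}$ with a common scalar, so it is a multiple of the identity. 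By \cite[Cor.~4.1.10]{Dixmier} an irreducible C$^\ast$-algebra containing one non-zero compact contains all of $\mathcal{K}$, whence $\mathcal{K}\cap\mathfrak{A}(W)=\mathcal{K}=\ker(\pi|_{\mathfrak{A}(W)})$ and $\mathfrak{A}(W)/\mathcal{K}\cong\pi(\mathfrak{A}(W))$ is simple. To pin down the ideal structure, let $I$ be a non-zero proper ideal of $\mathfrak{A}(W)$. Then $I\cap\mathcal{K}$ is a closed ideal of the simple algebra $\mathcal{K}$, so it is $0$ or $\mathcal{K}$; the case $I\cap\mathcal{K}=0$ is excluded because $\mathcal{K}$ is an essential ideal of $\mathfrak{A}(W)$ (if $a\mathcal{K}=0$ then $a$ kills every rank-one operator, so $a=0$). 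Hence $\mathcal{K}\subseteq I$, and simplicity of $\mathfrak{A}(W)/\mathcal{K}$ then forces $I=\mathcal{K}$. Thus $\mathcal{K}\cap\mathfrak{A}(W)=\mathcal{K}$ is the unique non-trivial ideal of $\mathfrak{A}(W)$.

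Finally, for the trace statements I would use that a tracial state $\tau$ on a reduced crossed product $C(X)\rtimes_{r}W$ restricts to a $W$-invariant state on $C(X)$, since the trace property gives $\tau(u_{g}fu_{g}^{-1})=\tau(f)$, i.e. $\tau|_{C(X)}\circ(g\cdot)=\tau|_{C(X)}$. A $W$-invariant state on $C(\partial W)$ is a $W$-invariant probability measure on $\partial(W,S)$, which cannot exist for a boundary action on a space with more than one point: strong proximality collapses the orbit of any measure to a point mass, and minimality would then produce a global fixed point (note $W$ is non-amenable in both cases, so $\partial(W,S)$ is non-trivial). For $C(\overline{W})$ one observes that $W$ and $\partial W$ are disjoint $W$-invariant Borel subsets of $\overline{W}$, the former admitting no invariant probability measure since $W$ is infinite and acts on itself by translation, the latter none by the previous remark; so any invariant probability measure on $\overline{W}$ would be trivial, and $C(\overline{W})$ carries no $W$-invariant state either. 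Consequently neither $\mathfrak{A}(W)\cong C(\overline{W})\rtimes_{r}W$ nor $\pi(\mathfrak{A}(W))\cong C(\partial W)\rtimes_{r}W$ admits a tracial state, i.e. both are traceless.
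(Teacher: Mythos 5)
Your proof is correct, and it reaches all three conclusions by a route that differs from the paper's at every step, so let me compare the two.

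For the simplicity of $\pi(\mathfrak{A}(W))\cong C(\partial W)\rtimes_{r}W$ the paper cites \cite[Corollary 7.5]{BKKO}, i.e. it routes through the C$^\ast$-simplicity of $W$ together with the fact that $\partial(W,S)$ is a $W$-boundary; you instead combine minimality with topological freeness (Proposition \ref{r.a.2}, respectively the unlabeled lemma following it) and the Archbold--Spielberg intersection property \cite{Archbold}. Both work; your argument uses the topological freeness results of Section \ref{section 3}, which the paper proves but does not actually invoke in this corollary, and it avoids the Kalantar--Kennedy/BKKO machinery entirely. For the ideal structure of $\mathfrak{A}(W)$ the paper runs Archbold--Spielberg a second time, now for the topologically free (Lemma \ref{topfree}) and amenable (Corollary \ref{Total amenability}) action on $\overline{(W,S)}$, to show that every non-zero ideal contains $\mathcal{K}\cap\mathfrak{A}(W)$; you instead prove $\mathcal{K}\subseteq\mathfrak{A}(W)$ outright ($\prod_{s\in S}(1-P_{s})$ is the rank-one projection onto $\delta_{e}$, the commutant computation gives irreducibility, and \cite[Cor. 4.1.10]{Dixmier} finishes), after which essentiality of $\mathcal{K}$ and simplicity of the quotient pin down the ideal lattice. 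This is more self-contained and proves slightly more, namely $\mathcal{K}\cap\mathfrak{A}(W)=\mathcal{K}$. For tracelessness the paper excludes $W$-invariant states on $C(\overline{W})$ using strong proximality of the action on the compactification (Remark \ref{limitelements}) and then cites \cite[Corollary 4]{Kennedy}, which again requires C$^\ast$-simplicity of $W$; your elementary observation that a tracial state on a reduced crossed product restricts to an invariant state on the coefficient algebra makes that citation unnecessary, and your decomposition of an invariant measure on $\overline{W}$ along $W\sqcup\partial W$ replaces the strong-proximality-on-$\overline{W}$ step.

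One step needs patching. You rule out invariant measures on $\partial(W,S)$ by the boundary-action argument ``on a space with more than one point'', and justify non-triviality of the boundary by the parenthetical ``$W$ is non-amenable in both cases''. That inference is not valid as stated: a one-point space is a $G$-boundary for every group $G$, amenable or not, so non-amenability of $W$ by itself does not tell you that the particular space $\partial(W,S)$ has more than one point --- and if it were a single point, the statement you are proving would actually be false. The fact is true, and the quickest repair inside the paper is Corollary \ref{Total amenability}: the action $W\curvearrowright\partial(W,S)$ is amenable, and an amenable action admitting a global fixed point forces the acting group to be amenable; since $\partial(W,S)\neq\emptyset$ (because $W$ is infinite and $\overline{W}$ is compact), this rules out both a one-point boundary and the fixed point your strong-proximality argument produces. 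Alternatively, the proofs of Theorems \ref{r.a.} and \ref{s.a.i.} exhibit distinct boundary points (e.g. $(st)^{\infty}\neq(ts)^{\infty}$ when $m_{st}=\infty$ in the right-angled case, and $\partial(W,S)\cong\partial_{h}W$ is infinite in the hyperbolic case). Relatedly, in the right-angled case non-amenability of $W$ is not a hypothesis of the corollary; it does hold (an irreducible right-angled system with $|S|\geq3$ is infinite and not of affine type), but it should be argued rather than assumed.
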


\begin{proof}
The simplicity of $\pi(\mathfrak{A}(W))$ follows from the fact that $\pi(\mathfrak{A}(W))\cong C(\partial W)\rtimes_{r}W$ in combination with Theorem \ref{s.a.i.}, Theorem \ref{r.a.} and \cite[Corollary 7.5]{BKKO}. Let $I\vartriangleleft\mathfrak{A}(W)$ be a non-trivial ideal. By Lemma \ref{topfree}, Corollary \ref{Total amenability} and \cite[Theorem 2]{Archbold}, $I$ intersects non-trivially with the unital C$^\ast$-algebra $\mathcal{D}(W,S)$ generated by all $P_\mathbf{w}$, $\mathbf{w}\in W$. Since $\mathcal{K}\cap\mathcal{D}(W,S)$ is a $W$-equivariant ideal in $\mathcal{D}(W,S)$ it is easy to see that $\mathcal{K}\cap\mathcal{D}(W,S)\subseteq I\cap\mathcal{D}(W,S)$, hence $\mathcal{K}\cap\mathfrak{A}(W)\subseteq I$. But by the simplicity of $\pi(\mathfrak{A}(W))$, $\mathcal{K}\cap\mathfrak{A}(W)$ is a maximal non-trivial ideal, so $I=\mathcal{K}\cap\mathfrak{A}(W)$. We get that the C$^\ast$-algebra $\mathfrak{A}(W)$ contains $\mathcal{K}\cap\mathfrak{A}(W)$ as its unique non-trivial ideal.

To show the remaining statements, it suffices to show that $C(\overline{W})$ carries no $W$-invariant tracial state. Indeed, if $C(\overline{W})$ carries no $W$-invariant tracial state then $C(\partial W)$ obviously also carries no $W$-invariant state. That $\mathfrak{A}(W))\cong C(\overline{W})\rtimes_{r}W$ and $\pi(\mathfrak{A}(W))\cong C(\partial W)\rtimes_{r}W$ are both traceless then follows with \cite[Corollary 4]{Kennedy}. So let us show that $C(\overline{W})$ carries no $W$-invariant state. For this, assume that $\tau$ is such a state. Remark \ref{limitelements} implies that $\delta_{z}\in\overline{W.\left\{ \tau\right\} }=\left\{ \tau\right\}$  for some $z\in\partial W$, i.e. $\tau=\delta_{z}$. But $\delta_{z}$ is obviously not $W$-invariant. This leads to a contradiction.
\end{proof}

One of the main ideas in \cite{KalantarKennedy} is the observation of the fact that for a discrete group $G$ the $G$-injective envelope $I_{G}(\mathbb{C})$ of $\mathbb{C}$ (i.e. the unique $G$-injective and $G$-essential extension of $\mathbb{C})$ carries a natural C$^\ast$-algebra structure for which $I_{G}(\mathbb{C})\cong C(\partial_{F}G)$. Here, $\partial_{F}G$ denotes the Furstenberg boundary of the group $G$. The construction of $\partial_{F}G$ by means of Hamana's theory of $G$-injective envelopes implies some powerful rigidity results.

In \cite{Ozawa2} Ozawa conjectured that for every exact C$^\ast$-algebra $\mathcal{A}$ there is a nuclear C$^\ast$-algebra $N(\mathcal{A})$ such that $\mathcal{A}\subseteq N(\mathcal{A})\subseteq I(\mathcal{A})$. Here $I(\mathcal{A})$ denotes the injective envelope of $\mathcal{A}$. (For more information on operator systems and ($G$-)injective envelopes we refer to \cite{Hamana1}, \cite{Hamana2},\cite{Hamana3}, \cite{Hamana4} and Paulsen's book \cite{Paulsen}.) Embeddings of this form have the striking advantage that properties of the larger C$^\ast$-algebra (for instance simplicity and primeness) are reflected by the properties of $\mathcal{A}$. Ozawa proved his conjecture in the case of reduced group C$^\ast$-algebras of word hyperbolic groups $G$ by choosing $N(C_{r}^{\ast}(G))$ to be the crossed product $C(\partial_{h}G)\rtimes_{r}G$. Kalantar and Kennedy extended his result in \cite[Section 4]{KalantarKennedy} to general exact group C$^\ast$-algebras by replacing the crossed product by $C(\partial_{h}G)$ by the crossed product $C(\partial_{F}G)\rtimes_{r}G$. However, in full generality Ozawa's conjecture remains a major open problem.

The following corollary provides an embedding of certain Hecke C$^\ast$-algebras which is similar to the one above.

\begin{proposition} \label{envelope}
Let $(W,S)$ be a finite rank Coxeter system. Assume that $W$ is either small at infinity or that the system is irreducible and right-angled with $\left|S\right|\geq3$. Then $I_{W}(C(\partial(W,S)))=C(\partial_{F}W)$ where $I_{W}(C(\partial(W,S)))$ denotes the $W$-injective envelope of $C(\partial(W,S))$. Further, for every $q\in\mathbb{R}_{>0}^{(W,S)}\setminus\mathcal{R}^{\prime}$ there are natural embeddings
\begin{eqnarray}
\nonumber
C_{r,q}^{\ast}(W)\hookrightarrow C(\partial(W,S))\rtimes_{r}W\hookrightarrow C(\partial_{F}W)\rtimes_{r}W\hookrightarrow I(C_{r}^{\ast}(W))\text{.}
\end{eqnarray}
In particular, $I(C_{r,q}^{\ast}(W))\hookrightarrow I(C_{r}^{\ast}(W))$ for every $q\in\mathbb{R}_{>0}^{(W,S)}\setminus\mathcal{R}^{\prime}$.
\end{proposition}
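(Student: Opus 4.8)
The plan is to reduce the whole statement to the single input that $W \curvearrowright \partial(W,S)$ is a topological $W$-boundary, and then to run this through Hamana's theory of $G$-injective envelopes together with Kalantar--Kennedy's identification $I_W(\mathbb{C}) \cong C(\partial_F W)$. In either case of the hypothesis the action $W \curvearrowright \partial(W,S)$ is minimal and strongly proximal: this is Theorem \ref{s.a.i.} in the (non-amenable) small-at-infinity case and Theorem \ref{r.a.} in the irreducible right-angled case with $\left|S\right|\geq 3$. By the universal property of $\partial_F W$ there is therefore a continuous $W$-equivariant surjection $\partial_F W \to \partial(W,S)$, which dualises to an injective unital $W$-equivariant $\ast$-homomorphism $\iota : C(\partial(W,S)) \hookrightarrow C(\partial_F W) = I_W(\mathbb{C})$.

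To prove the first assertion $I_W(C(\partial(W,S))) = C(\partial_F W)$, I would show that $C(\partial_F W)$, viewed via $\iota$ as an extension of $C(\partial(W,S))$, is simultaneously $W$-injective and a $W$-essential extension, and hence equals the $W$-injective envelope by uniqueness. The $W$-injectivity is immediate, since $C(\partial_F W) = I_W(\mathbb{C})$ is $W$-injective by construction. For essentiality I would use that $\mathbb{C} \subseteq C(\partial_F W)$ is $W$-essential (being the $W$-injective envelope of $\mathbb{C}$, equivalently $C(\partial_F W)$ is $W$-rigid): given any $W$-equivariant unital completely positive map $\phi$ out of $C(\partial_F W)$ whose restriction to $C(\partial(W,S))$ is completely isometric, its restriction to $\mathbb{C}$ is completely isometric as well, so $W$-essentiality of $\mathbb{C}\subseteq C(\partial_F W)$ forces $\phi$ to be completely isometric. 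Thus $C(\partial(W,S)) \subseteq C(\partial_F W)$ is a $W$-essential, $W$-injective extension, whence $I_W(C(\partial(W,S))) = C(\partial_F W)$.

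For the chain of embeddings I would treat the three inclusions in turn. The first, $C_{r,q}^{\ast}(W) \hookrightarrow C(\partial(W,S)) \rtimes_r W$, is exactly Corollary \ref{embedding2}, valid for $q \in \mathbb{R}_{>0}^{(W,S)} \setminus \mathcal{R}'$. The second is obtained by functoriality of the reduced crossed product applied to the injective equivariant $\ast$-homomorphism $\iota$: the canonical faithful conditional expectations $E : C(\partial(W,S)) \rtimes_r W \to C(\partial(W,S))$ and $E' : C(\partial_F W) \rtimes_r W \to C(\partial_F W)$ satisfy $E' \circ (\iota \rtimes W) = \iota \circ E$, so faithfulness of $E$ and injectivity of $\iota$ force $\iota \rtimes W$ to be injective (exactness of $W$, which underlies these considerations, follows from the amenability of the action in Corollary \ref{Total amenability}). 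The third inclusion $C(\partial_F W) \rtimes_r W \hookrightarrow I(C_r^{\ast}(W))$ is Kalantar--Kennedy's extension of Ozawa's embedding theorem for exact groups, realising $C_r^{\ast}(W) \subseteq C(\partial_F W) \rtimes_r W \subseteq I(C_r^{\ast}(W))$; see \cite[Section 4]{KalantarKennedy}.

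Finally, the last statement follows from the rigidity of injective envelopes. The composed chain exhibits $C_{r,q}^{\ast}(W)$ as a unital $C^{\ast}$-subalgebra of the injective $C^{\ast}$-algebra $I(C_r^{\ast}(W))$; choosing a minimal $C_{r,q}^{\ast}(W)$-projection (an idempotent unital completely positive map on $I(C_r^{\ast}(W))$ fixing $C_{r,q}^{\ast}(W)$) in the sense of Hamana, its range is $C_{r,q}^{\ast}(W)$-injective and $C_{r,q}^{\ast}(W)$-essential, hence a copy of $I(C_{r,q}^{\ast}(W))$ inside $I(C_r^{\ast}(W))$, which gives the desired embedding. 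The main obstacle I anticipate is the Hamana-theoretic bookkeeping in the second paragraph, namely making the passage from $W$-essentiality of $\mathbb{C}$ to $W$-essentiality of $C(\partial(W,S))$ precise in the $W$-equivariant category; the crossed-product functoriality and the closing rigidity step are then routine once the relevant faithful expectations and minimal projections are in place.
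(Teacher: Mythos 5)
Your proposal is correct and takes essentially the same route as the paper: both deduce from Theorems \ref{r.a.} and \ref{s.a.i.} that $\partial(W,S)$ is a $W$-boundary, dualize the universal surjection $\partial_{F}W\rightarrow\partial(W,S)$ and identify $C(\partial_{F}W)=I_{W}(\mathbb{C})$ as a $W$-injective, $W$-essential extension of $C(\partial(W,S))$ (the argument behind \cite[Corollary 5.5]{KalantarKennedy}, which the paper cites and you write out), build the chain from Corollary \ref{embedding2}, functoriality of reduced crossed products, and the Kalantar--Kennedy/Hamana embedding $C(\partial_{F}W)\rtimes_{r}W\hookrightarrow I(C_{r}^{\ast}(W))$, and obtain the final containment $I(C_{r,q}^{\ast}(W))\hookrightarrow I(C_{r}^{\ast}(W))$ via Hamana's minimal-projection/Choi--Effros realization of the injective envelope inside the injective algebra $I(C_{r}^{\ast}(W))$. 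The only differences are expository: you spell out the essentiality and conditional-expectation arguments that the paper delegates to citations, and your parenthetical appeal to exactness is not actually needed for the injectivity of the crossed-product map (it matters only for the third inclusion).
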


\begin{proof}
By Theorem \ref{r.a.} and Theorem \ref{s.a.i.}, $\partial W$ is a $W$-boundary. Hence, $\partial W$ is a continuous $W$-equivariant image of the Furstenberg boundary $\partial_{F}W$ (see \cite[Proposition 4.6]{Fu2}). This induces a $W$-equivariant embedding $C(\partial W)\hookrightarrow C(\partial_{F}W)$. The equality $I_{W}(C(\partial(W,S)))=C(\partial_{F}W)$ then follows in the same way as in the proof of \cite[Corollary 5.5]{KalantarKennedy}. We further deduce the existence of the chain $C_{r,q}^{\ast}(W)\hookrightarrow C(\partial W)\rtimes_{r}W\hookrightarrow C(\partial_{F}W)\rtimes_{r}W\hookrightarrow I(C_{r}^{\ast}(W))$ of inclusions from Corollary \ref{embedding2}, from \cite[Theorem 3.4]{Hamana4} and by extending the $W$-equivariant embedding $C(\partial W)\hookrightarrow C(\partial_{F}W)$ to an embedding $C(\partial W)\rtimes_{r}W\hookrightarrow C(\partial_{F}W)\rtimes_{r}W$ of the corresponding crossed products.

It remains to show that $I(C_{r,q}^{\ast}(W)) \hookrightarrow I(C_{r}^{\ast}(W))$ for every $q\in\mathbb{R}_{>0}^{(W,S)}\setminus\mathcal{R}^{\prime}$. But this is clear since, by the injectivity of $I(C_{r}^{\ast}(W))$ and $C_{r,q}^{\ast}(W)\hookrightarrow I(C_{r}^{\ast}(W))$, the injective envelope $I(C_{r,q}^{\ast}(W))$ is contained in $I(C_{r}^{\ast}(W))$ as an operator system. Hence, every completely positive projection $\theta:\mathcal{B}(\ell^{2}(W))\rightarrow I(C_{r}^{\ast}(W))$ restricts to the identity on $I(C_{r,q}^{\ast}(W))$. But the C$^\ast$-algebra structure of $I(C_{r}^{\ast}(W))$ is given by the Choi-Effros product associated with $\theta$, so this induces an embedding $I(C_{r,q}^{\ast}(W))\hookrightarrow I(C_{r}^{\ast}(W))$. 
\end{proof}

\begin{remark}
Proposition \ref{envelope} holds for all Coxeter systems whose action $W\curvearrowright\partial(W,S)$ is a boundary action. Considering Ozawa's conjecture it would be interesting to know if the embedding $I(C_{r,q}^{\ast}(W))\hookrightarrow I(C_{r}^{\ast}(W))$, $q\in\mathbb{R}_{>0}^{(W,S)}\setminus\mathcal{R}^{\prime}$ is always surjective, i.e. if $I(C_{r,q}^{\ast}(W))$ does not depend on the choice of the parameter $q$. In that case, $C_{r,q}^{\ast}(W)$ would turn out to be a prime C$^\ast$-algebra for all $q\in\mathbb{R}_{>0}^{(W,S)}\setminus\mathcal{R}^{\prime}$ (see \cite[Theorem 3.4]{Hamana4}).
\end{remark}

A complete classification of Coxeter systems (and ranges of parameters $q$) that give rise to Hecke-von Neumann algebras which are $\text{II}_{1}$-factors is still an open problem. Partial results have been obtained in \cite{Gar}, \cite{Mario} and \cite{Raum}. Considering Proposition \ref{II1}, a factoriality result would be particularly interesting in the case of systems which are small at infinity. We close this section with the following proposition which treats a similar question.

\begin{proposition}
Let $(W,S)$ be a finite rank Coxeter system that is small at infinity. Then $C_{r,q}^{\ast}(W)\cap C_{r,q}^{\ast,r}(W)=\mathbb{C}1$ for every $q\in\mathbb{R}_{>0}^{(W,S)}\setminus\mathcal{R}^{\prime}$.
\end{proposition}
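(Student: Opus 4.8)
The plan is to reduce the statement to the faithfulness of a single product $*$-homomorphism and then slice. Write $A:=C_{r,q}^{\ast}(W)$, so that, as in the proof of Theorem \ref{AO}, $JAJ=C_{r,q}^{\ast,r}(W)$ and $[A,JAJ]\subseteq\mathcal{K}$. Since $q\notin\mathcal{R}^{\prime}$, Theorem \ref{compacts} gives $\mathcal{N}_{q}(W)\cap\mathcal{K}=0$; as $A\subseteq\mathcal{N}_{q}(W)$ and $J\mathcal{K}J=\mathcal{K}$, both $\pi|_{A}$ and $\pi|_{JAJ}$ are faithful (hence isometric). Because $A$ is exact (\cite[Theorem 6.1]{Mario}) and $\pi(A),\pi(JAJ)$ commute, the algebraic product map is continuous for the minimal tensor norm — this is exactly the strong $(\mathcal{AO})$ content of Theorem \ref{AO} — so it extends to a $*$-homomorphism
\[
\nu:A\minotimes JAJ\rightarrow\mathcal{B}(\ell^{2}(W))/\mathcal{K},\qquad a\otimes b\mapsto\pi(ab)=\pi(a)\pi(b).
\]
Granting that $\nu$ is \emph{faithful}, the proposition follows quickly: for $x\in A\cap JAJ$ one has $\nu(x\otimes1-1\otimes x)=\pi(x)-\pi(x)=0$, whence $x\otimes1=1\otimes x$ in $A\minotimes JAJ$; applying a slice map $\mathrm{id}_{A}\otimes\psi$ for any state $\psi$ on $JAJ$ gives $x=\psi(x)\,1\in\mathbb{C}1$.

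So the real work is to prove that $\nu$ is faithful. I would do this by exhibiting a state $\phi$ on $\mathcal{B}(\ell^{2}(W))/\mathcal{K}$ with $\phi\circ\nu=\tau_{q}\otimes\tau_{q}^{r}$, the tensor product of the canonical faithful tracial states on $A$ and on $JAJ$. This suffices: $\ker\nu$ is a two-sided ideal, and if $0\neq w\in\ker\nu$ then $0\neq w^{\ast}w\in\ker\nu$ satisfies $(\tau_{q}\otimes\tau_{q}^{r})(w^{\ast}w)=\phi(\nu(w^{\ast}w))=0$, contradicting faithfulness of $\tau_{q}\otimes\tau_{q}^{r}$ on the minimal tensor product. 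Equivalently, I would construct a \emph{singular} state $\tilde{\phi}$ on $\mathcal{B}(\ell^{2}(W))$ (that is, $\tilde{\phi}|_{\mathcal{K}}=0$) with $\tilde{\phi}(a\,JbJ)=\tau_{q}(a)\,\tau_{q}^{r}(JbJ)$ for $a,b\in A$, and then set $\phi:=\tilde{\phi}\circ\pi^{-1}$ on $\pi(\cdot)$. The natural candidate for $\tilde{\phi}$ is a weak-$*$ cluster point of functionals built from the basis vectors $\delta_{w_{i}}$ as $w_{i}$ leaves every finite subset of $W$, so that $\tilde{\phi}$ automatically annihilates $\mathcal{K}$.

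The crux is the decoupling identity $\tilde{\phi}(T_{u}^{(q)}\,JT_{v}^{(q)}J)=\delta_{u,e}\delta_{v,e}$ on the linear basis of the Hecke algebra, and this is where smallness at infinity enters decisively. By Theorem \ref{infinity} every reflection centralizer $C_{W}(s)$ is finite (equivalently, by Theorem \ref{hyperbolic 3}, $W$ is word hyperbolic and $\partial(W,S)=\partial_{h}W$), and this finiteness is precisely what forces a bounded left modification of a long word $w$ by $u$ and a bounded right modification by $v$ to be incompatible for all but finitely many $w$ unless $u=v=e$ — the group-level shadow being that $\{w:w^{-1}uw=v^{-1}\}$ is finite whenever $u\neq e$. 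The step I expect to be the main obstacle is controlling the \emph{Hecke-specific diagonal terms}: unlike the group case $q=1$, the operators $T_{s}^{(q)}$ carry contributions $p_{s}(q)\delta_{w}$, so the plain vector states $\omega_{\delta_{w}}$ do \emph{not} converge to $\tau_{q}\otimes\tau_{q}^{r}$, and the functional $\tilde\phi$ must be arranged so that these lower-order $q$-corrections are absorbed while the finite-support (decoupling) conclusion is retained. Managing this delicate balance — and thereby repairing the gap in \cite[Section 5]{Caspers} — is the heart of the argument and the point at which the hypothesis of smallness at infinity is genuinely indispensable.
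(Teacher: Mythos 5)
Your reduction is correct as far as it goes: granting that $\nu:C_{r,q}^{\ast}(W)\minotimes C_{r,q}^{\ast,r}(W)\rightarrow\mathcal{B}(\ell^{2}(W))/\mathcal{K}$ is faithful, the slice-map argument does force $x\otimes1=1\otimes x$ and hence $x\in\mathbb{C}1$, and the existence of $\nu$ itself (via exactness, nuclearity of $\mathfrak{A}(W)$, commutation modulo $\mathcal{K}$, and faithfulness of $\pi$ on each factor from Theorem \ref{compacts}) is fine. The genuine gap is that faithfulness of $\nu$ is never proved, and it is not a routine verification: in the paper the analogous isomorphism $C_{r,q}^{\ast}(W)\otimes C_{r,q}^{\ast,r}(W)\cong\pi\bigl(C^{\ast}(C_{r,q}^{\ast}(W),C_{r,q}^{\ast,r}(W))\bigr)$ (Proposition \ref{II1}) is obtained only under the extra hypothesis that $\mathcal{N}_{q}(W)$ is a $\text{II}_{1}$-factor, precisely because the proof needs simplicity of the spatial tensor product. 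Moreover, the mechanism you propose for the decoupling identity is false: smallness at infinity gives finiteness of $C_{W}(s)$ for $s\in S$ (Theorem \ref{infinity}), but the set $\left\{ \mathbf{w}\mid\mathbf{w}^{-1}\mathbf{u}\mathbf{w}=\mathbf{v}^{-1}\right\}$ is a coset of $C_{W}(\mathbf{u})$, and for $\mathbf{u}$ of infinite order $C_{W}(\mathbf{u})\supseteq\left\langle \mathbf{u}\right\rangle$ is always infinite (already for $W=(\mathbb{Z}_{2})^{\ast3}$ and $\mathbf{u}=st$, a group that is small at infinity). So even at $q=1$ the vector-state limits do not vanish automatically, and for $q\neq1$ you note yourself that they do not converge to $\tau_{q}\otimes\tau_{q}^{r}$; what remains is a program, not a proof. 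In fact the program cannot succeed from smallness at infinity alone: for the infinite dihedral group (small at infinity, $1\notin\mathcal{R}^{\prime}$) the element $\lambda_{st}+\lambda_{ts}=\rho_{st}+\rho_{ts}$ is a non-scalar element of $C_{r}^{\ast}(W)\cap JC_{r}^{\ast}(W)J$, so $\nu$ is not faithful there; some non-amenability input is unavoidable, and your argument never invokes one.

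The paper's proof bypasses the tensor product entirely and is much softer. For $x\in C_{r,q}^{\ast}(W)\cap C_{r,q}^{\ast,r}(W)$, the commutator estimates from the proof of Theorem \ref{AO} give $\left[x,y\right]\in\mathcal{K}$ for every $y\in\mathfrak{A}(W)$, so $\pi(x)$ lies in the center of $\pi(\mathfrak{A}(W))\cong C(\partial(W,S))\rtimes_{r}W$. By Corollary \ref{simplicity} (which is where non-amenability enters) this C$^{\ast}$-algebra is simple, hence has trivial center, so $\pi(x)\in\mathbb{C}1$; finally, since $q\notin\mathcal{R}^{\prime}$, Corollary \ref{embedding2} makes $\pi$ injective on $C_{r,q}^{\ast}(W)$, whence $x\in\mathbb{C}1$. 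Thus the only global input is simplicity of the boundary crossed product, not injectivity of $\nu$ — which is exactly why the paper's proof closes while yours does not.
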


\begin{proof}
Let $(W,S)$ be a finite rank Coxeter system that is small at infinity, let $q\in\mathbb{R}_{>0}^{(W,S)}\setminus\mathcal{R}^{\prime}$ and $x\in C_{r,q}^{\ast}(W)\cap C_{r,q}^{\ast,r}(W)$. By the same argument as in the proof of Theorem \ref{AO} we have that for every $y\in\mathfrak{A}(W)$ the commutator $\left[x,y\right]$ is compact. This implies that $\pi(x)$ is in the center of $\pi(\mathfrak{A}(W))$. But by Corollary \ref{simplicity} the C$^\ast$-algebra $\pi(\mathfrak{A}(W))$ is simple, so in particular its center is trivial. We get that $\pi(x)\in\mathbb{C}1$. Since by Corollary \ref{embedding2} the quotient map $\pi:\mathcal{B}(\ell^{2}(W))\rightarrow\mathcal{B}(\ell^{2}(W))/\mathcal{K}$ restricts to an embedding of $C_{r,q}^{\ast}(W)$ into $\pi(\mathfrak{A}(W))$, we get that $x\in\mathbb{C}1$.
\end{proof}


\subsection*{Acknowledgements}

It is a pleasure to thank my supervisor Martijn Caspers. The present paper is to a large amount the result of many fruitful discussions in which he explained notions and concepts to me and gave me feedback on the direction of my work. I am also grateful to Sven Raum and Adam Skalski for their feedback on an earlier draft of this paper. Finally, I want to thank Stefaan Vaes for very useful communication on interpolated free group factors.

I acknowledge support by the NWO project ``\emph{The structure of Hecke-von Neumann algebras}'', 613.009.125.


\vspace{5mm}



\begin{thebibliography}{00}

\bibitem{AO} C. A. Akemann, P. A. Ostrand, \emph{On a tensor product C$^\ast$-algebra associated with the free group on two generators}, J. Math. Soc. Japan 27 (1975), no. 4, 589-599.

\bibitem{Allcock} D. Allcock, \emph{Reflection centralizers in Coxeter groups}, Transform. Groups 18 (2013), no.3, 599-613.

\bibitem{Delaroche} C. Anantharaman-Delaroche, \emph{On tensor products of group C$^\ast$-algebras and related topics}, Limits of graphs in group theory and computer science, 1-35, EPFL Press, Lausanne, 2009.

\bibitem{Archbold} R. J. Archbold, J. S. Spielberg, \emph{Topologically free actions and ideals in discrete C$^\ast$-dynamical systems}, Proc. Edingburgh Math. Soc. (2) 37 (1994), no. 1, 119--124.

\bibitem{BK}
  A. Bearden, M. Kalantar,
  \emph{Topological boundaries of unitary representations}, to appear in Int. Math. Res. Not. (IMRN).

\bibitem{Bernstein} J. Bernstein, \emph{Le ``centre'' de Bernstein}, Edited by P. Deligne, Representations of reductive groups over a local field, 1-32, Travaux en Cours, Hermann, Paris, 1984.

\bibitem{combinatorics} A. Bj\"{o}rner, F. Brenti, \emph{Combinatorics of Coxeter groups}, Graduate Texts in Mathematics, 231. Springer, New York, 2005. xiv+363 pp. ISBN: 978-3540-442387.

\bibitem{Bourbaki} N. Bourbaki, \emph{Lie groups and Lie algebras. Chapters 4-6}, Translated from the 1968 French original by Andrew Pressley, Elements of Mathematics (Berlin), Springer-Verlag, Berlin, 2002. xii+300 pp. 109.00, ISBN 3-540-42650-7.

\bibitem{BKKO} E. Breuillard, M. Kalantar, M. Kennedy, N. Ozawa, \emph{C$^\ast$-simplicity and the unique trace property for discrete groups}, Publ. Math. Inst. Hautes \'{E}tudes Sci. 126 (2017), 35-71.

\bibitem{BridsonHaefliger} M. R. Bridson, A. Haefliger, \emph{Metric spaces of non-positive curvature}, Grundlehren der mathematischen Wissenschaften, 319. Springer-Verlag, Berlin, 1999. xxii+643 pp. ISBN: 3-540-64324-9.

\bibitem{Brink} B. Brink, \emph{On centralizers of reflections in Coxeter groups}, Bull. London Math. Soc. 28 (1996), no. 5, 465–470.

\bibitem{BrownOzawa} N. Brown, N.  Ozawa,  \emph{C$^\ast$-algebras and finite-dimensional approximations}, Graduate Studies in Mathematics, 88. American Mathematical Society, Providence, RI, 2008. xvi+509 pp.

\bibitem{Kennedy} R. S. Bryder, M. Kennedy, \emph{Reduced twisted crossed products over C$^\ast$-simple group}, Int. Math. Res. Not. IMRN 2018, no. 6, 1638-1655.

\bibitem{BunceDeddens} J. W. Bunce, J. A. Deddens, \emph{A family of simple C$^\ast$-algebras related to weighted shift operators}, J. Functional Analysis 19 (1975), 13--24.

\bibitem{Caprace} P. E. Caprace, J. L\'ecureux, \emph{Combinatorial and group-theoretic compactifications of buildings}, Ann. Inst. Fourier (Grenoble) 61 (2011), no. 2, 619--672.

\bibitem{Caspers}  M. Caspers, \emph{Absence of Cartan subalgebras for right-angled Hecke von Neumann algebras},
Anal. PDE 13 (2020), no. 1, 1-28.

\bibitem{Mario} M. Caspers, M. Klisse, N.S. Larsen, \emph{Graph product Khintchine inequalities and Hecke C*-algebras: Haagerup inequalities,(non) simplicity, nuclearity and exactness}, J. Funct. Anal. (2021) 280, no. 1, 108795.

\bibitem{CSW} M. Caspers, A. Skalski, M. Wasilewski, \emph{On masas in $q$-deformed von Neumann algebras}, Pacific J. Math. 302 (2019), no. 1, 1--21.

\bibitem{ChifanSinclair} I. Chifan, T. Sinclair, \emph{On the structural theory of $\text{II}_1$ factors of negatively curved groups}, Ann. Sci. \'{E}c. Norm. Sup\'{e}r. (4) 46 (2013), no. 1, 1-33 (2013).

\bibitem{Connes} A. Connes, \emph{Almost periodic states and factors of type $\text{III}_1$}, J. Functional Analysis 16 (1974), 415-455.

\bibitem{Connes2} A. Connes, \emph{Classification of injective factors. Cases $\text{II}_1$, $\text{II}_\infty$, $\text{III}_\lambda, \lambda \neq 1$}, Ann. Math. (2) 104 (1976), no. 1, 73--115.

\bibitem{Cornulier} Y. De Cornulier, \emph{Semisimple Zariski closure of Coxeter groups}, arXiv preprint arXiv:1211.5635 (2012).

\bibitem{Coxeter} H. S. M. Coxeter, \emph{Discrete groups generated by reflections}, Ann. of Math. (2) 35 (1934), no. 3, 588-621.

\bibitem{Davidson} K. R. Davidson, \emph{C$^\ast$-algebras by example}, Fields Institute Monographs, 6. American Mathematical Society, Providence, RI, 1996.996.

\bibitem{Davis} M. W. Davis, \emph{The Geometry and Topology of Coxeter groups}, London Mathematical Society Monographs Series, 32. Princeton University Press, Princeton, NJ, 2008.

\bibitem{Dymara2}  M. W. Davis, J. Dymara, T. Januszkiewicz, B. Okun,\emph{Weighted L$^2$-cohomology of Coxeter groups}, Geom. Topol. 11 (2007), 47--138.

\bibitem{Dixmier} J. Dixmier, \emph{Les C$^\ast$-alg\`{e}bres et leurs repr\'{e}sentations}, Reprint of the secoond (1969) edition. Les Grands Classiques Gauthier-Villars, \'{E}ditions Jacques Gabay, Paris, 1996, 403 pp. ISBN: 2-87647-013-6.

\bibitem{DJ}  A. Dranishnikov, T. Januszkiewicz, \emph{Every Coxeter group acts amenably on a compact space}, Proceedings of the 1999 Topology and Dynamics Conference (Salt Lake City, UT). Topology Proc. 24 (1999), Spring, 135--141.

\bibitem{Dykema}
   K. Dykema,
   \emph{Free products of hyperfinite von Neumann algebras and free dimension},
    Duke Math. J. 69 (1993), no. 1, 97--119.

\bibitem{Dymara}
   J. Dymara, \emph{Thin buildings}, Geom. Topol. 10 (2006), 667--694.

\bibitem{Fe} G. Fendler, \emph{Simplicity of the reduced C$^\ast$-algebras of certain Coxeter groups}, Illinois J. Math. 47 (2003), no. 3, 883--897.

\bibitem{Fu1} H. Furstenberg, \emph{A Poisson formula for semi-simple Lie groups}, Ann. of Math. (2) 77 (1963), 335--386.

\bibitem{Fu2} H. Furstenberg, \emph{Boundary theory and stochastic processes on homogeneous spaces}, Harmonic analysis on homogeneous spaces (Proc. Sympos. Pure Math., Vol. XXVI, Williams Coll., Williamstown, Mass., 1972), 193--229. Amer. Math. Soc., Providence, R.I., 1973.

\bibitem{Gar} L. Garncarek, \emph{Factoriality of Hecke-von Neumann algebras of right-angled Coxeter groups}, J. Funct. Anal. 270 (2016), no. 3, 1202--1219.

\bibitem{Gromov} M. Gromov, \emph{Hyperbolic groups}, Essays in group theory, 75--263, Math. Sci. Res. Inst. Publ. 8, Springer, New York, 1987.

\bibitem{Harpe} E. Ghys, P. de la Harpe, \emph{Sur les Groupes Hyperboliques d'apr\`{e}s Mikhael Gromov}, Birkh\"auser Boston Inc., Boston, MA, 1990.

\bibitem{Haagerup} U. Haagerup, \emph{A new look at C$^\ast$-simplicity and the unique trace property of a group}, Operator algebras and applications - the Abel Symposium 205, 167-176, Abel Symp., 12, Springer, 2017.

\bibitem{Hamana1} M. Hamana, \emph{Injective envelopes of Banach modules}, Tohoku Math. J. (2) 30 (1978), no. 3, 439--453.

\bibitem{Hamana2} M. Hamana, \emph{Injective envelopes of C$^\ast$-algebras}, J. Math. Soc. Japan 31 (1979), no. 1, 181-197.

\bibitem{Hamana3} M. Hamana, \emph{Injective envelopes of operator systems}, Publ. Res. Inst. Math. Sci. 15 (1979), no. 3, 773-785.

\bibitem{Hamana4} M. Hamana, \emph{Injective envelopes of C$^\ast$-dynamical systems}, Tohoku Math. J. (2) 37 (1985), no. 4, 463-487.

\bibitem{DeLaHarpe} P. de La Harpe, \emph{On simplicity of reduced C*‐algebras of groups}, Bull. London Math. Soc. 39 (2007), no. 1, 1--26.

\bibitem{HK} Y. Hartman, M. Kalantar, \emph{Stationary C$^\ast$-dynamical systems}, arXiv preprint arXiv:1712.10133 (2017).

\bibitem{Higson} N. Higson, \emph{Bivariant $K$-theory and the Novikov conjecture}, Geom. Funct. Anal. 10 (2000), no. 3, 563-581.

\bibitem{HiGu} N. Higson, E. Guentner, \emph{Group C$^\ast$-algebras and $K$-theory}, Noncommutative geometry, 137--251, Lecture Notes in Math., 1831, Fond. CIME/CIME Found. Subser., Springer, Berlin, 2004.

\bibitem{Isono} C. Houdayer, Y. Isono, \emph{Unique prime factorization and bicentralizer problem for a class of type III factors}, Adv. Math. 305 (2017), 402--455.

\bibitem{Humphreys} J. Humphreys, \emph{Reflection groups and Coxeter groups}, Cambridge Studies in Advanced Mathematics, 29. Cambridge University Press, Cambridge, 1990. xii+204 pp. ISBN: 0-521-37510-X.

\bibitem{Isono2} Y. Isono, \emph{Examples of factors which have no Cartan subalgebras}, Trans. Amer. Math. Soc. 367 (2015), no. 11, 7917--7937.

\bibitem{IwahoriMatsumoto} N. Iwahori, H. Matsumoto, \emph{On some Bruhat decomposition and the structure of Hecke rings of $\mathfrak{p}$-adic Chevalley groups}, Inst. Hautes \'{E}tudes Sci. Publ. Math. No. 25 (1965), 5--48.

\bibitem{Kaimanovich} V. A. Kaimanovich, \emph{The Poisson formula for groups with hyperbolic properties}, Ann. of Math. (2) 152 (2000), no. 3, 659--692.

\bibitem{Adam} M. Kalantar, P. Kasprzak, A. Skalski, R. Vergnioux, \emph{Noncommutative Furstenberg boundary}, arXiv preprint arXiv:2002.09657 (2020).

\bibitem{KalantarKennedy} M. Kalantar, M. Kennedy, \emph{Boundaries of reduced $C^{*}$-algebras of discrete groups}, J. Reine Angew. Math. 727 (2017), 247--267.

\bibitem{Kapovich} I. Kapovich, N. Benakli, \emph{Boundaries of hyperbolic groups}, Combinatorial and geometric group theory (New York, 2000/Hoboken, NJ, 2001), 39--93, Contemp. Math., 296, Amer. Math. Soc., Providence, RI, 2002.

\bibitem{KL2} D. Kazhdan, G. Lusztig, \emph{Representations of Coxeter groups and Hecke algebras}, Invent. Math. 53  (1979), no. 2, 165--184.

\bibitem{KL} D. Kazhdan, G. Lusztig, \emph{Proof of the Deligne-Langlands conjecture for Hecke algebras}, Invent. Math. 87 (1987), no. 1, 153--215.

\bibitem{Mario3} M. Klisse, \emph{Simplicity of right-angled Hecke C$^\ast$-algebras}, arXiv preprint arXiv:2105.05205 (2021).

\bibitem{Lam} T. Lam, A. Thomas, \emph{Infinite reduced words and the Tits boundary of a Coxeter group}, Int. Math. Res. Not. IMRN 2015, no. 17, 7690--7733.

\bibitem{Lecureux} J. L\'{e}cureux, \emph{Amenability of actions on the boundary of a building}, Int. Math. Res. Not. IMRN 2010, no. 17, 3265--3302.

\bibitem{Lusztig} G. Lusztig, \emph{Affine Hecke algebras and their graded version}, J. Amer. Math. Soc. 2 (1989), no. 3, 599--635.

\bibitem{Matsumoto} H. Matsumoto, \emph{Analyse harmonique dans les syst\`{e}mes de Tits bornologiques de type affine}, Lecture Notes in Mathematics, Vol. 590, Springer-Verlag, Berlin-New York, 1977. i+219 pp. ISBN: 3-540-08249-2.

\bibitem{Moussong} G. Moussong, \emph{Hyperbolic Coxeter groups}, Thesis (Ph.D.)-The Ohio State University. 1988. 55 pp.

\bibitem{Neumann} F. J. Murray, J. von Neumann, \emph{Rings of operators. IV.}, Ann. of Math. (2) 44 (1943), 716--808.

\bibitem{Vinberg} G. A. Noskov, E. B. Vinberg, \emph{Strong Tits alternative for subgroups of Coxeter groups}, J. Lie Theory 12 (2002), no. 1, 259--264.

\bibitem{Ozawa} N. Ozawa, \emph{Solid von Neumann algebras}, Acta Math. 192 (2004), no. 1, 111--117.

\bibitem{Ozawa2} N. Ozawa, \emph{Boundaries of reduced free group C$^\ast$-algebras}, Bull. Lond. Math. Soc. 39 (2007), no. 1, 35--38.

\bibitem{OzawaPopa} N. Ozawa, S. Popa, \emph{On a class of $\text{II}_1$ factors with at most one Cartan subalgebra}, Ann. of Math. (2), 172 (2010), no. 1, 713--749.

\bibitem{Paulsen} V. Paulsen, \emph{Completely bounded maps and operator algebras}, Cambridge Studies in Advanced Mathematics, 78. Cambridge University Press, Cambridge, 2002. xii+300 pp. ISBN: 0-521-81669-6.

\bibitem{PopaVaes} S. Popa, S. Vaes, \emph{Unique Cartan decomposition for $\text{II}_1$ factors arising from arbitrary actions of hyperbolic groups}, J. Reine Angew. Math. 694 (2014), 215--239.

\bibitem{Radulescu} F. Radulescu, \emph{Random matrices, amalgamated free products and subfactors of the von Neumann algebra of a free group, of noninteger index}, Invent. Math. 115 (1994), no. 2, 347--389.

\bibitem{Raum} S. Raum, A. Skalski, \emph{Factorial multiparameter Hecke von Neumann algebras and representations of groups acting on right-angled buildings}, arXiv preprint arXiv:2008.00919 (2020).

\bibitem{Remy} B. R\'{e}my, \emph{Buildings and Kac-Moody groups}, Buildings, Finite geometries and Groups, 231--250, Springer Proc. Math., 10, Springer, New York, 2012.

\bibitem{Selberg} A. Selberg, \emph{On discontinuous groups in higher-dimensional symmetric spaces}, 1960 Contributions to function theory (internat. Colloq. Function Theory, Bombay, 1960) pp. 147--164 Tata Institute of Fundamental Research, Bombay

\bibitem{Skandalis} G. Skandalis, \emph{Une Notion de nucl\'{e}arit\'{e} en K-th\'{e}orie (d'apr\`{e}s J. Cuntz)}, K-Theory 1 (1988), no. 6, 549-573.

\bibitem{Voiculescu} D. Voiculescu, \emph{The analogues of entropy and of Fisher's information measure in free probability theory. III. The absence of Cartan subalgebras}, Geom. Funct. Anal. 6 (1996), no. 1, 172--199.

\bibitem{Zeller} G. Zeller-Meier, \emph{Produits crois\'es d'une C$^\ast$-alg\`ebre par un group d'automorphismes}, J. Math. Pures Appl. (9) 47 (1968), 101--239.


\end{thebibliography}
\end{document}